\theoremstyle{plain}
\newtheorem{theorem}{Theorem}[section]
\newtheorem{lemma}[theorem]{Lemma}
\newtheorem{proposition}[theorem]{Proposition}
\newtheoremstyle{named}{}{}{\itshape}{}{\bfseries}{.}{.5em}{\thmnote{#3's }#1}
\theoremstyle{named}
\theoremstyle{definition}
\newtheorem{definition}[theorem]{Definition}
\newtheorem{remark}[theorem]{Remark}
\newtheorem{claim}[theorem]{Claim}
\newcommand{\pref}[1]{(\ref{#1})}
\newcommand{\PP}{\mathbb{P}}
\newcommand{\ZZ}{\mathbb{Z}}
\newcommand{\RR}{\mathbb{R}}
\newcommand{\MM}{\mathcal{M}}
\newcommand{\EE}{\mathbb{E}}
\newcommand{\ve}{\varepsilon}
\newcommand{\vp}{\varphi}
\newcommand{\al}{\alpha}
\newcommand{\ga}{\gamma}
\newcommand{\ka}{\kappa}
\newcommand{\la}{\lambda}
\newcommand{\sig}{\sigma}
\newcommand{\om}{\omega}
\newcommand{\thet}{\theta}
\newcommand{\Ga}{\Gamma}
\newcommand{\La}{\Lambda}
\newcommand{\Om}{\Omega}
\newcommand{\norm}[1]{\left|\left|#1\right|\right|}
\newcommand{\rst}[1]{\ensuremath{{\mathbin\upharpoonright}%
\raise-.5ex\hbox{$#1$}}}
\newcounter{gscan}
\newcounter{btscan}
\newcounter{cscan}
\newcounter{hscan}
\newcounter{fscan}
\newcounter{pscan}
\newcounter{sscan}
\renewcommand{\thefscan}{F\arabic{fscan}}
\renewcommand{\thesscan}{S\arabic{sscan}}
\numberwithin{equation}{section}
\DeclareMathOperator*{\esssup}{ess\,sup}
\begin{document}

\title{On the Stochastic Homogenization of Fully Nonlinear Uniformly Parabolic Equations in Stationary Ergodic Spatio-Temporal Media}

\author{Jessica Lin}
\address{University of Chicago\\ 
Department of Mathematics\\
Chicago, IL 60637}
\email[Jessica Lin]{jessica@math.uchicago.edu}

\subjclass[2010]{35B27, 35K55}

\keywords{stochastic homogenization, fully nonlinear parabolic equations, error estimates for homogenization}

\date{\today}

\thanks{This work was completed as a part of the author's doctoral thesis. The author would like to thank her thesis advisor, Takis Souganidis, for his patient guidance and many helpful discussions. The author was supported as a graduate student on NSF grant DGE-1144082.}

\begin{abstract}
We study homogenization for fully nonlinear uniformly parabolic equations in stationary ergodic spatio-temporal media from the qualitative and quantitative perspective. We show that under suitable hypotheses, solutions to fully nonlinear uniformly parabolic equations in spatio-temporal media homogenize almost surely. In addition, we obtain a rate of convergence for this homogenization in measure, assuming that the environment is strongly mixing with a prescribed rate. A general methodology to study the stochastic homogenization and rates of convergence for stochastic homogenization of uniformly elliptic equations was introduced in \cite{csw} and \cite{cs}. We extend their approach to fully nonlinear uniformly parabolic equations, and we develop a number of new arguments to handle the parabolic structure of the problem. 
\end{abstract}

\maketitle


\section{Introduction}
We study the stochastic homogenization for fully nonlinear uniformly parabolic equations with oscillations in space and time. Specifically, we study the limiting behavior of solutions $u^{\ve}=u^{\ve}(x, t, \om)$ of the initial value problem
 \begin{equation} \label{hom}
\begin{cases}
u^{\ve}_{t}-F(D^{2}u^{\ve}, x/\ve, t/{\ve}^{2}, \om)=0 \quad\text{in}\quad D_{T},\\
u^{\ve}=g \quad\text{on}\quad\partial_{p} D_{T},
\end{cases}
\end{equation}
\noindent where $D\subset \RR^{d}$ is an open, bounded domain, $D_{T}=D\times (0, T]$, $\partial_{p}D_{T}=(\overline{D}\times\left\{0\right\})\cup(\partial D\times[0,T))$ denotes the parabolic boundary which satisfies a uniform exterior cone condition, and $g\in C(\partial_{p}D_{T})$. The argument $\om$ describes an environment which is an element of the underlying probability space $(\Om, \mathcal{F}, \PP)$. We postpone the precise assumptions on \pref{hom} until later (see \pref{f1}-\pref{f5}), but we point out the key hypotheses at this time. For each $\om\in\Om$, $F(\cdot, \cdot, \cdot, \om)$ is uniformly elliptic and satisfies the necessary assumptions for well-posedness in the theory of viscosity solutions. With regards to the random setting, we assume that $F$ is stationary in space-time, and that translations in space-time are ergodic with respect to the law of $F(\cdot, \cdot, \cdot, \om)$. 

The first result pertains to a qualitative observation about the limiting behavior of $u^{\ve}$. We show that almost surely, $u^{\ve}$ converge uniformly to a limiting function $u=u(x,t)$, which solves a deterministic limiting equation.  The limiting equation takes on the form 
 \begin{equation}\label{limit}
\begin{cases}
u_{t}-\overline{F}(D^{2}u)=0 \quad\text{in}\quad D_{T}, \\
u=g \quad\text{on}\quad\partial_{p} D_{T}, 
\end{cases}
\end{equation}
where we construct $\overline{F}$, a real-valued function on $\mathbb{S}^{d}$, the space of $d\times d$ symmetric matrices with real entries. More precisely, we prove
\begin{theorem}\label{stochhom}
Assume \pref{f1}-\pref{f4}. There exists a uniformly elliptic operator $\overline{F}: \mathbb{S}^{d}\rightarrow \RR$, such that if $u^{\ve}(\cdot, \cdot, \om)$ is the solution of \pref{hom}, then as $\ve\rightarrow 0$, a.s. in $\om$, $u^{\ve}\rightarrow u$ uniformly, where $u$ is the unique solution of \pref{limit}. 
\end{theorem}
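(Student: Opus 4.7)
The plan follows the Caffarelli--Souganidis--Wang framework of \cite{csw}, adapted to the parabolic spatio-temporal setting. For each fixed $M\in\mathbb{S}^d$ I would first construct $\overline{F}(M)$ by an obstacle-problem approach. Inside parabolic cylinders $Q_R=B_R\times(-R^2,0]$, consider the obstacle problem whose solution $w^R(\cdot,\cdot,\om)$ is the smallest viscosity supersolution of
\[
\max\bigl(w_s - F(M+D^2 w, y, s, \om),\ w\bigr) = 0 \text{ in } Q_R, \qquad w = 0 \text{ on } \partial_p Q_R.
\]
The normalized parabolic measure of the contact set $\{w^R=0\}$ quantifies how effectively the environment accommodates Hessian $M$. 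Stationarity of $F$ under space-time shifts and ergodicity allow one to invoke a parabolic Akcoglu--Krengel type subadditive ergodic theorem on the near-subadditive functionals generated by these densities, yielding an $\om$-independent limit as $R\to\infty$. Repeating the construction for the dual obstacle (obstacle from above) and balancing the two limits isolates a unique scalar $\overline{F}(M)$; uniform ellipticity and Lipschitz continuity of $\overline{F}$ in $M$ follow from the Pucci bounds preserved by the construction.

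With $\overline{F}$ in hand, Krylov--Safonov Hölder estimates applied to \pref{hom} make $\{u^\ve\}$ locally uniformly equicontinuous, so the upper and lower half-relaxed limits $\overline{u}$ and $\underline{u}$ are well defined. To show that $\overline{u}$ is a viscosity subsolution of \pref{limit}, I would argue by contradiction: if smooth $\vp$ touches $\overline{u}$ strictly from above at $(x_0,t_0)$ and $\vp_t(x_0,t_0) > \overline{F}(D^2\vp(x_0,t_0))$, then the obstacle-problem construction produces, on a positive-probability event and for small $\ve$, a microscopic corrector $v^\ve$ whose contact set is sufficiently dense in a small parabolic cylinder around $(x_0,t_0)$. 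Taking $\vp(x,t)+\ve^2 v^\ve(x/\ve,t/\ve^2,\om)$ as a perturbed test function, one shows that $u^\ve$ must touch it from below at a contact-set point, and evaluating \pref{hom} together with the obstacle equation there contradicts the strict inequality; the supersolution property of $\underline{u}$ is symmetric. The uniform exterior cone condition on $\partial_p D_T$ and continuity of $g$ produce $\ve$- and $\om$-independent parabolic barriers, which force $\overline{u}=\underline{u}=g$ on $\partial_p D_T$, and comparison for \pref{limit} then gives $\overline{u}=\underline{u}$ throughout $\overline{D_T}$. The almost-sure null exceptional set is removed uniformly in $M$ and $g$ by a countable-approximation argument using Lipschitz continuity of $\overline{F}$.

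The principal obstacle is the construction of $\overline{F}$. The parabolic cylinders $B_R\times(-R^2,0]$ are anisotropic under space-time shifts, so the subadditive ergodic machinery has to be set up in a way that respects this parabolic scaling, and one must quantitatively control the boundary-layer errors that prevent exact subadditivity of the contact-density functional. A related secondary difficulty is the quantitative lower bound on the density of the microscopic contact set required in the perturbed-test-function step so that $u^\ve$ genuinely touches the test function at a point where the obstacle equation holds; the parabolic analogue of the requisite measure-theoretic estimates requires genuinely new arguments beyond the elliptic case of \cite{csw}.
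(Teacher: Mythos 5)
Your overall strategy is sound and matches the paper's: construct $\overline{F}(M)$ through obstacle problems and a subadditive ergodic theorem, then run a perturbed-test-function argument with half-relaxed limits and barriers. Two points deserve scrutiny, one of which is a genuine gap.

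First, a minor imprecision: the measure of the obstacle contact set is \emph{exactly} subadditive and stationary (Theorem~\ref{obsthm}\pref{obst3} and the argument in Proposition~3.1); the difficulty in the parabolic case is not ``near-subadditivity'' or boundary-layer errors, but that the parabolic boxes $(-r,r)^d\times(-r^2,0]$ grow anisotropically, violating the regularity hypothesis of the Akcoglu--Krengel theorem. The paper resolves this by extending the multiparameter subadditive ergodic theorem to arbitrary nested families of rectangular boxes (Theorem~\ref{genset}), not by controlling near-subadditivity.

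Second, and more substantively, your description of the perturbed-test-function step is confused in a way that leaves a gap. You write that $u^\ve$ ``must touch $\vp+\ve^2 v^\ve$ from below at a contact-set point, and evaluating \pref{hom} together with the obstacle equation there'' gives the contradiction. But the touching point is simply a maximizer of $u^\ve-\vp-\ve^2 v^\ve$; there is no reason it should lie on the contact set, whose density in subcubes is some fixed $\al\in(0,1)$, not $1$. The clean argument, which the paper follows, replaces the obstacle solution by the \emph{free} approximate corrector $w^\ve$ of~\pref{correct}, which satisfies the cell equation with right-hand side $-\overline{F}(M)$ at every point and for which $\norm{\ve^2 w^\ve}_{L^\infty(Q_{1/\ve})}\to 0$ a.s.\ by construction; then $\vp+\ve^2 w^\ve$ is a strict supersolution of \pref{hom} in a small cylinder (no case distinction on the touching point), and the parabolic comparison principle plus the uniform decay of $\ve^2 w^\ve$ yield the contradiction directly. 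If you insist on using $\overline{v}^\ve$, you can do so because $\overline{v}^\ve$ is a viscosity supersolution of the level-$\ell$ cell inequality \emph{everywhere} (Theorem~\ref{obsthm}\pref{obst2} with the extra nonnegative term on the contact set), but then you must take $\ell$ strictly below $\overline{\ell}=-\overline{F}(M)$ so that $\overline{p}(\ell)>0$ and $\ve^2\overline{v}^\ve\to 0$ by the density argument of Lemma~\ref{pos}, then let $\ell\uparrow\overline{\ell}$; at $\ell=\overline{\ell}$ it can happen that $\overline{p}(\overline{\ell})=0$ and the obstacle solution does not flatten. Either way, the contact-point case analysis is not what drives the comparison. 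Finally, a ``positive-probability event'' does not suffice for almost-sure homogenization; you need the single full-probability event $\tilde{\Om}$ built from the countable intersection over rational $(M,\ell)$ as in~\pref{omt}, which you do allude to at the end but should make the load-bearing object from the start.
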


Furthermore, we establish a rate of convergence for Theorem \ref{stochhom} ``in measure." In order to quantify the rate of convergence of Theorem \ref{stochhom}, it is necessary to quantify the ergodicity hypothesis by assuming that $F$ is strongly mixing with a prescribed rate. In broad terms, the rate of mixing measures the ``decorrelation" of different events of the probability space. Under our hypotheses, we find $f(\ve), h(\ve)$ such that 
\begin{equation*}
\PP\left[\left\{\om: \norm{u^{\ve}(\cdot, \cdot, \om)-u(\cdot, \cdot)}_{L^{\infty}(D_{T})}\geq f(\ve)\right\}\right]\leq h(\ve)
\end{equation*}
where $\lim_{\ve\rightarrow 0} f(\ve)=\lim_{\ve\rightarrow 0} h(\ve)=0$. This gives us a rate of convergence for homogenization in measure. We assume a logarithmic rate of mixing in order to obtain a logarithmic rate for the homogenization of \pref{hom}. The main result we prove regarding the rate of convergence for homogenization is

\begin{theorem}\label{thmfinalrate}
Assume \pref{f1}-\pref{f5}. Let $u^{\ve}, u$ solve \pref{hom} and \pref{limit} respectively. There exist constants $\tilde{C}, \tilde{c}, \hat{C}, \hat{c}$, and $\ve_{0}$ which depend on the ellipticity constants and dimension so that for all $\ve<\ve_{0}$, 
\begin{equation}
\PP[\norm{u^{\ve}-u}_{L^{\infty}(D_{T})}\geq \tilde{C}\ve^{\tilde{c}|\ln\ve|^{-2/3}}]\leq \hat{C}\ve^{\hat{c}|\ln\ve|^{-2/3}}.
\end{equation}
\end{theorem}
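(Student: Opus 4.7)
The plan is to follow the obstacle-problem methodology of Caffarelli--Souganidis \cite{cs}, adapted to the parabolic setting with the natural space-time scaling $(x/\ve, t/\ve^{2})$. In the qualitative proof of Theorem \ref{stochhom}, the effective operator $\overline{F}(M)$ is characterized, for each $M\in\mathbb{S}^{d}$, via the long-time/large-scale behavior of a parabolic obstacle problem with quadratic obstacle $\tfrac12 \langle Mx, x\rangle$: the density of the contact set in a large parabolic cylinder $Q_{R}$ converges, by a subadditive ergodic argument, to a quantity encoding $\overline{F}(M)$. The entire quantitative argument reduces to (i) obtaining a quantitative rate for this convergence of contact densities, and (ii) transferring it to the rate for $\norm{u^{\ve}-u}_{L^{\infty}(D_{T})}$ via a perturbed test function/comparison argument.

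The first main step is to quantify the obstacle-problem convergence using \pref{f5}. I would partition $Q_{R}$ into sub-cylinders of parabolic size $\ell$ comparable to the decorrelation length, so that the renormalized contact density $\mu_{R}(M)$ becomes an average of $\sim (R/\ell)^{d+2}$ weakly dependent random variables indexed by the sub-cylinders. A Chernoff/Azuma-type inequality adapted to the strong-mixing setting then yields, schematically,
\begin{equation*}
\PP\bigl[|\mu_{R}(M)-\overline{F}(M)|\geq \delta\bigr]\leq C\exp\bigl(-c\delta^{2}(R/\ell)^{d+2}\bigr) + (\text{mixing correction}).
\end{equation*}
Because the mixing rate in \pref{f5} is only logarithmic, $\ell$ must be polylogarithmic in $R$, which is the structural reason one cannot hope for an algebraic rate and is the ultimate source of the $|\ln\ve|^{-2/3}$ factor after all scales have been balanced.

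The second main step is a perturbed test function/comparison argument. Choosing $R=\ve^{-\alpha}$ for a suitable $\alpha\in(0,1)$ turns the above probabilistic statement into an approximate cell problem on mesoscopic parabolic cylinders (in original coordinates). For a smooth test function $\vp$ touching $u$ from above with $D^{2}\vp(x_{0},t_{0})=M$, I would compare $u^{\ve}$ on a mesoscopic cylinder around $(x_{0},t_{0})$ to $\vp + \ve^{2} w^{\ve}$, where $w^{\ve}$ is the rescaled solution of the obstacle problem at matrix $M$. The concentration estimate forces $w^{\ve}$ to propagate essentially as the deterministic flow $\overline{F}(M)\,t$ off a small-probability event, and the parabolic comparison principle converts this into a one-sided bound on $u^{\ve}-u$. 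Running the argument symmetrically for supersolutions, union-bounding over a finite $\delta$-net in $M$ and a grid of base points, and patching in a boundary layer using the uniform exterior cone condition and interior/boundary H\"older estimates for \pref{hom} and \pref{limit}, extends the bound to all of $D_{T}$.

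The final step is a joint optimization of the scales $R$, $\ell$, the tolerance $\delta$, the mesoscopic comparison scale, and the cardinality of the net in $M$. The constants $\tilde{C}, \tilde{c}, \hat{C}, \hat{c}$ and the exponent $|\ln\ve|^{-2/3}$ fall out of balancing the Gaussian tail $\exp(-c\delta^{2}(R/\ell)^{d+2})$ against the polylogarithmic lower bound on $\ell$ forced by the logarithmic mixing rate in \pref{f5}. The main obstacle I expect is the first step: importing concentration from the i.i.d.\ case into the strong-mixing setting while keeping the error small enough and the probability loss good enough to survive the perturbed test function argument; it is exactly here that the \emph{logarithmic} decay of mixing is painful. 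A secondary technical difficulty is the correct parabolic analogue of the Alexandrov--Bakelman--Pucci estimate needed to convert obstacle-problem information into pointwise bounds on $u^{\ve}-u$, since the time variable effectively raises the ambient dimension to $d+2$ in parabolic scaling and requires revisiting several of the interior geometric estimates from the elliptic arguments in \cite{csw}, \cite{cs}.
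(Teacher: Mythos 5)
Your two-step skeleton—(i) quantify the convergence of the obstacle-problem contact densities using the mixing hypothesis, then (ii) transfer it to $\norm{u^{\ve}-u}_{L^{\infty}}$ via a quantified perturbed test function argument with a net in $M$ and a grid of base points—matches the paper's high-level structure. Step (ii) as you describe it is broadly in line with what the paper does via Turanova's $\delta$-solution machinery (inf/sup convolutions, paraboloid touching, covering by forward cylinders, union bound).

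However, step (i) as proposed has a genuine gap. A Chernoff/Azuma-type concentration inequality for the average $\mu_{R}(M)$ over weakly dependent sub-cylinders controls only the \emph{fluctuation} $|\mu_{R}(M)-\EE[\mu_{R}(M)]|$; it gives no mechanism for showing that $\EE[\mu_{R}(M)]$ itself approaches its limit with a rate, which is precisely the content one needs (the subadditive ergodic theorem comes with no rate of its own). The paper's proof of Theorem~\ref{thmrate1} is a three-scale induction on the \emph{product} of second moments $\mathbb{J}_{k^3}=\overline{J}_{k^3}\underline{J}_{k^3}$, and the mixing hypothesis only handles the case where variances are large relative to second moments. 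The crucial additional ingredient, which your proposal does not supply, is Lemma~\ref{fshomog}: when the masses are concentrated near their means and both means are bounded below, the parabolic Fabes--Stroock-type quantitative lower bound from \cite{linreg1} (Appendix~\ref{quantreg}) forces a strict separation $\overline{v}^{m}-\underline{v}^{m}>0$ on a fixed fraction of the middle-scale cube, which kills a fixed fraction of one contact set at the larger scale and thereby drives $\mathbb{J}$ down. Without this PDE input there is no reason for the contact densities to decay at all; concentration alone is not enough. A further omission is that this only bounds the \emph{product} $\overline{J}\cdot\underline{J}$; to get a one-sided rate on $\ve^{2}w^{\ve}$ one must perturb the right-hand side to $\ell=-\overline{F}(M)\pm\gamma$ (Lemma~\ref{perturb}) so that one factor is bounded below and the product bound yields decay of the other—this step is absent from your plan as well. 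Finally, your scale balancing heuristic (Gaussian tail versus polylogarithmic $\ell$) is not quite the source of the $|\ln\ve|^{-2/3}$ exponent; in the paper it arises from the specific $3^{k^{3}}$, $3^{k^{3}+3k^{2}}$, $3^{(k+1)^{3}}$ scale hierarchy in the induction together with the $3^{-k^{3/2}}$ decorrelation rate in \pref{1mixing}.
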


The qualitative and quantitative stochastic homogenization for fully nonlinear uniformly elliptic equations was addressed in \cite{csw} and \cite{cs} respectively, and our paper is an extension of their work to the class of fully nonlinear uniformly parabolic equations with oscillations in space and time. Theorem \ref{stochhom} is the parabolic analogue of the main result of \cite{csw}, and the rate in Theorem \ref{thmfinalrate} is comparable to the rate obtained in \cite{cs}. We point out that the strategies to prove Theorems \ref{stochhom} and \ref{thmfinalrate} are similar to those of \cite{csw} and \cite{cs}, however there are several difficulties which arise given the parabolic structure of the problem. The proofs in \cite{csw} and \cite{cs} rely upon delicate applications of the regularity theory for fully nonlinear elliptic equations.  Therefore, extending these results to a different problem requires verifying that these arguments still hold using the regularity estimates available for the problem at hand. For fully nonlinear uniformly parabolic equations, these properties have been developed in a number of works, including (but not limited to) those of Krylov \cite{krylov, krylovabp}, Tso \cite{tso}, Wang \cite{wangreg1, wangreg2}, Lin \cite{linreg1}, and Turanova \cite{olga1}.  A consistent issue in our analysis is having to deal with the natural causality inherent in parabolic equations. Parabolic regularity estimates depend sensitively on time; they are not always merely $\RR^{d+1}$ generalizations of elliptic results in $\RR^{d}$. This dependency on time causes several of the arguments of \cite{csw} and \cite{cs} to break down without additional justification. This paper addresses these issues which arise in the parabolic setting. 

While we were writing this, it came to our attention that Armstrong and Smart \cite{asellip} have very recently developed a different approach to the stochastic homogenization of fully nonlinear uniformly elliptic equations. Their method yields an algebraic rate of convergence for homogenization under the assumption that the environment is independent, and identically distributed, which is a stronger mixing hypothesis than ours. Moreover, it is unclear how to adapt their approach to the parabolic setting, making our results independent of their work. 

We next aim to motivate and outline the proofs of Theorems \ref{stochhom} and \ref{thmfinalrate}. 
\medskip

\noindent \textbf{Homogenization Strategy.} Most of the arguments in the theory of homogenization are based on the ansatz that $u^{\ve}(\cdot, \cdot,\om)$ takes on a multiscale expansion
\begin{equation*}
u^{\ve}(x,t,\om)=u(x,t, \om)+\ve^{2}w^{\ve}(x/\ve,t/\ve^{2},\om)+\ldots
\end{equation*}
and letting $y=x/\ve$, $s=t/\ve^{2}$, this formally implies that 
\begin{equation*}
u_{t}+w^{\ve}_{s}-F(D^{2}_{x}u+D^{2}_{y}w^{\ve}, y,s, \om)=0 \quad\text{in}\quad D_{T}.
\end{equation*}
If homogenization is to occur, the former expression must be independent of $y$ and $s$. This naturally leads to the following compatibility condition: for each $M\in \mathbb{S}^{d}$, there exists a unique constant $\overline{F}(M)$ and a continuous function $w^{\ve}$ satisfying
\begin{equation}\label{badcell}
\begin{cases}
w^{\ve}_{s}-F(M+D^{2}_{y}w^{\ve}, y, s, \om)=-\overline{F}(M)\quad\text{in}\quad \RR^{d+1},\\
\frac{1}{(|y|^{2}+|s|)}w^{\ve}(y,s,\om)\rightarrow 0\quad\text{as}\quad |y|, |s|\rightarrow\infty,\quad\text{a.s. in}\quad \om,
\end{cases}
\end{equation}
 where the second condition is the necessary and sufficient condition for $\overline{F}(M)$ to be uniquely determined. 
 
However, it is unknown whether solutions to \pref{badcell} exist. The fundamental observations of \cite{csw} showed it is possible to homogenize by reformulating the compatibility condition imposed on $w^{\ve}$ and $\overline{F}$. Indeed, it is enough if we require that for each $M\in \mathbb{S}^{d}$, there exists a unique constant $\overline{F}(M)$ and a continuous solution $w^{\ve}$ which solves
\begin{equation}\label{correct}
\begin{cases}
w^{\ve}_{s}-F(D^{2}_{y}w^{\ve}+M,y,s,\om)=-\overline{F}(M) \quad\text{in}\quad Q_{1/{\ve}},\\
w^{\ve}(y,s)=0\quad\text{on}\quad \partial_{p}Q_{1/{\ve}},\\
\end{cases}
\end{equation}
with the condition that $\norm{\ve^{2}w^{\ve}(\cdot, \cdot, \om)}_{L^{\infty}(Q_{1/\ve})}\rightarrow 0$ as $\ve\rightarrow 0$, a.s. in $\om$, where $Q_{r}=B_{r}(0)\times (-r^{2}, 0]$.  We refer to $w^{\ve}$ as the ``approximate corrector" of \pref{hom}. 

The above problem can also be reformulated as follows. Letting
\begin{equation*}
w_{\ve}(y, s, \om)=\ve^{2}w^{\ve}(y/\ve, s/\ve^{2}, \om),
\end{equation*}
and for any $M\in\mathbb{S}^{d}$, setting
\begin{equation*}
F_{M}(\cdot, y,s, \om)=F(\cdot+M, y,s, \om),
\end{equation*}
then \pref{correct} is equivalent to $w_{\ve}$ solving
\begin{equation}\label{fbar}
\begin{cases}
w_{\ve,s}-F_{M}(D^{2}_{y}w_{\ve}, y/\ve, s/\ve^{2},\om)=-\overline{F}(M) \quad\text{in}\quad Q_{1},\\
w_{\ve}=0\quad\text{on}\quad \partial_{p}Q_{1},
\end{cases}
\end{equation}
with $\norm{w_{\ve}(\cdot, \cdot, \om)}_{L^{\infty}(Q_{1})} \rightarrow 0$ as $\ve\rightarrow 0$, a.s. in $\om$.  

We will frequently change our perspective, by working with $w^{\ve}$ or $w_{\ve}$, and we will try to continually remind the reader of their definitions. Some of the arguments in this paper are much more convenient to explain using one of the two perspectives, so we simply choose the most convenient representation to explain our arguments. We emphasize that all results which hold true for solutions $w^{\ve}$ also hold for $w_{\ve}$ with the appropriate adjustments for scaling. 

To identify the correct choice of $\overline{F}(M)$, we vary the values of the right hand side of \pref{fbar} until the corresponding solution has the desired behavior in the limit as $\ve\rightarrow 0$. More precisely, for each $\ell\in \RR$, we study the limiting behavior of $w_{\ve}^{\ell}$, which solves
\begin{equation}\label{correctell}
\begin{cases}
w^{\ell}_{\ve, s}-F_{M}(D^{2}w^{\ell}_{\ve}, y/\ve, s/\ve^{2},\om)=\ell \quad\text{in}\quad Q_1,\\
w^{\ell}_{\ve}=0\quad\text{on}\quad \partial_{p}Q_1.
\end{cases}
\end{equation}
The correct choice of $\overline{F}(M)$ has the property that $\displaystyle \lim_{\ve\rightarrow 0}w^{-\overline{F}(M)}_{\ve}(y,s, \om)=0$ a.s. in $\om$, uniformly in $Q_{1}$. For $\ell\in \RR$, we expect that 
\begin{eqnarray}\label{dichot}
\begin{cases}
\displaystyle \lim_{\ve\rightarrow 0} w^{\ell}_{\ve}(y, s, \om)\geq   0\quad\text{if}\quad \ell\geq -\overline{F}(M),\\
\displaystyle \lim_{\ve\rightarrow 0} w^{\ell}_{\ve}(y, s, \om)\leq0\quad\text{if}\quad \ell\leq -\overline{F}(M).
\end{cases}
\end{eqnarray}

In order to establish an almost-sure limit for $\left\{w^{\ell}_{\ve}\right\}$, we compare $w^{\ell}_{\ve}$ to the solutions of the corresponding obstacle problems from above and below. It was shown in \cite{csw} that $w^{\ell}_{\ve}$ is controlled by the measure of the contact sets of the obstacle problems, which are subadditive and stationary quantities. We then appeal to the subadditive ergodic theorem (Theorem \ref{genset}) to obtain an almost-sure limit, which determines the limiting behavior of $w^{\ell}_{\ve}$ for any $\ell\in\RR$. One difficulty we encounter is that parabolic cubes do not satisfy the original assumptions of Akcoglu and Krengel's subadditive ergodic theorem \cite{akerg}, and we could not find a result in the literature which applied to the parabolic setting. We overcome this by extending the results of \cite{akerg} for nested families of cubes with sides of arbitrary length. The precise result and its proof are presented in Appendix A. 

Using the information from the limiting behavior of solutions of the obstacle problem, we are then able to build a well-defined, uniformly elliptic operator $\overline{F}$. By invoking the perturbed test function method of Evans \cite{evanshom}, we are able to conclude. 

\medskip 

\noindent\textbf{Rates Strategy}
We follow the general approach outlined by Caffarelli and Souganidis in \cite{cs}, by computing the rate of homogenization for \pref{hom} in two main steps. 

The first step is to understand, for each $M\in \mathbb{S}^{d}$, the rate of decay of the associated approximate corrector $\ve^{2}w^{\ve}(\cdot, \cdot, \om)$ as $\ve$ tends to 0. The behavior of $w^{\ve}$ is controlled by the solutions to the obstacle problem from above and below, and this amounts to understanding the rate of decay of the measure of the contact sets of the obstacle problem. Roughly speaking, we show that for any choice of $\ell\in \RR$, the product of the second moments of the measure of the contact sets corresponding to \pref{correctell} from above and below satisfy a certain rate of decay. Obtaining a rate for this product is quite technical, and relies heavily on regularity estimates for fully nonlinear uniformly parabolic equations. Even with the appropriate parabolic versions of the elliptic regularity results, the argument of \cite{cs} does not immediately work due to the causality of parabolic equations. We overcome this by utilizing estimates developed in a companion paper \cite{linreg1} to adapt the argument of \cite{cs} to this setting. Using the rate on this product, we show that excluding an exceptional set $A^{M}_{\ve}$, which depends only on the size of $M$, $\ve^{2}w^{\ve}(\cdot, \cdot, \om)$ decays with a given rate. This amounts to obtaining a rate of convergence for the homogenization $\ve^{2}w^{\ve}$ (or equivalently $w_{\ve}$) in measure. 


The second step is to quantify the perturbed test function method. In \cite{cs}, this step is formulated in the language of $\delta$-solutions, which have been generalized to the parabolic setting by Turanova in \cite{olga1}. We use the results of \cite{olga1} in order to show that for configurations $\om\in \Om\setminus A^{M}_{\ve}$, with $|M|$ specified, the decay on $w_{\ve}(\cdot, \cdot, \om)$ yields a rate of decay on $\norm{u^{\ve}(\cdot, \cdot, \om)-u(\cdot, \cdot)}_{L^{\infty}(D_{T})}$. This allows us to obtain a rate for the homogenization in measure.

\medskip 

\noindent\textbf{General Assumptions:}
We work in the stationary ergodic, spatio-temporal setting. We assume there exists an underlying probability space $(\Om, \mathcal{F}, \PP)$ equipped with a measure-preserving transformation $\left\{\tau_ {(y', s')}\right\}_{(y', s')\in \RR^{d+1}}$. We also assume that $\partial_{p}D_{T}$ satisfies a uniform exterior cone condition, which allows us to construct global barriers (see \cite{parbar} for the precise assumption). Our hypotheses can be summarized as follows:

\begin{list}{ (\thefscan)}
{
\usecounter{fscan}
\setlength{\topsep}{1.5ex plus 0.2ex minus 0.2ex}
\setlength{\labelwidth}{1.2cm}
\setlength{\leftmargin}{1.5cm}
\setlength{\labelsep}{0.3cm}
\setlength{\rightmargin}{0.5cm}
\setlength{\parsep}{0.5ex plus 0.2ex minus 0.1ex}
\setlength{\itemsep}{0ex plus 0.2ex}
}

\item \label{f1} \textit{Stationarity}: For every $(M, \om)\in \mathbb{S}^{d}\times\Om$, and for all $(y',s')\in\RR^{d+1}$, 
\begin{equation*}
F(M, y+y', s+s', \om)=F(M, y, s, \tau_{(y',s')}\om).
\end{equation*}

\item\label{f2} \textit{Ergodicity}:  If  $A\in \mathcal{F}$, and $\tau_{(y',s')}A=A$ for all $(y', s')\in\RR^{d+1}$, then $\PP(A)=0$ or $\PP(A)=1$. 

\item \label{f3} \textit{Uniform Ellipticity}. There exists $\la, \La>0$ such that for all $N\in \mathbb{S}^{d}$, $N\geq 0$, and for all $\om\in \Om$,
\begin{equation*}
\la\norm{N}\leq F(M+N, y, s, \om)-F(M, y, s, \om)\leq \La\norm{N}.
\end{equation*}
where $\norm{N}$ denotes the largest eigenvalue of $N$. This immediately implies that uniformly in $(y,s, \om)$, $F(\cdot, y, s, \om)$ is Lipschitz continuous on $\mathbb{S}^{d}$. 
\item \label{f4} \textit{Boundedness and Regularity of $F$}:  For every $R>0, \om\in \Om, M\in \mathbb{S}^{d}$ with $\norm{M}\leq R$, 
\begin{equation*}
\left\{F(M, \cdot, \cdot, \om)\right\}\text{is uniformly bounded and uniformly equicontinuous on}~\RR^{d+1}
\end{equation*}
and there exists $C$ so that 
\begin{equation*}
\esssup_{\om\in \Om}|F(0, 0, 0, \om)|<C, 
\end{equation*}
which by \pref{f1}, implies 
\begin{equation*}
\sup_{(y,s)\in \RR^{d+1}}\esssup_{\om\in\Om} |F(0, y, s, \om)|<C.
\end{equation*}
In light of the continuity of $F$, this implies that 
\begin{equation*}
\esssup_{w\in\Om} \sup_{(y,s)\in \RR^{d+1}} |F(0, y, s, \om)|<C.
\end{equation*}
We also require that there exists a modulus of continuity $\rho(\cdot)$, and a constant $\sig>\frac{1}{2}$ such that for all $(M, y, s, \om)\in \mathbb{S}^{d}\times \RR^{d+1}\times \Om$, 
\begin{equation*}
|F(M,y_{1},s_{1},\om)-F(M, y_{2}, s_{2}, \om)|\leq \rho[(1+|M|)(|y_{1}-y_{2}|_{\RR^{d}}+|s_{1}-s_{2}|)^{\sig}]
\end{equation*}
where $\norm{\cdot}_{\RR^{d}}$ denotes the standard Euclidean norm on $\RR^{d}$. 

By applying \pref{f3}, we have that
\begin{equation*}
\esssup_{\om\in\Om} \sup_{(y,s)\in \RR^{d+1}}|F(M, y, s, \om)|\leq C+\La |M|\leq C(1+|M|).
\end{equation*}

\item \label{f5} \textit{Mixing Hypothesis.} $\Omega$ is strongly mixing with rate given by \pref{gmixing}. 
\end{list}

Conditions \pref{f3} and \pref{f4} imply that $F(\cdot, \cdot, \cdot, \om)$ admits a comparison principle (see \cite{users}). 

\medskip 

\noindent\textbf{Historical/Literature Review}
The first results regarding homogenization of fully nonlinear second-order equations were for the periodic homogenization of uniformly elliptic equations in the work of Evans \cite{evanshom}. The problem was later revisited by Caffarelli using a different approach in \cite{cafhom}. In the random setting, stochastic homogenization for elliptic equations originated in the study of linear, uniformly elliptic operators in divergence and nondivergence form in the works of Papanicolau and Varadhan  \cite{papvar2, papvar1} and Kozlov \cite{kozlov1}. For fully nonlinear uniformly elliptic equations in random media, the first work was done by Caffarelli, Souganidis, and Wang \cite{csw}, which was the main inspiration for this paper. The topic was revisited by Armstrong and Smart \cite{ashom} for equations with gradient dependence, and furthermore the method was extended by the same authors to a class of degenerate elliptic equations in the random setting \cite{asdeg}.  

Rates of convergence for homogenization of second-order equations is a far less well-developed topic, with a smaller body of literature. In the linear setting, under more restrictive hypotheses than ours here, Yurinskii \cite{yurest1, yurest2} obtained an algebraic rate of convergence for homogenization of uniformly elliptic equations in dimensions $d\geq 3$. For fully nonlinear elliptic equations in the periodic setting, which have an additional convexity assumption on $F$, algebraic rates of convergence have been proven using several different approaches. Camilli and Marchi \cite{camarchi} proved an algebraic rate of convergence for the periodic homogenization of fully nonlinear uniformly elliptic equations, using the regularity of the correctors and PDE methods. Ichihara \cite{ichihara} showed that for degenerate elliptic equations in the periodic setting, one may obtain algebraic rates of convergence using methods from stochastic control theory. The paper of Caffarelli and Souganidis \cite{cs} was the first paper for fully nonlinear equations which did not require a convexity assumption on $F$, and it addressed rates of convergence for homogenization in the periodic, almost-periodic, and stationary ergodic setting. Assuming a logarithmic mixing hypothesis, they obtain a logarithmic rate of convergence for homogenization in measure. As previously mentioned, a new approach to stochastic homogenization has been put forth by Armstrong and Smart \cite{asellip} which yields an algebraic rate of convergence in measure for stochastic homogenization of fully nonlinear uniformly elliptic equations, assuming that the environment is independent and identically distributed. 

\smallskip

\noindent\textbf{Outline of the Paper}
We begin by addressing some preliminary background and notation necessary for the proofs of Theorem \ref{stochhom} and Theorem \ref{thmfinalrate}. The proof of Theorem \ref{stochhom} follows in Section 3. The rest of the paper is devoted to proving Theorem \ref{thmfinalrate}. In Section \ref{decaymass}, we prove a rate on the decay of the product of the second moments of the measure of the contact sets of the obstacle problems from above and below. This yields a rate of decay on the approximate correctors $w_{\ve}$ in measure, which is explained in Section \ref{decaycorrect}. Finally, in Section \ref{errorcomp}, the argument is completed by employing the regularity estimates of \cite{olga1}. 

The Appendix to this paper contains content which is crucial for the proofs of Theorem \ref{stochhom} and Theorem \ref{thmfinalrate}. In Appendix A, a generalization of Akcoglu and Krengel's multiparameter subadditive ergodic theorem is proven for cubes with sides of arbitrary length. In Appendix B, we include some of the proofs of the technical lemmas in Section \ref{decaymass}, which take on a similar flavor to their elliptic analogues in \cite{cs}. Appendix C reviews the quantitative regularity estimate established in \cite{linreg1}, which is the most delicate step in the proof of Theorem \ref{thmrate1}. In Appendix D, we review the regularity estimates established in \cite{olga1}, which are necessary to complete the proof of Theorem \ref{thmfinalrate}. 
\section{Preliminaries}
\subsection{Notation}
We work in the Euclidean space $\RR^{d+1}$. We reserve $|\cdot|$ to denote the Lebesgue measure of sets or the norm of vectors, while $\norm{\cdot}$ will denote either the maximum eigenvalue of a nonnegative matrix or the appropriate norm for a Banach space. We define the parabolic distance between $(x_1, t_1)$ and $(x_2, t_2)$ in $\RR^{d+1}$ by
\[
d((x_{1}, t_{1}), (x_{2}, t_{2}))=
\left(|x_1-x_2|^{2}+|t_1-t_2|\right)^{1/2}.
\]
For $\al\in (0,1)$, $u\in C^{0, \al}(D_{T})$ if 
\begin{equation*}
\norm{u}_{C^{0, \al}(D_{T})}=\norm{u}_{L^{\infty}(D_{T})}+\sup_{(x_{i}, t_{i})\in D_{T}, \atop (x_{1}, t_{1})\neq (x_{2}, t_{2})}\frac{|u(x_{1}, t_{1})-u(x_{2}, t_{2})|}{[d((x_{1}, t_{1}), (x_{2}, t_{2}))]^{\al}}.
\end{equation*}
Moreover, $u\in C^k$ if for all $\al, \beta$ such that $|\al|+2\beta\leq k$,  $D^{\al}_{x}D^{\beta}_{t}u$ is continuous. 

Unless otherwise stated, $u^{\ve}$ and $u$ will always refer to the respective solutions of \pref{hom} and \pref{limit}. We denote $\mathbb{S}^{d}$ to be the space of $d\times d$ symmetric matrices with real coefficients. For $P, N\in \mathbb{S}^{d}$, we say that $P\geq N$ if tr$(P-N)\geq 0$. We reserve $\la, \La$ to be the ellipticity constants corresponding to \pref{f3}. 

We work in parabolic cylinders $Q_{r}=B_{r}(0)\times (-r^{2}, 0]$ or $Q_{r}(x,t)=B_{r}(x)\times (t-r^{2}, t]$ and parabolic cubes of the form $C_{r}=(-r, r)^{d}\times (-r^{2}, 0]$ or $C_{r}(x,t)=(-r+x, r+x)^{d}\times (t-r^{2}, t]$. We also work with cylinders which are forward in time, which we denote by $Q^{+}_{r}=B_{r}(0)\times (0, r^{2}]$ or $Q^{+}_{r}(x,t)=B_{r}(x)\times (t,t+ r^{2}]$. 

For every $M\in \mathbb{S}^{d}$, we define Pucci's extremal operators for ellipticity constants $\la, \La$ by
\begin{equation*}
\MM^{-}(M)=\la\left(\sum_{e_{i}>0} e_{i}\right)+\La\left(\sum_{e_{i}<0}e_{i}\right)\quad\text{and}\quad\MM^{+}(M)=\La\left(\sum_{e_{i}>0} e_{i}\right)+\la\left(\sum_{e_{i}<0}e_{i}\right),
\end{equation*}
where $e_{i}$ are the eigenvalues of $M$. It follows immediately from \pref{f3} that for all $N\geq 0$, 
\begin{equation*}
\MM^{-}(N)\leq F(M+N, y, s, \om)-F(M, y, s, \om)\leq \MM^{+}(N).
\end{equation*}
For a detailed analysis of Pucci's extremal operators, and a list of their properties, see \cite{cc}. For any domain $D_{T}\subset \RR^{d+1}$, we define $\overline{S}(g, D_{T})$ to be the set of viscosity supersolutions to
\begin{equation*}
u_{t}-\MM^{-}(D^{2}u)\geq g\quad\text{in}\quad D_{T},
\end{equation*}
and respectively, $\underline{S}(g, D_{T}))$ to be the set of viscosity subsolutions to
\begin{equation*}
u_{t}-\MM^{+}(D^{2}u)\leq g\quad\text{in}\quad D_{T}.
\end{equation*}
We define $S(g, D_{T})=\overline{S}(g, D_{T})\cap \underline{S}(g, D_{T})$ and $S^{*}(g, D_{T})=\overline{S}(-\norm{g}_{\infty}, D_{T})\cap \underline{S}(\norm{g}_{\infty}, D_{T})$. 

We work in the probability space $(\Om, \mathcal{F}, \PP)$. We denote $\EE[\cdot]$ to be the expectation of a random variable, and $\mathbb{V}[\cdot]=\EE[(\cdot)^{2}]-(\EE[\cdot])^{2}$ to denote the variance of a random variable. 

We will frequently fix the matrix $M\in \mathbb{S}^{d}$ in our analysis, and therefore when it is clear from context, we use the notation $F(\cdot, y,s,\om)=F_{M}(\cdot, y, s, \om)=F(\cdot+M, y,s, \om)$ in order not to clutter the notation. 

\subsection{Parabolic Regularity}
We will make use of the following regularity results for parabolic equations. These estimates are stated for viscosity solutions, and we refer to \cite{wangreg1} for proofs. \\
\noindent\textbf{Parabolic ABP-Estimate}~(Wang, \cite{wangreg1}).
\textit{
Let $u\in\overline{S}(g,Q_{1})$, and assume that $u\geq 0$ on $\partial_{p}(Q_1)$. There exists $C=C(\la, \La, d)>0$ such that 
\begin{equation*}
\sup_{Q_{1}} u^{-}\leq C\left(\int_{\left\{u=\Gamma(u)\right\}}(g^{-})^{d+1} dx dt\right)^{1/(d+1)}
\end{equation*}
where $\Ga(u)$ is the convex envelope of $-u^{-}$. }
\\
\noindent\textbf{Parabolic Krylov-Safonov Estimate} (Wang, \cite{wangreg1}).
\textit{There exists $C=C(\la, \La, d)>0$, $\sig=\sig(\la, \La, d)\in (0,1)$ such that if $u\in S^{*}(g, Q_{1})$, then $u\in C^{0, \sig}(Q_{1})$ and 
\begin{equation*}
\norm{u}_{C^{0, \sig}(Q_{1/2})}\leq C(\norm{u}_{L^{\infty}(Q_{1})}+\norm{g}_{L^{d+1}(Q_{1})}).
\end{equation*}
}
\subsection{Properties of the Obstacle Problem}
For each $\ell\in \RR$, the limiting behavior of $w^{\ve, \ell}$ is controlled by the solutions to the obstacle problem with obstacle 0. In this section, we drop the dependencies of $w=w^{\ve, \ell}$ since it plays no role in our analysis. To avoid confusion, we will also present everything by solving problems in general domains $K\subset \RR^{d+1}$. We recall that the solution to the obstacle problem from above $\overline{v}(y,s,\om)$ is defined by 
\begin{equation*}
\overline{v}=\inf \left\{v\in C(\overline{K}): v_{s}-F(D^{2}v, y, s, \om)\geq \ell\quad\text{in}\quad K\quad\text{and}\quad v\geq 0\quad\text{on}\quad \overline{K}\right\}
\end{equation*}
and respectively,
\begin{equation*}
\underline{v}=\sup \left\{v\in C(\overline{K}): v_{s}-F(D^{2}v, y, s, \om)\leq \ell\quad\text{in}\quad K\quad\text{and}\quad v\leq 0\quad\text{on}\quad\overline{K}\right\},
\end{equation*}
The key quantity to study is the measure of the contact sets of the obstacle problems. We define
\begin{equation*}
\overline{m}(K,\ell, \om)=\left| \left\{\overline{v}=0\right\}\right|\quad\text{and}\quad \underline{m}(K, \ell, \om)=\left|\left\{ \underline{v}=0\right\}\right|.
\end{equation*}

We also define
\begin{equation*}
\overline{m}_{\ve}(K,\ell, \om)=\left| \left\{\overline{v}_{\ve}=0\right\}\right|\quad\text{and}\quad \underline{m}_{\ve}(K, \ell, \om)=\left|\left\{ \underline{v}_{\ve}=0\right\}\right|
\end{equation*}
to denote when we are solving the obstacle problem in $K$ with arguments $\left(y/\ve, s/\ve^{2}\right)$. 

The next theorem summarizes the properties of the obstacle problem, which are relevant to this paper. The proofs have been omitted, because they follow as direct parabolic versions of the proofs found in \cite{csw}.
\begin{theorem}\label{obsthm}
Let $\overline{v}$(resp. $\underline{v})$ be defined as above. 
\begin{enumerate}[(i)]
\item \label{obst1} There exists $\sig=\sig(\la, \La, d)\in (0,1)$ such that $\overline{v}, \underline{v}\in C^{0, \al}(\overline{K})$ and $\overline{v}=\underline{v}=0$ in $\partial_{p}K$.
\item \label{obst2}
$\overline{v}$ solves
\begin{equation}\label{obsteqa}
\begin{cases}
\overline{v}_{s}-F(D^{2}\overline{v}, y, s, \om)=\ell+(\ell+F(0,y,s, \om))_{-}\chi_{\left\{\overline{v}=0\right\}}\quad\text{in}\quad K,\\
\overline{v}=0\quad\text{on}\quad \partial_{p}K,
\end{cases}
\end{equation}
and $\underline{v}$ solves
\begin{equation}\label{obsteqb}
\begin{cases}
\underline{v}_{s}-F(D^{2}\underline{v}, y, s, \om)=\ell-(\ell+F(0,y,s, \om))_{+}\chi_{\left\{\underline{v}=0\right\}}\quad\text{in}\quad K,\\
\underline{v}=0\quad\text{on}\quad \partial_{p}K.
\end{cases}
\end{equation}

\noindent  For $(y_{0},s_{0})\in \left\{\overline{v}\neq 0\right\}$ (resp., $\left\{\underline{v}\neq 0\right\}$), $\overline{v}_{s}(y_{0}, s_{0})-F(D^{2}\overline{v}(y_{0}, s_{0}), y_{0}, s_{0}, \om)=\ell$ (resp., $\underline{v}_{s}(y_{0}, s_{0})-F(D^{2}\underline{v}(y_{0}, s_{0}), y_{0}, s_{0}, \om)=\ell)$ . 

\item \label{obst3} If $K_{1}\subset K_{2}$, and $\overline{v}_{1}$ and $\overline{v}_{2}$ solve the obstacle problem from above in those respective domains, then $\overline{v}_{1}\leq \overline{v}_{2}$. Therefore, 
\begin{equation*}
\overline{m}(K_{2}, \ell, \om)\leq \overline{m}(K_{1}, \ell,\om).
\end{equation*}
Similarly,
\begin{equation*}
\underline{m}(K_{2}, \ell, \om)\leq \underline{m}(K_{1}, \ell, \om).
\end{equation*}
\item  \label{obst5} If $F_{1}$ and $F_{2}$ satisfy \pref{f3}, and $F_{1}\leq F_{2}$, then  $\overline{m}(K, \ell, F_{1})\leq \overline{m}(K, \ell, F_{2})$. Similarly, $\underline{m}(K, \ell, F_{1})\leq \underline{m}(K, \ell, F_{2})$.
\end{enumerate}
\end{theorem}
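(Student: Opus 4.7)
The strategy is to mirror the Perron/penalization/comparison arguments used for the elliptic obstacle problem in \cite{csw}, substituting the parabolic Krylov--Safonov estimate of Wang \cite{wangreg1} for Hölder regularity and the parabolic comparison principle for the monotonicity statements. The main delicate step, discussed at the end, is the identification of the right-hand side in \pref{obsteqa}--\pref{obsteqb} by passing to the limit in a penalized problem.

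\textbf{Construction and part (i).} I would first verify that the admissible family defining $\overline{v}$ is non-empty by exhibiting an explicit barrier such as $v(y,s) = A(s - s_*) + C$, where $s_*$ is the initial time of $K$, and $A, C$ are large enough that hypothesis \pref{f4} forces $v_s - F(D^2 v, y, s, \omega) \geq \ell$ and $v \geq 0$ on $\overline{K}$. Taking the pointwise infimum over the admissible family and passing to the lower semicontinuous envelope, a standard parabolic Perron-type argument produces a viscosity supersolution of $v_s - F(D^2 v, y, s, \omega) \geq \ell$ with $\overline{v} \geq 0$; minimality forces equality on $\{\overline{v} > 0\}$, while $\overline{v} = 0$ on $\partial_p K$ is inherited by sandwiching against a barrier vanishing there. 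Because $\overline{v}$ is a supersolution of $v_s - \mathcal{M}^-(D^2 v) \geq \ell - C$ and, in the non-coincidence set, a subsolution of $v_s - \mathcal{M}^+(D^2 v) \leq \ell + C$ (with $C$ coming from \pref{f4}), $\overline{v}$ lies in the Pucci class $S^*(C(1+|\ell|), K)$, and the parabolic Krylov--Safonov estimate yields $\overline{v} \in C^{0,\sigma}(\overline{K})$. The argument for $\underline{v}$ is symmetric.

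\textbf{Part (ii).} I would prefer the penalization route: approximate $\overline{v}$ by the solution $v^\kappa$ of
\begin{equation*}
v^\kappa_s - F(D^2 v^\kappa, y, s, \omega) = \ell + \kappa (v^\kappa)_-  \text{ in } K, \qquad v^\kappa = 0 \text{ on } \partial_p K,
\end{equation*}
and pass to the limit $\kappa \to \infty$. The parabolic ABP and Krylov--Safonov estimates yield compactness of the family $\{v^\kappa\}$, and stability of viscosity solutions identifies the limit with $\overline{v}$. The penalty term $\kappa (v^\kappa)_-$ is bounded in $L^\infty$ by $(\ell + F(0, y, s, \omega))_-$ (otherwise one violates the supersolution inequality for the minimal admissible function) and concentrates on the coincidence set in the limit, producing the characteristic function on the right-hand side of \pref{obsteqa}. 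The derivation for $\underline{v}$ is analogous; the pointwise assertion on $\{\overline{v} \neq 0\}$ then follows from the form of the PDE together with upper/lower semicontinuity of the set.

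\textbf{Parts (iii) and (iv), and main obstacle.} Both monotonicity statements reduce to direct comparison on the admissible families: if $K_1 \subset K_2$, then any admissible function for $\overline{v}_2$ restricts to an admissible function for $\overline{v}_1$, giving $\overline{v}_1 \leq \overline{v}_2$ on $K_1$; if $F_1 \leq F_2$, the inequality $v_s - F_1(D^2 v) \geq v_s - F_2(D^2 v) \geq \ell$ shows $F_2$-admissible functions are $F_1$-admissible, so $\overline{v}_{F_1} \leq \overline{v}_{F_2}$; the corresponding ordering of contact sets is immediate, and identical arguments handle $\underline{v}$. The hardest part is making the penalization limit in (ii) rigorous: because of the causal nature of the parabolic operator, the approximation argument is more delicate than in the elliptic case, and one must be careful that the penalized problems admit uniformly regular solutions so that the parabolic Krylov--Safonov bound is applicable with a constant uniform in $\kappa$, and that the weak limit of $\kappa (v^\kappa)_-$ is identified as $(\ell+F(0,y,s,\omega))_-\chi_{\{\overline{v}=0\}}$ rather than a general nonnegative measure supported on the coincidence set.
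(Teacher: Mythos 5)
The paper omits this proof, stating only that the claims follow as direct parabolic versions of those in \cite{csw}, so the natural benchmark is the Caffarelli--Souganidis--Wang argument. Your treatment of parts (i), (iii), and (iv) follows that line of reasoning: a linear-in-time barrier together with \pref{f4} gives nonemptiness of the admissible family, the parabolic Krylov--Safonov estimate gives H\"older regularity, and the monotonicity claims reduce to inclusions of admissible families. One supplement is needed for (i): you only check the subsolution inequality off the coincidence set, but membership in $S^*$ requires it everywhere. The fix is to observe that at a coincidence point $(y_0,s_0)$ any test function $\phi$ touching $\overline{v}$ from above has a local parabolic minimum there (since $\phi\geq\overline{v}\geq 0=\phi(y_0,s_0)$), so $\phi_s\leq 0$ and $D^2\phi\geq 0$, whence $\phi_s-\mathcal{M}^+(D^2\phi)\leq -F(0,y_0,s_0,\omega)\leq C$ by \pref{f4}.

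For part (ii) you take a genuinely different route from \cite{csw}, and the route you choose has a gap that you flag but do not close. The reference proves the PDE characterization directly: $\overline{v}$ is a supersolution with right-hand side $\ell$ because it is the lower envelope of supersolutions; it solves the equation with right-hand side $\ell$ on $\{\overline{v}>0\}$ by the usual Perron bump construction; and the indicator-function right-hand side on $\{\overline{v}=0\}$ is read off from the constraint, using that the supersolution test against the constant test function $\phi\equiv 0$ forces $\ell+F(0,y,s,\omega)\leq 0$ on the coincidence set, so that $\ell+(\ell+F(0,\cdot,\cdot,\omega))_-=-F(0,\cdot,\cdot,\omega)$ there, exactly as $\overline{v}\equiv 0$ requires. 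Your penalization scheme gives a locally uniformly convergent family $v^\kappa$ and a uniform $L^\infty$-bound on $\kappa(v^\kappa)_-$, hence a weak-* subsequential limit $g\in L^\infty$ supported on $\{\overline{v}=0\}$; but the viscosity stability theorem does not convert weak-* convergence of the right-hand sides into the statement that the limit solves the equation with source $g$, nor does it identify $g$ with $(\ell+F(0,\cdot,\cdot,\omega))_-$ pointwise. This is a real missing step, and since the Perron argument never needs to pass a discontinuous right-hand side through a limit, it is the route you should take; the penalization detour trades a two-line observation for a hard compensated-compactness problem.
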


Finally, we highlight an important observation which is unique to parabolic obstacle problems. We use this later to address some interesting challenges which arise in the proof of Theorem \ref{thmrate1}.

\begin{lemma}\label{monobst}
Let  $\overline{v}_{1}$ solve the obstacle problem from above in $K_{1}$, and $\overline{v}_{2}$ solve the obstacle problem from above in $K_{2}$, with $\partial_{p}K_{1}\subset \partial_{p} K_{2}$. Then $(\left\{\overline{v}_{2}=0\right\}\cap K_{1})=\left\{\overline{v}_{1}=0\right\}$. Respectively, for $\underline{v}_{1}$ and $\underline{v}_{2}$ defined analogously, $(\left\{\underline{v}_{2}=0\right\}\cap K_{1})=\left\{\underline{v}_{1}=0\right\}$.
\end{lemma}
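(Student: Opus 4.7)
The plan is to establish the stronger identity $\overline{v}_{1}=\overline{v}_{2}$ on $K_{1}$, from which the claimed equality of contact sets is immediate, and to obtain it by two opposing competitor arguments that together sandwich one function against the other.

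The easy inclusion $\{\overline{v}_{2}=0\}\cap K_{1}\subseteq \{\overline{v}_{1}=0\}$ is a direct consequence of Theorem \ref{obsthm}(\ref{obst3})-type reasoning: the restriction $\overline{v}_{2}\rst{\overline{K_{1}}}$ is continuous, nonnegative on $\overline{K_{1}}$, and satisfies $(\overline{v}_{2})_{s}-F(D^{2}\overline{v}_{2},y,s,\om)\geq \ell$ on $K_{1}\subseteq K_{2}$, so it is admissible in the infimum defining $\overline{v}_{1}$. Hence $\overline{v}_{1}\leq \overline{v}_{2}$ on $K_{1}$, and at any point of $K_{1}$ where $\overline{v}_{2}=0$ the sandwich $0\leq \overline{v}_{1}\leq \overline{v}_{2}=0$ forces $\overline{v}_{1}=0$.

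For the reverse inclusion, the plan is to use the parabolic nature of the hypothesis $\partial_{p}K_{1}\subseteq \partial_{p}K_{2}$, which forces $K_{2}\setminus \overline{K_{1}}$ to lie "to the future" of $K_{1}$: the portion of $\partial K_{1}$ that is not already in $\partial_{p}K_{1}$ must sit strictly inside $K_{2}$, and in the parabolic setting these points are temporally later than $K_{1}$. I will exploit this by pasting a competitor $\tilde{v}$ for $\overline{v}_{2}$: set $\tilde{v}=\overline{v}_{1}$ on $\overline{K_{1}}$, and on $\overline{K_{2}}\setminus K_{1}$ define $\tilde{v}$ as the solution of the obstacle problem from above for $F$ with right-hand side $\ell$ in $K_{2}\setminus \overline{K_{1}}$, with boundary data equal to $\overline{v}_{1}\rst{\partial K_{1}\setminus \partial_{p}K_{1}}$ on the interior interface and equal to $0$ on $\partial_{p}K_{2}\setminus \partial_{p}K_{1}$. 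The two pieces of boundary data match at their common edge (where both equal $0$, since $\overline{v}_{1}$ vanishes on $\partial_{p}K_{1}$) and the combined data is continuous and nonnegative, so this auxiliary obstacle problem is well-posed.

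The main technical step will be verifying that $\tilde{v}$ is an admissible candidate in the infimum defining $\overline{v}_{2}$: continuity, nonnegativity on $\overline{K_{2}}$, and vanishing on $\partial_{p}K_{2}$ are built in, while the supersolution inequality holds on the interior of each piece by construction. The delicate point is the interface $\partial K_{1}\setminus \partial_{p}K_{1}$, and here the parabolic viscosity definition is exactly what is needed: if a $C^{2}$ test function $\phi$ touches $\tilde{v}$ from below at some $(y_{0},s_{0})$ on the interface, then its restriction to $\{s\leq s_{0}\}$ touches $\overline{v}_{1}$ from below at $(y_{0},s_{0})$, which is at the "top" of $K_{1}$ where the parabolic supersolution inequality for $\overline{v}_{1}$ is still imposed, yielding $\phi_{s}(y_{0},s_{0})-F(D^{2}\phi(y_{0},s_{0}),y_{0},s_{0},\om)\geq \ell$. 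This is the step that genuinely uses parabolic causality rather than elliptic isotropy and has no elliptic counterpart. Once $\tilde{v}$ is admissible, the extremality of $\overline{v}_{2}$ gives $\overline{v}_{2}\leq \tilde{v}=\overline{v}_{1}$ on $K_{1}$, which combined with the opposite inequality gives $\overline{v}_{1}=\overline{v}_{2}$ on $K_{1}$, completing the proof. The statement for $\underline{v}_{1},\underline{v}_{2}$ follows by applying the same argument to $-\underline{v}$ and the operator $\widetilde{F}(N,y,s,\om)=-F(-N,y,s,\om)$, which converts the obstacle problem from below for $F$ into one from above for $\widetilde{F}$.
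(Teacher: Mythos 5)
Your proposal is correct, but it takes a genuinely different route from the paper. The paper argues by contradiction: assuming the inclusion is strict, there is a point $(\hat y,\hat s)\in K_1$ with $\overline{v}_1(\hat y,\hat s)=0<\overline{v}_2(\hat y,\hat s)$; since $\overline{v}_2-\overline{v}_1=0$ on $\partial_p K_1$, the maximum of $\overline{v}_2-\overline{v}_1$ over $\overline{K_1}$ is positive and is attained at a point where $\overline{v}_2>0$; restricting to the open set $\{\overline{v}_2>0\}\cap K_1$, where $\overline{v}_2$ is an exact solution of the PDE (by Theorem~\ref{obsthm}\pref{obst2}) while $\overline{v}_1$ is only a supersolution and $\overline{v}_2-\overline{v}_1\leq 0$ on its parabolic boundary, the comparison principle forces $\overline{v}_2\leq\overline{v}_1$ there, a contradiction. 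Your argument instead is constructive: you paste $\overline{v}_1$ on $\overline{K_1}$ with an auxiliary obstacle solution on the ``future'' region $K_2\setminus\overline{K_1}$ (matching data on the interface, which is where the parabolic viscosity definition at the top of $K_1$ becomes essential) to build an admissible competitor for the infimum defining $\overline{v}_2$, which yields $\overline{v}_2\leq\overline{v}_1$ on $K_1$ directly. Both are valid. Your route proves the slightly stronger conclusion $\overline{v}_1=\overline{v}_2$ on $K_1$ without a contradiction argument, at the cost of having to verify well-posedness and boundary attainment for the auxiliary obstacle problem with nonconstant initial data, and having to check the viscosity supersolution inequality across the pasting interface. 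The paper's route is more economical in that it only invokes comparison on a subregion and reuses Theorem~\ref{obsthm}\pref{obst2}, though it leaves slightly more to the reader in identifying exactly which subdomain the comparison is applied on and why the maximum lies in it.
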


\begin{proof}
We only show the proof for $\overline{v}$. By monotonicity of the obstacle problem, it is immediate that $\left\{\overline{v}_{2}=0\right\}\cap K_{1}\subset \left\{\overline{v}_{1}=0\right\}$. To show equality, we suppose for the purpose of contradiction that $\left\{\overline{v}_{2}=0\right\}\cap K_{1}\subsetneq \left\{\overline{v}_{1}=0\right\}$. This means that there exists a point $(\hat{y}, \hat{s})$, where $\overline{v}_{1}(\hat{y}, \hat{s})$ vanishes, but $\overline{v}_{2}(\hat{y}, \hat{s})$ remains strictly positive. By the continuity of $\overline{v}_{1}$ and $\overline{v}_{2}$, there exists an open set $B$ where $\overline{v}_{2}>\overline{v}_{1}$, and hence the maximum of $\overline{v}_{2}-\overline{v}_{1}$ occurs in $B$. Moreover, in $B$, $\overline{v}_{2}$ is a solution to the obstacle problem, while $\overline{v}_{1}$ is only a supersolution. As in the proof of the comparison principle on $K_{1}$, this contradicts the solution properties of $\overline{v}_{2}$ and $\overline{v}_{1}$, which implies $\left\{\overline{v}_{2}=0\right\}\cap K_{1}=\left\{\overline{v}_{1}=0\right\}$. The same argument shows the statement for $\underline{v}_{1}$ and $\underline{v}_{2}$. 
\end{proof}

\subsection{The Mixing Hypothesis}
We assume a quantified mixing hypothesis on the probability space $(\Omega, \mathcal{F}, \PP)$. We assign a rate to the``decorrelation function," which describes how quickly events become independent. We denote $\mathcal{B}$ and $\mathcal{B}(r)$ to denote the smallest $\sigma$-algebras generated by the measurable subsets $\left\{F(\cdot, y, s, \cdot): (y,s)\in C_{1}\right\}$ and $\left\{F(\cdot, y, s, \cdot): d((y,s), C_{1})\geq r\right\}$ of $\Om$. We impose a logarithmic mixing rate, in order to obtain a logarithmic rate of convergence for homogenization:

There exists $c>0$ such that for all $\delta>0$, 
\begin{equation}\label{gmixing}
\sup_{A\in \mathcal{B}, B\in \mathcal{B}(r)} |\PP[A\cap B]-\PP[A]\PP[B]|\leq \delta\quad\text{if}\quad r>\delta^{c\ln \delta}.
\end{equation}

We note that the decay rate in the mixing hypothesis yields a similar rate of decorrelation for random variables on $\mathcal{B}$ and $\mathcal{B}(r)$. Indeed, the following proposition follows from \pref{gmixing}, 
\begin{proposition}[Caffarelli, Souganidis \cite{cs}]
Assume \pref{gmixing}. Let $f\in L^{\infty}_{+}(\mathcal{B})$, the space of bounded, nonnegative random variables on $\mathcal{B}$, and $g\in L^{\infty}_{+}(\mathcal{B}(r))$, the space of bounded, nonnegative random variables on $\mathcal{B}(r)$. Then 
\begin{equation}
\sup_{f\in L^{\infty}_{+}(\mathcal{B})\atop g\in L^{\infty}_{+}(\mathcal{B}(r))}\norm{\EE[fg]-\EE[f]\EE[g]}\leq 3^{-k^{3/2}}\norm{f}_{\infty}\norm{g}_{\infty}\quad\text{if}\quad r> 3^{k^{3}}.
\end{equation}
\end{proposition}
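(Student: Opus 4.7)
The plan is to reduce this random-variable-level decorrelation estimate to the event-level mixing hypothesis \pref{gmixing} via the layer-cake representation of nonnegative functions. Setting $M_{f} := \norm{f}_{\infty}$ and $M_{g} := \norm{g}_{\infty}$, I would write
\begin{equation*}
f(\om) = \int_{0}^{M_{f}}\mathbf{1}_{\{f > t\}}(\om)\, dt, \qquad g(\om) = \int_{0}^{M_{g}}\mathbf{1}_{\{g > s\}}(\om)\, ds,
\end{equation*}
and note that the level sets satisfy $\{f > t\} \in \mathcal{B}$ and $\{g > s\} \in \mathcal{B}(r)$ by measurability of $f,g$ with respect to the respective $\sig$-algebras. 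Applying Tonelli to both $\EE[fg]$ and to the product $\EE[f]\EE[g]$ and subtracting yields the key identity
\begin{equation*}
\EE[fg] - \EE[f]\EE[g] = \int_{0}^{M_{f}}\int_{0}^{M_{g}} \bigl( \PP[\{f > t\} \cap \{g > s\}] - \PP[\{f > t\}]\,\PP[\{g > s\}] \bigr)\, ds\, dt,
\end{equation*}
which is the essential step reducing the problem to an event-level estimate.

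I would then apply \pref{gmixing} uniformly to each integrand. With the choice $\delta = 3^{-k^{3/2}}$, one computes $\ln\delta = -k^{3/2}\ln 3$, hence $\delta^{c\ln\delta} = \exp(c(\ln 3)^{2} k^{3})$. The latter equals $3^{k^{3}}$ under the natural calibration $c = 1/\ln 3$ for the free constant in \pref{gmixing} (and otherwise is comparable to $3^{k^{3}}$ up to absorbing a multiplicative constant into the exponent of $k$). Thus whenever $r > 3^{k^{3}}$, the mixing hypothesis delivers the uniform bound $|\PP[A \cap B] - \PP[A]\PP[B]| \leq \delta$ for all $A \in \mathcal{B}$, $B \in \mathcal{B}(r)$. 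Plugging this into the displayed identity and passing the absolute value inside the integral gives
\begin{equation*}
|\EE[fg] - \EE[f]\EE[g]| \leq \delta \cdot M_{f} M_{g} = 3^{-k^{3/2}}\norm{f}_{\infty}\norm{g}_{\infty},
\end{equation*}
which is the claimed inequality.

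The only delicate point in the argument is the constant matching in the second step, i.e., pairing the exponent $k^{3/2}$ in $\delta$ with the threshold exponent $k^{3}$ in $r$ through the free parameter $c$ appearing in \pref{gmixing}. Apart from this purely bureaucratic book-keeping, the proof is a standard transfer-of-decorrelation argument from events to bounded nonnegative random variables, and I do not anticipate any genuine obstacle. I note in passing that the hypothesis $f,g \geq 0$ is used only for the convenience of the layer-cake formula; the same bound extends, at the cost of a factor of $4$, to signed bounded random variables by splitting into positive and negative parts, though this is not needed here.
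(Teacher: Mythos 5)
Your proof is correct. The paper cites this proposition from \cite{cs} without supplying a proof, but the layer-cake (Cavalieri) representation you use to transfer the event-level mixing bound \pref{gmixing} to bounded nonnegative random variables is the standard argument, and your calibration of $\delta = 3^{-k^{3/2}}$ against the threshold $r > 3^{k^3}$ matches exactly the normalization the paper itself adopts in its restatement \pref{1mixing} of the mixing hypothesis.
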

Moreover, the decorrelation of random variables leads to a decay in variances. 
\begin{lemma}[Caffarelli, Souganidis,\cite{cs}] 
Let $\pi_{1}, \pi_{2}\ldots \pi_{M}$ be a family of random variables such that for $i,j=1, \ldots M$ and some $\sigma_{ij}>0$, $\EE[\pi_{i}\pi_{j}]-\EE[\pi_{i}]\EE[\pi_{j}]\leq \sigma_{ij}$. Then
\begin{align*}
&\mathbb{V}\left[\frac{1}{M}\sum_{i=1}^{M}\pi_{i}\right]\leq \frac{1}{M^{2}}\sum_{i=1}^{M}\mathbb{V}[\pi_{i}]+\frac{1}{M^{2}}\sum_{i,j=1}^{M}\sigma_{ij},\\
&\EE\left[\left(\frac{1}{M}\sum_{i=1}^{M}\pi_{i}\right)^{2}\right]\leq \left(\frac{1}{M}\sum_{i=1}^{M}\EE[\pi_{i}]\right)^{2}+\frac{1}{M^{2}}\sum_{i,j=1}^{M}\sigma_{ij}+\frac{1}{M^{2}}\sum_{i=1}^{M}\mathbb{V}[\pi_{i}],
\end{align*}
and if, for all $i=1, \ldots, M$, $\EE[\pi_{i}]=E$, $\mathbb{V}[\pi_{i}]=V$, then 
\begin{equation}\label{probfact}
\mathbb{V}\left[\frac{1}{M}\sum_{i=1}^{M}\pi_{i}\right]\leq \frac{1}{M}V+\frac{1}{M^{2}}\sum_{i,j=1}^{M}\sigma_{ij}.
\end{equation}
\end{lemma}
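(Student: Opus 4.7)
The plan is a direct bilinear expansion; no analytic input is required beyond linearity of expectation and the stated covariance hypothesis $\EE[\pi_i\pi_j]-\EE[\pi_i]\EE[\pi_j]\leq\sigma_{ij}$. Writing $\bar\pi = \frac{1}{M}\sum_{i=1}^{M}\pi_i$, I would begin from the identity
\[
\mathbb{V}[\bar\pi] \;=\; \EE[\bar\pi^{2}] - (\EE[\bar\pi])^{2} \;=\; \frac{1}{M^{2}}\sum_{i,j=1}^{M}\bigl(\EE[\pi_i\pi_j]-\EE[\pi_i]\EE[\pi_j]\bigr),
\]
and split the double sum into its diagonal and off-diagonal pieces. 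The diagonal part, where $i=j$, contributes exactly $\frac{1}{M^{2}}\sum_{i=1}^{M}\mathbb{V}[\pi_i]$. On the off-diagonal, the hypothesis bounds each covariance by $\sigma_{ij}$, and since the $\sigma_{ij}$ are nonnegative one can freely enlarge the sum to include the diagonal terms, yielding the bound $\frac{1}{M^{2}}\sum_{i,j=1}^{M}\sigma_{ij}$. Combining these produces the first inequality.

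For the second inequality, I would use the identity $\EE[\bar\pi^{2}] = \mathbb{V}[\bar\pi] + (\EE[\bar\pi])^{2}$ together with $\EE[\bar\pi] = \frac{1}{M}\sum_{i=1}^{M}\EE[\pi_i]$ by linearity of expectation, and then substitute the bound on $\mathbb{V}[\bar\pi]$ obtained in the previous step. For the identically distributed specialization, observe that $\sum_{i=1}^{M}\mathbb{V}[\pi_i] = MV$, so that $\frac{1}{M^{2}}\sum_{i=1}^{M}\mathbb{V}[\pi_i] = V/M$, which is exactly the stated form.

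I do not anticipate any real obstacle: the argument is elementary and combinatorial, and no regularity theory or dynamical input is being invoked. The only point requiring a moment of care is that the hypothesis is one-sided, namely $\EE[\pi_i\pi_j]-\EE[\pi_i]\EE[\pi_j]\leq\sigma_{ij}$, rather than a two-sided bound on the absolute value of the covariance; this turns out to be precisely enough, since the conclusions only assert upper bounds on $\mathbb{V}[\bar\pi]$ and on $\EE[\bar\pi^{2}]$, and the positivity of the $\sigma_{ij}$ is what allows the harmless enlargement of the off-diagonal sum to a full sum over all pairs $(i,j)$.
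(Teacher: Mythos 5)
Your proof is correct. The paper does not include its own proof of this lemma (it is cited from Caffarelli--Souganidis), but your argument is the standard one: expand $\mathbb{V}[\bar\pi]$ bilinearly, identify the diagonal with $\frac{1}{M^2}\sum_i \mathbb{V}[\pi_i]$, bound the off-diagonal covariances by $\sigma_{ij}$, and use positivity of the $\sigma_{ij}$ to enlarge the off-diagonal sum to the full double sum. The remaining two statements follow by the identity $\EE[\bar\pi^2]=\mathbb{V}[\bar\pi]+(\EE[\bar\pi])^2$ and by substitution, exactly as you say.
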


\section{The Homogenization of \pref{hom}}
\subsection{The Construction of $\overline{F}$}\label{construct}
We use the subadditive ergodic theorem (Theorem \ref{genset}) to identify $\overline{F}$ through the ergodic properties of the measure of the contact sets of the obstacle problem. Recall that for every $M\in \mathbb{S}^{d}$, our choice of $\overline{F}(M)$ relies upon the limiting behavior of $w^{-\overline{F}(M)}_{\ve}$. We will first understand the limiting of behavior of $w^{\ell}_{\ve}$ for each choice of $\ell\in\RR$, in order to find the correct choice $\ell=-\overline{F}(M)$. Since $M$ is fixed throughout this section, we adopt the notation that $F(\cdot, y, s, \om)=F_{M}(\cdot, y, s, \om)$ for simplicity. 

We claim that $\overline{m}(\cdot, \ell, \cdot)$ (resp. $\underline{m}(\cdot, \ell, \om)) : \mathcal{V}\times \Om\rightarrow \RR$, where $\mathcal{V}$ represents the set of subdomains of $\RR^{d+1},$ satisfies \pref{sethypot1}-\pref{bnd} for Theorem \ref{genset}.

\begin{proposition}
$\overline{m}(K, \ell,\om)$ (resp. $\underline{m}(K, \ell, \om))$ satisfies \pref{sethypot1}-\pref{bnd} for Theorem \ref{genset}.
\end{proposition}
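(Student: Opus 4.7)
The plan is to verify in sequence the four standard hypotheses of the subadditive ergodic theorem for $\overline{m}(\cdot, \ell, \cdot)$; the argument for $\underline{m}$ is entirely analogous with signs reversed. These amount to measurability of $\om \mapsto \overline{m}(K, \ell, \om)$, stationarity under the translation group $\{\tau_{(y', s')}\}$, subadditivity under disjoint decompositions of the domain, and a uniform bound of the form $\overline{m}(K, \ell, \om) \leq C|K|$. Each item follows from the structural properties in Theorem \ref{obsthm} together with hypothesis \pref{f1}.

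\emph{Stationarity.} For fixed $(y', s') \in \RR^{d+1}$, I would check that $(y, s) \mapsto \overline{v}(y + y', s + s', \om)$, defined on $K - (y', s')$, solves the obstacle problem from above with operator $F(\cdot, \cdot, \cdot, \tau_{(y', s')}\om)$ on $K - (y', s')$. This is immediate after differentiation from the rewriting $F(M, y + y', s + s', \om) = F(M, y, s, \tau_{(y', s')}\om)$ afforded by \pref{f1}, together with $\overline{v} \geq 0$. Uniqueness of the minimal nonnegative supersolution of the obstacle problem identifies this function with $\overline{v}(\cdot, \cdot, \tau_{(y', s')}\om)$ on $K - (y', s')$. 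The two contact sets thus differ by a translation, and taking Lebesgue measure yields
\[
\overline{m}(K, \ell, \tau_{(y', s')}\om) \;=\; \overline{m}(K + (y', s'), \ell, \om),
\]
which is the required stationarity.

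\emph{Subadditivity and boundedness.} Given a disjoint decomposition $K = K_1 \cup \cdots \cup K_N$ into parabolic sub-boxes, the containment $K_i \subset K$ combined with property \pref{obst3} of Theorem \ref{obsthm} gives $\overline{v}_{K_i} \leq \overline{v}_K$ on $K_i$. Since $\overline{v}_{K_i} \geq 0$, this forces $\{\overline{v}_K = 0\} \cap K_i \subset \{\overline{v}_{K_i} = 0\}$, and summing Lebesgue measures gives the subadditive inequality
\[
\overline{m}(K, \ell, \om) \;\leq\; \sum_{i=1}^N \overline{m}(K_i, \ell, \om).
\]
The pointwise bound $0 \leq \overline{m}(K, \ell, \om) \leq |K|$ is automatic since the contact set sits inside $K$, which provides the uniform integrability required. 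Measurability of $\om \mapsto \overline{m}(K, \ell, \om)$ follows from the stability of viscosity solutions: \pref{f4} ensures that $\om \mapsto F(\cdot, \cdot, \cdot, \om)$ is measurable into a suitable space of operators, hence $\om \mapsto \overline{v}(\cdot, \cdot, \om)$ is measurable into $C(\overline{K})$, and finally $\om \mapsto |\{\overline{v} = 0\}|$ is measurable by Fubini.

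The main obstacle is not any of these verifications individually — each is essentially bookkeeping given Theorem \ref{obsthm} and \pref{f1} — but rather the compatibility of the parabolic domain geometry with the multiparameter subadditive ergodic theorem being invoked. Because parabolic boxes scale quadratically in time versus linearly in space, the Akcoglu--Krengel theorem \cite{akerg} does not apply verbatim; this is precisely the difficulty addressed by the extension to nested families of cubes with arbitrary side lengths developed in Appendix A. With that extension in hand, I would choose the decomposition of $K$ above by parabolic boxes conforming to its framework, at which point the verification is complete.
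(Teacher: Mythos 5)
Your proposal is correct and follows essentially the same line of reasoning as the paper: boundedness and subadditivity are obtained exactly as in the paper's proof from the contact-set inclusion induced by domain monotonicity (Theorem \ref{obsthm}\pref{obst3}), and stationarity is obtained by identifying the translated solution with the solution of the obstacle problem for the shifted environment. The only cosmetic difference is that you invoke uniqueness of the minimal admissible supersolution to conclude the identification in one stroke, whereas the paper establishes the two contact-set inclusions separately via minimality in each direction; these are the same argument.
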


\begin{proof}
We only show the proof for $\overline{m}(K, \ell, \om)$ since the other case follows similarly. 
\begin{enumerate}
\item We note that \pref{bnd} is immediate since $\overline{m}(K, \ell,\om)\leq |K|$. 

\item We next verify the subadditivity, Let $K=\bigcup_{j} K_{j}$, where $K_{j}$ are mutually disjoint subsets of $\RR^{d+1}$, and $|\partial_{p}K_{i}\cap\partial_{p}K_{j}|=0$ if $i\neq j$. Let $\overline{v}$ denote the solution to the obstacle problem from above on $K$. The monotonicity of the obstacle problem and the subadditivity yield
\begin{equation*}
\overline{m}(K, \ell, \om)=\sum_{j}\left|\left\{\overline{v}=0\right\}\cap K_{j}\right|\leq \sum_{j} \overline{m}(K_{j}, \ell, \om).
\end{equation*}

\item We show that $\overline{m}(K, \ell, \om)$ satisfies the stationarity assumption \pref{sethypot1} . For $(z,r)\in \RR^{d+1}$, we define
\begin{equation*}
T_{(z,r)}\overline{m}(K, \ell, \om)=\overline{m}((z,r)+K,\ell, \om) \quad\text{and}\quad T_{(z,r)}\underline{m}(K, \ell, \ \om)=\underline{m}((z,r)+K, \ell, \om).
\end{equation*}
We claim that for all $(z,r)\in\RR^{d+1}$, $T_{(z,r)}\overline{m}(K, \ell, \om)=\overline{m}(K, \ell, \tau_{(z,r)}\om)$, and respectively, $T_{(z,r)}\underline{m}(K, \ell, \om)=\underline{m}(K, \ell, \tau_{(z,r)}\om)$.

Fix $(z, r)\in \RR^{d+1}$. We only show the proof for $\overline{m}(K, \ell, \om)$, as the case for $\underline{m}(K, \ell, \om)$ follows similarly. For $(y,s)\in (z,r)+K$, let
\begin{equation*}
\tilde{v}(y, s, \om)=\overline{v}_{(z,r)+K}(y,s, \om). 
\end{equation*}
If we represent $(y,s)=(z,r)+(y',s')$ for $(y', s')\in K$, then $\tilde{v}$ solves
\begin{equation*}
\tilde{v}_s-F(D^{2}\tilde{v},z+y', r+s', \om)=\tilde{v}_{s}-F(D^{2}\tilde{v}, y', s', \tau_{(z,r)}\om) \geq \ell \quad\text{in}\quad K. 
\end{equation*}
Moreover, since $\tilde{v}\geq 0$ on $(z,r)+K$, it follows that $f(y',s',\om)=\tilde{v}(z+y', r+s', \om)\geq 0$ on $K$. 
Combining these two observations, we observe that $f(y', s', \om)$ is an admissible supersolution to the obstacle problem from above in $K$, and by applying the minimality of $\overline{v}$, we have
\begin{equation*}
\left\{f(y',s')=0\right\}\subset\left\{\overline{v}_{K}(y',s', \tau_{(z,r)}\om)=0\right\}.
\end{equation*}
This implies that 
\begin{equation*}
\overline{m}(K,\ell, \tau_{(z,r)}\om)\geq \left|\left\{f(y',s')=0\right\}\right|=\overline{m}((z,r)+K,\ell, \om)=T_{(z,r)}\overline{m}(K, \ell, \om).
\end{equation*}
To obtain the reverse inequality, we can make an analogous argument to show that perturbing $\overline{v}(y,s, \tau_{(z,r)}\om)$ will solve the obstacle problem from above in $K+(z,r)$, and by applying minimality of the obstacle problem, we obtain the desired result. 
\end{enumerate}
\end{proof}

Since $\overline{m}(K, \ell, \om)$ and $\underline{m}(K, \ell, \om)$ satisfy \pref{sethypot1}-\pref{bnd}, Theorem \ref{genset} yields that for each $\ell\in \RR$, there exists an event $\Om^{\ell}\subset \Om$, of full probability, such that for all $\om\in \Om^{\ell}$, 
\begin{equation}\label{pdef}
\overline{p}(\ell)=\lim_{\ve\rightarrow0}\frac{\overline{m}(Q_{1/\ve}, \ell, \om)}{|Q_{1/\ve}|}=\inf_{r>0} \frac{\overline{m}(Q_{r}, \ell, \om)}{|Q_{r}|} \quad\text{and}\quad \underline{p}(\ell)=\lim_{\ve\rightarrow0}\frac{\underline{m}(Q_{1/\ve}, \ell, \om)}{|Q_{1/\ve}|}=\inf_{r>0} \frac{\underline{m}(Q_{r}, \ell, \om)}{|Q_{r}|},
\end{equation}
where $\overline{p}(\ell)$, $\underline{p}(\ell)$ are constant by the ergodicity assumption \pref{f2}. We note that the values of $\overline{p}(\ell)$ and $\underline{p}(\ell)$ do not depend on whether we use cylinders or cubes. More importantly, if we change scales and work in $Q_{1}$, we may represent
\begin{equation}
\overline{p}(\ell)=\lim_{\ve\rightarrow 0}\frac{\overline{m}_{\ve}(Q_{1}, \ell, \om)}{|Q_{1}|}\quad\text{and}\quad \underline{p}(\ell)=\lim_{\ve\rightarrow0}\frac{\underline{m}_{\ve}(Q_{1}, \ell, \om)}{|Q_{1}|}.
\end{equation}
We show that the limiting behavior of $w^{\ell}_{\ve}$ is controlled by whether $\overline{p}(\ell)>0$ or $\overline{p}(\ell)=0$. We claim that if $\overline{p}(\ell)=\al>0$, then the contact set must spread all over, and hence $\overline{v}_{\ve}$ must converge to the obstacle uniformly. 

\begin{lemma}\label{pos}
If $\overline{p}(\ell)=\al>0$ (respectively $\underline{p}(\ell)>0$), then for all $\om\in \Om_{\ell}$, $\displaystyle \lim_{\ve\rightarrow 0} \overline{v}_{\ve}=0$ (resp.,  $\displaystyle \lim_{\ve\rightarrow 0} \underline{v}_{\ve}=0$) uniformly on $\overline{Q}_{1}$. 
\end{lemma}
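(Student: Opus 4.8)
The plan is to argue by contradiction using the scaled solution $\overline{v}_\ve$ of the obstacle problem from above in $Q_1$ with arguments $(y/\ve, s/\ve^2)$, together with the uniform H\"older regularity supplied by the Krylov--Safonov estimate. We focus on the ``above'' statement; the ``below'' case is identical after reversing signs. Recall that $\overline{v}_\ve \geq 0$ on all of $\overline{Q}_1$, that $\overline{v}_\ve$ vanishes on $\partial_p Q_1$, and that by Theorem \ref{obsthm}\pref{obst1}--\pref{obst2} the functions $\overline{v}_\ve$ are uniformly bounded and equi-H\"older continuous on $\overline{Q}_1$ with exponent $\sig = \sig(\la, \La, d)$ independent of $\ve$: indeed $\overline{v}_\ve \in S^*(\,\cdot\,, Q_1)$ with right-hand side controlled by $\ell$ and $\sup |F(0,\cdot,\cdot,\om)|$, which is bounded by \pref{f4}. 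Hence by Arzel\`a--Ascoli, along any subsequence $\ve_k \to 0$ we may extract a further subsequence so that $\overline{v}_{\ve_k} \to v_*$ uniformly on $\overline{Q}_1$ for some $v_* \geq 0$ with $v_* = 0$ on $\partial_p Q_1$.

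The heart of the argument is to show $v_* \equiv 0$ on $\overline{Q}_1$, which, since the limit is the same ($0$) for every subsequence, forces the full convergence $\overline{v}_\ve \to 0$ uniformly. Suppose not, so there is a point where $v_*$ is strictly positive, hence (by continuity) an open parabolic subcylinder $Q = Q_\rho(y_0, s_0) \subset\subset Q_1$ and a constant $\eta > 0$ with $v_* \geq \eta$ on $\overline{Q}$. Then for $\ve_k$ small we have $\overline{v}_{\ve_k} \geq \eta/2 > 0$ throughout $Q$, so $Q$ is disjoint from the contact set $\{\overline{v}_{\ve_k} = 0\}$. This is where the hypothesis $\overline{p}(\ell) = \al > 0$ must be brought to bear: by \pref{pdef} (in its $Q_1$, scaled form), $|\{\overline{v}_{\ve_k} = 0\}| / |Q_1| \to \al > 0$, so the contact sets have a uniformly positive fraction of the mass of $Q_1$ but, by what we just showed, stay out of the fixed cylinder $Q$. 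This alone is not yet a contradiction, so the real work is to use the equi-H\"older continuity together with the structure of the contact set to propagate positivity: since $\overline{v}_{\ve_k}$ has a uniform H\"older modulus, the set $\{\overline{v}_{\ve_k} \geq \eta/4\}$ contains a fixed-size neighborhood of $Q$, and one iterates this ``spreading'' using the equation and the fact that away from the contact set $\overline{v}_{\ve_k}$ solves $\overline{v}_{\ve_k, s} - F_M(D^2 \overline{v}_{\ve_k}, y/\ve_k, s/\ve_k^2, \om) = \ell$. The cleanest route I expect is the one used in \cite{csw}: bound $\overline{v}_{\ve_k}$ below by a barrier built over the set where it is already known to be large, and show that if the contact set must eventually fill a positive proportion of $Q_1$, no configuration of a fixed ``positivity island'' $Q$ is consistent with this once $\ve_k$ is small — propagating the island by the interior estimate would force $\overline{v}_{\ve_k}$ to be bounded away from $0$ on a set of measure exceeding $|Q_1| - \al |Q_1| / 2$, contradicting $|\{\overline{v}_{\ve_k} = 0\}| \to \al|Q_1|$.

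The step I expect to be the main obstacle is precisely this last propagation/covering argument: translating ``the contact set has asymptotic density $\al > 0$'' into ``$\overline{v}_\ve$ cannot stay bounded away from zero on any fixed cylinder,'' which in the parabolic setting requires care because the natural monotonicity (Theorem \ref{obsthm}\pref{obst3}) and the comparison-by-translation argument respect the causal (backward-in-time) geometry of parabolic cylinders. I would handle this by covering $\overline{Q}_1$ (minus a parabolic boundary layer) by finitely many translates of a small cylinder $Q_\rho(z_i, r_i)$ whose spatial-temporal arrangement is compatible with causality, noting that on each such translate the obstacle problem in $Q_1$ restricts (via Lemma \ref{monobst}, using $\partial_p Q_\rho(z_i,r_i) \subset$ the appropriate parabolic boundary data) to the obstacle problem on the smaller cylinder, so the contact-set mass is additive over the cover. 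If a fixed positivity island persisted, the contact set would be forced to concentrate its density $\al$ into the remaining translates, but the equi-H\"older bound caps how much density can accumulate near $Q$'s boundary; choosing $\rho$ small enough relative to $\eta$ and the H\"older modulus makes this impossible for $\ve_k$ small. Once $v_* \equiv 0$ is established for every subsequential limit, the uniform convergence $\lim_{\ve \to 0} \overline{v}_\ve = 0$ on $\overline{Q}_1$ follows by a standard subsequence argument, and the same reasoning with reversed inequalities gives $\lim_{\ve \to 0} \underline{v}_\ve = 0$ when $\underline{p}(\ell) > 0$.
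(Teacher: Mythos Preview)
Your setup (uniform H\"older regularity, compactness, contradiction) is sound, but the step you yourself flag as ``the main obstacle'' is in fact not filled, and your proposed way to fill it does not work. You invoke Lemma~\ref{monobst} to claim the contact-set mass is ``additive over the cover,'' but that lemma requires $\partial_p K_1 \subset \partial_p K_2$, which fails for a generic subcube $C^i \subset C_1$ (only the bottom time-slab of $C_1$ has this property). So the covering step, as written, has no force, and the vague propagation-of-positivity sketch does not produce a contradiction either: nothing you have said rules out the contact set concentrating entirely in $C_1 \setminus Q$ with density $\al|C_1|/|C_1 \setminus Q|$ there.

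The missing idea is simpler than what you attempt. The subadditive ergodic theorem (Theorem~\ref{genset}) applies not only to $C_1$ but to \emph{each} fixed subcube $C^i$ of a finite partition of $C_1$: by stationarity, $\overline m_\ve(C^i,\ell,\om)/|C^i| \to \overline p(\ell) = \al$ for every $i$ on the same full-measure set $\Om_\ell$. Monotonicity (Theorem~\ref{obsthm}\pref{obst3}) then gives $|\{\overline v_\ve = 0\} \cap C^i| \leq \overline m_\ve(C^i,\ell,\om) \to \al|C^i|$ for each $i$, while summing over $i$ and using the global limit forces equality in each subcube. Hence every $C^i$ of diameter $\delta$ contains a contact point for $\ve$ small, and the uniform H\"older bound gives $|\overline v_\ve| \leq C_h\delta^\sig$ on $C_1$; sending $\ve\to 0$ then $\delta\to 0$ finishes. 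This is the paper's argument---direct, with no need for compactness or contradiction.
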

\begin{proof}
For convenience, we use the unit cube $C_{1}$ instead of the unit cylinder.  We decompose  $C_{1}$ into a disjoint union of parabolic cubes $C^{i}$ of diameter at most $\delta$, and show that $\overline{v}_{\ve}$ must vanish in each $C^{i}$. Indeed, by Theorem \ref{obsthm}\pref{obst3} and Theorem \ref{genset}, for $\ve$ sufficiently small and for all $i$, $|\left\{\overline{v}_{\ve}=0\right\}\cap C^{i}|\leq |\overline{m}_{\ve}(C^{i}, \ell, \om)|=\al |C^{i}|$, for all $\om\in \Om_{\ell}$. This implies that the ratio of contact set of $\overline{v}_{\ve}$ in each $C^{i}$ can never be greater than $\al$. Moreover, the ratio of contact set can never be strictly less than $\al$ in any subcube. Otherwise, in order to preserve the ratio $\al$ over $C_{1}$, a different subcube would have to have ratio strictly greater than $\al$ which is impossible. Therefore, for all $\om\in \Om_{\ell}$, every subcube $C^{i}$ has ratio exactly $\al$ of contact set, which means there exists at least one point in each $C^{i}$ so that $\overline{v}_{\ve}$ vanishes. 

We note that with this scaling, in view of Theorem \ref{obsthm}\pref{obst1}, $\left\{\overline{v}_{\ve}\right\}$ have uniform Holder estimates. Therefore, 
\begin{equation*}
|\overline{v}_{\ve}|\leq C_{h}\delta^{\sig}\quad\text{in}\quad C_{1}.
\end{equation*}

Letting first $\ve\rightarrow 0$, then $\delta\rightarrow 0$, yields $\lim_{\ve\rightarrow 0} \overline{v}_{\ve}=0$ uniformly in $Q_{1}$, for all $\om\in \Om_{\ell}$. 
\end{proof}

Since we are interested in $\lim_{\ve \rightarrow 0} w^{\ell}_{\ve}$ where $w^{\ell}_{\ve}$ solves \pref{correctell}, we consider the behavior of the relaxed half limits
\begin{equation*}
w^{\ell, *}(y,s,\om)=\limsup_{\ve\rightarrow 0 \atop y'\rightarrow y, s'\rightarrow s} w^{\ell}_{\ve}(y',s', \om)\quad\text{and}\quad w^{\ell}_{*}(y,s,\om)=\liminf_{\ve\rightarrow 0 \atop y'\rightarrow y, s'\rightarrow s} w^{\ell}_{\ve}(y',s', \om).
\end{equation*}

We show how the values of $\overline{p}(\ell)$ determine the limiting behavior of $w^{\ell,*}$ and $w^{\ell}_{*}$. 
\begin{lemma}\label{alpos}
If $ \overline{p}(\ell)>0$, then $w^{\ell,*}(\cdot, \cdot, \om)\leq 0$. 
\end{lemma}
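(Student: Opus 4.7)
The plan is to sandwich $w^{\ell}_{\ve}$ from above by the obstacle-from-above solution $\overline{v}_{\ve}$ and then invoke Lemma \ref{pos} to conclude that this upper envelope vanishes in the limit. Throughout, fix $\om\in\Om_{\ell}$ (the full-probability event on which \pref{pdef} holds).

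First, I would record the comparison between the two objects. By Theorem \ref{obsthm}\pref{obst2}, rescaled to $Q_{1}$, the function $\overline{v}_{\ve}$ satisfies
\begin{equation*}
\overline{v}_{\ve,s}-F(D^{2}\overline{v}_{\ve}, y/\ve, s/\ve^{2}, \om)=\ell+(\ell+F(0,y/\ve,s/\ve^{2},\om))_{-}\chi_{\{\overline{v}_{\ve}=0\}}\geq \ell\quad\text{in}\quad Q_{1},
\end{equation*}
with $\overline{v}_{\ve}=0$ on $\partial_{p}Q_{1}$. Hence $\overline{v}_{\ve}$ is a viscosity supersolution of the very PDE \pref{correctell} solved by $w^{\ell}_{\ve}$, and the two functions agree on $\partial_{p}Q_{1}$. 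By the comparison principle guaranteed by \pref{f3}--\pref{f4} (see \cite{users}),
\begin{equation*}
w^{\ell}_{\ve}(y,s,\om)\leq \overline{v}_{\ve}(y,s,\om)\quad\text{for every}~(y,s)\in \overline{Q}_{1}.
\end{equation*}

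Second, I would pass to the half-relaxed limits. Taking $\limsup^{*}$ as $\ve\to 0$ on both sides and using Lemma \ref{pos} (whose hypothesis $\overline{p}(\ell)>0$ is precisely ours), I get
\begin{equation*}
w^{\ell,*}(y,s,\om)\leq \limsup_{\ve\to 0 \atop y'\to y,\,s'\to s}\overline{v}_{\ve}(y',s',\om)=0\quad\text{on}\quad \overline{Q}_{1},
\end{equation*}
which is the claimed inequality.

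There is essentially no obstacle here; the only subtle point is that the right-hand side of the obstacle equation \pref{obsteqa} contains the nonnegative term $(\ell+F(0,y/\ve,s/\ve^{2},\om))_{-}\chi_{\{\overline{v}_{\ve}=0\}}$, which one must observe simply makes $\overline{v}_{\ve}$ a \emph{super}solution to the $\ell$-right-hand side equation, not an obstruction to comparison. Once this is noted, the comparison principle and the uniform decay supplied by Lemma \ref{pos} finish the argument. The analogous lower bound $w^{\ell}_{*}\geq 0$ when $\underline{p}(\ell)>0$ follows identically using $\underline{v}_{\ve}$ in place of $\overline{v}_{\ve}$.
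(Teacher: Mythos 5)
Your argument is correct and coincides with the paper's proof: both establish $w^{\ell}_{\ve}\leq\overline{v}_{\ve}$ via the comparison principle (you simply spell out why $\overline{v}_{\ve}$ is a supersolution, using Theorem \ref{obsthm}(ii)) and then pass to the limit via Lemma \ref{pos}.
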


\begin{proof}
We fix $\ell$ and drop the dependencies of $\ell$, since it plays no role in our analysis. By the comparison principle, $\overline{v}_{\ve}(\cdot, \om)\geq w_{\ve}(\cdot, \om)$, so by letting $\ve\rightarrow 0$ and applying Lemma \ref{pos}, we have the desired result.
\end{proof}

\begin{lemma}\label{al0}
If $\overline{p}(\ell)=0$, then $w^{\ell}_{*}(\cdot, \cdot, \om)\geq 0$. 
\end{lemma}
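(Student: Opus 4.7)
The plan is to show that $\overline{v}_{\ve} - w^{\ell}_{\ve} \to 0$ uniformly in $Q_1$; combined with $\overline{v}_{\ve}\geq 0$, this immediately yields the desired lower bound on $w^{\ell}_{\ve}$. This mirrors Lemma \ref{alpos}, but with the roles inverted: there $\overline{v}_{\ve}$ was shown to converge to zero directly, whereas here the smallness of the contact set forces $\overline{v}_{\ve}$ to be a near-solution of the unobstructed equation and hence close to $w^{\ell}_{\ve}$.

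First, from \pref{obsteqa} and \pref{correctell} (both rescaled to $Q_1$) together with the uniform ellipticity \pref{f3}, I would observe that the difference $u_{\ve} := \overline{v}_{\ve} - w^{\ell}_{\ve}$ is nonnegative by the comparison principle (since $\overline{v}_{\ve}$ is a supersolution of the equation satisfied by $w^{\ell}_{\ve}$), vanishes on $\partial_p Q_1$, and satisfies in the viscosity sense
\[
u_{\ve,s} - \MM^{+}(D^2 u_{\ve}) \leq \bigl(\ell + F_{M}(0, y/\ve, s/\ve^2, \om)\bigr)_{-}\chi_{\{\overline{v}_{\ve}=0\}}.
\]
By \pref{f4}, the right-hand side is bounded pointwise by $(|\ell|+C_{M})\chi_{\{\overline{v}_{\ve}=0\}}$, with $C_{M}$ depending only on $|M|$ and the uniform bound on $F(0,\cdot,\cdot,\om)$.

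Next, applying the parabolic ABP estimate to $-u_{\ve}$ --- a nonpositive supersolution with zero parabolic boundary data --- gives
\[
\sup_{Q_1}\bigl(\overline{v}_{\ve} - w^{\ell}_{\ve}\bigr) \leq C(1+|\ell|+|M|)\,\overline{m}_{\ve}(Q_1,\ell,\om)^{1/(d+1)}.
\]
Since $\overline{p}(\ell)=0$, relation \pref{pdef} forces $\overline{m}_{\ve}(Q_1,\ell,\om) \to 0$ for each $\om \in \Om^{\ell}$, so the right-hand side vanishes as $\ve\to 0$. Combined with $\overline{v}_{\ve}\geq 0$, this yields $w^{\ell}_{\ve}(\cdot,\cdot,\om) \geq -o(1)$ uniformly on $Q_1$, and passing to the $\liminf$ produces $w^{\ell}_{*}(\cdot,\cdot,\om) \geq 0$ on $Q_1$ for every $\om \in \Om^{\ell}$.

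The argument is essentially routine once the Pucci inequality for $u_{\ve}$ is written down and ABP is applied; I expect no serious obstacle, since the discontinuous forcing lives only on the contact set whose measure is controlled by hypothesis. The only mildly conceptual step is recognizing that $\overline{v}_{\ve}$ --- rather than $\underline{v}_{\ve}$ --- is the correct object to compare $w^{\ell}_{\ve}$ with in the regime $\overline{p}(\ell)=0$.
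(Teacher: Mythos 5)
Your argument is essentially the same as the paper's: both compare $w^{\ell}_{\ve}$ with $\overline{v}_{\ve}$, write a Pucci inequality for the difference with forcing supported on the contact set $\{\overline{v}_{\ve}=0\}$, invoke the parabolic ABP estimate, and then send $\ve\to 0$ using $\overline{p}(\ell)=0$. The only cosmetic difference is that the paper phrases it as a supersolution inequality for $w_{\ve}-\overline{v}_{\ve}$ with $\MM^{-}$, whereas you phrase it as a subsolution inequality for $\overline{v}_{\ve}-w^{\ell}_{\ve}$ with $\MM^{+}$; these are equivalent, and the ABP application is identical.
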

\begin{proof}
We drop the dependencies on $\ell$ since they play no role in our analysis. It is immediate from Theorem \ref{obsthm}\pref{obst2} that
\begin{align*}
(w_{\ve}-\overline{v}_{\ve})_{s} -\MM^{-}(D^{2}w_{\ve}-D^{2}\overline{v}_{\ve}) &\geq(w_{\ve}-\overline{v}_{\ve})_{s}+ F(D^{2}\overline{v}_{\ve}, y, s, \om)-F(D^{2}w_{\ve}, y, s, \om)\\
&\geq-\norm{F(0, \cdot, \cdot, \om)}_{\infty}\chi_{\left\{\overline{v}_{\ve}=0\right\}}
\end{align*}
and $w_{\ve}-\overline{v}_{\ve}\rst{\partial Q_{1}}=0$. The ABP-estimate yields
\begin{align*}
\sup_{Q_{1}}(-w_{\ve}(\cdot, \cdot,\om))\leq \sup_{Q_{1}}(\overline{v}_{\ve}(\cdot, \cdot, \om)-w_{\ve}(\cdot, \cdot, \om))&=\sup_{Q_{1}}(w_{\ve}(\cdot, \cdot, \om)-\overline{v}_{\ve}(\cdot, \cdot, \om))^{-}\\
&\leq C[\overline{m}_{\ve}(Q_{1}, \ell, \om)]^{1/(d+1)}.
\end{align*}

Sending $\ve\rightarrow0$, and using that $\overline{p}(\ell)=0$, we conclude. 
\end{proof}

For every $\ell\in\RR$, we have shown there exists an event $\Om^{\ell}\subset \Om$ of full probability such that for all $\om\in \Om^{\ell}$, either $w^{\ell,*}(\cdot, \cdot, \om)\leq 0$ or $w^{\ell}_{*}(\cdot, \cdot,\om)\geq 0$. In order to find the value of $\ell=-\overline{F}(M)$ with the correct limiting behavior on a set of full probability, we define
\begin{equation*}
\overline{\ell}=\inf\left\{\ell\in\RR: \overline{p}(\ell)=0\right\}.
\end{equation*}

and
\begin{equation}\label{omt}
\tilde{\Om}=\bigcap _{M\in \mathbb{S}^{d}_{\mathbb{Q}}}\bigcap _{\ell\in \mathbb{Q}} \Om^{\ell},
\end{equation}
where $\mathbb{S}_{\mathbb{Q}}^{d}$ denotes the space of symmetric $d$-by-$d$ matrices with entries in $\mathbb{Q}$, and we point out that $\PP(\tilde{\Om})=1$. We claim that $\overline{\ell}$ is the correct choice of $-\overline{F}(M)$. 
 
 \begin{lemma}
For all $\om\in \tilde{\Om}$, $\displaystyle \lim_{\ve\rightarrow 0} w^{\overline{\ell}}_{\ve}= 0$ uniformly in $Q_{1}$. 
 \end{lemma}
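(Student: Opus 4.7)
The plan is to show that for every $\om \in \tilde{\Om}$ the relaxed half-limits satisfy $w^{\overline{\ell},*}(\cdot, \cdot, \om) \leq 0 \leq w^{\overline{\ell}}_*(\cdot, \cdot, \om)$ on $\overline{Q}_1$. Since $w^{\overline{\ell}}_* \leq w^{\overline{\ell},*}$ always, and both half-limits vanish on $\partial_p Q_1$, this pins down $w^{\overline{\ell},*} = w^{\overline{\ell}}_* = 0$, which is equivalent to the desired uniform convergence $w^{\overline{\ell}}_\ve \to 0$.

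Two preliminary observations drive the argument. First, applying the comparison principle to the difference $w^{\ell_2}_\ve - w^{\ell_1}_\ve$ (which lies in $\overline{S}(\ell_2 - \ell_1, Q_1)$ with zero parabolic boundary data) gives the monotonicity $w^{\ell_1}_\ve \leq w^{\ell_2}_\ve$ whenever $\ell_1 \leq \ell_2$, while the parabolic ABP estimate applied to the same difference yields the uniform Lipschitz bound $\norm{w^{\ell_1}_\ve - w^{\ell_2}_\ve}_{L^{\infty}(Q_1)} \leq C|\ell_1 - \ell_2|$ with $C = C(\la, \La, d)$. Passing to half-limits, both $\ell \mapsto w^{\ell,*}(y,s,\om)$ and $\ell \mapsto w^{\ell}_*(y,s,\om)$ inherit the same monotonicity and $C$-Lipschitz continuity, uniformly in $(y,s) \in \overline{Q}_1$. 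Second, the analogous monotonicity at the level of the obstacle problem --- a smaller $\ell$ relaxes the supersolution constraint and enlarges $\{\overline{v}=0\}$ --- shows that $\overline{p}(\cdot)$ is non-increasing; combined with the definition of $\overline{\ell}$ as an infimum, this gives $\overline{p}(\ell) > 0$ for every rational $\ell < \overline{\ell}$ and $\overline{p}(\ell) = 0$ for every rational $\ell > \overline{\ell}$.

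With these tools the argument is short. Fix $\om \in \tilde{\Om}$ and pick rational sequences $\ell_k \uparrow \overline{\ell}$ and $\ell_k' \downarrow \overline{\ell}$. For each $k$, since $\overline{p}(\ell_k) > 0$ and $\om \in \Om^{\ell_k}$, Lemma \ref{alpos} yields $w^{\ell_k,*}(\cdot, \cdot, \om) \leq 0$ on $\overline{Q}_1$; the Lipschitz continuity in $\ell$ lets us pass to the limit $k \to \infty$ to obtain $w^{\overline{\ell},*}(\cdot, \cdot, \om) \leq 0$. Symmetrically, since $\overline{p}(\ell_k') = 0$ and $\om \in \Om^{\ell_k'}$, Lemma \ref{al0} gives $w^{\ell_k'}_*(\cdot, \cdot, \om) \geq 0$, and passing to the limit along $\ell_k' \downarrow \overline{\ell}$ yields $w^{\overline{\ell}}_*(\cdot, \cdot, \om) \geq 0$. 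Combined with the trivial inequality $w^{\overline{\ell}}_* \leq w^{\overline{\ell},*}$, this concludes.

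The main obstacle is establishing the Lipschitz continuity of $w^\ell_\ve$ in $\ell$ with a constant uniform in $\ve$, which is needed because $\overline{\ell}$ is a priori irrational while $\tilde{\Om}$ is built only from rational parameters. This bound follows from applying ABP directly to the difference of two correctors, which solves a uniformly parabolic PDE with constant right-hand side $\ell_2 - \ell_1$ on the fixed domain $Q_1$; once it is in hand, the rational approximation of $\overline{\ell}$ is immediate.
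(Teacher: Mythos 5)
Your proposal is correct, and it is essentially the same argument the paper gives, only repackaged. The paper works with rational sequences $\ell_n\downarrow\overline{\ell}$ (where $\overline{p}(\ell_n)=0$) and $\ell_n'\uparrow\overline{\ell}$ (where $\overline{p}(\ell_n')>0$), exactly as you do, and it obtains the same two one-sided bounds $w^{\overline{\ell}}_*\geq 0$ and $w^{\overline{\ell},*}\leq 0$ by passing to the limit along those sequences. The only cosmetic difference is how the passage to the irrational level $\overline{\ell}$ is licensed: you isolate a Lipschitz-in-$\ell$ estimate
$\norm{w^{\ell_1}_\ve-w^{\ell_2}_\ve}_{L^\infty(Q_1)}\leq C|\ell_1-\ell_2|$
(by applying ABP or a barrier to the difference, which solves a uniformly parabolic equation with constant source on the fixed domain), whereas the paper constructs the explicit perturbation $\tilde{w}^n_\ve=w^n_\ve-\al_n(s+1)$, shows it is a sub/supersolution at level $\overline{\ell}$, and invokes the comparison principle. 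Those two steps are the same barrier computation expressed in different language (and in fact the paper's Lemma 3.8 is precisely a quantitative version of the Lipschitz estimate you use). Your observation that $\overline{p}(\cdot)$ is non-increasing, hence $\overline{p}(\ell)>0$ for $\ell<\overline{\ell}$ and $\overline{p}(\ell)=0$ for $\ell>\overline{\ell}$, is correct and is implicitly used by the paper as well.
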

 
 \begin{proof}
Fix $\om\in\tilde{\Om}$. Let $\left\{\ell_{n}\right\}\subset \mathbb{Q}$ be a monotonically decreasing sequence such that, $\lim_{n\rightarrow\infty}\ell_{n}=\overline{\ell}$, and $\overline{p}(\ell_{n})=0$. For notational simplicitiy, let $w^{n}_{\ve}=w^{\ell_{n}}_{\ve}$, the solution of \pref{correctell} with right hand side $\ell_{n}$. We note that since $\left\{\ell_{n}\right\}\subset \mathbb{Q}$, and $\om\in\tilde{\Om}$, the limiting behavior of $w^{n}_{\ve}$ is determined by Lemmas \ref{alpos} and \ref{al0}. Let $\left\{\al_{n}\right\}$ be a sequence of positive, real numbers, to be chosen later.  Letting $\tilde{w}^{n}_{\ve}=w^{n}_{\ve}-\al_{n}(s+1)$ solves
\begin{equation*}
\tilde{w}^{n}_{\ve, s}(y,s)-F(D^{2}\tilde{w}^{n}_{\ve}, y/\ve, s/\ve^{2}, \om)=w^{n}_{\ve, s}(y,s)-\al_{n}-F(D^{2}w^{n}_{\ve}, y/\ve, s/\ve^{2}, \om)=\ell_{n}-\al_{n}
\end{equation*}
and, for $\ell_{n}-\al_{n}\leq \overline{l}$, $\tilde{w}^{n}_{\ve}$ a subsolution with right hand side $\overline{\ell}$. Moreover, on $\partial_{p}Q_{1}$, $\tilde{w}^{n}_{\ve}(y,s)=-\al_{n}(s+1)\leq 0=w^{\overline{\ell}}_{\ve}(y,s)$. The comparison principle yields
\begin{equation*}
\tilde{w}^{n}_{\ve}(y,s, \om)\leq w^{\overline{\ell}}_{\ve}(y,s, \om).
\end{equation*}
Sending $\ve\rightarrow 0$, and then $n\rightarrow\infty$, we have that
\begin{equation*}
w^{\overline{\ell}}_{*}\geq 0.
\end{equation*}
 
 If we repeat this argument by approximating $\overline{\ell}$ from below, and use that the contact set has positive measure, we have $w^{\overline{\ell},*}\leq 0.$

Hence, 
 \begin{equation*}
0\leq w^{\overline{\ell}}_{*}\leq w^{\overline{\ell},*}\leq 0.
 \end{equation*}

This proves that $\lim_{\ve\rightarrow 0}w^{\bar{\ell}}_{\ve}=0$ uniformly, for all $\om\in \tilde{\Om}$. 
 \end{proof}
 
 We have shown that letting $-\overline{F}(M)=\overline{\ell}$ gives the desired limited behavior for $w^{-\overline{F}(M)}_{\ve}$. We now claim that this is the unique value of $\overline{F}(M)$. In order to do so, it is enough to verify the following uniqueness lemma: 
  \begin{lemma} \label{uniq}
 Suppose that there exists $w^{1}_{\ve}, w^{2}_{\ve}$ which solve $w^{i}_{\ve, s}-F(D^{2}w^{i}_{\ve}, y/\ve, s/\ve^{2}, \om)=\ell_{i}$ respectively, with $\ell_{2}=\ell_{1}+\eta$, for some $\eta>0$. Then there exists $\beta=\beta(\La, d)>0$, isuch that
 \begin{equation*}
 w^{1}_{\ve}(y,s)+\beta\eta (s+1)(1-|y|^{2})\leq w^{2}_\ve(y,s).
 \end{equation*}
 \end{lemma}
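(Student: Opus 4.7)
The plan is to construct an explicit barrier and invoke the comparison principle. Set $\vp(y,s) := \beta \eta (s+1)(1-|y|^{2})$ on $Q_{1}$ and let $u := w^{1}_{\ve} + \vp$. The strategy is to show that $u$ is a subsolution of the equation satisfied by $w^{2}_{\ve}$ on $Q_{1}$, with $u \leq w^{2}_{\ve}$ on $\partial_{p}Q_{1}$. This product-form barrier is the natural parabolic analogue of the quadratic perturbation used in the elliptic version of this argument; the factor $(s+1)$ makes it vanish on the bottom face and the factor $(1-|y|^{2})$ makes it vanish on the lateral face, so $\vp \equiv 0$ on $\partial_{p} Q_{1}$. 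Combined with $w^{i}_{\ve} = 0$ on $\partial_{p}Q_{1}$, this yields $u = 0 = w^{2}_{\ve}$ on $\partial_{p}Q_{1}$.

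Next I would verify the PDE inequality. A direct computation gives $\vp_{s} = \beta\eta(1-|y|^{2}) \geq 0$ and $D^{2}\vp = -2\beta\eta(s+1) I$, which is negative semidefinite on $Q_{1}$ since $s + 1 \geq 0$ there. By the uniform ellipticity assumption \pref{f3} applied to the nonpositive perturbation $D^{2}\vp$, one has
\begin{equation*}
F(D^{2}w^{1}_{\ve} + D^{2}\vp, y/\ve, s/\ve^{2}, \om) \;\geq\; F(D^{2}w^{1}_{\ve}, y/\ve, s/\ve^{2}, \om) + \MM^{-}(D^{2}\vp) \;=\; F(D^{2}w^{1}_{\ve}, y/\ve, s/\ve^{2}, \om) - 2 d \La \beta \eta (s+1).
\end{equation*}
Inserting the equation for $w^{1}_{\ve}$, I obtain on $Q_{1}$
\begin{equation*}
u_{s} - F(D^{2}u, y/\ve, s/\ve^{2}, \om) \;\leq\; \ell_{1} + \beta \eta (1-|y|^{2}) + 2 d \La \beta \eta (s+1) \;\leq\; \ell_{1} + \beta \eta (1 + 2 d \La),
\end{equation*}
where I used $1-|y|^{2} \leq 1$ and $s+1 \leq 1$ inside $Q_{1}$.

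Choosing $\beta := (1 + 2 d \La)^{-1}$, which depends only on $\La$ and $d$, the right-hand side becomes $\ell_{1} + \eta = \ell_{2}$. Thus $u$ is a viscosity subsolution of $u_{s} - F(D^{2}u, y/\ve, s/\ve^{2}, \om) \leq \ell_{2}$ in $Q_{1}$ with $u \leq w^{2}_{\ve}$ on $\partial_{p}Q_{1}$, and the comparison principle (which applies under \pref{f3}--\pref{f4}) yields $u \leq w^{2}_{\ve}$ in $Q_{1}$, i.e., $w^{1}_{\ve} + \beta \eta (s+1)(1-|y|^{2}) \leq w^{2}_{\ve}$. There is no genuine obstacle here; the only thing to be careful about is selecting a barrier that is identically zero on \emph{all} of $\partial_{p}Q_{1}$ (including the corner $\{|y|=1, s=-1\}$) while providing a strictly positive interior gap controlled by $\eta$, and the product form $(s+1)(1-|y|^{2})$ delivers exactly that.
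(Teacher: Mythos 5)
Your proof is correct and matches the paper's argument essentially line for line: the same product barrier $\beta\eta(s+1)(1-|y|^{2})$, the same ellipticity bound giving the $2d\La\beta\eta(s+1)$ error term, the same choice $\beta=(1+2d\La)^{-1}$, and the same concluding appeal to the comparison principle. The only difference is cosmetic — you make the explicit choice of $\beta$ and spell out the Pucci-operator step that the paper compresses into ``for an appropriate choice of $\beta$.''
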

 
 \begin{proof}
Let $\tilde{w}_{\ve}(y,s)=w^{1}_{\ve}(y,s)+\beta\eta (s+1)(1-|y|^{2})$, with $\beta$ to be chosen later. Note that $\tilde{w}_{\ve}=0$ on $\partial_{p}Q_{1}$, and
 \begin{align*}
\tilde{w}_{\ve, s}-F(D^{2}\tilde{w}_{\ve}, y/\ve, s/\ve^{2}, \om) &=w^{1}_{\ve,s}+\beta\eta(1-|y|^{2})-F(D^{2}w^{1}-2\beta\eta (s+1)\ell, y/\ve, s/\ve^{2}, \om)\\
&\leq w_{\ve,s}^{1}+\beta\eta(1-|y|^{2})-F(D^{2}w^{1}, y/\ve, s/\ve^{2}, \om)+2\La d \eta \beta (s+1)\\
&\leq \ell_{1}+\beta\eta(1+2\La d)\\
&\leq \ell_{2}
 \end{align*}
 for an appropriate choice of $\beta$. By the comparison principle,
 \begin{equation*}
 \tilde{w}_{\ve}(y,s)=w_{\ve}^{1}(y,s)+\beta\eta (s+1)(1-|y|^{2})\leq w_{\ve}^{2}(y,s)
 \end{equation*}
 as asserted.  
 \end{proof}
 
 Lemma \ref{uniq} shows that it is impossible for two potential values of $\overline{F}(M)$ to yield the same limiting behavior for their corresponding solutions $w_{\ve}$, which implies that $\overline{F}(M)$ is unique for every $M\in \mathbb{S}^{d}$.  Finally, we show that $\overline{F}(\cdot)$  satisfies the necessary hypotheses, namely uniformly ellipticity, in order for \pref{limit} to be well-posed. 
 
We note that the uniform ellipticity of $\overline{F}(\cdot)$ will also imply that $\overline{F}(\cdot)$ is continuous. Therefore, for any $M\in \mathbb{S}^{d}$, consider $\left\{M_{k}\right\}\subset\mathbb{S}^{d}_{\mathbb{Q}}$ such that $M_{k}\rightarrow M$. By the continuity of $\overline{F}(\cdot)$, $\overline{F}(M_{k})\rightarrow \overline{F}(M)$. By \pref{omt}, for every $\om\in \tilde{\Om}$, for every $M\in \mathbb{S}^{d}$, for every $\ell\in \RR$, the limiting behavior of $w^{\ell}_{\ve}(y, s, \om)$ is understood. Hence, the construction of $\overline{F}(M)$ makes sense for all $\om\in \tilde{\Om}$. 
  
 It was observed in \cite{ashom} that the uniform ellipticity of $\overline{F}(\cdot)$ follows relatively easily from the properties of the obstacle problem. The same proof applies in this setting, so we choose to omit it. 
 \begin{proposition}[Armstrong, Smart \cite{ashom}]\label{ellip}
$ \overline{F}(\cdot)$ is uniformly elliptic. 
\end{proposition}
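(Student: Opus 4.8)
The plan is to verify directly the two-sided inequality
\[
\la\norm{N}\leq \overline{F}(M+N)-\overline{F}(M)\leq \La\norm{N}\quad\text{for all }M\in\mathbb{S}^{d},\ N\geq 0,
\]
by exploiting the characterization $-\overline{F}(M)=\overline{\ell}(M)=\inf\{\ell:\overline{p}(\ell,M)=0\}$ together with the monotonicity property of the obstacle problem in the operator, namely Theorem \ref{obsthm}\pref{obst5}: if $F_{1}\leq F_{2}$ (as operators) then $\overline{m}(K,\ell,F_{1})\leq\overline{m}(K,\ell,F_{2})$, and hence $\overline{p}(\ell,F_{1})\leq\overline{p}(\ell,F_{2})$. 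The key observation is that, by \pref{f3} and the definition of Pucci's operators, for $N\geq 0$ one has $F_{M+N}(P,y,s,\om)=F(P+M+N,y,s,\om)$ and
\[
F_{M}(P,y,s,\om)+\la\norm{N}\leq F_{M+N}(P,y,s,\om)\leq F_{M}(P,y,s,\om)+\La\norm{N},
\]
so that the operator $F_{M+N}$ is sandwiched between the two constant vertical shifts $F_{M}+\la\norm{N}$ and $F_{M}+\La\norm{N}$ of the operator $F_{M}$.

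First I would record the elementary fact that adding a constant $c$ to the operator simply shifts the threshold: the obstacle problem for $F_{M}+c$ with right-hand side $\ell$ is the same as the obstacle problem for $F_{M}$ with right-hand side $\ell-c$ (since $v_{s}-(F_{M}(D^{2}v)+c)\geq\ell \iff v_{s}-F_{M}(D^{2}v)\geq \ell+c$; note the obstacle $0$ is untouched). Hence $\overline{p}(\ell,F_{M}+c)=\overline{p}(\ell+c,F_{M})$, and consequently $\overline{\ell}(F_{M}+c)=\overline{\ell}(F_{M})-c$, i.e. the threshold for the shifted operator is the threshold for $F_{M}$ shifted down by $c$. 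Applying this with $c=\la\norm{N}$ and $c=\La\norm{N}$, and combining with Theorem \ref{obsthm}\pref{obst5} applied to the sandwich $F_{M}+\la\norm{N}\leq F_{M+N}\leq F_{M}+\La\norm{N}$ (which gives $\overline{p}(\ell,F_{M}+\la\norm{N})\leq\overline{p}(\ell,F_{M+N})\leq\overline{p}(\ell,F_{M}+\La\norm{N})$ for every $\ell$, and therefore the reverse ordering of infima defining the thresholds), I get
\[
\overline{\ell}(F_{M})-\La\norm{N}\leq\overline{\ell}(F_{M+N})\leq\overline{\ell}(F_{M})-\la\norm{N}.
\]
Since $\overline{F}(M)=-\overline{\ell}(F_{M})$ and $\overline{F}(M+N)=-\overline{\ell}(F_{M+N})$, negating gives exactly $\la\norm{N}\leq\overline{F}(M+N)-\overline{F}(M)\leq\La\norm{N}$, which is uniform ellipticity; it also immediately yields that $\overline{F}$ is Lipschitz, hence continuous, so the extension from $\mathbb{S}^{d}_{\mathbb{Q}}$ to all of $\mathbb{S}^{d}$ in the construction is justified.

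The main obstacle is making the threshold-ordering argument airtight: one must check that the map $\ell\mapsto\overline{p}(\ell,F_{M})$ is monotone in $\ell$ (it is: a larger right-hand side only enlarges the admissible class for the infimum defining $\overline{v}$, hence enlarges the contact set — this follows from the same comparison-principle reasoning as Theorem \ref{obsthm}\pref{obst3}), so that the set $\{\ell:\overline{p}(\ell)=0\}$ is an interval and the infimum $\overline{\ell}$ behaves well under the shifts; and one should confirm that the sandwich inequality for the operators does indeed pass to the $\overline{p}$'s for a fixed $\ell$ and then to the thresholds with the inequalities in the stated directions. Because the authors explicitly attribute this to \cite{ashom} and say "the same proof applies," I expect the write-up to be short, citing Theorem \ref{obsthm}\pref{obst5}, the shift identity for $\overline{p}$, and the monotonicity of $\overline{p}$ in $\ell$, with the bulk of the work already packaged in Theorem \ref{obsthm}.
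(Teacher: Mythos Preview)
Your approach is correct and is precisely the obstacle-problem route that the paper attributes to \cite{ashom} (and then omits): sandwich $F_{M+N}$ between $F_{M}+\la\norm{N}$ and $F_{M}+\La\norm{N}$, use the monotonicity of the contact-set measure in the operator (Theorem~\ref{obsthm}\pref{obst5}) together with the shift identity $\overline{p}(\ell,F_{M}+c)=\overline{p}(\ell+c,F_{M})$, and read off the ellipticity of $\overline{F}$ from the threshold $\overline{\ell}$. Two small slips to clean up: the monotonicity in $\ell$ goes the other way (a larger right-hand side \emph{shrinks} the admissible class, so $\overline{p}$ is nonincreasing in $\ell$), and depending on the sign convention in Theorem~\ref{obsthm}\pref{obst5} your ``reverse ordering'' remark needs to be checked carefully---the chain of inequalities you land on is the right one, but make sure each intermediate step has the correct direction.
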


 
 \subsection{The Proof of Theorem \ref{stochhom}}\label{proofhom}
Having obtained $\overline{F}(M)$ with the appropriate limiting behavior of $w_{\ve}$, we employ the perturbed test function argument of Evans \cite{evanshom} to prove that this choice $\overline{F}(M)$ satisfies Theorem \ref{stochhom}.  

\begin{proof}[Proof of Thm \ref{stochhom}]
We define the half-relaxed limits 
\begin{equation*}
u^{*}(x,t,\om)=\limsup_{\ve\rightarrow 0 \atop y\rightarrow x, s\rightarrow t} u^{\ve}(y,s,\om) \quad\text{and}\quad u_{*}(x,t,\om)=\liminf_{\ve\rightarrow 0 \atop y\rightarrow x, s\rightarrow t} u^{\ve}(y,s,\om).
\end{equation*}

It is immediate that $u_{*}(\cdot, \cdot, \om)\leq u^{*}(\cdot, \cdot, \om)$. We claim that for all $\om\in \tilde{\Om}$, $u^{*}(\cdot,\cdot, \om)$ is a subsolution to \pref{limit}, and $u_{*}(\cdot,\cdot, \om)$ is a supersolution to \pref{limit}. 

Here we only show the proof for $u^{*}(x,t, \om)$. To this end, we fix $\om\in\tilde{\Om}$, and argue by contradiction assuming that there exists a smooth function $\vp(x,t)$ such that $u^{*}(\cdot, \cdot, \om)-\vp(\cdot, \cdot)$ has a strict local maximum at $(\hat{x},\hat{t})$, with $u^{*}(\hat{x},\hat{t}, \om)=\vp(\hat{x},\hat{t})$, but 
\begin{equation*}
\vp_{t}(\hat{x},\hat{t})-\overline{F}(D^{2}\vp(\hat{x},\hat{t}))=\sigma >0.
\end{equation*}

Let $M=D^{2}\vp(\hat{x}, \hat{t})$, and consider $w^{\ve}$ solving \pref{correct} with right hand side $-\overline{F}(M)$ in $Q_{1/\ve}(\hat{x}, \hat{t})$. We claim that
\begin{equation*}
\vp^{\ve}(x,t)=\vp(x,t)+\ve^{2}w^{\ve}(x/\ve,t/\ve^{2}, \om)
\end{equation*}
 is a subsolution to \pref{hom} in $Q_{r}(\hat{x}, \hat{t})$ for $r$ sufficiently small. Suppose that $\psi$ is a smooth function, and $\vp^{\ve}-\psi$ has a minimum at $(x_{\ve},t_{\ve})$ inside $Q_{r}(\hat{x}, \hat{t})$ ($r$ to be chosen later). It follows from \pref{correct} that
\begin{equation*}
\psi_{t}(x_{\ve}, t_{\ve})-\vp_{t}(x_{\ve},t_{\ve})-F(M+D^{2}\psi(x_{\ve}, t_{\ve})-D^{2}\vp(x_{\ve}, t_{\ve}), x_{\ve}/\ve, t_{\ve}/{\ve}^{2}, \om)\geq -\overline{F}(D^{2}\vp(\hat{x},\hat{t})) 
\end{equation*}
which implies that, for $r$ sufficiently small, 
\begin{equation*}
\psi_{t}(x_{\ve}, t_{\ve})-F(D^{2}\psi(x_{\ve}, t_{\ve}), x_{\ve}/\ve,  t_{\ve}/\ve^{2}, \om)\geq \vp_{t}(\hat{x}, \hat{t})-\overline{F}(D^{2}\vp(\hat{x},\hat{t}),\om)-\frac{\sigma}{2}=\frac{\sigma}{2}>0\quad\text{in}\quad Q_{r}(\hat{x}, \hat{t}).
\end{equation*}

The comparison principle for parabolic equations yields,
\begin{equation*}
\sup_{Q_r(\hat{x}, \hat{t})}(u^{\ve}-\vp^{\ve})^{+}\leq \sup_{\partial_{p}Q_{r}(\hat{x}, \hat{t})}(u^{\ve}-\vp^{\ve})^{+},
\end{equation*}
and after taking the appropriate $\limsup$ and using the decay condition of $w^{\ve}(\cdot, \cdot, \om)$, we have that 
\begin{equation*}
\sup_{Q}(u^{*}-\vp)^{+}\leq\sup_{\partial_{p}Q}(u^{*}-\vp)^{+}.
\end{equation*}
This contradicts having a strict maximum at $(\hat{x}, \hat{t})$. Thus, $u^{*}(\cdot, \cdot, \om)$ is a subsolution to \pref{limit}. 

A similar argument shows that $u_{*}(\cdot, \cdot, \om)$ is a supersolution to \pref{limit}. By the comparison principle, we have that $u^{*}(\cdot, \cdot, \om)=u_{*}(\cdot, \cdot, \om)=:u(\cdot, \cdot, \om)$, and $u(\cdot, \cdot, \om)$ solves \pref{limit}. By Proposition \ref{ellip}, $u(\cdot, \cdot, \om)$ is the unique solution to \pref{limit}. Since \pref{limit} is a deterministic equation, this means that $u(\cdot, \cdot, \om)=u(\cdot, \cdot)$ is independent of $\om$. Therefore, we may conclude that for all $\om\in \tilde{\Om}$, $u^{\ve}(\cdot, \cdot, \om)\rightarrow u(\cdot, \cdot)$ locally uniformly in $D_{T}$. A standard barrier argument \cite{parbar} and the uniform exterior cone condition allows us to conclude that the convergence is uniform on $\overline{D_{T}}$. 
\end{proof}

\section{A Rate of Decay of the Masses of the Obstacle Problem}\label{decaymass}
We prove a rate of decay for the product of the second moments of the masses of the obstacle problem from above and below, which holds for any choice of $\ell$. In light of \pref{dichot} Lemma \ref{alpos}, and Lemma \ref{al0},
\begin{equation*}
\begin{cases}
\ell\geq -\overline{F}(M)\Longrightarrow \overline{p}(\ell)=0,\\
\ell\leq -\overline{F}(M)\Longrightarrow \underline{p}(\ell)=0,
\end{cases}
\end{equation*}
which implies that $\overline{p}(\ell)\underline{p}(\ell)=0$ for every $\ell\in \RR$. Therefore, by \pref{pdef}, for each $\ell\in \RR$ and almost surely in $\om$, 
\begin{equation}\label{prodmeas}
\lim_{\ve\rightarrow 0} \overline{m}(Q_{1/\ve}, \ell, \om)\underline{m}(Q_{1/\ve}, \ell, \om)=0.
\end{equation}

The ultimate goal is to obtain a rate of decay on $\ve^{2}w^{\ve}$. As in the proof of Theorem \ref{stochhom}, this must come from studying the separation of $w^{\ve}$ from the obstacle problems from above and below. Recall that
\begin{align*}
\sup_{Q_{1/\ve}}(-\ve^{2}w^{\ve}(\cdot, \cdot,\om))\leq \sup_{Q_{1/\ve}}(\ve^{2}\overline{v}^{\ve}(\cdot, \cdot, \om)-\ve^{2}w^{\ve}(\cdot, \cdot, \om))\leq C\left(\frac{1}{|Q_{1/\ve}|}\int_{\left\{\overline{v}^{\ve}=0\right\}} (\ell+F(0, y, s, \om))_{-}^{d+1}~dyds\right)^{1/d+1},
\end{align*}
and
\begin{align*}
\sup_{Q_{1/\ve}}(\ve^{2}w^{\ve}(\cdot, \cdot,\om))\leq \sup_{Q_{1/\ve}}(\ve^{2}w^{\ve}(\cdot, \cdot, \om)-\ve^{2}\underline{v}^{\ve}(\cdot, \cdot, \om))\leq C\left(\frac{1}{|Q_{1/\ve}|}\int_{\left\{\underline{v}^{\ve}=0 \right\}} (\ell+F(0, y, s, \om))_{+}^{d+1}~dyds\right)^{1/d+1}.
\end{align*}
This suggests that we should study the limiting behavior of the right hand sides of the former expressions. 

Moreover, instead of studying the above limit $\om$-by-$\om$, we obtain a rate of decay for the second moments of these quantities. In other words, we choose to study
\begin{equation}\label{somelim}
\lim_{\ve\rightarrow 0}\EE\left[\left(\frac{1}{|Q_{1/\ve}|}\int_{\left\{\overline{v}^{\ve}=0\right\}} (\ell+F(0, y, s, \om))_{-}^{d+1}~dyds\right)^{2}\right]\EE\left[\left(\frac{1}{|Q_{1/\ve}|}\int_{\left\{\underline{v}^{\ve}=0\right\}} (\ell+F(0, y, s, \om))_{+}^{d+1}~dyds\right)^{2}\right].
\end{equation}

\subsection{Notation, Set-Up, Statement of the Main Theorem}
We work with $G_{k}=C_{3^{k}}$, parabolic cubes with sides of length $3^{k}$ in space and $3^{2k}$ in time.  Unless otherwise stated, $G_{k}$ will always have top point $(0,0)$. We refer to $\overline{v}^{k}$ and $\underline{v}^{k}$ as the solutions to the obstacle problem from above and below in the cube $G_{k}$, respectively. We note that because we are working with domains which expand in $k$, the functions $F(\cdot, y, s, \om)$ are used. Also, $M$ and $\ell$ are fixed throughout this section, so we drop their dependencies to keep the notation as simple as possible. 

We define the total masses of the obstacle problems (from above and below) to be the random variables
\begin{equation*}
\overline{\pi}_{k}(\om)=\frac{1}{|G_{k}|}\int_{\left\{\overline{v}^{k}=0\right\}} (\ell+F(0, y, s, \om))_{-}^{d+1}~dyds \quad\text{and}\quad \underline{\pi}_{k}(\om)=\frac{1}{|G_{k}|}\int_{\left\{\underline{v}^{k}=0\right\}} (\ell+F(0, y, s, \om))_{+}^{d+1}~dyds.
\end{equation*}
We aim to study the decay of \pref{somelim} ``globally in $\om$." We consider the second moments of $\overline{\pi}_{k}(\om)$ and $\underline{\pi}_{k}(\om)$ defined by 
\begin{equation}\label{jdef}
\overline{J}_{k}=\EE [\overline{\pi}_{k}^{2}]\quad\text{\and}\quad \underline{J}_{k}=\EE[\underline{\pi}_{k}^{2}].
\end{equation}

We also define the quantities
\begin{equation}
\overline{E}_{k}=\EE[\overline{\pi}_{k}]\quad\text{and}\quad \underline{E}_{k}=\EE[\underline{\pi}_{k}],
\end{equation}
and 
\begin{equation}
\overline{V}_{k}=\overline{J}_{k}-(\overline{E}_{k})^{2}\quad\text{and}\quad \underline{V}_{k}=\underline{J}_{k}-(\underline{E}_{k})^{2}.
\end{equation}
We will frequently write equations with the terms $\overline{\underline{V}}_{k}, \overline{\underline{E}}_{k},$ and $\overline{\underline{J}}_{k}$, to signify that the equations hold for the quantities from above and below, respectively. 

Later in this section, we verify some precise statements regarding the decay of $\overline{J}_{k}$ and $\underline{J}_{k}$. For now, we point out that by the monotonicity of the obstacle problem, these are non-increasing quantities in $k$. We now state the main result of this section:

\begin{theorem}\label{thmrate1}
Assume \pref{f1}-\pref{f5}. There exists universal constants $\tau=\tau(\la, \La, d)\in(0,1)$, $C=C(\la, \La, d)>0$ and a positive integer $k_{0}=k_{0}(\la, \La, d)$ such that for $k\geq k_{0}$, 
\begin{equation}\label{ratek}
\mathbb{J}_{k^{3}}=\overline{J}_{k^{3}}\underline{J}_{k^{3}}\leq C(1+\norm{M}+\ell)^{4(d+1)} 3^{(k_{0}-k)\tau}
\end{equation}
\end{theorem}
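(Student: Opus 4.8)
\textbf{Proof proposal for Theorem \ref{thmrate1}.}

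The plan is to establish a recursive inequality relating the decay of the product $\mathbb{J}_{(k+1)^3}$ at scale $3^{(k+1)^3}$ to $\mathbb{J}_{k^3}$ at scale $3^{k^3}$, and then iterate it. The starting point is the observation \pref{prodmeas} that the product of the masses tends to zero almost surely, together with the dichotomy $\overline{p}(\ell)\underline{p}(\ell)=0$: at least one of the two families of masses is forced to be genuinely small. The key structural fact is that each $\overline{\pi}_k$ (resp. $\underline{\pi}_k$) is, up to the bounded weight $(\ell+F(0,y,s,\om))_\pm^{d+1}$ which is controlled by $C(1+\|M\|+\ell)^{d+1}$ via \pref{f4}, essentially a normalized measure of a contact set, hence a stationary, subadditive, bounded random variable in the sense needed for the tools in Section 2.4. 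First I would use Lemma \ref{monobst} to break the cube $G_{(k+1)^3}$ into $3^{(d+2)((k+1)^3-k^3)}$ disjoint subcubes congruent to $G_{k^3}$, so that $\overline{\pi}_{(k+1)^3}$ is bounded by the average of the masses $\overline{\pi}_{k^3}^{(i)}$ over these subcubes (and similarly from below). The crucial point that Lemma \ref{monobst} buys us is that the contact set restricted to each subcube is \emph{exactly} the contact set of the obstacle problem solved in that subcube with its own parabolic boundary data, so the $\overline{\pi}_{k^3}^{(i)}$ are genuine translates of $\overline{\pi}_{k^3}$ and the stationarity from \pref{f1} applies.

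Next I would exploit the mixing hypothesis \pref{f5}/\pref{gmixing}. Subcubes that are parabolically far apart have masses whose covariance is controlled by \pref{probfact}: after grouping the $M\sim 3^{(d+2)((k+1)^3-k^3)}$ subcubes so that within each group the centers are separated by a distance exceeding the mixing radius $r\sim 3^{k^3\cdot(\text{something})}$, the decorrelation estimate gives $\sigma_{ij}\leq 3^{-k^{3/2}}C(1+\|M\|+\ell)^{4(d+1)}$ for the far pairs, while near pairs are handled trivially by Cauchy–Schwarz against $\overline{J}_{k^3}$. Applying \pref{probfact} then yields
\begin{equation*}
\overline{J}_{(k+1)^3}\leq \overline{J}_{k^3}\,\overline{E}_{k^3}\;+\;\frac{C}{M}\,\overline{J}_{k^3}\;+\;C(1+\|M\|+\ell)^{4(d+1)}\,3^{-k^{3/2}},
\end{equation*}
and the same for the barred/underlined version. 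The real work — and the place I expect the proof to be most delicate — is to show that the leading term, the product $\overline{E}_{k^3}\,\underline{E}_{k^3}$ of the \emph{first} moments, is strictly contractive, i.e. bounded by $(1-\theta)$ for some $\theta=\theta(\la,\La,d)>0$. This is exactly where the ABP estimate and the Krylov–Safonov/$C^{0,\sigma}$ regularity of the obstacle solutions (Theorem \ref{obsthm}\pref{obst1}) must be combined with a quantitative version of the argument of Lemma \ref{pos}: if the contact set from above occupies a fixed fraction of every subcube at some scale, then $\overline{v}_\ve$ is uniformly small, which by the ABP bound forces the contact set from below to be small, and vice versa; quantifying this tradeoff in expectation is where the companion estimate of \cite{linreg1} (Appendix C) enters, to overcome the causality obstruction that the elliptic argument of \cite{cs} does not see.

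Finally, with the recursion $\mathbb{J}_{(k+1)^3}\leq (1-\theta)\mathbb{J}_{k^3}+C(1+\|M\|+\ell)^{4(d+1)}3^{-k^{3/2}}+\frac{C}{M}\mathbb{J}_{k^3}$ in hand — where the $\frac{C}{M}$ term is negligible for $k$ large and the additive error is summable — I would iterate from $k_0$ to $k$. Because each step multiplies by a factor bounded away from $1$, after $k-k_0$ steps we pick up a factor like $(1-\theta)^{k-k_0}\leq 3^{-\tau(k-k_0)}$ for $\tau=\tau(\la,\La,d)\in(0,1)$, while the accumulated additive errors, being geometrically dominated, contribute at the same or faster rate; absorbing constants gives precisely \pref{ratek} with the stated dependence $C(1+\|M\|+\ell)^{4(d+1)}3^{(k_0-k)\tau}$. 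The choice of $k_0$ is dictated by needing $k_0$ large enough that (i) the mixing radius at scale $3^{k^3}$ is small compared to the subcube separation for all $k\geq k_0$, (ii) the $\frac{C}{M}$ variance term is absorbed into the contraction, and (iii) the regularity/ABP tradeoff in the first-moment estimate is in its asymptotic regime. The main obstacle, to reiterate, is step two: proving the strict multiplicative gain on $\overline{E}_{k^3}\underline{E}_{k^3}$ uniformly in $\ell$ and $M$, which is the genuine parabolic analogue of the hardest lemma of \cite{cs} and requires the Appendix C estimate rather than a direct transcription of the elliptic proof.
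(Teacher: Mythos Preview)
Your outline has the right ingredients (induction on scales, mixing for the variance, regularity for the remaining case) but the logical architecture is not correct, and as written the argument does not close.

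First, a small but genuine error: Lemma \ref{monobst} does \emph{not} say that the contact set restricted to every subcube in a partition equals the contact set of the subcube problem. It only gives equality when $\partial_p K_1\subset\partial_p K_2$, which fails for interior subcubes. For a general partition you only have the inequality from Theorem \ref{obsthm}\pref{obst3}, which is what yields $\overline{\pi}_{(k+1)^3}\leq \overline{A}_{(k+1)^3}$ and subadditivity. Your displayed inequality $\overline{J}_{(k+1)^3}\leq \overline{J}_{k^3}\,\overline{E}_{k^3}+\ldots$ is also dimensionally wrong; what \pref{probfact} actually gives is $\overline{J}_{(k+1)^3}\leq (\overline{E}_{k^3})^2+M^{-1}\overline{V}_{k^3}+\text{(mixing error)}$.

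The central gap is that mixing alone cannot produce the contraction. From the corrected bound you only get $\overline{J}_{(k+1)^3}\lesssim(\overline{E}_{k^3})^2$ up to small errors, and since $(\overline{E}_{k^3})^2\leq\overline{J}_{k^3}$ always, there is no gain unless $\overline{V}_{k^3}\geq\eta(\overline{E}_{k^3})^2$ for some fixed $\eta>0$, in which case $(\overline{E}_{k^3})^2\leq(1+\eta)^{-1}\overline{J}_{k^3}$. The paper's proof is organized precisely around this dichotomy: either some variance is large (Case 2), and mixing gives the contraction on that factor of $\mathbb{J}$; or all variances are small (Case 3), and a completely different mechanism is needed. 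Your proposal to show ``$\overline{E}_{k^3}\underline{E}_{k^3}\leq 1-\theta$'' is not the right target: this is an absolute bound, not a contraction of $\mathbb{J}_{(k+1)^3}$ relative to $\mathbb{J}_{k^3}$, and it does nothing when variances are small.

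What actually happens in Case 3 is not the ABP/spreading argument you sketch (that is the qualitative Lemma \ref{pos}). In the small-variance case, Chebyshev pins $\overline{\pi}_m,\underline{\pi}_m$ near their means on a set of large probability, so both are bounded \emph{below} by an explicit $\theta>0$. One then considers $h=\overline{v}^m-\underline{v}^m\geq 0$, which is a supersolution of $h_s-\MM^-(D^2h)\geq f_m$ with $\|f_m\|_{L^{d+1}}^{d+1}\gtrsim\theta|G_m|$, and applies the quantitative Fabes--Stroock--type lower bound (Proposition \ref{FSL}, from \cite{linreg1}) to force $h\geq c_{fs}\theta^{\al/2}$ on $\tfrac{2}{3}G_m$. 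Combined with H\"older continuity, this prevents $\overline{v}^m$ and $\underline{v}^m$ from both touching zero in the same subcube $G^{ij}_{k^3}$, which in turn yields a strict drop $\overline{E}_{m}\leq(1-c)\overline{E}_{s}$ (or the underlined analogue) and hence the contraction on $\mathbb{J}$. The causality issue you mention is real, but it enters at a different point: the domain on which \pref{genlb} holds a priori shrinks with $\|f_m\|_{L^{d+1}}$, and one needs Lemma \ref{lemmaka} (a second application of the subadditive ergodic theorem) to guarantee that $\{f_m\geq c\|f_m\|_{L^{d+1}}\}$ occupies a fixed fraction of the bottom of $G_m$, so that the lower bound holds on $\tfrac{2}{3}G_m$ uniformly. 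This step, together with the intermediate scale $m=k^3+3k^2$ (needed so that the H\"older oscillation over a $G_{k^3}$ subcube is small compared to the Fabes--Stroock lower bound at scale $G_m$), is absent from your outline and is where the argument is genuinely delicate.
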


\subsection{An outline of the proof of Theorem \ref{thmrate1}}
We provide a heuristic outline of the proof of Theorem \ref{thmrate1} in order to clarify the ideas in the rest of this section. We work inductively, by considering three different scales for parabolic cubes, which we denote by $G_{b}$, $G_{m}$ and $G_{s}$ to represent the big scale, middle scale, and small scale. We assume that \pref{ratek} holds for the small scale $G_{s}$. Given information about the small scale and the middle scale, there are three possible outcomes:\\
\underline{Case 1:}  If either $\mathbb{J}_{s}$ or $\mathbb{J}_{m}$ is less than some critical level, corresponding to the desired rate of $\mathbb{J}_{b}$, then by the non-increasing property of $\overline{\underline{J}}_{k}$ and $\mathbb{J}_{k}$, we may conclude. \\
\underline{Case 2:}
We suppose there exists some $\eta\in (0,1)$, to be chosen, such that $\overline{\underline{V}}_{m}\geq \eta \overline{\underline{J}}_{m}$, or $\overline{\underline{V}}_{s}\geq \eta \overline{\underline{J}}_{s}$. By applying the mixing hypothesis \pref{gmixing} to \pref{probfact}, we obtain a decay in the variances, which in this case yields a decay in the second moments. This allows us to conclude. \\
\underline{Case 3:} We address the case if both $\overline{\underline{J}}_{m}$ and $\overline{\underline{J}}_{s}$ are above the critical level in Case 1, and $\overline{\underline{V}}_{m}$ and $\overline{\underline{V}}_{s}$ are small compared to $\overline{\underline{J}}_{m}$ and $\overline{\underline{J}}_{s}$ respectively. Chebyshev's inequality yields that for most $\om\in \Om$, the random variables $\overline{\underline{\pi}}_{m}(\om)$ and $\overline{\underline{\pi}}_{s}(\om)$  are approximately constant and equal to $\overline{\underline{E}}_{m}$ and $\overline{\underline{E}}_{s}$ respectively. Therefore, a decay in the random variables $\overline{\underline{\pi}}_{m}(\cdot)$ to $\overline{\underline{\pi}}_{s}(\cdot)$ would be enough to conclude for these choices of $\om$. Since we are not in Case 1 or 2, $\overline{\pi}_{m}$ and $\underline{\pi}_{m}$ have explicit low bounds. Using these explicit lower bounds and the regularity estimates from \cite{linreg1}, we show that $\overline{v}^{m}$ and $\underline{v}_{m}$ are strictly separated in each subcube $G^{i}_{s}$ of $\frac{2}{3}G_{m}$. This implies there must be a decay in the measure of the contact sets from the small scale to the middle scale, and hence a decay from $\overline{\pi}_{s}$ to $\overline{\pi}_{m}$ or $\underline{\pi}_{s}$ to $\underline{\pi}_{m}$. This is enough to yield a rate on the second moments, and we may conclude. 

\subsection{Some Simplifying Observations and Notation}
We now assign specific values for the levels of $b, m, s$ in the outline above. The three scales we consider are $b=(k+1)^{3}$, $m=k^{3}+3k^{2}$ and $s=k^{3}$. Notice that $G_{(k+1)^{3}}$ is subdivided into $3^{(3k+1)(d+2)}$ subcubes $G^{i}_{k^{3}+3k^{2}}$, and then each of those subcubes is further divided into $3^{3k^{2}(d+2)}$ subcubes $G^{ij}_{k^{3}}$. Whenever any of the quantities such as $\overline{v}^{\al}$ and $\overline{\pi}^{\beta}_{\al}$ have indices on them, this will represent solving the obstacle problem in the domain $G^{\beta}_{\al}$, and working with the respective quantities there. 

Using these scales, we modify the statement of \pref{gmixing} for $r>3^{k^{2}}$. At the scales we have chosen, we may represent \pref{gmixing} as the following:
\begin{equation}\label{1mixing}
\sup_{A\in \mathcal{B}, B\in \mathcal{B}(r)} |\PP[A\cap B]-\PP[A]\PP[B]|\leq 3^{-k^{3/2}}\quad\text{if}\quad r>3^{k^{3}}.
\end{equation}

We also define the averaged quantities
\begin{equation}\label{avgdef}
\overline{\underline{A}}^{i}_{k^{3}+3k^{2}}=3^{-(3k^{2})(d+2)}\sum_{j=1}^{3^{k^{2}(d+2)}} \overline{\underline{\pi}}^{ij}_{k^{3}}(\omega)\quad\text{and}\quad\overline{\underline{A}}_{(k+1)^{3}}=3^{-(3k+1)(d+2)}\sum_{i=1}^{3^{(3k+1)(d+2)}} \overline{\underline{\pi}}^{i}_{k^{3}+3k^{2}}(\omega).
\end{equation}
These quantities naturally arise when applying \pref{gmixing} to \pref{probfact}. 

We note that by applying the stationarity property \pref{f1}, for $i=1, 2, \ldots, 3^{(3k+1)(d+2)}$, and $j=1, 2, \ldots, 3^{3k^{2}(d+2)}$, we have that
\begin{equation*}
\EE[\overline{\underline{\pi}}^{ij}_{k^{3}}]=\overline{\underline{E}}_{k^{3}}\quad\text{and}\quad \EE[\overline{\underline{\pi}}^{i}_{k^{3}+3k^{2}}]=\overline{\underline{E}}_{k^{3}+3k^{2}},
\end{equation*}
which implies that
\begin{equation*}
\EE[\overline{\underline{A}}_{k^{3}+3k^{2}}]=\overline{\underline{E}}_{k^{3}}\quad\text{and}\quad \EE[\overline{\underline{A}}_{(k+1)^{3}}]=\overline{\underline{E}}_{k^{3}+3k^{2}}.
\end{equation*}
Finally, recall that by \pref{f4},  $(\ell+F_{M}(0, y, s, \om))_{\pm}\leq C(1+\norm{M}+\ell)$. By a standard rescaling argument (see \cite{csw}), we may assume that for all $(y,s)\in\RR^{d+1}$, 
\begin{equation*}
(\ell+F(0, y, s, \om))_{\pm}\leq 1
\end{equation*}
which implies
\begin{equation}\label{simple}
0\leq \hat{\overline{\underline{\pi}}}_{k}\leq 1, ~~0\leq \overline{\underline{J}}_{k^{3}}\leq 1\quad\text{and}\quad 0\leq\overline{\underline{J}}_{k^{3}+3k^{2}}\leq 1.
\end{equation}
Using this rescaling, \pref{ratek} is equivalent to showing there exists $\tau=\tau(\la, \La, d)\in (0,1)$, $k_{0}=k_{0}(\la, \La, d)$ such that 
\begin{equation}\label{newrate}
\mathbb{J}_{k^{3}}\leq 3^{(k_{0}-k)\tau}
\end{equation}

\subsection{Technical Lemmas}
We next state the relevant lemmas needed to prove Theorem \ref{thmrate1}. We comment that some of the statements, and their proofs, are direct parabolic analogues of the corresponding results in \cite{cs}. We include them in the Appendix \ref{techlem} for completeness. However, the major difficulty in the parabolic setting is the argument in Case 3 to show that $\overline{v}^{m}$ and $\underline{v}^{m}$ are strictly separated. In the elliptic setting, it was shown in \cite{cs, csw} that the Fabes-Stroock estimate \cite{fs} yields a quantitative lower bound for $\overline{v}^{m}-\underline{v}^{m}$. The parabolic version of this quantitative regularity estimate was established in full generality in \cite{linreg1}. For this application, we use a special case of \cite{linreg1} to conclude that $\overline{v}^{m}-\underline{v}^{m}$ remains strictly positive in a subdomain of $G_{m}$. 

The first lemma we state is regarding the monotonicity of the obstacle problem, and the natural monotonicity inherited by $\overline{\underline{J}}_{k}$, and $\mathbb{J}_{k}$. 
\begin{lemma}\label{lemmonot}
For $k\geq 1$, 
\begin{enumerate}
\item$\overline{\underline{J}}_{k^{3}}\geq \overline{\underline{J}}_{k^{3}+3k^{2}}\geq\overline{\underline{J}}_{(k+1)^{3}},$\\
\item $ \mathbb{J}_{k^{3}} \geq \mathbb{J}_{k^{3}+3k^{2}}\geq \mathbb{J}_{(k+1)^{3}}$.
\end{enumerate}
\end{lemma}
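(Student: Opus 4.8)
The plan is to deduce Lemma~\ref{lemmonot} directly from the nesting relations between the three families of cubes and the monotonicity of the obstacle problem already recorded in Theorem~\ref{obsthm}\pref{obst3}, together with the identity $\EE[\overline{\underline{\pi}}^{ij}_{k^3}] = \overline{\underline{E}}_{k^3}$ and the definitions \pref{jdef}. The two assertions are essentially the same statement at the level of a single cube and at the level of second moments, so I would prove (1) and then observe that (2) follows formally from (1) by taking products.

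\textbf{Step 1: Monotonicity of the single-cube masses.} First I would recall that the three scales satisfy $G_{k^3} = G_s \subset G_m = G_{k^3+3k^2} \subset G_b = G_{(k+1)^3}$, since $3^{k^3} \leq 3^{k^3+3k^2} \leq 3^{(k+1)^3}$ and all three cubes share the top point $(0,0)$. Fix $\om$. By Theorem~\ref{obsthm}\pref{obst3}, if $K_1 \subset K_2$ then $\overline{v}_1 \leq \overline{v}_2$ on $K_1$, hence $\{\overline{v}_2 = 0\} \cap K_1 \subset \{\overline{v}_1 = 0\}$; the same holds from below. Because the integrand $(\ell + F(0,y,s,\om))_{-}^{d+1}$ is nonnegative and intrinsic to $\om$ (it does not depend on which obstacle problem we solve), enlarging the domain can only shrink the contact set, so
\begin{equation*}
\int_{\{\overline{v}^{k^3+3k^2}=0\}} (\ell+F(0,y,s,\om))_{-}^{d+1}\,dyds \;\geq\; \int_{\{\overline{v}^{(k+1)^3}=0\}} (\ell+F(0,y,s,\om))_{-}^{d+1}\,dyds,
\end{equation*}
and similarly for the step $G_{k^3}\subset G_{k^3+3k^2}$ and for the obstacle problem from below. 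Here I must be slightly careful about the normalization $1/|G_k|$: since $\overline{\pi}_k$ is the integral divided by $|G_k|$ and the larger cube has larger volume, monotonicity in $k$ of the \emph{unnormalized} integral does not automatically pass to $\overline{\pi}_k$. This is why one works with the averaged quantities \pref{avgdef}: writing $\overline{\pi}_{(k+1)^3}$ as the average over subcubes $G^{i}_{k^3+3k^2}$ of the unnormalized-integral-over-$G^i$ divided by $|G^i|$, and invoking Lemma~\ref{monobst} so that the contact set of the global solution restricted to each subcube agrees with (or is contained in) the contact set of the subcube's own obstacle solution, one gets $\overline{\pi}_{(k+1)^3}(\om) \leq \overline{\underline{A}}_{(k+1)^3}(\om)$ pointwise. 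Taking expectations and using $\EE[\overline{\underline{A}}_{(k+1)^3}] = \overline{E}_{k^3+3k^2}$ would, however, only give a statement about first moments; for second moments one argues more directly, as in Step 2.

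\textbf{Step 2: Passing to second moments.} For the claim $\overline{J}_{k^3} \geq \overline{J}_{k^3+3k^2} \geq \overline{J}_{(k+1)^3}$ I would argue that $0 \leq \overline{\pi}_{(k+1)^3}(\om) \leq \overline{\pi}_{k^3+3k^2}(\om)$ holds \emph{pointwise in $\om$} — again via Lemma~\ref{monobst}, which is precisely the tool that lets us compare the contact set measured in a small cube against the contact set of the global problem restricted to that small cube, so that the normalized masses are monotone after all (the normalization issue disappears because both $\overline{\pi}$'s are normalized by cubes whose union, after Lemma~\ref{monobst}, tiles the contact set without overlap of positive measure). Squaring a pointwise inequality between nonnegative quantities preserves it, and monotonicity of $\EE[\cdot]$ then yields $\overline{J}_{(k+1)^3} = \EE[\overline{\pi}_{(k+1)^3}^2] \leq \EE[\overline{\pi}_{k^3+3k^2}^2] = \overline{J}_{k^3+3k^2}$, and likewise the other step and the versions from below; this is (1). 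For (2), multiply the two pointwise inequalities $\overline{\pi}_{(k+1)^3} \leq \overline{\pi}_{k^3+3k^2}$ and $\underline{\pi}_{(k+1)^3} \leq \underline{\pi}_{k^3+3k^2}$ (legitimate since all terms are nonnegative), square, and take expectations to get $\mathbb{J}_{(k+1)^3} = \EE[\overline{\pi}_{(k+1)^3}^2\underline{\pi}_{(k+1)^3}^2] \leq \EE[\overline{\pi}_{k^3+3k^2}^2\underline{\pi}_{k^3+3k^2}^2] = \mathbb{J}_{k^3+3k^2}$, and similarly for the small-to-middle step.

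\textbf{Main obstacle.} I expect the only genuinely delicate point to be the bookkeeping around the volume normalization $1/|G_k|$: naively, larger $k$ means larger $|G_k|$, which pushes $\overline{\pi}_k$ down, while a smaller contact set also pushes it down — so the direction is consistent, but making it rigorous requires Lemma~\ref{monobst} to identify the contact set of the large-cube solution, restricted to a subcube, with that subcube's own obstacle-problem contact set (note the hypothesis $\partial_p K_1 \subset \partial_p K_2$, which does hold for the nested parabolic cubes sharing a top point since the lateral and bottom boundary of the inner cube sits inside the corresponding boundary of the outer one once translated appropriately — this needs a one-line check). Everything else is the monotonicity of Theorem~\ref{obsthm}\pref{obst3}, nonnegativity of the integrand, and monotonicity of expectation, so the proof should be short.
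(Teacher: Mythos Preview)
Your Step 1 is on the right track, and indeed the paper obtains exactly the pointwise bound $\overline{\pi}_{(k+1)^{3}}(\om)\leq \overline{A}_{(k+1)^{3}}(\om)$ that you describe. The gap is in Step 2.

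\textbf{The pointwise inequality is false.} The claim $\overline{\pi}_{(k+1)^{3}}(\om)\leq \overline{\pi}_{k^{3}+3k^{2}}(\om)$ does not hold $\om$-by-$\om$. The quantity $\overline{\pi}_{k^{3}+3k^{2}}(\om)$ is the normalized mass in the \emph{one particular} subcube $G_{k^{3}+3k^{2}}$ with top point $(0,0)$, whereas $\overline{\pi}_{(k+1)^{3}}(\om)$ sees all $3^{(3k+1)(d+2)}$ subcubes. If, for some $\om$, the integrand $(\ell+F(0,y,s,\om))_{-}^{d+1}$ happens to vanish on the centered subcube but is large on another subcube where $\overline{v}^{(k+1)^{3}}$ touches the obstacle, then $\overline{\pi}_{k^{3}+3k^{2}}(\om)=0$ while $\overline{\pi}_{(k+1)^{3}}(\om)>0$. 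Your appeal to Lemma~\ref{monobst} does not rescue this: its hypothesis $\partial_{p}K_{1}\subset\partial_{p}K_{2}$ fails for the nested cubes $G_{k^{3}+3k^{2}}\subset G_{(k+1)^{3}}$ sharing a top point, since the bottom and lateral faces of the inner cube lie in the \emph{interior} of the outer cube, not on its parabolic boundary. (The ``one-line check'' you defer would in fact fail.)

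\textbf{What the paper does instead.} After the pointwise bound $\overline{\pi}_{(k+1)^{3}}\leq \overline{A}_{(k+1)^{3}}$ (your Step 1), one squares and takes expectations to get $\overline{J}_{(k+1)^{3}}\leq \EE\bigl[(\overline{A}_{(k+1)^{3}})^{2}\bigr]$, and then expands the square, applies Cauchy--Schwarz term by term, and uses stationarity $\EE[(\overline{\pi}^{i}_{k^{3}+3k^{2}})^{2}]=\overline{J}_{k^{3}+3k^{2}}$ to obtain
\[
\EE\bigl[(\overline{A}_{(k+1)^{3}})^{2}\bigr]
=3^{-2(3k+1)(d+2)}\sum_{i,j}\EE\bigl[\overline{\pi}^{i}_{k^{3}+3k^{2}}\,\overline{\pi}^{j}_{k^{3}+3k^{2}}\bigr]
\leq 3^{-2(3k+1)(d+2)}\sum_{i,j}\overline{J}_{k^{3}+3k^{2}}
=\overline{J}_{k^{3}+3k^{2}}.
\]
This is precisely where stationarity enters and replaces the pointwise comparison you are missing.

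\textbf{A second slip.} In your argument for (2) you write $\mathbb{J}_{k}=\EE[\overline{\pi}_{k}^{2}\underline{\pi}_{k}^{2}]$, but by definition $\mathbb{J}_{k}=\overline{J}_{k}\,\underline{J}_{k}=\EE[\overline{\pi}_{k}^{2}]\,\EE[\underline{\pi}_{k}^{2}]$. Once (1) is proved, (2) is immediate by multiplying the two chains of inequalities; no further pointwise argument is needed.
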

This follows by a direct computation and employing the monotonicity of the obstacle problem. 

The next lemma addresses Case 2. In order to apply \pref{probfact}, we introduce the averaged quantities $\overline{\underline{A}}_{k}$ defined by \pref{avgdef}. 
\begin{lemma}\label{decayv}
For all $k\geq 1$, 
\begin{equation}\label{decays}
\mathbb{V}[\overline{\underline{A}}_{k^{3}+3k^{2}}]\leq 3^{-(3k^{2})(d+2)+(d+1)}\overline{\underline{V}}_{k^{3}}+3^{-k^{3/2}},
\end{equation}
and
\begin{equation}\label{decayb}
\mathbb{V}[\overline{\underline{A}}_{(k+1)^{3}}]\leq 3^{-(3k+1)(d+2)+(d+1)}\overline{\underline{V}}_{k^{3}+3k^{2}}+3^{-k^{3/2}}.
\end{equation}
\end{lemma}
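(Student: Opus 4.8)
\textbf{Proof proposal for Lemma \ref{decayv}.}

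The plan is to apply the variance estimate \pref{probfact} to each of the two averaged quantities $\overline{\underline{A}}_{k^{3}+3k^{2}}$ and $\overline{\underline{A}}_{(k+1)^{3}}$, after checking that the random variables being averaged are stationary copies of one another and that their pairwise covariances are controlled by the mixing rate \pref{1mixing}. I would present the argument for $\overline{\underline{A}}_{k^{3}+3k^{2}}$ in detail; the second estimate \pref{decayb} is identical after relabelling $(3k^2)(d+2)$ by $(3k+1)(d+2)$ and $k^3$ by $k^3+3k^2$, together with the observation that the separation between the subcubes $G^{i}_{k^3+3k^2}$ of $G_{(k+1)^3}$ is at least $3^{k^3+3k^2} > 3^{k^3}$, which is exactly the threshold appearing in \pref{1mixing}. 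The argument also works identically for the quantities from above and from below, which is why the statement is phrased with the $\overline{\underline{(\cdot)}}$ notation.

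First I would fix $i$ and write $\overline{\underline{A}}^{i}_{k^{3}+3k^{2}} = 3^{-(3k^{2})(d+2)} \sum_{j} \overline{\underline{\pi}}^{ij}_{k^{3}}(\omega)$, so that $M := 3^{(3k^2)(d+2)}$ is the number of subcubes. By the stationarity hypothesis \pref{f1} and the discussion preceding this lemma, each $\overline{\underline{\pi}}^{ij}_{k^3}$ has the same law as $\overline{\underline{\pi}}_{k^3}$, hence $\EE[\overline{\underline{\pi}}^{ij}_{k^3}] = \overline{\underline{E}}_{k^3}$ and $\mathbb{V}[\overline{\underline{\pi}}^{ij}_{k^3}] = \overline{\underline{V}}_{k^3}$ for every $j$. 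Next I would estimate the covariances $\sigma_{jj'} := \EE[\overline{\underline{\pi}}^{ij}_{k^3}\overline{\underline{\pi}}^{ij'}_{k^3}] - \EE[\overline{\underline{\pi}}^{ij}_{k^3}]\EE[\overline{\underline{\pi}}^{ij'}_{k^3}]$ for $j \neq j'$. The key point is that $\overline{\underline{\pi}}^{ij}_{k^3}$ is a random variable measurable with respect to the $\sigma$-algebra generated by $F(\cdot, y, s, \cdot)$ for $(y,s) \in G^{ij}_{k^3}$ — here one uses Lemma \ref{monobst}, or rather the locality of the obstacle problem, to see that the contact set $\{\overline{v}^{ij} = 0\}$ inside $G^{ij}_{k^3}$ depends only on $F$ restricted to that cube. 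Distinct subcubes $G^{ij}_{k^3}$, $G^{ij'}_{k^3}$ of $G^{i}_{k^3+3k^2}$ are separated by parabolic distance at least $3^{k^3}$ (their sidelengths), so after translating to center one of them at $C_1$, the relevant $\sigma$-algebras sit inside $\mathcal{B}$ and $\mathcal{B}(r)$ with $r > 3^{k^3}$; since $\overline{\underline{\pi}}^{ij}_{k^3} \in L^\infty_+$ with norm $\leq 1$ by the normalization \pref{simple}, the decorrelation estimate \pref{1mixing} (in its variance-of-bounded-random-variables form, the Proposition of Caffarelli--Souganidis quoted in \S2.4) gives $\sigma_{jj'} \leq 3^{-k^{3/2}}$ for all $j \neq j'$. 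For $j = j'$ we simply bound $\sigma_{jj} \leq \mathbb{V}[\overline{\underline{\pi}}^{ij}_{k^3}] = \overline{\underline{V}}_{k^3} \leq 1$, but in fact we fold the diagonal into the $V/M$ term of \pref{probfact}.

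Now applying \pref{probfact} with $E = \overline{\underline{E}}_{k^3}$, $V = \overline{\underline{V}}_{k^3}$, and $\sigma_{jj'} \leq 3^{-k^{3/2}}$ for $j \neq j'$ (and $\sigma_{jj} \leq 1$), we get
\begin{equation*}
\mathbb{V}[\overline{\underline{A}}^{i}_{k^{3}+3k^{2}}] \leq \frac{1}{M}\overline{\underline{V}}_{k^3} + \frac{1}{M^2}\sum_{j,j'=1}^{M}\sigma_{jj'} \leq \frac{1}{M}\overline{\underline{V}}_{k^3} + \frac{M^2 - M}{M^2}3^{-k^{3/2}} + \frac{1}{M}.
\end{equation*}
Since $M = 3^{(3k^2)(d+2)}$, the first term is $3^{-(3k^2)(d+2)}\overline{\underline{V}}_{k^3}$; the stated bound has $3^{-(3k^2)(d+2)+(d+1)}$ on the right, so there is a factor $3^{d+1}$ of slack which comfortably absorbs the extra $1/M$ diagonal term for $k$ large (indeed $1/M \leq 3^{-(3k^2)(d+2)+(d+1)}$ for all $k\geq 1$), and the second term is $\leq 3^{-k^{3/2}}$. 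Finally I would note that $\overline{\underline{A}}_{k^3+3k^2} = 3^{-(3k+1)(d+2)}\sum_i \overline{\underline{A}}^{i}_{k^3+3k^2}$ is itself an average of the $\overline{\underline{A}}^{i}_{k^3+3k^2}$, so its variance is no larger than the maximum of the individual variances plus cross terms — but here the cleanest route is to observe that $\overline{\underline{A}}_{k^3+3k^2}$ is literally the average of all $\overline{\underline{\pi}}^{ij}_{k^3}$ over all $i$ and $j$ (a total of $3^{(3k+1)(d+2)+(3k^2)(d+2)}$ of them), rerun the same covariance bookkeeping directly at this level, and note that the gain in the $V$-term only improves, while all cross-covariances between subcubes in different $G^i$ or the same $G^i$ are still $\leq 3^{-k^{3/2}}$ by \pref{1mixing} (distinct $k^3$-subcubes of $G_{(k+1)^3}$ are separated by at least $3^{k^3}$). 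This yields \pref{decays}; the estimate \pref{decayb} follows verbatim with the subcube count $3^{(3k+1)(d+2)}$ in place of $3^{(3k^2)(d+2)}$ and the lower-level variance $\overline{\underline{V}}_{k^3+3k^2}$ in place of $\overline{\underline{V}}_{k^3}$, using separation $\geq 3^{k^3+3k^2} > 3^{k^3}$.

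The main obstacle I anticipate is not the variance arithmetic but justifying cleanly that $\overline{\underline{\pi}}^{ij}_{k^3}$ is measurable with respect to $F$ restricted to $G^{ij}_{k^3}$ — i.e. the spatial (and causal) locality of the obstacle problem's contact set — so that the mixing hypothesis is actually applicable; this is where one must be a little careful with the parabolic structure, and Lemma \ref{monobst} together with the finite-speed/localization properties recorded in Theorem \ref{obsthm} is the tool to invoke. A secondary bookkeeping point is keeping the exponents $(3k^2)(d+2)$ versus $(3k+1)(d+2)$ straight between the two scales, and confirming the slack factor $3^{d+1}$ indeed dominates the diagonal $1/M$ contribution for every $k \geq 1$.
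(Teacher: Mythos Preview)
Your overall strategy is right and matches the paper's, but there is a genuine gap: your claim that ``distinct subcubes $G^{ij}_{k^{3}}$, $G^{ij'}_{k^{3}}$ are separated by parabolic distance at least $3^{k^{3}}$'' is false. Adjacent subcubes in the grid share a face, so the distance between them (as sets) is zero; equivalently, their centres are only $3^{k^{3}}$ apart, which does \emph{not} exceed the threshold $r>3^{k^{3}}$ in \pref{1mixing}. Hence the covariance bound $\sigma_{jj'}\leq 3^{-k^{3/2}}$ is not available for nearest-neighbour pairs, and your application of \pref{probfact} with that bound for all $j\neq j'$ is unjustified. This is not a bookkeeping slip --- it is exactly why the right-hand side of \pref{decays} carries the extra factor $3^{d+1}$ that you flagged as ``slack''.

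The paper's fix is to split the covariance sum into near and far pairs. For each fixed $j$, there are at most $3^{d+1}$ subcubes $G^{ij'}_{k^{3}}$ (including $j'=j$) whose centres lie within parabolic distance $2\cdot 3^{k^{3}}$; for these one uses the crude Cauchy--Schwarz/stationarity bound $\EE[\overline{\underline{\pi}}^{ij}_{k^{3}}\overline{\underline{\pi}}^{ij'}_{k^{3}}]-(\overline{\underline{E}}_{k^{3}})^{2}\leq \overline{\underline{V}}_{k^{3}}$. For the remaining far pairs, the mixing bound $3^{-k^{3/2}}$ does apply. Summing gives
\[
\mathbb{V}[\overline{\underline{A}}_{k^{3}+3k^{2}}]\leq 3^{-2(3k^{2})(d+2)}\Bigl(3^{(3k^{2})(d+2)}\cdot 3^{d+1}\cdot \overline{\underline{V}}_{k^{3}}+3^{2(3k^{2})(d+2)}\cdot 3^{-k^{3/2}}\Bigr),
\]
which is exactly \pref{decays}. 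So the factor $3^{d+1}$ is not absorbing a diagonal remainder; it \emph{is} the count of close neighbours for which mixing fails. Your digression about re-averaging over all $i$ is also unnecessary: in the lemma $\overline{\underline{A}}_{k^{3}+3k^{2}}$ means $\overline{\underline{A}}^{i}_{k^{3}+3k^{2}}$ for any fixed $i$ (all have the same variance by stationarity).
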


For Case 3, we first justify the claim that $\overline{\underline{\pi}}_{k}(\om)$ is comparable to $\overline{\underline{E}}_{k}$, for most $\om$'s. Whenever we are in the case where,
\begin{equation}\label{badcase}
\overline{\underline{V}}_{k^{3}}\leq \eta(\overline{\underline{E}}_{k^{3}})^{2}\quad\text{and}\quad \overline{\underline{V}}_{k^{3}+3k^{2}}\leq \eta(\overline{\underline{E}}_{k^{3}+3k^{2}})^{2},
\end{equation}
then by Chebyshev's inequality, for $j=k^{3}$ or $k^{3}+3k^{2}$, 
\begin{equation*}
\PP\left[ |\overline{\underline{\pi}}_{j}-\overline{\underline{E}}_{j}|\geq \frac{\overline{\underline{E}}_{j}}{2}\right]\leq \PP\left[|\overline{\underline{\pi}}_{j}-\overline{\underline{E}}_{j}|\geq \sqrt{\frac{\overline{\underline{V}}_{j}}{4\eta}}\right]\leq 4\eta.
\end{equation*}

In Case 3, we would like to show that $\overline{\underline{\pi}}^{ij}_{k^{3}}$ is controlled by the expectation. By the above, this is true for each $G^{ij}_{k^{3}}$, up to certain exceptional sets. However, in order to understand the behavior at a global level, we define
\begin{equation*}
\overline{\underline{a}}^{i}_{k^{3}+3k^{2}}(\om)=3^{-(3k^{2}(d+2))}\sum_{j=1}^{3^{3k^{2}(d+2)}} \mathbbm{1}_{\left\{ \overline{\underline{\pi}}^{ij}_{k^{3}}(\cdot)\leq \frac{1}{2}\overline{\underline{E}}_{k^{3}}\right\}}(\om),
\end{equation*}
and 
\begin{equation*}
\overline{\underline{N}}^{i}_{k^{3}+3k^{2}}(\om)=\sum_{j=1}^{3^{3k^{2}(d+2)}} \left(1-\mathbbm{1}_{\left\{ \overline{\underline{\pi}}^{ij}_{k^{3}}(\cdot)\leq \frac{1}{2}\overline{\underline{E}}_{k^{3}}\right\}}(\om)\right).
\end{equation*}
We point out that $\overline{N}^{i}_{k^{3}+3k^{2}}$ represents the number of subcubes where $\overline{\underline{\pi}}^{ij}_{k^{3}}(\om)>\frac{1}{2}\overline{\underline{E}}_{k^{3}}$.



Moreover, we consider
\begin{equation}\label{zeta}
\zeta_{d}=\frac{1}{4}\left(\frac{2}{3}\right)^{d+1}, 
\end{equation}
where the choice for $\zeta_{d}$ will become clear in the proof of Theorem \ref{thmrate1}. We consider the sets
\begin{equation}
\langle \overline{\underline{B}}_{k^{3}}\rangle =\left\{\om\in \Om: \langle\overline{\underline{a}}^{ij}_{k^{3}}\rangle(\om)>\zeta_{d}\right\}.
\end{equation}

We now justify the claim that $\overline{\underline{\pi}}_{k^{3}+3k^{2}}$ are ``almost constant" up to these exceptional sets:
\begin{lemma}\label{excsets}
Suppose \pref{badcase} holds. For every $k$, there exists subsets $\overline{\underline{B}}_{k^{3}+3k^{2}}\subset\Om$ so that
\begin{equation}\label{chebym}
\overline{\underline{\pi}}_{k^{3}+3k^{2}}\in \left[\frac{1}{2}\overline{\underline{E}}_{k^{3}+3k^{2}}, \frac{3}{2}\overline{\underline{E}}_{k^{3}+3k^{2}}\right]\quad\text{in}\quad \Om\setminus \overline{\underline{B}}_{k^{3}+3k^{2}}\quad\text{and}\quad\PP[\overline{\underline{B}}_{k^{3}+3k^{2}}]\leq 4\eta.
\end{equation}

Also, 
\begin{equation}
\PP[\langle \overline{\underline{B}}_{k^{3}}\rangle] < \zeta_{d}^{-1}\eta\quad\text{and}\quad\overline{\underline{N}}_{k^{3}}\geq (1-\zeta_{d})3^{3k^{2}(d+2)}\quad\text{in}\quad \Om\setminus \langle \overline{\underline{B}}_{k^{3}}\rangle.
\end{equation}
If 
\begin{equation*}
\textbf{B}_{k}=\overline{B}_{k^{3}+3k^{2}}\cup \underline{B}_{k^{3}+3k^{2}}\cup \langle\overline{B}_{k^{3}}\rangle\cup \langle \underline{B}_{k^{3}}\rangle,
\end{equation*} then we have that
\begin{equation}
\PP[\textbf{B}_{k}]\leq 8\eta (1+ \zeta_{d}).
\end{equation}
\end{lemma}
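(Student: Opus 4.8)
\textbf{Proof proposal for Lemma \ref{excsets}.}
The plan is to establish the three conclusions in turn, with the first being a direct application of Chebyshev's inequality combined with the Hölder continuity of the averaged quantities $\overline{\underline{A}}_{(k+1)^3}$, and the remaining two following by a counting argument on the subcubes. First I would prove \pref{chebym}: under hypothesis \pref{badcase}, the preliminary Chebyshev bound stated just before the lemma gives $\PP[|\overline{\underline{\pi}}_{k^3+3k^2}-\overline{\underline{E}}_{k^3+3k^2}|\geq \overline{\underline{E}}_{k^3+3k^2}/2]\leq 4\eta$, so I would simply set
\begin{equation*}
\overline{\underline{B}}_{k^3+3k^2}=\left\{\om\in\Om:\ |\overline{\underline{\pi}}_{k^3+3k^2}(\om)-\overline{\underline{E}}_{k^3+3k^2}|\geq \tfrac12\,\overline{\underline{E}}_{k^3+3k^2}\right\},
\end{equation*}
which immediately gives both the two-sided bound on $\overline{\underline{\pi}}_{k^3+3k^2}$ outside $\overline{\underline{B}}_{k^3+3k^2}$ and the probability estimate $\PP[\overline{\underline{B}}_{k^3+3k^2}]\leq 4\eta$.

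Next I would handle the bound on $\PP[\langle\overline{\underline{B}}_{k^3}\rangle]$ and the lower bound on $\overline{\underline{N}}_{k^3}$. The key observation is that $\langle\overline{\underline{a}}^{ij}_{k^3}\rangle$ (equivalently the averages $\overline{\underline{a}}^{i}_{k^3+3k^2}$ of the indicator variables $\mathbbm{1}_{\{\overline{\underline{\pi}}^{ij}_{k^3}\leq \frac12\overline{\underline{E}}_{k^3}\}}$) has expectation at most $4\eta$, again by the preliminary Chebyshev estimate applied at scale $j=k^3$ to each subcube $G^{ij}_{k^3}$ and the fact that $\EE[\overline{\underline{\pi}}^{ij}_{k^3}]=\overline{\underline{E}}_{k^3}$ by stationarity \pref{f1}. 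Then Markov's inequality applied to this nonnegative random variable with threshold $\zeta_d$ yields
\begin{equation*}
\PP[\langle\overline{\underline{B}}_{k^3}\rangle]=\PP[\langle\overline{\underline{a}}^{ij}_{k^3}\rangle>\zeta_d]\leq \frac{\EE[\langle\overline{\underline{a}}^{ij}_{k^3}\rangle]}{\zeta_d}\leq \frac{4\eta}{\zeta_d}<\zeta_d^{-1}\eta
\end{equation*}
once $\eta$ is taken small (or, more honestly, I should carry the constant $4$ and adjust the final statement; I would check the bookkeeping here against the definition of $\zeta_d$). For $\om\in\Om\setminus\langle\overline{\underline{B}}_{k^3}\rangle$ we have $\langle\overline{\underline{a}}^{ij}_{k^3}\rangle(\om)\leq \zeta_d$, meaning the fraction of subcubes $G^{ij}_{k^3}$ with $\overline{\underline{\pi}}^{ij}_{k^3}(\om)\leq \frac12\overline{\underline{E}}_{k^3}$ is at most $\zeta_d$; since $\overline{\underline{N}}^{i}_{k^3+3k^2}$ counts the complementary set of subcubes, this gives $\overline{\underline{N}}_{k^3}\geq(1-\zeta_d)3^{3k^2(d+2)}$ as claimed.

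Finally, the bound on $\PP[\textbf{B}_k]$ follows by a union bound over the four sets comprising $\textbf{B}_k$: each of $\overline{B}_{k^3+3k^2}$ and $\underline{B}_{k^3+3k^2}$ contributes at most $4\eta$, and each of $\langle\overline{B}_{k^3}\rangle$ and $\langle\underline{B}_{k^3}\rangle$ contributes at most $\zeta_d^{-1}\cdot 4\eta$ (reconciling with the displayed $\zeta_d^{-1}\eta$ via the constant bookkeeping noted above), so $\PP[\textbf{B}_k]\leq 8\eta+8\zeta_d^{-1}\eta=8\eta(1+\zeta_d^{-1})$, which after absorbing constants is of the stated form $8\eta(1+\zeta_d)$ up to the precise form of $\zeta_d$. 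The only genuinely delicate point — and where I expect to spend the most care — is making the constants in the Chebyshev/Markov steps consistent with the exact normalizations of $\overline{\underline{a}}$, $\overline{\underline{N}}$, and $\zeta_d=\frac14(2/3)^{d+1}$; the probabilistic content is entirely routine, so the write-up is really just careful bookkeeping of these universal constants to land precisely on the displayed inequalities.
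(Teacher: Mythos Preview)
Your proposal is correct and follows essentially the same approach as the paper: define $\overline{\underline{B}}_{k^3+3k^2}$ via Chebyshev, bound $\EE[\overline{\underline{a}}]$ by $4\eta$ using the one-sided tail estimate at scale $k^3$ together with stationarity, apply Markov with threshold $\zeta_d$, read off $\overline{\underline{N}}_{k^3}$ from the definition, and finish with a union bound. Your hesitation about the constants is well placed --- the paper's own proof obtains $\PP[\langle\overline{\underline{B}}_{k^3}\rangle]\leq 4\eta\zeta_d^{-1}$ and $\PP[\textbf{B}_k]\leq 8\eta(1+\zeta_d^{-1})$, so the displayed bounds in the statement carry minor typos; your stray reference to H\"older continuity of $\overline{\underline{A}}_{(k+1)^3}$ is irrelevant and should simply be dropped.
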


Up to now, these lemmas have been direct parabolic analogues of the results of \cite{cs}. We now begin to explain the most delicate part of the rate computation in the parabolic setting, the strict separation of $\overline{v}^{m}$ and $\underline{v}^{m}$. 

We consider $h^{m}(y,s, \om)=\overline{v}^{m}(y,s, \om)-\underline{v}^{m}(y,s,\om)$ is a nonnegative supersolution to 
\begin{equation}\label{diff}
h_{s}-\MM^{-}(D^{2}h)\geq f_{m}:=(\ell+F(0, y, s, \om))_{+}\chi_{\left\{\overline{v}^{m}=0\right\}}+(\ell+F(0, y, s, \om))_{-}\chi_{\left\{\underline{v}^{m}=0\right\}}. 
\end{equation}
A direct application of Theorem \ref{genfs} (with $\ka=2/3$) yields a constant $c=c(\la, \La, d)$ so that for all $|y|\leq \frac{2}{3} 3^{m}$, $0\geq s\geq -\frac{2}{3} \frac{\norm{f_{m}}^{d+1}_{L^{d+1}(G_{m})}}{2}3^{-md}$, for all $\om\in\Om$
\begin{equation}\label{genlb}
c3^{m(2-\left(d+2\right)\al)}\norm{f_{m}}^{1-(d+1)\al}_{L^{\infty}(G_{m})}\norm{f_{m}}_{L^{d+1}(G_{m})}^{(d+1)\al}\leq h^{m}(y,s, \om).
\end{equation}
We point out that this estimate is not good enough for the application we have in mind. The domain where \pref{genlb} holds depends on $\norm{f_{m}}_{L^{d+1}(G_{m})}$, which is a consequence of the causality property of parabolic equations. In particular, $f_{m}=f_{k^{3}+3k^{2}}$ decays as $k\rightarrow\infty$, and therefore we have an estimate which worsens in a domain which is also shrinking. If one tries to continue the argument of \cite{cs} with \pref{genlb}, the argument breaks down. We must find an estimate which holds in a fixed fraction of the cube $G_{m}$. 

In light of the remarks in Appendix \ref{quantreg}, it is enough to show that there exists a suitable choice of $t_{0}$ so that 
\begin{equation*}
\Ga(\om):=\left\{(y,s)\in G_{m}: f_{m}(y,s,\om)>\frac{1}{2|G_{m}|^{1/(d+1)}}\norm{f_{m}}_{L^{d+1}(G_{m})}\right\}\subset\left\{t\leq t_{0}\right\}. 
\end{equation*}
In order to do so, we will apply the subadditive ergodic theorem (Theorem \ref{genset}), in order to show that $\Ga(\om)$ spreads all over the cylinder $G^{m}$ almost surely. 

We define $G^{'}_{m}=(0, 3^{m})^{d}\times (-3^{2m}, -\frac{2}{3}\cdot3^{2m}]\subset G_{m}$. We note that by construction that $\partial_{p}G^{'}_{m}\subset \partial_{p}G_{m}$, which by Lemma \ref{monobst} implies that the corresponding obstacle problems in $G^{'}_{m}$ have the same contact sets as those for $G_{m}$. In other words, $\left\{\overline{v}^{'m}=0\right\}=\left\{\overline{v}^{m}=0\right\}\cap G^{'}_{m}$. In particular, $\overline{\underline{m}}(G^{'}_{m}, \ell, \om)=|\left\{\overline{\underline{v}}^{m}=0\right\}\cap G^{'}_{m}|.$ 

We note that since $\overline{\underline{m}}(G_{m}, \ell, \om)$ is a stationary, subadditive quantity, it follows by the same arguments of Proposition 3.1 and \pref{simple} that 
\begin{equation*}
R_{G_{m}}(\om):=\int f_{m}^{d+1}~dyds=\int (\ell+F(0, y, s, \om))^{d+1}_{+}\chi_{\left\{\overline{v}^{m}=0\right\}}+(\ell+F(0, y, s, \om))^{d+1}_{-}\chi_{\left\{\underline{v}^{m}=0\right\}}~dyds
\end{equation*}
is a stationary, subadditive process. The crucial step in the next lemma is to apply Theoreom \ref{genset} to $R_{G_{m}}(\om)$. 

\begin{lemma}\label{lemmaka}
There exists positive an event of full probability, $\Om_{1}$, such that for all $\om\in\Om_{1}$, 
\begin{equation}\label{frac}
\left| \left\{f_{j}(\cdot, \cdot, \om)> \frac{1}{(4|G_{j}|)^{1/(d+1)}}\norm{f_{j}}_{L^{d+1}(G_{j})}\right\}\cap G^{'}_{j} \right| \geq \frac{1}{12}\norm{f_{j}}^{d+1}_{L^{d+1}(G_{j})}.
\end{equation} 
for $j$ sufficiently large. 
\end{lemma}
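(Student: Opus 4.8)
The goal is a lower bound on the measure of the super-level set of $f_j$ relative to $\|f_j\|_{L^{d+1}}^{d+1}$, valid in the sub-cylinder $G_j'$ for large $j$. The natural strategy is to recognize $R_{G_j}(\om) = \int_{G_j} f_j^{d+1}\,dyds$ as a stationary subadditive process (as already observed in the excerpt), apply the subadditive ergodic theorem (Theorem \ref{genset}) to obtain an almost-sure limit $\bar R = \lim_j R_{G_j}(\om)/|G_j|$, and then use a Chebyshev-type argument to control the portion of the mass of $f_j^{d+1}$ carried by the small-value region $\{f_j \le (4|G_j|)^{-1/(d+1)}\|f_j\|_{L^{d+1}(G_j)}\}$. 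First I would split: on $\{f_j \le (4|G_j|)^{-1/(d+1)}\|f_j\|_{L^{d+1}(G_j)}\}$ the integral of $f_j^{d+1}$ is at most $|G_j| \cdot (4|G_j|)^{-1}\|f_j\|^{d+1}_{L^{d+1}(G_j)} = \tfrac14 \|f_j\|^{d+1}_{L^{d+1}(G_j)}$. Hence the complementary super-level set carries at least $\tfrac34 \|f_j\|^{d+1}_{L^{d+1}(G_j)}$ of the mass. Since $f_j \le 1$ by the rescaling \pref{simple}, the measure of that super-level set (inside all of $G_j$) is at least $\tfrac34\|f_j\|^{d+1}_{L^{d+1}(G_j)}$. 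The content of the lemma is to transfer a fixed fraction of this into $G_j'$, the bottom $\tfrac13$-slab in time.

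The key step is the subadditive ergodic theorem applied to the stationary subadditive process $R_{G_m}(\om)$ and also to its restriction to time-slabs. Concretely, I would apply Theorem \ref{genset} to the process on $G_j'$: because $\partial_p G_m' \subset \partial_p G_m$, Lemma \ref{monobst} gives $\{\overline v^{'m}=0\} = \{\overline v^m=0\}\cap G_m'$ and similarly for $\underline v$, so $R_{G_m'}(\om) = \int_{G_m'} f_m^{d+1}\,dyds$ is itself a stationary, subadditive quantity over the translates of the slab-shaped region $G_m'$. Akcoglu--Krengel's theorem (in the generalized form of Appendix A, which handles cubes of arbitrary side lengths, including the anisotropic slab $G_m'$) yields an almost-sure constant limit $\lim_{m\to\infty} R_{G_m'}(\om)/|G_m'| = \bar R_{\text{slab}}$. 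By comparing $G_j' \subset G_j$ with $G_j$ partitioned into time-slabs of the same shape as $G_j'$ (there are boundedly many such slabs stacked in time, namely three, up to the overlap on $\partial_p$), stationarity forces $\bar R_{\text{slab}} = |G_j'|/|G_j| \cdot \bar R = \tfrac13 \bar R$ — the limiting density is the same whether one averages over the full cube or over the slab. Therefore $R_{G_j'}(\om)/|G_j'| \to \tfrac13\cdot\tfrac{|G_j|}{|G_j'|}\cdot \tfrac{|G_j'|}{|G_j|}\bar R$... more carefully: $R_{G_j'}(\om) \sim \tfrac13 R_{G_j}(\om)$ asymptotically a.s., i.e. for $\om$ in a full-probability event $\Om_1$ and $j$ large, $R_{G_j'}(\om) \ge \tfrac14 R_{G_j}(\om) = \tfrac14\|f_j\|^{d+1}_{L^{d+1}(G_j)}$.

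With that, I finish inside $G_j'$: again split $G_j'$ into $\{f_j \le (4|G_j|)^{-1/(d+1)}\|f_j\|_{L^{d+1}(G_j)}\}$ and its complement. On the low-value part, $\int_{G_j'} f_j^{d+1} \le |G_j'|(4|G_j|)^{-1}\|f_j\|^{d+1}_{L^{d+1}(G_j)} \le \tfrac14\|f_j\|^{d+1}_{L^{d+1}(G_j)}$. Since $\int_{G_j'} f_j^{d+1} = R_{G_j'}(\om) \ge \tfrac14\|f_j\|^{d+1}_{L^{d+1}(G_j)}$... this is tight, so I would instead take the slab-ratio constant slightly better than $\tfrac14$ (use that $G_j'$ is the $\tfrac13$-slab so the density limit gives asymptotically $\tfrac13$, hence $R_{G_j'}(\om) \ge \tfrac{3}{10}\|f_j\|^{d+1}_{L^{d+1}(G_j)}$ for large $j$), leaving at least $\tfrac{3}{10} - \tfrac14 = \tfrac{1}{20} > \tfrac{1}{12}\cdot\tfrac{3}{10}$... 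I would just be careful with constants so the complementary super-level set inside $G_j'$ carries at least $\tfrac1{12}\|f_j\|^{d+1}_{L^{d+1}(G_j)}$ of the mass, and since $f_j \le 1$ its measure is at least that. The main obstacle is the second paragraph: justifying that the subadditive ergodic theorem applies to the anisotropic, time-slab-shaped regions $G_m'$ and that the limiting density over the slab equals $\tfrac13$ of the density over the cube — this is exactly why the generalized Akcoglu--Krengel theorem of Appendix A (for cubes with sides of arbitrary length) is needed rather than the classical statement, and one must check $R_{G_m'}$ inherits stationarity and subadditivity from the obstacle-problem contact-set identity of Lemma \ref{monobst}.
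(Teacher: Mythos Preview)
Your proposal is correct and is essentially the same argument as the paper's, only organized as a direct proof rather than by contradiction. The paper assumes \pref{frac} fails along a subsequence, splits $\int_{G'_j} f_j^{d+1}$ over the level set exactly as you do, and arrives at $\frac{R_{G'_j}}{|G'_j|} < \frac{1}{2}\frac{R_{G_j}}{|G_j|}$; since both ratios converge a.s.\ to the \emph{same} limit by Theorem \ref{genset} applied to the two nested families $\{G_j\}$ and $\{G'_j\}$ (using Lemma \ref{monobst} to identify $f_j$ with $f'_j$ on $G'_j$, exactly as you suggest), this is a contradiction.

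One small slip to clean up: the limiting \emph{densities} satisfy $\bar R_{\text{slab}} = \bar R$, not $\bar R_{\text{slab}} = \tfrac{1}{3}\bar R$; it is the total masses that satisfy $R_{G'_j} \sim \tfrac{1}{3} R_{G_j}$, which you then use correctly. Also, in your final splitting you should keep the sharper bound $|G'_j|(4|G_j|)^{-1} = \tfrac{1}{12}$ on the low-value region rather than $\tfrac14$; then $R_{G'_j} \geq c\|f_j\|_{L^{d+1}}^{d+1}$ with any $c > \tfrac{1}{12}$ (and $c$ can be taken arbitrarily close to $\tfrac13$) immediately gives a super-level-set measure of at least $(c-\tfrac{1}{12})\|f_j\|_{L^{d+1}}^{d+1}$, so the constant $\tfrac{1}{12}$ is comfortably achieved.
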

 \begin{proof}
 We first note that by Theorem \ref{genset}, there exists an event $\Om_{1}$ of full probability, such that $\frac{R_{I}(\om)}{|I|}$ converges as $|I|\rightarrow\infty$. 

For the purposes of contradiction, we suppose that there exists a subsequence $\left\{j_{n}\right\}\rightarrow\infty$ (perhaps depending on $\om$), so that 
 \begin{equation}\label{wrong}
\left| \left\{f_{j}(\cdot, \cdot, \om)> \frac{1}{(4|G_{j}|)^{1/(d+1)}}\norm{f_{j}}_{L^{d+1}(G_{j})}\right\}\cap G^{'}_{j} \right| < \frac{1}{12}\norm{f_{j}}^{d+1}_{L^{d+1}(G_{j})}
\end{equation}
for all $j_{n}$ in this subsequence. To simplify the notation, let $H_{j}=\frac{1}{(4|G_{j}|)^{1/(d+1)}}\norm{f_{j}}_{L^{d+1}(G_{j})}$. We now compute
 \begin{align*}
 \int_{G^{'}_{j}}|f_{j}|^{d+1} dyds&=\int_{G^{'}_{j}\cap \left\{f_{j}> H_{j}\right\}}|f_{j}|^{d+1} dy ds+ \int_{G^{'}_{j}\cap \left\{f_{j}\leq H_{j}\right\}} |f_{j}|^{d+1}dy ds\\
 &\leq \left| \left\{f_{j}> H_{j}\right\}\cap G^{'}_{j} \right|+  \frac{|G^{'}_{j}|}{4|G_{j}|}\norm{f_{j}}^{d+1}_{L^{d+1}(G_{j})}\\
 &< \frac{1}{12}\norm{f_{j}}^{d+1}_{L^{d+1}(G_{j})}+\frac{|G^{'}_{j}|}{4|G_{j}|}\norm{f_{j}}^{d+1}_{L^{d+1}(G_{j})}\\
 &=\frac{1}{4}\frac{|G^{'}_{j}|}{|G_{j}|} \norm{f_{j}}^{d+1}_{L^{d+1}(G_{j})}+\frac{|G^{'}_{j}|}{4|G_{j}|}\norm{f_{j}}^{d+1}_{L^{d+1}(G_{j})}
  \end{align*}
which implies 
 \begin{equation}
 \frac{1}{|G^{'}_{j}|}  \int_{G^{'}_{j}}|f_{j}|^{d+1} dyds<\frac{1}{2}\frac{1}{|G_{j}|} \norm{f_{j}}^{d+1}_{L^{d+1}(G_{j})}.
 \end{equation}
 
Since $\left\{\overline{v}^{j}=0\right\}=\left(\left\{\overline{v}^{'j}=0\right\}\cap G^{'}_{j}\right)$, we have that that $f_{j}=f^{'}_{j}$ in $G^{'}_{j}$. This implies
 \begin{equation}\label{contr}
 \frac{R_{G^{'}_{j}}}{|G^{'}_{j}|}<\frac{1}{2} \frac{R_{G_{j}}}{|G_{j}|}.
 \end{equation}
 
 Finally, we note that since $\left\{G^{'}_{j}\right\}$ and $\left\{G_{j}\right\}$ form nested, increasing families of cubes, by Theorem \ref{genset}, both sides of \pref{contr} converge almost surely to the same number as $j_{n}\rightarrow \infty$, and we obtain a contradiction. 
 \end{proof}
 
Lemma \ref{lemmaka} shows that 
 \begin{equation}\label{inprob}
 \lim_{j\rightarrow\infty}\PP\left[\left| \left\{f_{j}(\cdot, \cdot, \om)> \frac{1}{(4|G_{j}|)^{1/(d+1)}}\norm{f_{j}}_{L^{d+1}(G_{j})}\right\}\cap G^{'}_{j} \right| \geq \frac{1}{12}\norm{f_{j}}^{d+1}_{L^{d+1}(G_{j})}\right]=0. 
 \end{equation}
 
As a consequence of Lemma \ref{lemmaka}, and in light of the remarks made in Appendix \ref{quantreg}, we have have
 \begin{proposition}\label{FSL}
There exists positive constants $c_{fs}=c_{fs}(\la, \La, d)$, $\rho=\rho(\la, \La, d)$, and $\beta=\beta(\la, \La, d)$ such that for all $\om\in\Om, j\in\mathbb{N}$ satisfying \pref{frac}, for all $|y|\leq \frac{2}{3} 3^{j}$, for all $-\frac{2}{3}3^{-2j} \leq s\leq0$,
\begin{equation}
h_{j}(y,s, \om)\geq  c_{fs} \norm{f_{j}}^{d+1}_{L^{(d+1)\al}(G_{j})},
\end{equation}
with $\al=\rho+\beta \log\left(2|G_{j}|/\norm{f}_{L^{d+1}(G_{j})}^{d+1}\right)$.
 \end{proposition}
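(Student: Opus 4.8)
The plan is to obtain Proposition \ref{FSL} as the combination of two ingredients, with essentially no new PDE work: Lemma \ref{lemmaka}, which forces a fixed proportion of the ``mass'' of the source $f_j$ into the earliest part of the cube $G_j$, and the refinement of the parabolic quantitative regularity estimate of \cite{linreg1} recorded in Appendix \ref{quantreg}, whose hypothesis is precisely such a concentration of mass in the past. So the proof amounts to matching the output of Lemma \ref{lemmaka} to the input of that refinement.

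First I would fix the set-up and check that $\al$ is meaningful. Recall $h_j = \overline{v}^j - \underline{v}^j$ is a nonnegative supersolution of $\partial_s h_j - \MM^{-}(D^2 h_j) \ge f_j$ in $G_j$ with $h_j = 0$ on $\partial_p G_j$, where $f_j = (\ell + F(0,\cdot,\cdot,\om))_+\chi_{\{\overline{v}^j=0\}} + (\ell + F(0,\cdot,\cdot,\om))_-\chi_{\{\underline{v}^j=0\}}$, and that the normalization \pref{simple} gives $0 \le f_j \le 1$, hence $\norm{f_j}^{d+1}_{L^{d+1}(G_j)} \le |G_j|$. If $\norm{f_j}_{L^{d+1}(G_j)} = 0$ the statement is vacuous, so assume this norm is positive; then $2|G_j|/\norm{f_j}^{d+1}_{L^{d+1}(G_j)} \ge 2$, so $\al = \rho + \beta\log(2|G_j|/\norm{f_j}^{d+1}_{L^{d+1}(G_j)})$ is a well-defined number with $\al \ge \rho > 0$, and all the norms appearing below are meaningful.

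The main step is to feed Lemma \ref{lemmaka} into Appendix \ref{quantreg}. For $\om$ in the full-probability event $\Om_1$ of Lemma \ref{lemmaka} and $j$ large enough that \pref{frac} holds, the super-level set $\{f_j > (4|G_j|)^{-1/(d+1)}\norm{f_j}_{L^{d+1}(G_j)}\}$ has measure at least $\tfrac{1}{12}\norm{f_j}^{d+1}_{L^{d+1}(G_j)}$ inside $G'_j = (0,3^j)^d \times (-3^{2j}, -\tfrac23 3^{2j}]$. Since $G'_j$ sits entirely in the slab $\{s \le -\tfrac23 3^{2j}\}$, this says exactly that $f_j$ carries a fixed proportion of its $L^{d+1}$ mass, at a level comparable to its spatial average, in a time-slab $\{s \le t_0\}$ whose depth $-t_0 = \tfrac23 3^{2j}$ is a fixed fraction of the height of $G_j$. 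That is the hypothesis under which the Appendix \ref{quantreg} refinement of Theorem \ref{genfs} (the parabolic Fabes--Stroock estimate of \cite{linreg1}, used here with $\ka = 2/3$) improves \pref{genlb}: in place of the shrinking time interval $0 \ge s \ge -\tfrac13\norm{f_j}^{d+1}_{L^{d+1}(G_j)}3^{-jd}$, the lower bound on $h_j$ now propagates forward through the remaining height of $G_j$ and holds on the fixed region $|y| \le \tfrac23 3^j$, $-\tfrac23 3^{2j} \le s \le 0$, while the domain-dependent prefactor of \pref{genlb} is replaced by $\norm{f_j}^{d+1}_{L^{(d+1)\al}(G_j)}$ for the above $\al$, with constants $c_{fs}, \rho, \beta$ depending only on $\la, \La, d$. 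This is the asserted estimate.

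The genuinely hard point lies upstream, in Lemma \ref{lemmaka} and Appendix \ref{quantreg}: parabolic causality makes the naive bound \pref{genlb} degenerate on a set that shrinks with $j$, and one must first show, via the subadditive ergodic theorem (Theorem \ref{genset}) applied to the stationary subadditive process $R_{G_m}(\om) = \int f_m^{d+1}$ along the two nested parabolic families $\{G_m\}$ and $\{G'_m\}$, that the mass of $f_j$ reaches into the past at all. For Proposition \ref{FSL} itself the only care required is bookkeeping: verifying that the proportion $\tfrac{1}{12}$ and the slab $\{s \le -\tfrac23 3^{2j}\}$ meet the quantitative hypotheses of the Appendix \ref{quantreg} estimate with constants independent of $j$ — so that the region $|y| \le \tfrac23 3^j$, $s \ge -\tfrac23 3^{2j}$ on which the estimate holds stays a fixed fraction of $G_j$ — and tracking how the logarithmic exponent $\al$ is produced in the Fabes--Stroock iteration.
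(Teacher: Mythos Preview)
Your proposal is correct and follows essentially the same route as the paper: the paper's ``proof'' of Proposition \ref{FSL} is literally the sentence that it follows from Lemma \ref{lemmaka} together with the remarks in Appendix \ref{quantreg}, and you have spelled out precisely that combination---using \pref{frac} to place a fixed proportion of the $L^{d+1}$ mass of $f_j$ in the slab $\{s\le -\tfrac{2}{3}3^{2j}\}$, and then invoking the improved Fabes--Stroock-type bound (Theorem \ref{thisfs}) to obtain the lower bound on $h_j$ on the fixed region $|y|\le \tfrac{2}{3}3^j$, $-\tfrac{2}{3}3^{2j}\le s\le 0$.
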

 

Finally, we state the result we will use in the proof of Theorem \ref{thmrate1}, which demonstrates the strict separation which will happen in Case 3. 
\begin{lemma}\label{fshomog}
Let $\om\in \Om_{1}, k\in \mathbb{N}$ such that \pref{frac} holds for $j=k^{3}+3k^{2}$, and assume that the total masses of the obstacle problems in $G_{k^{3}+3k^{2}}$ satisfy $\overline{\pi}_{k^{3}+3k^{2}}(\om)\underline{\pi}_{k^{3}+3k^{2}}(\om)\geq \theta$ for some $\theta>0$. Let $\ell, \theta$ are so that $2\frac{C_{h}}{c_{fs}}\leq 3^{3k^{2}(2+\sig-d/d+1)}3^{k^{3}(2-d/d+1)} \theta^{\al/2}$ where $\sigma$ is the Holder exponent which comes from the Holder continuity of the obstacle problem, $C_{h}$ is the Holder norm affiliated with the obstacle problem, and $\al, c_{fs}$ are as in Proposition \ref{FSL}. Then it is not possible for both $\overline{v}_{k^{3}+3k^{2}}(\cdot, \cdot, \om)$ and $\underline{v}_{k^{3}+3k^{2}}(\cdot, \cdot, \om)$ to vanish in any of the subcubes $G^{i}_{k^{3}}$ that subdivide $\frac{2}{3} G_{k^{3}+3k^{2}}=\left(-\frac{2}{3}3^{k^{3}+3k^{2}},\frac{2}{3}3^{k^{3}+3k^{2}}\right)^{d}\times\left(-\frac{2}{3}3^{2(k^{3}+3k^{2})}, 0\right]$. 
\end{lemma}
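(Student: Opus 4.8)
The plan is to argue by contradiction: suppose that in some subcube $G^{i}_{k^{3}}$ of $\frac{2}{3}G_{k^{3}+3k^{2}}$ both $\overline{v}_{k^{3}+3k^{2}}$ and $\underline{v}_{k^{3}+3k^{2}}$ vanish at some point, say at $(\hat{y},\hat{s})\in G^{i}_{k^{3}}$. On the one hand, Proposition \ref{FSL} gives a pointwise lower bound for $h_{k^{3}+3k^{2}}=\overline{v}_{k^{3}+3k^{2}}-\underline{v}_{k^{3}+3k^{2}}$ on the slab $\{|y|\leq \tfrac{2}{3}3^{k^{3}+3k^{2}}\}\times\{-\tfrac{2}{3}3^{-2(k^{3}+3k^{2})}\leq s\leq 0\}$ of the form $h\geq c_{fs}\norm{f}_{L^{d+1}(G_{m})}^{(d+1)}$ (with the logarithmically-varying exponent $\al$), provided \pref{frac} holds for $j=k^{3}+3k^{2}$, which is assumed. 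On the other hand, since $\overline{v}_{k^{3}+3k^{2}}(\hat{y},\hat{s})=\underline{v}_{k^{3}+3k^{2}}(\hat{y},\hat{s})=0$, we have $h(\hat{y},\hat{s})=0$, and by the uniform Hölder continuity of the obstacle-problem solutions (Theorem \ref{obsthm}\pref{obst1}) with Hölder norm $C_{h}$ and exponent $\sig$, $h$ cannot exceed $2C_{h}\,r^{\sig}$ on the parabolic ball of radius $r$ around $(\hat{y},\hat{s})$, where $r$ is comparable to the diameter of $G^{i}_{k^{3}}$, i.e.\ $r\sim 3^{k^{3}}$. Comparing the two bounds on the overlap of these regions forces $c_{fs}\norm{f}_{L^{d+1}}^{(d+1)}\leq 2C_{h}(3^{k^{3}})^{\sig}$, and unwinding the scaling (using $|G_{k^{3}+3k^{2}}|\sim 3^{(d+2)(k^{3}+3k^{2})}$ and the hypothesis $\overline{\pi}_{k^{3}+3k^{2}}\underline{\pi}_{k^{3}+3k^{2}}\geq\theta$, which forces $\norm{f_{k^{3}+3k^{2}}}_{L^{d+1}(G_{m})}^{d+1}$ to be bounded below by a power of $\theta$ times $|G_{m}|$) gives exactly the inequality that contradicts the hypothesis $2\tfrac{C_{h}}{c_{fs}}\leq 3^{3k^{2}(2+\sig-d/(d+1))}3^{k^{3}(2-d/(d+1))}\theta^{\al/2}$.

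In more detail, the key steps in order would be: (1) From $\overline{\pi}_{k^{3}+3k^{2}}\underline{\pi}_{k^{3}+3k^{2}}\geq\theta$ and $\overline{\pi},\underline{\pi}\leq 1$ (by the rescaling \pref{simple}), deduce $\overline{\pi}_{k^{3}+3k^{2}},\underline{\pi}_{k^{3}+3k^{2}}\geq\theta$, hence lower bounds on $\int_{\{\overline{v}=0\}}(\ell+F)_{-}^{d+1}$ and $\int_{\{\underline{v}=0\}}(\ell+F)_{+}^{d+1}$; summing and recalling the definition of $f_{m}$ in \pref{diff}, conclude $R_{G_{m}}(\om)=\norm{f_{m}}_{L^{d+1}(G_{m})}^{d+1}\geq\theta|G_{m}|$, so $\norm{f_{m}}_{L^{d+1}(G_{m})}^{d+1}$ controls a power of $\theta 3^{(d+2)(k^{3}+3k^{2})}$. (2) Apply Proposition \ref{FSL} with $j=k^{3}+3k^{2}$ to get the lower bound $h_{m}(y,s,\om)\geq c_{fs}\norm{f_{m}}_{L^{d+1}(G_{m})}^{(d+1)}$ valid on the slab described above, with $\al=\rho+\beta\log(2|G_{m}|/\norm{f_{m}}_{L^{d+1}(G_{m})}^{d+1})$; note from step (1) that $2|G_{m}|/\norm{f_{m}}^{d+1}\leq 2/\theta$, so $\al\leq\rho+\beta\log(2/\theta)$, which explains the $\theta^{\al/2}$ appearing in the hypothesis. (3) Observe that the subcubes $G^{i}_{k^{3}}$ tiling $\frac{2}{3}G_{m}$ lie (up to the top slab of thickness $\sim 3^{2k^{3}}$, which is where \pref{frac} and hence the bound is localized by the construction of $G'_{m}$ and Lemma \ref{monobst}) inside the region where the Proposition \ref{FSL} bound applies; this is precisely why we arranged $\Ga(\om)\subset\{t\leq t_{0}\}$ and worked with $G'_{m}$. (4) If $(\hat{y},\hat{s})\in G^{i}_{k^{3}}$ has $h_{m}(\hat{y},\hat{s})=0$, use Hölder continuity to bound $h_{m}(y,s)\leq 2C_{h}\,d((y,s),(\hat{y},\hat{s}))^{\sig}$; since $G^{i}_{k^{3}}$ has parabolic diameter $\lesssim 3^{k^{3}}$, on all of $G^{i}_{k^{3}}$ we get $h_{m}\leq 2C_{h}3^{k^{3}\sig}$ (absorbing constants). (5) Chain the two bounds: $c_{fs}\norm{f_{m}}_{L^{d+1}(G_{m})}^{(d+1)}\leq 2C_{h}3^{k^{3}\sig}$, substitute the lower bound for $\norm{f_{m}}^{d+1}$ from step (1) and the lower bound for $\al$ from step (2), and rearrange powers of $3$ to land on $2C_{h}/c_{fs}\geq 3^{3k^{2}(2+\sig-d/(d+1))}3^{k^{3}(2-d/(d+1))}\theta^{\al/2}$, contradicting the hypothesis.

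The main obstacle, which is where all the care in Section \ref{decaymass} has been spent, is step (3): making sure that the Fabes--Stroock-type lower bound from \cite{linreg1} genuinely holds on a region containing (most of) the subcubes $G^{i}_{k^{3}}$, rather than on a region that shrinks as $k\to\infty$. The raw parabolic Fabes--Stroock estimate \pref{genlb} only controls $h$ on a time interval whose length is proportional to $\norm{f_{m}}_{L^{d+1}(G_{m})}^{d+1}3^{-md}$, which degenerates; the fix is Lemma \ref{lemmaka}, which (via the subadditive ergodic theorem applied to the stationary subadditive process $R_{G_{m}}(\om)$ and the nested family $G'_{m}\subset G_{m}$) shows that a definite fraction of the mass of $f_{m}$ already lives in the bottom piece $G'_{m}$, so that after the reduction in Appendix \ref{quantreg} the bound holds on a fixed fraction of $G_{m}$. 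Granting Proposition \ref{FSL} — which packages exactly this — the remaining arithmetic of balancing exponents is routine bookkeeping; the only subtlety there is tracking how the logarithmic dependence of $\al$ on $\theta$ propagates, and confirming that the stated hypothesis on $\ell,\theta$ is precisely what closes the loop. One should also double-check the top-slab issue: the subcubes $G^{i}_{k^{3}}$ that meet $\{t>t_{0}\}$ must be handled either by noting they form a negligible fraction of the tiling of $\frac{2}{3}G_{m}$ or by shrinking to $G'_{m}$ as above, consistent with how Lemma \ref{fshomog} is invoked in the proof of Theorem \ref{thmrate1}.
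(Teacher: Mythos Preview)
Your approach matches the paper's: bound $h=\overline{v}-\underline{v}$ from below on $\tfrac{2}{3}G_{k^{3}+3k^{2}}$ via Proposition~\ref{FSL} using the mass hypothesis, bound it from above via H\"older continuity on any subcube $G^{i}_{k^{3}}$ where both $\overline{v}$ and $\underline{v}$ vanish, and combine for a contradiction. Two pieces of arithmetic need tightening, however, to close against the \emph{stated} hypothesis. First, from $\overline{\pi}\,\underline{\pi}\geq\theta$ the paper uses AM--GM to get $\overline{\pi}+\underline{\pi}\geq 2\theta^{1/2}$, hence $\|f_{m}\|_{L^{d+1}}^{d+1}=|G_{m}|(\overline{\pi}+\underline{\pi})\geq 2|G_{m}|\theta^{1/2}$; raising this to the power $\alpha$ in the FSL lower bound is exactly what produces $\theta^{\alpha/2}$. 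Your explanation (that $\theta^{\alpha/2}$ comes from bounding $\alpha$) is not the mechanism, and your weaker bound $\|f_{m}\|^{d+1}\geq\theta|G_{m}|$ would yield only $\theta^{\alpha}$, which, since $\theta\leq 1$, is too small to contradict $2C_{h}/c_{fs}\leq \theta^{\alpha/2}\cdot(\ldots)$. Second, the H\"older bound must be rescaled to the domain $G_{k^{3}+3k^{2}}$: the paper obtains $\max_{G^{i}_{k^{3}}}h\leq 2C_{h}\,3^{(k^{3}+3k^{2})d/(d+1)-3k^{2}\sig}$, the factor $3^{(k^{3}+3k^{2})d/(d+1)}$ coming from the $L^{d+1}$ norm of the right-hand side in the H\"older estimate, not simply $2C_{h}3^{k^{3}\sig}$. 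This is what accounts for the $d/(d+1)$ exponents in the hypothesis.
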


\subsection{Decay Estimate}
We now present the proof of Theorem \ref{thmrate1}. 
\begin{proof}[Proof of Theorem \ref{thmrate1}]
We seek $\tau\in(0,1)$, and $k_{0}\geq 1$ so that for all $k\geq k_{0}$, 
\begin{equation}\label{hypot}
\mathbb{J}_{k^{3}}\leq 3^{(k_{0}-k)\tau}.
\end{equation}
 
We proceed by induction on $k$, assuming \pref{hypot}. We would like to prove that for appropriate choices of $k_{0}, \tau$, 
\begin{equation}\label{decaygoal}
\mathbb{J}_{(k+1)^{3}}\leq 3^{(k_{0}-k-1)\tau}.
\end{equation}

If we are in the situation where either
\begin{equation*}
\overline{\underline{J}}_{k^{3}}\leq 3^{(k_{0}-k)\tau-1}, \quad \overline{\underline{J}}_{k^{3}+3k^{2}}\leq 3^{(k_{0}-k)\tau-1},\quad\text{or}\quad \mathbb{J}_{k^{3}+3k^{2}}\leq 3^{(k_{0}-k)\tau-1},
\end{equation*}
then by the non-increasing property of $\overline{\underline{J}}_{k}$, \pref{decaygoal} is immediate. 

We will assume then that
\begin{equation}\label{whoa}
\overline{\underline{J}}_{k^{3}}> 3^{(k_{0}-k)\tau-1}, \quad \overline{\underline{J}}_{k^{3}+3k^{2}}> 3^{(k_{0}-k)\tau-1},\quad\text{and}\quad \mathbb{J}_{k^{3}+3k^{2}}> 3^{(k_{0}-k)\tau-1}.
\end{equation}

Suppose we are in Case 2, where there exists $\eta\in(0,1)$, to be chosen, so that either
\begin{equation*}
\overline{\underline{V}}_{k^{3}}\geq \eta (\overline{\underline{E}}_{k^{3}})^{2}\quad\text{or}\quad \overline{\underline{V}}_{k^{3}+3k^{2}}\geq \eta (\overline{\underline{E}}_{k^{3}+3k^{2}})^{2}.
\end{equation*}
This implies that
\begin{equation}\label{baduse}
\overline{\underline{J}}_{k^{3}}\geq (1+\eta)(\overline{\underline{E}}_{k^{3}})^{2}\quad\text{or}\quad \overline{\underline{J}}_{k^{3}+3k^{2}}\geq (1+\eta)(\overline{\underline{E}}_{k^{3}+3k^{2}})^{2}.
\end{equation}
If $\overline{\underline{V}}_{k^{3}}\geq \eta (\overline{\underline{E}}_{k^{3}})^{2}$, then by Lemma \ref{decayv}, \pref{whoa}
\begin{align*}
\overline{\underline{J}}_{(k+1)^{3}}&\leq \overline{\underline{J}}_{k^{3}+3k^{2}}\leq \EE[(\overline{\underline{A}}_{k^{3}+3k^{2}})^{2}]=V[\overline{\underline{A}}_{k^{3}+3k^{2}}]+(\overline{\underline{E}}_{k^{3}})^{2}\\
&\leq 3^{-(3k^{2})(d+2)+(d+1)}\overline{\underline{V}}_{k^{3}}+3^{-k^{3/2}}+(\overline{\underline{E}}_{k^{3}})^{2}\\
&\leq (1+\eta)^{-1}[1+ 3^{-(3k^{2})(d+2)+(d+1)}(1+\eta)]\overline{\underline{J}}_{k^{3}}+3^{-k^{3/2}}3^{(k-k_{0})\tau+1}\overline{\underline{J}}_{k^{3}}.
\end{align*}

Therefore, if there exists choices of $k_{0}, \tau$ so that for all $k\geq k_{0}$, 
\begin{equation*}
(1+\eta)^{-1}[1+ 3^{-(3k^{2})(d+2)+(d+1)}(1+\eta)]+3^{-k^{3/2}+(k-k_{0})\tau+1}\leq 3^{-\tau}, 
\end{equation*}
then \pref{decaygoal} holds. We note that the former expression is the same as requiring
\begin{equation*}
(1+\eta)^{-1}[1+ 3^{-(3k^{2})(d+2)+(d+1)}(1+\eta)]+3^{-k(k^{1/2}-\tau)}3^{-k_{0}\tau+1}\leq 3^{-\tau}.
\end{equation*}
Since $\tau<1$, we see that this holds so long as $k_{0}, \eta, \tau$ satisfy
\begin{equation}\label{const1}
(1+\eta)^{-1}[1+ 3^{-(3k_{0}^{2})(d+2)+(d+1)}(1+\eta)]+3^{-k_{0}\tau+1}\leq 3^{-\tau}.
\end{equation}

Finally, we suppose we are in Case 3, where both 
\begin{equation}\label{case3}
\overline{\underline{V}}_{k^{3}}\leq \eta (\overline{\underline{E}}_{k^{3}})^{2}\quad\text{and}\quad \overline{\underline{V}}_{k^{3}+3k^{2}}\leq \eta (\overline{\underline{E}}_{k^{3}+3k^{2}})^{2}, 
\end{equation}
which in light of \pref{baduse}, yields
\begin{equation}\label{contabove}
\overline{\underline{J}}_{k^{3}}\leq (1+\eta)(\overline{\underline{E}}_{k^{3}})^{2}\quad\text{and}\quad \overline{\underline{J}}_{k^{3}+3k^{2}}\leq (1+\eta)(\overline{\underline{E}}_{k^{3}+3k^{2}})^{2}.
\end{equation}

We claim that in this case, either $\overline{J}_{k^{3}+3k^{2}}$ will have a strict decay from $\overline{J}_{k^{3}}$, or $\underline{J}_{k^{3}+3k^{2}}$ will have a strict decay from $\underline{J}_{k^{3}}$, and this will be enough to recover \pref{newrate}. 

We recall the set $\textbf{B}_{k}$ from Lemma \ref{excsets}. Let $\textbf{D}_{k}\subset \Om_{1}$ be the set of $\om\in\Om_{1}$ such that \pref{frac} does not hold for $j=k^{3}+3k^{2}$. In light of \pref{inprob}, there exists a $k_{0}$ such that for all $k\geq k_{0}$, 
\begin{equation}\label{setcond}
\PP[\textbf{D}_{k}]\leq(1+\zeta^{-1}_{d})\eta. 
\end{equation}

We apply Lemma \ref{fshomog} to show
\begin{claim}\label{fsclaim}
In $\Om_{1}\setminus(\textbf{B}_{k}\cup\textbf{D}_{k})$, either $\overline{v}_{k^{3}+3k^{2}}$ or $\underline{v}_{k^{3}+3k^{2}}$ cannot both have a contact point in any of the subcubes $G^{i}_{k^{3}}$ in $\frac{2}{3} G_{k^{3}+3k^{2}}$. 
\end{claim}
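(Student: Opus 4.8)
The plan is to derive Claim \ref{fsclaim} as an essentially immediate consequence of the separation Lemma \ref{fshomog}, after checking that the hypotheses of that lemma are in force on the set $\Om_{1}\setminus(\textbf{B}_{k}\cup\textbf{D}_{k})$. First I would recall what has been excluded: removing $\textbf{D}_{k}$ from $\Om_{1}$ guarantees that \pref{frac} holds for $j=k^{3}+3k^{2}$, which is the first hypothesis of Lemma \ref{fshomog}. So the only remaining thing to verify is the quantitative lower bound $\overline{\pi}_{k^{3}+3k^{2}}(\om)\underline{\pi}_{k^{3}+3k^{2}}(\om)\geq \theta$ together with the compatibility relation $2\frac{C_{h}}{c_{fs}}\leq 3^{3k^{2}(2+\sig-d/(d+1))}3^{k^{3}(2-d/(d+1))}\theta^{\al/2}$ for a suitable $\theta$.

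The key step is to produce $\theta$. On $\Om\setminus\textbf{B}_{k}$ we are in Case 3, so \pref{badcase} holds, and Lemma \ref{excsets} gives $\overline{\underline{\pi}}_{k^{3}+3k^{2}}\geq \frac{1}{2}\overline{\underline{E}}_{k^{3}+3k^{2}}$ pointwise outside $\overline{\underline{B}}_{k^{3}+3k^{2}}\subset\textbf{B}_{k}$. Hence for $\om\in\Om_{1}\setminus(\textbf{B}_{k}\cup\textbf{D}_{k})$ we have $\overline{\pi}_{k^{3}+3k^{2}}(\om)\underline{\pi}_{k^{3}+3k^{2}}(\om)\geq \frac14 \overline{E}_{k^{3}+3k^{2}}\underline{E}_{k^{3}+3k^{2}}$. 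Next I would bound $\overline{E}_{k^{3}+3k^{2}}\underline{E}_{k^{3}+3k^{2}}$ from below using \pref{whoa} and \pref{contabove}: since $\overline{\underline{J}}_{k^{3}+3k^{2}}>3^{(k_{0}-k)\tau-1}$ and $\overline{\underline{J}}_{k^{3}+3k^{2}}\leq(1+\eta)(\overline{\underline{E}}_{k^{3}+3k^{2}})^{2}$, we get $(\overline{\underline{E}}_{k^{3}+3k^{2}})^{2}\geq (1+\eta)^{-1}3^{(k_{0}-k)\tau-1}$, so that $\overline{E}_{k^{3}+3k^{2}}\underline{E}_{k^{3}+3k^{2}}\geq (1+\eta)^{-1}3^{(k_{0}-k)\tau-1}$ as well (applying the bound to both the over- and under-quantities and multiplying). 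Thus one may take $\theta=\theta_k := \tfrac14(1+\eta)^{-1}3^{(k_{0}-k)\tau-1}$, which is the natural choice tying the induction hypothesis to the separation estimate.

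Then I would verify that with this $\theta_k$ the compatibility inequality of Lemma \ref{fshomog} holds for $k\geq k_0$, possibly after enlarging $k_0$: the right-hand side $3^{3k^{2}(2+\sig-d/(d+1))}3^{k^{3}(2-d/(d+1))}\theta_k^{\al/2}$ grows like a positive power of $3^{k^3}$ (the exponent $2-d/(d+1)>0$ dominates, and $\theta_k^{\al/2}$ only costs a factor that is subexponential in $k^{3}$ since $\al$ degrades logarithmically), so it eventually exceeds the fixed constant $2C_h/c_{fs}$. The main obstacle — and the one subtlety I would spell out carefully — is precisely this bookkeeping: tracking how $\al=\rho+\beta\log(2|G_{j}|/\norm{f_{j}}_{L^{d+1}(G_{j})}^{d+1})$ behaves (it is at most logarithmic in $k^3$ since $\norm{f_j}_{L^{d+1}}^{d+1}\geq$ a power of $\theta_k$ that is at worst $3^{-O(k)}$), confirming that $\theta_k^{\al/2}=3^{-O(k\log k)}$ is negligible against $3^{k^{3}(2-d/(d+1))}$, and choosing $k_0$ (depending on $\la,\La,d$ only, via $C_h,c_{fs},\rho,\beta,\sig,\tau$) large enough that the inequality closes. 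Once the hypotheses of Lemma \ref{fshomog} are confirmed, that lemma directly states that $\overline{v}_{k^{3}+3k^{2}}$ and $\underline{v}_{k^{3}+3k^{2}}$ cannot both vanish in any subcube $G^{i}_{k^{3}}$ subdividing $\frac23 G_{k^{3}+3k^{2}}$, which is exactly Claim \ref{fsclaim}, completing the argument.
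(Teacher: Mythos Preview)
Your proposal is correct and follows essentially the same route as the paper: on $\Om_1\setminus(\textbf{B}_k\cup\textbf{D}_k)$ use \pref{chebym} and \pref{contabove} together with \pref{whoa} to extract a lower bound $\theta\sim 3^{-O(k)}$ for $\overline{\pi}_{k^3+3k^2}\underline{\pi}_{k^3+3k^2}$, then verify the compatibility inequality of Lemma \ref{fshomog} by observing that the right-hand side grows like $3^{p(k)}$ with $p$ cubic. One minor slip in the bookkeeping: since $\norm{f_j}_{L^{d+1}}^{d+1}/|G_j|=\overline{\pi}_j+\underline{\pi}_j\geq 2\sqrt{\theta_k}\sim 3^{-O(k)}$, the exponent $\al=\rho+\beta\log(2/(\overline{\pi}_j+\underline{\pi}_j))$ is $O(k)$, not $O(\log k)$, so $\theta_k^{\al/2}=3^{-O(k^2)}$ rather than $3^{-O(k\log k)}$; this is harmless because $k^2\ll k^3$ and the conclusion stands.
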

For $\om\in\Om_{1}\setminus(\textbf{B}_{k}\cup\textbf{D}_{k})$, by \pref{chebym} and \pref{contabove}, 
\begin{align*}
\overline{\pi}_{k^{3}+3k^{2}}\underline{\pi}_{k^{3}+3k^{2}}&\geq \frac{1}{4}\overline{E}_{k^{3}+3k^{2}}\underline{E}_{k^{3}+3k^{2}}\geq \frac{1}{4(1+\eta)}(\overline{J}_{k^{3}+3k^{2}}\underline{J}_{k^{3}+3k^{2}})^{1/2}\\
&=\frac{1}{4(1+\eta)}\mathbb{J}_{k^{3}+3k^{2}}^{1/2}\geq \frac{1}{4(1+\eta)}3^{\frac{(k_{0}-k)\tau-1}{2}}.
\end{align*}
In the language, of Lemma \ref{fshomog}, $\theta= \frac{1}{4(1+\eta)}3^{\frac{(k_{0}-k)\tau-1}{2}}$. We just need to check that
\begin{equation*}
2\frac{C_{h}}{c_{fs}}\leq 3^{k^{3}(2-d/d+1)}3^{3k^{2}(2+\sig-d/d+1)} \theta^{\al/2}
\end{equation*}
 where $\sigma$ is the Holder exponent that comes from the obstacle problem, we may check the condition for $\al=\rho+\beta \log\left(\frac{2}{\overline{\pi}_{k^{3}+3k^{2}}+\underline{\pi}_{k^{3}+3k^{2}}}\right)$. We note that 
 \begin{align*}
&3^{k^{3}(2-d/d+1)}3^{3k^{2}(2+\sig-d/d+1)} \theta^{(\al/2)}\\
 &=3^{k^{3}(2-d/d+1)}3^{3k^{2}(2+\sig-d/d+1)} \left( \frac{1}{4(1+\eta)}3^{\frac{(k_{0}-k)\tau-1}{2}}\right)^{\frac{\rho}{2}+\frac{\beta}{2} \log\left(\frac{2}{\overline{\pi}_{k^{3}+3k^{2}}+\underline{\pi}_{k^{3}+3k^{2}}}\right)}\\
 &\geq  3^{k^{3}(2-d/d+1)}3^{3k^{2}(2+\sig-d/d+1)}\left( \frac{1}{4(1+\eta)}3^{\frac{(k_{0}-k)\tau-1}{2}}\right)^{\frac{\rho}{2}+\frac{\beta}{4} \log\left(\frac{1}{\theta}\right)}\\
 &=3^{3k^{2}(2+\sig-d/d+1)}3^{k^{3}(2-d/d+1)} \left( \frac{1}{4(1+\eta)}3^{\frac{(k_{0}-k)\tau-1}{2}}\right)^{\frac{\rho}{2}-\frac{\beta}{4}\log(\frac{1}{4(1+\eta)})-\frac{\beta}{4}{\frac{(k_{0}-k)\tau-1}{2}}\log 3}\\
 &\geq 3^{k^{3}(2-d/d+1)}3^{3k^{2}(2+\sig-d/d+1)}\left( \frac{1}{8}\right)^{\frac{\rho}{2}-\frac{\beta}{4}\log(\frac{1}{4(1+\eta)})-\frac{\beta}{4}{\frac{(k_{0}-k)\tau-1}{2}}\log 3}3^{\frac{(k_{0}-k)\tau-1}{2}\cdot \left( \frac{\rho}{2}-\frac{\beta}{4}\log(\frac{1}{4(1+\eta)})-\frac{\beta}{4}{\frac{(k_{0}-k)\tau-1}{2}}\log 3 \right)}.\\
 \end{align*}
The above calculation demonstrates that $3^{k^{3}(2-d/d+1)}3^{3k^{2}(2+\sig-d/d+1)} \theta^{(\al/2)}\sim 3^{p(k)}$, where $p(k)$ is cubic polynomial. Therefore, there exists $k_{0}$ such that for all $k\geq k_{0}$, 
\begin{equation}\label{const2}
3^{k^{3}(2-d/d+1)}3^{3k^{2}(2+\sig-d/d+1)} \theta^{(\al/2)} \geq 2\frac{C_{h}}{c_{fs}},
\end{equation}
which allows us to apply Lemma \ref{fshomog}. 

Now we return to the proof of the final rate. Once Claim \ref{fsclaim} holds, for each $\om\in \Om_{1}\setminus(\textbf{B}_{k}\cup\textbf{D}_{k})$, at least one of $\overline{v}^{k^{3}+3k^{2}}(\cdot, \cdot, \om)$ or $\underline{v}^{k^{3}+3k^{2}}(\cdot, , \cdot, \om)$ cannot have a contact point in half of the $(\frac{2}{3})^{d+1} 3^{3k^{2}(d+2)}=4\zeta_{d}3^{3k^{2}(d+2)}$ subcubes $G^{ij}_{k^{3}}$ which make up $\frac{2}{3}G_{k^{3}+3k^{2}}$. Therefore, for at least half of the $\om$'s in $\Om_{1}\setminus (\textbf{B}_{k}\cup\textbf{D}_{k})$, either $\overline{v}^{k^{3}+3k^{2}}(\cdot, \cdot, \om)$ or $\underline{v}^{k^{3}+3k^{2}}(\cdot, , \cdot, \om)$ cannot have a contact point in at least half of the subcubes $G^{ij}_{k^{3}}$. Without loss of generality, we assume that $\overline{v}^{k^{3}+3k^{2}}$ does not touch half of the subcubes. Therefore, there exists $\textbf{W}_{k}\subset \Om_{1}\setminus( \textbf{B}_{k}\cup\textbf{D}_{k})$ so that
\begin{equation}\label{measC}
\PP[\textbf{W}_{k}]\geq \frac{1}{2} \PP[\Om_{1}\setminus(\textbf{B}_{k}\cup\textbf{D}_{k})]\geq \frac{1}{2}\left[1-9(1+\zeta^{-1}_{d})\eta\right]=\frac{1}{2}\left[1-9\left(1+\left(\frac{3}{2}\right)^{(d+1)}\right)\eta\right]
\end{equation}
where for all $\om\in\textbf{W}_{k}$, 
\begin{equation}\label{Cdef}
\overline{v}^{k^{3}+3k^{2}}\quad\text{does not vanish in at least}\quad 2\zeta_{d}3^{3k^{2}(d+2)}\quad\text{subcubes}\quad G^{ij}_{k^{3}}\quad\text{inside}\quad \frac{2}{3}G^{i}_{k^{3}+3k^{2}}.
\end{equation}
In light of \pref{measC}, we restrict
\begin{equation}\label{const3}
\eta\in \left(0, \frac{1}{9\left(1+(3/2)^{d+1}\right)}\right).
\end{equation}

We now proceed to show how Case 3 yields a strict decay in masses. We will show that for $\om\in \textbf{W}_{k}$, $\overline{\pi}_{k^{3}+3k^{2}}(\cdot, \cdot, \om)$ is strictly smaller than $\overline{A}_{k^{3}+3k^{2}}$ by some multiple of $\overline{E}_{k^{3}}$. We fix $\om\in \textbf{W}_{k}$, and we quantify the number of subcubes $G^{ij}_{k^{3}}$ so that $\overline{\pi}^{ij}(\om)_{k^{3}}\geq \frac{1}{2}\overline{E}_{k^{3}}$. By Lemma \ref{excsets}, there exists at least $(1-\zeta_{d})3^{3k^{2}(d+2)}$ subcubes where $\overline{\pi}_{k^{3}}(\om)\geq \frac{1}{2}\overline{E}_{k^{3}}$. Thus, there has to be at least $3\zeta_{d}3^{k^{2}(d+2)}=(1-\zeta_{d})3^{3k^{2}(d+2)}-(1-4\zeta_{d})3^{3k^{2}(d+2)}$ subcubes inside $\frac{2}{3}G_{k^{3}+3k^{2}}$ with $\overline{\pi}^{ij}_{k^{3}}(\om)\geq \frac{1}{2}\overline{E}^{ij}_{k^{3}}$. Combining this with \pref{Cdef}, there exists at least $\zeta_{d}3^{3k^{2}(d+2)}$ subcubes $G^{ij}_{k^{3}}$ so that $\overline{v}^{k^{3}+3k^{2}}$ stays strictly positive, and $\overline{\pi}^{ij}_{k^{3}}(\om)\geq \frac{1}{2}\overline{E}^{ij}_{k^{3}}$. We denote these subcubes by $j^{'}$ and the rest of the subcubes by $j^{*}$. 

This yields
\begin{align*}
\overline{\pi}^{i}_{k^{3}+3k^{2}}(\om)&=3^{-(k^{3}+3k^{2})(d+2)}\sum_{j=1}^{3^{3k^{2}(d+2)}}\int_{G^{i}_{k^{3}+3k^{2}}\cap \left\{\overline{v}^{k^{3}+3k^{2}}=0\right\}}(\ell+F(0, y, s, \om)_{+}^{(d+1)}dy ds\\
&\leq 3^{-3k^{2}(d+2)}\sum_{j^{*}}\overline{\pi}^{ij^{*}}_{k^{3}}(\om)\\
&\leq 3^{-3k^{2}(d+2)}\left(\sum_{j^{*}}\overline{\pi}^{ij^{*}}_{k^{3}}(\om)+\sum_{j^{'}}\left(\overline{\pi}^{ij^{'}}_{k^{3}}(\om)-\frac{1}{2}\overline{E}_{k^{3}}\right)\right)\\
&\leq \overline{A}_{k^{3}+3k^{2}}(\om)-\frac{3^{-3k^{2}(d+2)}}{2}\left(\sum_{j^{'}}\overline{E}_{k^{3}}\right)\\
&\leq \overline{A}_{k^{3}+3k^{2}}(\om)-\frac{1}{2}\zeta_{d}\overline{E}_{k^{3}}.
\end{align*}

Finally, we compute that
\begin{align*}
\overline{E}_{k^{3}+3k^{2}}&=\EE[\overline{\pi}_{k^{3}+3k^{2}}]=\int_{\Om\setminus\textbf{W}_{k}} \overline{\pi}_{k^{3}+3k^{2}}(\om)d\PP+\int_{\textbf{W}_{k}} \overline{\pi}_{k^{3}+3k^{2}}(\om)d\PP\\
&\leq \int_{\Om\setminus\textbf{W}_{k}} \overline{A}_{k^{3}+3k^{2}}(\om)d\mu +\int_{\textbf{W}_{k}} \left( \overline{A}_{k^{3}+3k^{2}}(\om)-\frac{1}{2}\zeta_{d}\overline{E}_{k^{3}}\right)d\PP\\
&=\EE[\overline{A}_{k^{3}+3k^{2}}] -\frac{1}{2}\zeta_{d}\PP[\textbf{W}_{k}]\overline{E}_{k^{3}}\\
&\leq \overline{E}_{k^{3}}-\frac{1}{4}([1-9(1+\zeta_{d}^{-1})\eta]\zeta_{d})\overline{E}_{k^{3}}\\
&=\left(1-\frac{\zeta_{d}}{4}+\frac{9}{4}\eta\zeta_{d}+\frac{9}{4}\eta\right)\overline{E}_{k^{3}}
\end{align*}

Applying \pref{case3}, 
\begin{align*}
\overline{J}_{k^{3}+3k^{2}}&\leq (1+\eta)\left(1-\frac{\zeta_{d}}{4}+\frac{9}{4}\eta\zeta_{d}+\frac{9}{4}\eta\right)^{2}\overline{J}_{k^{3}}\\
&= (1+\eta)\left(1-\frac{1}{16}\left(\frac{2}{3}\right)^{d+1}+\frac{9}{16}\eta\left(\frac{2}{3}\right)^{d+1}+\frac{9}{4}\eta\right)^{2}\overline{J}_{k^{3}}.
\end{align*}
Therefore, if we choose $\eta$ so that
\begin{equation}\label{const4}
(1+\eta)\left(1-\frac{1}{16}\left(\frac{2}{3}\right)^{d+1}+\frac{9}{16}\eta\left(\frac{2}{3}\right)^{d+1}+\frac{9}{4}\eta\right)^{2}\leq 3^{-\tau},
\end{equation}
this produces the desired rate in Case 3. 

Thus, if we choose $k_{0}, \tau, \eta$ so that \pref{const1},  \pref{setcond}, \pref{const2}, \pref{const3}, and \pref{const4} all hold, then \pref{decaygoal} holds. 

\end{proof}

\section{A Rate of Decay on the Free Solutions}\label{decaycorrect}
In this section, we establish a rate of decay on $\norm{\ve^{2}w^{\ve}}_{L^{\infty}(Q_{1/\ve})}$ in measure. We note that we may reformulate Theorem \ref{thmrate1} by changing scales in terms of $\ve$. In other words, if we choose $3^{-k^{3}}=\ve$, then \pref{ratek} becomes
\begin{equation}\label{ratee}
\mathbb{J}_{\ve}\leq C(1+\norm{M}+|\ell|)^{4(d+1)}3^{c|\ln \ve|^{-2/3}}.
\end{equation}

Moreover, if we consider $Q_{1/\ve}$ instead of $C_{1/\ve}$, then we claim that this rate still holds, since the analysis stays the same. It was merely for convenience that we chose to present the previous section for $C_{1/\ve}$ instead of $Q_{1/\ve}$. By changing our perspective, and considering $w_{\ve}=\ve^{2}w^{\ve}(y/\ve, s/\ve^{2}, \om)$, we seek a rate of decay for $\norm{w_{\ve}}_{L^{\infty}(Q_{1})}$ in measure. We point out that $\chi_{\left\{\overline{\underline{v}}^{\ve}=0\right\}}(y,s, \om)=\chi_{\left\{\overline{\underline{v}}_{\ve}=0\right\}}(y/\ve, s/\ve^{2}, \om)$. This implies that $\overline{\pi}$ and $\underline{\pi}$ are invariant when we move between the interpretations of $w^{\ve}$ and $w_{\ve}$, which implies that \pref{ratee} holds for the problem with $w_{\ve}$. Moreover, throughout this section, we will constantly be relabeling constants $c, C, \hat{c}, \hat{C}$, etc. throughout the proofs, when these constants depend only on universal parameters such as $\la, \La, d$. 

In order to control the decay of $|w_{\ve}|$, it is enough to obtain a rate of decay for  $\overline{J}_{\ve}$ and $\underline{J}_{\ve}$. Although we have managed to obtain a rate on $\mathbb{J}_{\ve}$, this does not automatically yield a rate for $\overline{J}_{\ve}$ or $\underline{J}_{\ve}$. In particular, for $\ell$ very close to $-\overline{F}(M)$, it is possible that one of the quantities remains constant for some values of $\ve$, and then decays while the other stays constant. With this type of oscillatory behavior, it is not possible to show that $w_{\ve}\rightarrow\overline 0$ from both sides. In order to obtain the decay, we follow the strategy of \cite{cs} by studying the problem with $\ell=-\overline{F}(M)\pm\ga$ for some $\ga>0$. Using this choice for $\ell$ and \pref{dichot}, it will be immediate that either $\overline{J}_{\ve}$ or $\underline{J}_{\ve}$ must be strictly positive and bouned below. This means that the \pref{ratee} will be enough to yield a rate on the other quantity, which will be enough to obtain a rate on each side. 

We define $w_{\ve}^{\ga}$ to be the solution of 
\begin{equation}\label{freega}
\begin{cases}
\frac{\partial}{\partial s}w^{\ga}_{\ve}-F_{M}(D^{2}w^{\ga}_{\ve}, y/\ve, s/\ve^{2}, \om)=\overline{F}(M)+\ga\quad\text{in}\quad Q_{1}\\
w_{\ve}^{\ga}=\overline{P}\quad\text{on}\quad \partial_{p}Q_{1}.
\end{cases}
\end{equation}
and $\overline{v}^{\ga}_{\ve}$ and $\underline{v}^{\ga}_{\ve}$ to be the solutions of the obstacle problem from above and below respectively corresponding to \pref{freega}. Similarly, we let $\overline{\pi}_{\ve}^{\ga}(\om)$ and $\underline{\pi}_{\ve}^{\ga}(\om)$ denote the total masses of the obstacle problems corresponding to $\overline{v}_{\ve}^{\ga}$ and $\underline{v}_{\ve}^{\ga}$. We have the following ``perturbative estimate":
\begin{lemma}\label{perturb}
There exist uniform constants $C_{p}=C(\la, \La, d)$ and $\ve_{1}=\ve(\la, \La, d)$, such that for all $\ve<\ve_{1}$, 
\begin{equation}\label{perturbbnd}
\EE[(\overline{\pi}_{\ve}^{\ga})^{2}]\geq C_{p}\ga^{2(d+1)}.
\end{equation}
\end{lemma}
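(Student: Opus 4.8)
The plan is to exploit the dichotomy \pref{dichot} together with the ABP estimate to get a lower bound on the contact set of the obstacle problem from above when the right-hand side is $\overline{F}(M)+\ga$, i.e.\ when $\ell = -(\overline{F}(M)+\ga) < -\overline{F}(M)$. For this value of $\ell$ we have $\ell \leq -\overline{F}(M)$, so by Lemma~\ref{al0} and the construction of $\overline{F}$, $\underline{p}(\ell) = 0$, which forces $w^{\ell}_{\ve}$ to stay nonpositive in the limit; but more to the point, since $\ell$ is strictly below $-\overline{F}(M)$, the solution $w^{\ga}_{\ve}$ cannot converge to zero, and in fact the separation from the obstacle from above must be quantitatively bounded below. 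The strategy is: (1) construct an explicit subsolution barrier depending only on $\ga$ which lies below $w^{\ga}_{\ve}$ by the comparison principle; (2) use this to show $\inf_{Q_{1/2}} (-w^{\ga}_{\ve}) \geq c\ga$ deterministically (or on an event of probability bounded below); (3) feed this lower bound into the ABP-type inequality $\sup_{Q_1}(-w^{\ga}_{\ve}) \leq C\,[\overline{m}_{\ve}(Q_1,\ell,\om)]^{1/(d+1)}$ displayed in Section~\ref{decaymass}, which translates the pointwise lower bound on $-w^{\ga}_{\ve}$ into a lower bound $\overline{\pi}_{\ve}^{\ga}(\om) \geq c\ga^{d+1}$ on the total mass; (4) square and take expectations to obtain \pref{perturbbnd}.

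More concretely, for step (1)--(2): the key observation is that $w^{\ga}_{\ve}$ solves a parabolic equation with right-hand side $\overline{F}(M)+\ga$ and zero lateral/initial data on $\partial_p Q_1$ (after the normalization $\overline{P}$ is accounted for), while by the homogenization result of Theorem~\ref{stochhom} the solution of the same problem with right-hand side $\overline{F}(M)$ exactly (namely $-\overline{F}(M)$ on the right of \pref{correctell}) converges uniformly to $0$ as $\ve \to 0$. Writing $w^{\ga}_{\ve} = w^{0}_{\ve} + z_{\ve}$, the difference $z_{\ve}$ satisfies a Pucci-extremal inequality with right-hand side comparable to $\ga$ and zero parabolic boundary data, so by the comparison principle against the explicit barrier $\psi(y,s) = -c\ga(s+1)(1-|y|^2)$ of the type used in Lemma~\ref{uniq} we get $z_{\ve} \leq -c\ga$ on $Q_{1/2}$ for a dimensional constant $c$. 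Combined with $\|w^0_{\ve}\|_{L^\infty(Q_1)} \to 0$, for all $\ve$ smaller than some $\ve_1(\la,\La,d)$ we obtain $-w^{\ga}_{\ve} \geq \tfrac{c}{2}\ga$ on $Q_{1/2}$, uniformly in $\om$.

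For steps (3)--(4): by the ABP inequality displayed at the start of Section~\ref{decaymass} (applied to $w^{\ga}_{\ve} - \overline{v}^{\ga}_{\ve} \in \overline{S}$ with the appropriate right-hand side supported on $\{\overline{v}^{\ga}_{\ve}=0\}$, and $\overline{v}^{\ga}_{\ve} \geq w^{\ga}_{\ve}$), we have
\begin{equation*}
\frac{c}{2}\ga \leq \sup_{Q_{1/2}}(-w^{\ga}_{\ve}(\cdot,\cdot,\om)) \leq \sup_{Q_1}(\overline{v}^{\ga}_{\ve}-w^{\ga}_{\ve}) \leq C\Big(\frac{1}{|Q_1|}\int_{\{\overline{v}^{\ga}_{\ve}=0\}}(\ell+F(0,y,s,\om))_-^{d+1}\,dy\,ds\Big)^{1/(d+1)} = C\,\overline{\pi}^{\ga}_{\ve}(\om)^{1/(d+1)},
\end{equation*}
whence $\overline{\pi}^{\ga}_{\ve}(\om) \geq \tilde c\,\ga^{d+1}$ for every $\om$ and every $\ve < \ve_1$; squaring and integrating gives $\EE[(\overline{\pi}^{\ga}_{\ve})^2] \geq C_p\,\ga^{2(d+1)}$, which is \pref{perturbbnd}.

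The main obstacle I anticipate is step (2): making the lower bound $-w^{\ga}_{\ve} \geq c\ga$ hold uniformly in $\om$ for all small $\ve$. This requires knowing that the $\ga = 0$ problem genuinely homogenizes \emph{to zero} with an error that is $o(1)$ uniformly in $\om$ on the \emph{relevant event} (the full-probability set $\tilde{\Om}$ of Theorem~\ref{stochhom} gives only a.s.\ convergence, not a uniform rate), so one either needs to invoke a qualitative argument that replaces the uniform-in-$\om$ claim by an event of probability close to $1$ — which still suffices since we only need a lower bound on an expectation — or to observe that the barrier comparison can be run directly at the level of \pref{freega} without subtracting $w^0_{\ve}$, using only that the constant right-hand side $\overline{F}(M)+\ga$ strictly exceeds $\overline{F}(M)$ and that $\overline{v}^{\ga}_{\ve}$ is controlled by a deterministic ABP bound from above. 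The cleanest route is probably to fix the event $\Om' = \{\om : \|w^0_{\ve}\|_{L^\infty(Q_1)} \leq \tfrac{c}{4}\ga \text{ for all } \ve < \ve_1\}$, note $\PP[\Om'] \geq \tfrac12$ for $\ve_1$ small by Egorov/the qualitative homogenization, and carry the pointwise bound only on $\Om'$; since $\overline{\pi}^{\ga}_{\ve} \geq 0$ always, integrating over $\Om'$ alone still yields $\EE[(\overline{\pi}^{\ga}_{\ve})^2] \geq \tfrac12 (\tilde c \ga^{d+1})^2$, which is the claim up to adjusting $C_p$.
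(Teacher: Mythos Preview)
Your proposal is correct and matches the paper's approach: both combine a barrier comparison of the Lemma~\ref{uniq} type with the qualitative homogenization of Theorem~\ref{stochhom} to obtain a pointwise lower bound $-w^{\ga}_{\ve} \gtrsim \ga$ on an event of large probability, then feed this into the ABP inequality $\sup_{Q_1}(\overline{v}^{\ga}_{\ve} - w^{\ga}_{\ve}) \leq C(\overline{\pi}^{\ga}_{\ve})^{1/(d+1)}$ and square and integrate. The paper streamlines your decomposition $w^{\ga}_{\ve} = w^0_{\ve} + z_{\ve}$ by applying Theorem~\ref{stochhom} directly to $w^{\ga}_{\ve}$---whose deterministic limit $w^{\ga}$ admits the explicit bound $w^{\ga} \leq -\beta\ga(s+1)(1-|y|^2)$ by comparison---and your Egorov-type handling of the almost-sure convergence is exactly what the paper's phrase ``using the fact that the convergence is also in probability'' encodes.
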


\begin{proof}
By Theorem \ref{stochhom}, $w_{\ve}^{\ga}\rightarrow w^{\ga}$ uniformly in $Q_{1}$, a.s. in $\om$, where $w^{\ga}$ solves
\begin{equation*}
\begin{cases}
w^{\ga}_{s}-\overline{F}(D^{2}w^{\ga})=\overline{F}(M)+\ga\quad\text{in}\quad Q_{1}\\
w^{\ga}=0\quad\text{on}\quad \partial_{p}Q_{1}.
\end{cases}
\end{equation*}

By the comparison principle and the uniform ellipticity of $\overline{F}$, there exists $\beta(\la, \La, d)$ so that
\begin{equation*}
w^{\ga}\leq -\beta \ga (s+1)(1-|y|^{2})
\end{equation*}
which implies, almost surely, 
\begin{equation}\label{compl}
\lim_{\ve\rightarrow 0}(\overline{v}_{\ve}^{\ga}(y,s, \om)-w^{\ga}(y,s, \om))\geq \beta \ga (s+1)(1-|y|^{2}).
\end{equation}
On the other hand, the parabolic ABP-estimate yields
\begin{equation}\label{abpest}
\sup_{Q_{1}}[ \overline{v}_{\ve}^{\ga}-w_{\ve}^{\ga}]^{d+1}(\cdot, \cdot, \om)\leq C_{abp}^{d+1} \overline{\pi}_{\ve}^{\ga}(\om).
\end{equation}
Combining \pref{compl} and \pref{abpest}, almost surely as $\ve\rightarrow0$, 
\begin{equation}
(\beta \ga)^{d+1}\leq[\overline{v}_{\ve}^{\ga}(0,0, \om)-w_{\ve}^{\ga}(0,0, \om)]^{d+1}\leq C_{abp}^{d+1} \overline{\pi}_{\ve}^{\ga}(\om).
\end{equation}
Squaring, integrating, and using the fact that the convergence is also in probability, there exists $\ve_{1}$ so that for all  $\ve<\ve_{1}$, \pref{perturbbnd} holds. 
\end{proof}
The same proof shows that if we consider $\ell=-\overline{F}(M)-\ga$, then we have for $\ve$ sufficiently small,
\begin{equation}\label{minusell}
\EE[(\underline{\pi}_{\ve}^{\ga})^{2}]\geq C_{p}\ga^{2(d+1)}.
\end{equation}

We now use this lemma to compute a rate of decay of $w_{\ve}$ in measure. 

\begin{proposition}\label{est1}
There exists positive constants $\tilde{C}, \tilde{c}, \hat{C}, \hat{c}$, and $\ve_{0}$ depending only on $\la, \La, d$ so that for every $M\in\mathbb{S}^{d}$, for all $\ve\leq \ve_{0}$, there exists $A^{M}_{\ve}\subset \Om$ such that 
\begin{equation*}
\PP[A^{M}_{\ve}]\leq \tilde{C}(1+\norm{M})^{2(d+1)}\ve^{\tilde{c}|\ln \ve|^{-2/3}}, 
\end{equation*}
and for all $\om\in \Om\setminus A^{M}_{\ve}$, 
\begin{equation*}
\norm{w_{\ve}(\cdot, \cdot, \om)}\leq \hat{C}\ve^{\hat{c}|\ln \ve|^{-2/3}}.
\end{equation*}
\end{proposition}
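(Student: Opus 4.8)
The plan is to combine the rate on the product $\mathbb{J}_\ve = \overline{J}_\ve \underline{J}_\ve$ from Theorem \ref{thmrate1} (in its $\ve$-rescaled form \pref{ratee}) with the perturbative lower bound of Lemma \ref{perturb} to obtain a one-sided rate on each of $\overline{J}_\ve$ and $\underline{J}_\ve$ individually, and then convert this $L^2$ (second-moment) decay into a decay of $w_\ve$ in measure via the ABP estimate and Chebyshev's inequality. The key point is that while $\mathbb{J}_\ve$ decays with the logarithmic rate, neither factor need decay on its own; but if we run the argument with the right-hand side shifted to $\ell = -\overline{F}(M) + \ga$, then by \pref{dichot} (or rather the lower bound of Lemma \ref{perturb}) we know $\EE[(\overline{\pi}^\ga_\ve)^2] \geq C_p \ga^{2(d+1)}$ is bounded below, so the product decay forces $\underline{J}^\ga_\ve$ to decay; symmetrically, with $\ell = -\overline{F}(M) - \ga$, the bound \pref{minusell} forces $\overline{J}^\ga_\ve$ to decay.

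First I would fix $M$ and choose $\ga = \ga(\ve)$ to be a slowly-decaying function of $\ve$ — comparable to the rate $\ve^{c|\ln\ve|^{-2/3}}$ itself, up to constants — to be optimized at the end. Running Theorem \ref{thmrate1}/\pref{ratee} with $\ell = -\overline{F}(M)+\ga$ gives $\overline{J}^\ga_\ve \underline{J}^\ga_\ve \leq C(1+\norm{M}+|\overline{F}(M)| + \ga)^{4(d+1)} 3^{c|\ln\ve|^{-2/3}}$, and dividing by the lower bound $\EE[(\overline{\pi}^\ga_\ve)^2]\geq C_p\ga^{2(d+1)}$ yields $\underline{J}^\ga_\ve \leq C_p^{-1}\ga^{-2(d+1)} C(1+\norm{M})^{4(d+1)} 3^{c|\ln\ve|^{-2/3}}$ (absorbing the bounded quantities $|\overline{F}(M)|$, which is controlled by $\norm{M}$ via uniform ellipticity, and $\ga \le 1$). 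Symmetrically, with $\ell = -\overline{F}(M) - \ga$ and \pref{minusell}, I get the same bound on $\overline{J}^\ga_\ve$. Now for the correctors: by the ABP argument recalled just before the statement (and reproduced in the proof of Lemma \ref{al0} and Lemma \ref{perturb}), $\sup_{Q_1}(-w_\ve)(\cdot,\cdot,\om) \leq C(\overline{m}_\ve(Q_1,\ell,\om))^{1/(d+1)}$ with the contact-set mass replaced by $\overline{\pi}_\ve(\om)$ after the standard rescaling, so $\EE[\sup_{Q_1}((w_\ve)^-)^{2(d+1)}] \lesssim \overline{J}_\ve$, and similarly $\EE[\sup_{Q_1}((w_\ve)^+)^{2(d+1)}] \lesssim \underline{J}_\ve$. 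One must be careful that the comparison is between $w^\ga_\ve$ and the obstacle solutions $\overline{v}^\ga_\ve$, $\underline{v}^\ga_\ve$ for the shifted right-hand side; since $w^\ga_\ve - w_\ve$ is controlled by $\ga$ (comparison principle, as in Lemma \ref{uniq}), passing between the two costs only an additive $O(\ga)$, which is harmless at the scale of the rate.

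Having bounded the second moments $\EE[((w_\ve)^\pm)^{2(d+1)}]$ — up to constants involving $(1+\norm{M})$, $\ga^{-2(d+1)}$ and $\ga$ — by $C(1+\norm{M})^{C(d)}\,\ga^{-2(d+1)}\,3^{c|\ln\ve|^{-2/3}}$, I apply Chebyshev: for a threshold $h(\ve)$,
\begin{equation*}
\PP[\norm{w_\ve(\cdot,\cdot,\om)}_{L^\infty(Q_1)} \geq h(\ve) + C\ga] \leq h(\ve)^{-2(d+1)}\, C(1+\norm{M})^{C(d)}\,\ga^{-2(d+1)}\,3^{c|\ln\ve|^{-2/3}}.
\end{equation*}
Now the optimization: I want both the excluded probability and the threshold to be of the form $\ve^{c'|\ln\ve|^{-2/3}} = 3^{-c'|\ln\ve|^{1/3}}$ (recall $3^{c|\ln\ve|^{-2/3}}$ is what \pref{ratee} delivers once one sets $3^{-k^3} = \ve$). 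Choosing $\ga = 3^{-\al|\ln\ve|^{1/3}}$ and $h(\ve) = 3^{-\beta|\ln\ve|^{1/3}}$ for suitable small $\al,\beta > 0$, the right-hand side becomes $3^{(2(d+1)\beta + 2(d+1)\al - c)|\ln\ve|^{1/3}}\cdot C(1+\norm{M})^{C(d)}$, and as long as $2(d+1)(\al+\beta) < c$ this is $\leq C(1+\norm{M})^{C(d)} 3^{-\tilde c|\ln\ve|^{1/3}} = C(1+\norm{M})^{C(d)}\ve^{\tilde c|\ln\ve|^{-2/3}}$ for some $\tilde c > 0$ and all $\ve$ below a threshold $\ve_0$; and $h(\ve) + C\ga = \hat C\ve^{\hat c|\ln\ve|^{-2/3}}$ with $\hat c = \min(\al,\beta)$. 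Setting $A^M_\ve$ to be the exceptional event in the Chebyshev bound (intersected over the two choices $\ell = -\overline{F}(M)\pm\ga$) gives exactly the claimed conclusion, with the power $(1+\norm{M})^{2(d+1)}$ obtained by keeping track of the exponents (or simply enlarging the exceptional set).

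\textbf{Main obstacle.} The delicate point is not any single inequality but the bookkeeping of the exponents so that the final rate is genuinely of the stated form $\ve^{c|\ln\ve|^{-2/3}}$: one is dividing the product bound (which is only logarithmically small) by a quantity $\ga^{2(d+1)}$ that one also wants small, and the two competing smallnesses must be balanced against the fixed budget $c$ coming from \pref{ratee}. A secondary subtlety is making sure the passage between $w_\ve$ and the shifted corrector $w^\ga_\ve$, and between $w^\ga_\ve$ and its obstacle solutions, only introduces errors of size $O(\ga)$ uniformly — this uses the comparison principle in the form of Lemma \ref{uniq} together with \pref{f3}. Everything else is a routine application of ABP, Chebyshev, and the already-established Theorem \ref{thmrate1} and Lemma \ref{perturb}.
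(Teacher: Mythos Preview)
Your proposal is correct and follows essentially the same approach as the paper: shift the right-hand side to $\ell=-\overline{F}(M)\pm\ga$, combine Lemma~\ref{perturb} with the product rate \pref{ratee} to force one-sided decay of $\underline{J}^{\ga}_{\ve}$ (resp.\ $\overline{J}^{\ga}_{\ve}$), then ABP plus Chebyshev plus the $O(\ga)$ comparison between $w_{\ve}$ and $w^{\ga}_{\ve}$, and finally optimize $\ga$ and the Chebyshev threshold against the rate budget. The only cosmetic difference is that the paper passes from the second moment $\underline{J}^{\ga}_{\ve}=\EE[(\underline{\pi}^{\ga}_{\ve})^{2}]$ to the first moment $\EE[\underline{\pi}^{\ga}_{\ve}]$ (via Cauchy--Schwarz) before invoking ABP with exponent $d+1$, whereas you stay at the second moment and use ABP with exponent $2(d+1)$; this is why the paper lands exactly on the stated factor $(1+\norm{M})^{2(d+1)}$ while your direct route gives $(1+\norm{M})^{4(d+1)}$ --- your parenthetical ``keeping track of exponents'' is precisely that square-root step.
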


\begin{proof}
Fix $\ga>0$, to be chosen later. By the comparison principle, for each $\om$,
\begin{equation*}
\norm{w_{\ve}^{\ga}(\cdot, \cdot, \om)-w_{\ve}(\cdot, \cdot, \om)}_{L^{\infty}(Q_{1})}\leq C\ga.
\end{equation*}
Thus, 
\begin{equation*}
w_{\ve}(\cdot, \om)\leq w_{\ve}^{\ga}-\underline{v}_{\ve}^{\ga}+C\ga.
\end{equation*}
By combining Lemma \ref{perturb} and \pref{ratee}, 
\begin{equation*}
\EE[(\underline{\pi}_{\ve}^{\ga})^{2}]\leq C(1+\norm{M})^{4(d+1)}\ga^{-2(d+1)}\ve^{c|\ln\ve|^{-2/3}}.
\end{equation*}
Therefore, 
\begin{equation*}
\EE[\underline{\pi}_{\ve, \ga}]\leq C(1+\norm{M})^{2(d+1)}\ga^{-(d+1)}\ve^{c|\ln\ve|^{-2/3}}.
\end{equation*}
Finally, by applying \pref{abpest},
\begin{equation}\label{decaynorm}
\EE\left[\norm{w_{\ve}^{\ga}-\underline{v}_{\ve}^{\ga}}_{L^{\infty}(Q_{1})}^{d+1}\right]\leq C (1+\norm{M})^{2(d+1)}\ga^{-(d+1)}\ve^{c|\ln\ve|^{-2/3}}.
\end{equation}

For $\thet>0$, we define the exceptional sets $A^{\ga, M}_{\theta}\subset\Om$ by
\begin{equation}\label{defexcsets}
A^{\ga, M}_{\theta}=\left\{\om\in\Om: \norm{w_{\ve}^{\ga}(\cdot, \cdot, \om)-\underline{v}_{\ve, \ga}(\cdot, \cdot, \om)}_{L^{\infty}(Q_{1})}>\theta\right\},
\end{equation}

and by applying Chebyshev's inequality with \pref{decaynorm} , we have
\begin{equation*}
\PP\left[A^{\ga, M}_{\theta}\right]\leq C (1+\norm{M})^{2(d+1)}\ve^{c|\ln\ve|^{-2/3}}(\theta\ga)^{-(d+1)}.
\end{equation*}
Thus, for all $\om\in \Om\setminus A^{\ga, M}_{\theta}$, 
\begin{equation*}
\max_{\overline{Q}_{1}}(w_{\ve}(\cdot, \cdot, \om))\leq \theta+C\ga.
\end{equation*}
Finally, we just need to choose $\theta+C\ga=\hat{C}\ve^{\hat{c}(\ln \ve)^{-2/3}}$, for $\hat{c}$, $\hat{C}$ chosen appropriately, and this determines the choices of $\tilde{c}$ and $\tilde{C}$.  

A similar argument with $\ell=-\overline{F}(M)-\ga$ and applying \pref{minusell} yields the other side of the inequality. 
\end{proof}

We note that a priori, the set $A^{\ga, M}_{\theta}$ may depend upon the choice of $M$. We will show now that this is not the case, by obtaining an estimate which only depends on $\norm{M}$. 



\begin{proposition}\label{unifrate}
Let $R>0$. There exist uniform positive constants $\tilde{C}, \hat{C}, \tilde{c}, \hat{c}$ and $\ve_{0}$ so that for all $M\in \mathbb{S}^{d}$, such that $\norm{M}\leq R$, for all $\ve\leq \ve_{0}$, there exists $A_{\ve}\subset \Om$ such that 
\begin{equation}
\PP[A_{\ve}]\leq \tilde{C}R^{d^{2}}(1+R)^{2(d+1)}\ve^{\tilde{c}|\ln \ve|^{-2/3}}, 
\end{equation}
and for all $\om\in \Om\setminus A_{\ve}$, 
\begin{equation}\label{rateat1}
\sup_{\norm{M}\leq R} \norm{w_{\ve, M}(\cdot, \cdot, \om)}_{L^{\infty}(Q_{1})}\leq \hat{C}\ve^{\hat{c}|\ln \ve|^{-2/3}}, 
\end{equation} 
where $w_{\ve, M}$ solves \pref{fbar}. 
\end{proposition}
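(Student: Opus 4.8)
The plan is a net (discretization) argument. Proposition \ref{est1} already provides, for \emph{each fixed} $M$, an exceptional set $A^{M}_{\ve}$ of probability at most $\tilde{C}(1+\norm{M})^{2(d+1)}\ve^{\tilde{c}|\ln\ve|^{-2/3}}$ off of which $\norm{w_{\ve,M}(\cdot,\cdot,\om)}_{L^{\infty}(Q_1)}\leq \hat{C}\ve^{\hat{c}|\ln\ve|^{-2/3}}$. To make this uniform over $\{M\in\mathbb{S}^d:\norm{M}\leq R\}$ I will: (i) show that $M\mapsto w_{\ve,M}$ is Lipschitz into $L^{\infty}(Q_1)$ with a constant depending only on $\la,\La,d$, uniformly in $\ve$ and $\om$; (ii) cover the ball $\{\norm{M}\leq R\}$ by a $\delta$-net $\{M_1,\dots,M_P\}$ with $P\leq C(1+R/\delta)^{N}$, $N=\dim\mathbb{S}^d=d(d+1)/2\leq d^2$; (iii) take $A_\ve=\bigcup_{i=1}^{P}A^{M_i}_\ve$ and union-bound; (iv) choose $\delta$ to be of the same ``$\ve$ to a vanishing power'' type as the target rate, so that both the interpolation error $C\delta$ and the net cardinality $\delta^{-N}$ are absorbed. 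Note that $w_{\ve,M}$ is genuinely well defined for every $M\in\mathbb{S}^d$ (not merely rational $M$), since $\overline{F}$ was constructed on all of $\mathbb{S}^d$ in Section \ref{construct} and \pref{fbar} with constant right-hand side is well-posed by \pref{f3}--\pref{f4}.

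\textbf{Lipschitz dependence on $M$.} Both $w_{\ve,M}$ and $w_{\ve,M'}$ vanish on $\partial_p Q_1$ and solve \pref{fbar} with their respective matrices. Since $F_{M}(N,\cdot,\cdot,\om)=F(N+M,\cdot,\cdot,\om)$, splitting $M-M'$ into its positive and negative parts and applying \pref{f3} gives $|F_{M}(N,\cdot,\cdot,\om)-F_{M'}(N,\cdot,\cdot,\om)|\leq 2\La\norm{M-M'}$ for \emph{all} $N$, while $|\overline{F}(M)-\overline{F}(M')|\leq C\norm{M-M'}$ by the uniform ellipticity of $\overline{F}$ (Proposition \ref{ellip}). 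Hence $w_{\ve,M'}$ satisfies the equation for $w_{\ve,M}$ up to a constant right-hand side error of size $C_1\norm{M-M'}$ with $C_1=C_1(\la,\La,d)$. Adding the barrier $\pm C_1\norm{M-M'}(s+1)$ (which vanishes on $\overline{B}_1\times\{-1\}$ and is nonnegative on $\partial_p Q_1$) turns $w_{\ve,M'}$ into a super/subsolution of the equation solved by $w_{\ve,M}$ with $\partial_p Q_1$-data $\geq 0$ resp.\ $\leq 0$ --- exactly as in the proof of Lemma \ref{uniq} --- because $D^2(s+1)=0$ and $s+1\in(0,1]$ on $Q_1$. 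The comparison principle then yields $\norm{w_{\ve,M}(\cdot,\cdot,\om)-w_{\ve,M'}(\cdot,\cdot,\om)}_{L^{\infty}(Q_1)}\leq C_1\norm{M-M'}$, uniformly in $\ve$ and $\om$.

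\textbf{Assembling and bookkeeping.} Fix $\delta>0$, let $\{M_1,\dots,M_P\}$ be a $\delta$-net of $\{\norm{M}\leq R\}$ with $P\leq C(1+R/\delta)^N$, and set $A_\ve=\bigcup_{i=1}^{P}A^{M_i}_\ve$. Since $\norm{M_i}\leq R$, Proposition \ref{est1} gives $\PP[A_\ve]\leq P\,\tilde{C}(1+R)^{2(d+1)}\ve^{\tilde{c}|\ln\ve|^{-2/3}}$. For $\om\in\Om\setminus A_\ve$ and $\norm{M}\leq R$, picking $M_i$ with $\norm{M-M_i}\leq\delta$ and using the Lipschitz bound gives $\norm{w_{\ve,M}(\cdot,\cdot,\om)}_{L^\infty(Q_1)}\leq \hat{C}\ve^{\hat{c}|\ln\ve|^{-2/3}}+C_1\delta$. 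Now choose $\delta=\ve^{c^{*}|\ln\ve|^{-2/3}}$ with $c^{*}$ small: then $C_1\delta$ is already of the required form, and $(1+R/\delta)^N\leq C(1+R)^N\ve^{-c^{*}N|\ln\ve|^{-2/3}}$, so $P\,\ve^{\tilde{c}|\ln\ve|^{-2/3}}\leq C(1+R)^N\ve^{(\tilde{c}-c^{*}N)|\ln\ve|^{-2/3}}$, which has a strictly positive exponent as soon as $c^{*}<\tilde{c}/N$. Relabelling the constants $\tilde{C},\tilde{c},\hat{C},\hat{c}$ (and using $N\leq d^2$) gives the asserted bounds on $\PP[A_\ve]$ and on $\sup_{\norm{M}\leq R}\norm{w_{\ve,M}(\cdot,\cdot,\om)}_{L^\infty(Q_1)}$, valid for all $\ve\leq\ve_0$.

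\textbf{Where the difficulty lies.} None of these steps is deep; the only genuine point is the trade-off in the last step, since refining the net to make the Lipschitz interpolation error negligible multiplies the failure probability by $\delta^{-N}$. This is reconciled precisely because the probabilistic gain $\ve^{\tilde{c}|\ln\ve|^{-2/3}}$ and the natural mesh $\delta=\ve^{c^{*}|\ln\ve|^{-2/3}}$ are of the same vanishing-power type, so $\delta^{-N}$ is sub-dominant and can be absorbed by shrinking the exponent constant. The Lipschitz estimate itself, the union bound, and the Chebyshev step (already carried out inside Proposition \ref{est1}) are all routine.
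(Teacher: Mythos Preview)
Your proof is correct and follows essentially the same approach as the paper: cover $\{\norm{M}\leq R\}$ by a net of radius $r=\ve^{c|\ln\ve|^{-2/3}}$, use the Lipschitz dependence of $w_{\ve,M}$ on $M$ (which the paper derives in one line via comparison and the uniform ellipticity of $\overline{F}$), union-bound over the exceptional sets from Proposition~\ref{est1}, and choose $c$ small enough that the net cardinality $\sim r^{-d^2}$ is absorbed by the probability gain. Your write-up is more explicit about the Lipschitz barrier and uses the sharper dimension $N=d(d+1)/2$ rather than $d^2$, but these are cosmetic differences.
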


\begin{proof}
We cover $\left\{M\in \mathbb{S}^{d}: \norm{M}\leq R\right\}\subset \cup_{i=1}^{K} \mathcal{B}_{r}(M_{i})$ where $\mathcal{B}_{r}(M_{i})=\left\{N\in \mathbb{S}^{d}: \norm{N-M_{i}}\leq r\right\}$, $\norm{M_{i}}\leq R$, and $K\leq 2(R/r)^{d^{2}}$.

By applying the comparison principle and using that $\overline{F}(\cdot)$ is uniformly elliptic, there exists a uniform constant $C$, such that for all $i$,  
\begin{equation}\label{nearby}
\sup_{\norm{M-M_{i}}\leq r}\norm{w_{\ve, M}(\cdot, \cdot, \om)}_{L^{\infty}(Q_{1})}\leq Cr+\norm{w_{\ve, M_{i}}(\cdot, \cdot, \om)}_{L^{\infty}(Q_{1})}. 
\end{equation}
 
By Proposition \ref{est1}, and taking the maxmimum over all $i$'s, there exist positive constants $\tilde{C}, \tilde{c}$, $\hat{C}, \hat{c}$ and a set $A_{\ve}\subset \Om$, such that
\begin{align*}
&\PP[A^{M_{i}}_{\ve}]\leq \tilde{C}(1+R)^{2(d+1)}\ve^{\tilde{c}|\ln \ve|^{-2/3}}\quad\text{and}\\
&\norm{w_{\ve, M_{i}}(\cdot, \cdot, \om)}_{L^{\infty}(Q_{1})}\leq \hat{C}\ve^{\hat{c}|\ln \ve|^{-2/3}}\quad\text{for}\quad \om\in \Om\setminus A_{\ve}.
\end{align*}
Let $A_{\ve}=\bigcup A^{M_{i}}_{\ve}$. Then 
\begin{equation*}
\PP[A_{\ve}]\leq \sum_{i=1}^{K}\PP[A^{M_{i}}_{\ve}]\leq \tilde{C}\left(\frac{R}{r}\right)^{d^{2}}(1+R)^{2(d+1)}\ve^{\tilde{c}|\ln\ve|^{-2/3}},
\end{equation*}
and for all $\om\in \Om\setminus A_{\ve}$,
\begin{equation}\label{anchors}
\sup_{\norm{M_{i}}\leq R} \norm{w_{\ve, M_{i}}(\cdot, \cdot, \om)}_{L^{\infty}(Q_{1})}\leq \hat{C}\ve^{\hat{c}|\ln\ve|^{-2/3}}\quad\forall i.
\end{equation}

Choose $r=\ve^{c|\ln\ve|^{-2/3}}$, where $cd^{2}\leq \tilde{c}$. By combining \pref{nearby} and \pref{anchors}, we have that for all $\om\in \Om\setminus A_{\ve}$, 
\begin{equation*}
\sup_{\norm{M}\leq R} \norm{w_{\ve, M}(\cdot, \cdot, \om)}_{L^{\infty}(Q_{1})}\leq \hat{C}\ve^{\hat{c}|\ln\ve|^{-2/3}}, 
\end{equation*}
and 
\begin{equation*}
\PP[A_{\ve}]\leq \tilde{C}(R^{d^{2}}(1+R)^{2(d+1)})\ve^{(\tilde{c}/2)(\ln\ve)^{-2/3}}.
\end{equation*}

\end{proof}


\begin{remark}\label{ratescale}
For future reference, we work out the scaling of \pref{rateat1}. Suppose that $w_{\ve}$ solves
\begin{equation*}
\begin{cases}
w_{\ve, s}-F_{M}(D^{2}w_{\ve}, y/\ve, s/\ve^{2}, \om)=-\overline{F}(M)\quad\text{in}\quad Q_{r}\\
w_{\ve}=0\quad\text{on}\quad \partial_{p}Q_{r}
\end{cases}
\end{equation*}
Rescaling Proposition \ref{unifrate}, there exists $A_{\ve}\subset\Om$ such that 
\begin{equation*}
\PP[A_{\ve}]\leq \overline{C}R^{d^{2}}(1+R)^{2(d+1)}(\ve r^{-1})^{\tilde{c}|\ln (\ve r^{-1})|^{-2/3}},
\end{equation*} 
and for all $\om\in \Om\setminus A_{\ve}$, 
\begin{equation*}
\sup_{\norm{M}\leq R} \norm{w_{\ve, M}(\cdot, \cdot, \om)}_{L^{\infty}(Q_{r})}\leq \hat{C}r^{2}(\ve r^{-1})^{\hat{c}|\ln (\ve r^{-1})|^{-2/3}}.
\end{equation*} 
\end{remark}

\section{The Proof of Theorem \ref{thmfinalrate}}\label{errorcomp}
We show that the rate of decay on the approximate correctors $w_{\ve}$ yields a rate of decay on $u^{\ve}-u$. This follows by performing a quantification of the perturbed test function method of Evans \cite{evanshom}. In the elliptic setting, this quantification was first presented in the language of $\delta$-solutions, introduced in \cite{cs}. $\delta$-solutions were recently generalized to the parabolic setting by Turanova \cite{olga1}. We heavily use the regularity estimates developed in \cite{olga1}, but for the problem at hand we choose to present the results directly. We will constantly relabel $C$ in this section, whenever $C$ depends only on universal constants such as $\la, \La, d, T$. 

Our goal is to control $\PP[\om: \norm{u^{\ve}(\cdot, \cdot, \om)-u(\cdot, \cdot)}_{L^{\infty}_(D_{T})} \geq C\delta^{\al}]$, where $C\delta^{\al}$ will be chosen in terms of $\ve$. For convenience, we denote this set $\PP\left[|u^{\ve}-u|\geq C\delta^{\al}\right]$.  

We first outline our method of approach. We compare $u$ to strict supersolutions and subsolutions of \pref{limit}, because these approximations allow us to absorb small errors which come from various perturbations in the argument. We define $u^{+\delta}$
\begin{equation}\label{delsub}
\begin{cases}
u^{+\delta}_{t}-\overline{F}(D^{2}u^{+\delta})=\delta^{\al}\quad\text{in}\quad D_{T},\\
u^{+\delta}=g\quad\text{on}\quad \partial_{p}D_{T},
\end{cases}
\end{equation}
and 
\begin{equation}\label{delsup}
\begin{cases}
u^{-\delta}_{t}-\overline{F}(D^{2}u^{-\delta})=-\delta^{\al} \quad\text{in}\quad D_{T},\\
u^{-\delta}=g\quad\text{on}\quad \partial_{p}D_{T}.
\end{cases}
\end{equation}

The comparison principle yields
\begin{equation*}
\sup_{D_{T}}(u^{+\delta}-u)\leq \delta^{\al} T\quad\text{and}\quad\sup_{D_{T}}(u-u^{-\delta})\leq \delta^{\al} T.
\end{equation*}

It follows that
\begin{align}
\PP\left[|u^{\ve}-u|\geq C\delta^{\al} \right]&\leq \PP\left[ (u^{\ve}-u)\geq C\delta^{\al} \right]+\PP\left[(u^{\ve}-u)\leq -C\delta^{\al} \right]\notag\\
&\leq \PP[(u^{\ve}-u^{+\delta})\geq C\delta^{\al}]+\PP[(u^{\ve}-u^{-\delta})\leq -C\delta^{\al}]\notag\\
&= \PP[(u^{+\delta}-u^{\ve})\leq -C\delta^{\al}]+\PP[(u^{-\delta}-u^{\ve})\geq C\delta^{\al}]\label{comp}
\end{align}
Therefore, the main focus of this section will be to control $\PP[(u^{+\delta}-u^{\ve})\leq -C\delta^{\al}]$ and $\PP[(u^{-\delta}-u^{\ve})\geq C\delta^{\al}]$.

We continue the argument for controlling $\PP[(u^{+\delta}-u^{\ve})\leq -C\delta^{\al}]$, since the argument for $\PP[(u^{-\delta}-u^{\ve})\geq C\delta^{\al}]$ follows similarly. After regularizing $u^{+\delta}$ and $u^{\ve}$ appropriately, it was pointed out in \cite{olga1} that if $\om\in \left\{(u^{+\delta}-u^{\ve})\leq -C\delta^{\al}\right\}$, then there exists a paraboloid $P$ which touches touches $u_{\ve}$ from above, and has opening $|D^{2}P|\leq C\delta^{-\sigma}$, where $C, \sigma$ are universal constants of the problem. Perturbing $P$ to be $\tilde{P}$, we show that $\tilde{P}+\ve^{2}w^{\ve}$ is a supersolution to \pref{delsub} with a given rate of decay for $\om\in \Om\setminus A_{\ve}$, where $A_{\ve}$ is the exceptional set originating from Proposition \ref{unifrate}. As in the proof of Theorem \ref{stochhom}, we obtain a contradiction, which implies that $\left\{(u^{+\delta}-u^{\ve})\leq -C\delta^{\al}\right\}\subset A_{\ve}$, whose measure is controlled by Proposition \ref{unifrate}. We note that the exceptional sets $A_{\ve}$ in Proposition \ref{unifrate} only hold for local cylinders or parabolic cubes. Therefore, to estimate the total probability corresponding to the domain $D_{T}$, we apply a covering argument to conclude. 

Without loss of generality, we assume that $u^{+\delta}$ are $u^{\ve}$ are semiconcave and semiconvex respectively. This may be justified by regularizing $u^{+\delta}$ and $u^{\ve}$ using inf and sup convolutions in space (see Appendix E and \cite{olga1} for details). Furthermore, we only need to analyze the behavior of $u^{+\delta}-u^{\ve}$ in the interior of the domain, since $u^{+\delta}$ and $u^{\ve}$ share the same boundary condition, and they are both Lipschitz continuous. 
We introduce some new notation which will be utilized in this section. In order to perform the covering, we consider a grid enumerated by $i$ of parabolic cubes $K^{i}_{r}=\left[x_{i}-\frac{r\sqrt{d}}{2}, x_{i}+\frac{r\sqrt{d}}{2}\right]^{d}\times (t_{i}+\frac{r^{2}}{162d}, t_{i}+\frac{r^{2}}{81d}]$, with $r$ to be chosen so that $K^{i}_{r}\subset Q^{+}_{r}(x_{i}, t_{i})\subset D_{T}$. We are allowed to do so, because we are only concerned with the interior of $D_{T}$. Moreover, we define $\mathcal{P}_{\sigma}$ to be the set of all paraboloids of opening $C\delta^{-\sigma}$. In other words, 
\begin{equation*}
\mathcal{P}_{\sigma}=\left\{P(x,t): P(x,t)=P_{t}\cdot t+\frac{1}{2}\langle D^{2}Px, x\rangle+C~\text{and}~|P_{t}|, |D^{2}P|\leq C\delta^{-\sigma}\right\}. 
\end{equation*}
Using the same arguments as in \cite{olga1}, we have
\begin{claim}\label{claimcube}
There exists $Q^{+}_{r}(x_{i}, t_{i})$ so that there exists $(x_{0}, t_{0})\in K^{i}_{r}$  and a paraboloid $P\in \mathcal{P}_{\sigma}$, with $P\geq u^{\ve}$ in $\overline{Q^{+}_{r}}(\overline{x}, \overline{t})\cap \left\{t\leq t_{0}\right\}$, $P(x_{0}, t_{0})=u^{\ve}(x_{0}, t_{0})$, $P_{t}(x_{0}, t_{0})-\overline{F}(D^{2}P(x_{0}, t_{0}))> \delta^{\al}$.
\end{claim}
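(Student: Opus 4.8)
The plan is to follow the strategy laid out in \cite{olga1}, suitably adapted to the parabolic cylinders $Q^+_r(x_i,t_i)$. We begin from the assumption $\om \in \{(u^{+\delta}-u^\ve) \le -C\delta^\al\}$; after the inf/sup-convolution regularizations (so that $u^{+\delta}$ is semiconcave and $u^\ve$ is semiconvex, as noted above), the difference $u^{+\delta}-u^\ve$ attains an interior minimum, strictly below $-C\delta^\al$, at some point which — up to translating and relabeling — we may assume lies in one of the grid cubes $K^i_r$. At a minimum of a semiconcave-minus-semiconvex function, both functions are touched simultaneously by a common paraboloid $P$ of universal opening; the key quantitative point, which is exactly the content of the $\delta$-solution machinery of \cite{olga1}, is that the opening can be bounded by $C\delta^{-\sigma}$, where $\sigma$ depends only on $\la,\La,d$ and arises from tracking how the regularization parameters must be chosen relative to $\delta^\al$. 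This gives us the paraboloid $P \in \mathcal{P}_\sigma$ touching $u^\ve$ from above at $(x_0,t_0) \in K^i_r$ with $P(x_0,t_0)=u^\ve(x_0,t_0)$.

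Next I would verify the differential inequality $P_t(x_0,t_0)-\overline{F}(D^2P(x_0,t_0)) > \delta^\al$. Since $P$ touches $u^{+\delta}$ from below at the minimum point and $u^{+\delta}$ solves \pref{delsub} in the viscosity sense, we get $P_t - \overline{F}(D^2 P) \ge \delta^\al$ at that point; the strict inequality comes from the slack built into choosing $u^{+\delta}$ as a \emph{strict} supersolution, together with the small error absorbed by the regularization (this is why the right-hand side of \pref{delsub} is $\delta^\al$ rather than $0$). One then propagates this from the minimum point to the touching point $(x_0,t_0)$ using continuity of $P_t, D^2P$ and the fact that, by construction of the cube $K^i_r \subset Q^+_r(x_i,t_i)$, the two points are within a controlled parabolic distance, so the perturbation in $\overline{F}(D^2P)$ stays below the available slack.

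The remaining point is the containment $P \ge u^\ve$ in $\overline{Q^+_r}(x_i,t_i) \cap \{t \le t_0\}$ — not merely locally near $(x_0,t_0)$. Here I would use a barrier/localization argument: enlarge $P$ by a small multiple of the parabolic distance to $(x_0,t_0)$ (which is permissible since it only changes the opening by a universal constant, keeping $P \in \mathcal{P}_\sigma$ after relabeling $C$), so that the touching becomes strict away from $(x_0,t_0)$; then the ordering $P \ge u^\ve$ on the relevant portion of the cylinder follows from the maximum principle applied to $u^\ve \in S^*(\cdot, Q^+_r)$ on the restricted time slab $\{t \le t_0\}$, using that the restriction to $\{t\le t_0\}$ is exactly where causality lets us argue with a closed parabolic boundary. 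Choosing $r$ small enough (depending only on universal constants) that $K^i_r \subset Q^+_r(x_i,t_i) \subset D_T$ completes the selection.

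The main obstacle I anticipate is the second paragraph — extracting the \emph{quantitative} opening bound $C\delta^{-\sigma}$ and the strict differential inequality at the touching point. This is where the parabolic regularity estimates of \cite{olga1} (reviewed in Appendix D) do the real work, and where causality complicates matters: unlike the elliptic case, the touching paraboloid's control in the time direction ($|P_t| \le C\delta^{-\sigma}$) and the restriction of the touching set to $\{t \le t_0\}$ must be handled together, since a paraboloid that touches from above globally in time would overconstrain the construction. Once the $\mathcal{P}_\sigma$ membership and the strict inequality are in hand, the containment and the choice of $r$ are routine.
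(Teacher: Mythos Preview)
Your outline diverges from the paper's argument in a way that matters. The paper (following \cite{olga1}, summarized in Appendix~D) does \emph{not} simply locate the minimum of the regularized difference and read off a common touching paraboloid there. Instead it runs a measure-theoretic intersection argument: the parabolic ABP estimate applied to $\overline u^{\delta,\theta}-\underline u^\ve_\theta$ produces a convex-envelope contact set of measure at least a power of $\delta^\al$; the $W^{2,\ve}$-type sets $\overline\Theta^\theta_M$ from Appendix~D (points where the regularized $u^{+\delta}$ admits a polynomial expansion of opening $M$ with $P_t-\overline F(D^2P)$ equal to the right-hand side of \pref{delsub}) have complement of measure $\le CM^{-\sigma}$; and a covering argument over the grid $\{K^i_r\}$ forces the two sets to intersect in some $K^i_r$ once $M$ is chosen of order a negative power of $\delta$. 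This balance of the ABP contact measure against the $W^{2,\ve}$ deficit is precisely where the exponent in $\mathcal P_\sigma$ comes from---it is the $W^{2,\ve}$ exponent, not a byproduct of how the convolution parameter $\theta$ is tuned.

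Your first paragraph's mechanism (``at a minimum of a semiconcave-minus-semiconvex function, both are touched by a common paraboloid of opening $C\delta^{-\sigma}$'') does not deliver this. At such a minimum the semiconvexity/semiconcavity only bound the opening by $\theta^{-1}$, and there is in general no single paraboloid simultaneously touching $u^{+\delta}$ from below and $u^\ve$ from above: the parabolic Jensen--Ishii lemma yields ordered matrices $X\le Y$, one in the subjet of $u^{+\delta}$ and one in the superjet of $u^\ve$, and your differential inequality needs $X$ while the touching-from-above of $u^\ve$ needs $Y$. The paper sidesteps this by taking $P$ from the $\Theta_M$ expansion of $u^{+\delta}$ at a point that is \emph{also} in the convex-envelope contact set of the difference; the affine touching of $u^{+\delta}-u^\ve$ from below there is what lets one slide $P$ down (by an affine function, which leaves $P_t$ and $D^2P$ unchanged) until it touches $u^\ve$ from above on the whole of $\overline{Q^+_r}\cap\{t\le t_0\}$.

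Your third paragraph's containment argument is also a genuine gap. You propose to invoke ``the maximum principle applied to $u^\ve\in S^*(\cdot,Q^+_r)$'' to force $P\ge u^\ve$, but a paraboloid with $P_t-\overline F(D^2P)>\delta^\al$ is not a supersolution of the $\ve$-equation $u^\ve_t-F(D^2u^\ve,x/\ve,t/\ve^2,\om)=0$ (the operators $\overline F$ and $F(\cdot,x/\ve,t/\ve^2,\om)$ are different), so comparison is unavailable. In the paper's construction the containment is built in, not argued by comparison: it is a direct consequence of the convex-envelope contact condition combined with the $\Theta_M$ polynomial approximation, as in the previous paragraph.
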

We prove the claim in the appendix. Therefore, 
\begin{align*}
&\PP[(u^{+\delta}-u^{\ve})\leq -C\delta^{\al}]\\
&\leq \PP[~\exists i: \text{Claim \ref{claimcube} holds}].
\end{align*}
Moreover, in light of the grid,
\begin{equation*}
\PP[~\exists i: \text{Claim \ref{claimcube} holds}]\leq \sum_{i} \PP[\text{Claim \ref{claimcube} holds in}~Q^{+}_{r}(x_{i}, t_{i})].
 \end{equation*}
 Therefore, if we are able to control $\PP[\text{Claim \ref{claimcube} holds}]$ in each $Q^{+}_{r}(x_{i}, t_{i})$, then we are done. 
 
 The next lemma shows that 
 \begin{equation*}
 \left\{\om: \text{Claim \ref{claimcube} holds}\right\}\subset A_{\ve}^{i},
 \end{equation*}
where $A_{\ve}^{i}$ is the exceptional set corresponding to $Q^{+}_{r}(x_{i}, t_{i})$. 
 
 \begin{lemma}\label{deltaperturb}
 Let $Q^{+}_{r}(x_{i},t_{i})$, $K^{i}_{r}$ as above. Let $\om\in \Om$ so that there exists $(x_{0}, t_{0})\subset \overline{K}_{r}(x_{i},t_{i})$ and a paraboloid $P\in\mathcal{P}_{\sigma}$ such that $P(x_{0}, t_{0})=u^{\ve}(x_{0}, t_{0}, \om)$, $P\geq u^{\ve}(\cdot, \cdot, \om)$ in $\overline{Q}_{r}(x_{i},t_{i})\cap \left\{t\leq t_{0}\right\}$, and $P_{t}(x_{0}, t_{0})-\overline{F}(D^{2}P(x_{0}, t_{0}))> \delta^{\al}$. Then there exists a choice of $r(\delta, \ve)$ such that $\om\in A_{\ve}^{Q^{+}_{r}(x_{i}, t_{i})}$, the exceptional set associated with $Q^{+}_{r}(x_{i}, t_{i})$. 
 \end{lemma}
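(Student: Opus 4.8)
The plan is to prove the contrapositive. Fix $\om$ satisfying the hypotheses and suppose, for contradiction, that $\om\notin A_{\ve}^{Q^{+}_{r}(x_{i},t_{i})}$, the exceptional set produced by (the forward-in-time analogue of) Remark~\ref{ratescale} for the cylinder $Q^{+}_{r}(x_{i},t_{i})$ and the opening $R:=C\delta^{-\sig}$; recall $\norm{D^{2}P}\leq C\delta^{-\sig}$ because $P\in\mcal{P}_{\sig}$. Set $M:=D^{2}P(x_{0},t_{0})$, so $\norm{M}\leq R$, and let $w_{\ve}$ be the approximate corrector of \pref{fbar} for the matrix $M$, rescaled and recentered so that it solves $\partial_{t}w_{\ve}-F_{M}(D^{2}w_{\ve},x/\ve,t/\ve^{2},\om)=-\overline{F}(M)$ in $Q^{+}_{r}(x_{i},t_{i})$ with $w_{\ve}=0$ on $\partial_{p}Q^{+}_{r}(x_{i},t_{i})$. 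Since $\om\notin A_{\ve}^{Q^{+}_{r}(x_{i},t_{i})}$, the rescaled estimate of Remark~\ref{ratescale} gives $\norm{w_{\ve}(\cdot,\cdot,\om)}_{L^{\infty}(Q^{+}_{r}(x_{i},t_{i}))}\leq\m$, where $\m:=\hat{C}r^{2}(\ve r^{-1})^{\hat{c}|\ln(\ve r^{-1})|^{-2/3}}$.

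Next I carry out the perturbed test function construction. Put $\tilde{\vp}^{\ve}:=P+w_{\ve}$. Because $P\in\mcal{P}_{\sig}$ has $D^{2}P\equiv M$ constant, the corrector equation gives, in the viscosity sense in $Q^{+}_{r}(x_{i},t_{i})$, that $\partial_{t}\tilde{\vp}^{\ve}-F(D^{2}\tilde{\vp}^{\ve},x/\ve,t/\ve^{2},\om)=P_{t}-\overline{F}(M)>\delta^{\al}$, so $\tilde{\vp}^{\ve}$ is a supersolution of $v_{t}-F(D^{2}v,x/\ve,t/\ve^{2},\om)=\delta^{\al}$ there (any error from the sup-convolution regularization of $u^{\ve}$ is negligible against this strict gap and is absorbed when forming the difference below). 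I then work on the truncated cylinder $Q:=Q^{+}_{r}(x_{i},t_{i})\cap\{t\leq t_{0}\}$, whose parabolic boundary satisfies $\partial_{p}Q\subset\partial_{p}Q^{+}_{r}(x_{i},t_{i})$; hence $w_{\ve}=0$ and $\tilde{\vp}^{\ve}=P$ on $\partial_{p}Q$, and since $P\geq u^{\ve}$ on $\overline{Q}$ we get $w:=\tilde{\vp}^{\ve}-u^{\ve}\geq 0$ on $\partial_{p}Q$. Because $u^{\ve}$ solves its equation in $D_{T}\supset Q$, uniform ellipticity yields that $w$ is a viscosity supersolution of $v_{t}-\MM^{+}(D^{2}v)=\delta^{\al}$ in $Q$.

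To exploit the strict gap I compare $w$ with the torsion-type barrier $\delta^{\al}\Phi$, where $\Phi$ solves $\partial_{t}\Phi-\MM^{+}(D^{2}\Phi)=1$ in $B_{r}(x_{i})\times(t_{i},t_{i}+r^{2}]$ with $\Phi=0$ on the parabolic boundary; by parabolic scaling $\Phi(x,t)=r^{2}\hat{\Phi}((x-x_{i})/r,(t-t_{i})/r^{2})$, and since $(x_{0},t_{0})\in K^{i}_{r}$ lies in a fixed compact subcylinder of $B_{r}(x_{i})\times(t_{i},t_{i}+r^{2}]$ bounded away from the parabolic boundary, $\Phi(x_{0},t_{0})\geq c_{1}r^{2}$ with $c_{1}=c_{1}(\la,\La,d)>0$. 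Comparison (applied to $w-\delta^{\al}\Phi\in\overline{S}(0,Q)$, which is $\geq 0$ on $\partial_{p}Q$, via Wang's ABP estimate) gives $w\geq\delta^{\al}\Phi$ on $\overline{Q}$, hence $w(x_{0},t_{0})\geq c_{1}\delta^{\al}r^{2}$. On the other hand $P(x_{0},t_{0})=u^{\ve}(x_{0},t_{0})$ forces $w(x_{0},t_{0})=w_{\ve}(x_{0},t_{0})\leq\m$. Combining the two bounds, $c_{1}\delta^{\al}\leq\hat{C}(\ve r^{-1})^{\hat{c}|\ln(\ve r^{-1})|^{-2/3}}$.

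It remains to choose $r=r(\delta,\ve)$. Taking $r$ comparable to $\min\{\operatorname{dist}(x_{i},\partial D),\sqrt{T},1\}$ ensures $K^{i}_{r}\subset Q^{+}_{r}(x_{i},t_{i})\subset D_{T}$, and then for $\ve$ small $\hat{C}(\ve r^{-1})^{\hat{c}|\ln(\ve r^{-1})|^{-2/3}}\leq\ve^{(\hat{c}/2)|\ln\ve|^{-2/3}}$; since the relation between $\delta$ and $\ve$ fixed in the proof of Theorem~\ref{thmfinalrate} makes $\delta^{\al}$ of order $\ve^{\tilde{c}|\ln\ve|^{-2/3}}$ with $\tilde{c}>\hat{c}/2$, the displayed inequality is violated -- a contradiction -- so $\om\in A_{\ve}^{Q^{+}_{r}(x_{i},t_{i})}$. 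I expect the main difficulty to be precisely this scale bookkeeping: the estimate of Remark~\ref{ratescale} degrades both in the opening $R=\delta^{-\sig}$ (which later enters the probability bound through the factor $R^{d^{2}}(1+R)^{2(d+1)}$) and in the ratio $\ve r^{-1}$, and one must pin down $r$ -- constrained from above by the geometry of $D_{T}$ -- so that the corrector error $\m$ beats $c_{1}\delta^{\al}r^{2}$, while recording the exact $\delta$--$\ve$ relation for use in Theorem~\ref{thmfinalrate}. Two parabolic-specific points also need care: Remark~\ref{ratescale} must be transcribed to forward cylinders $Q^{+}_{r}$ (a routine time reversal, with the exceptional set merely translated by stationarity), and one must notice that the paraboloid controls $u^{\ve}$ only on $\{t\leq t_{0}\}$ -- which is exactly compatible with $w_{\ve}$ vanishing on the parabolic boundary of the truncated cylinder $Q$, so causality costs us nothing here.
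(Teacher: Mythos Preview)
Your argument is correct and follows the same perturbed--test--function strategy as the paper; the only difference is packaging. The paper bakes the barrier into the test function by setting $\tilde P=P-\eta\delta^{\al}(t-t_{i})(r^{2}-|x-x_{i}|^{2})$ and then verifies (with a longer computation, since $D^{2}\tilde P$ is no longer constant) that $\tilde P+w_{\ve}$ is a supersolution, whereas you keep $P$ (so $D^{2}P\equiv M$ makes the supersolution property immediate) and extract the strict gap $\delta^{\al}$ afterwards via a separate barrier $\Phi$. Both routes land on the same inequality $w_{\ve}(x_{0},t_{0})\geq c\,\delta^{\al}(t_{0}-t_{i})(r^{2}-|x_{0}-x_{i}|^{2})$, and the contradiction with the corrector bound from Remark~\ref{ratescale} is identical. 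One small correction: the difference $w=\tilde\vp^{\ve}-u^{\ve}$ lies in $\overline{S}(\delta^{\al},Q)$, i.e.\ $w_{t}-\MM^{-}(D^{2}w)\geq\delta^{\al}$, not $\MM^{+}$; equivalently (and avoiding the ``difference of two viscosity solutions'' issue altogether) you can observe that $u^{\ve}+\delta^{\al}\Phi$ is a subsolution of $v_{t}-F(D^{2}v,\cdot)=\delta^{\al}$ when $\Phi$ is the explicit bump above, and compare it directly with the supersolution $P+w_{\ve}$.
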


\begin{proof}
We first claim that without loss of generality, we may assume that $P(x,t)=u^{\ve}(x_{0}, t_{0})+P_{t}(t-t_{0})+\frac{1}{2}\langle D^{2}P(x-x_{0}), x-x_{0}\rangle$, where $P_{t}$ and $D^{2}P$ are constant. 
We will perturb $P$ to a function $P^{*}$ where $P^{*}+w_{\ve, r}$ is a supersolution to \pref{hom}. 


First, let $\tilde{P}(x,t)=P(x,t)-\eta\delta^{\al}(t-t_{i})(r^{2}-|x-x_{i}|^{2})$, with $\eta$ to be chosen. Note that by construction, $\tilde{P}=P\geq u^{\ve}$ on $\partial_{p}Q^{+}_{r}(x_{i}, t_{i})$. Also, we have
\begin{equation*}
\tilde{P}_{t}=P_{t}-\eta\delta^{\al}(r^{2}-|x-x_{i}|^{2})\quad\text{and}\quad D^{2}\tilde{P}=D^{2}P+2\eta\delta^{\al}(t-t_{i})Id
\end{equation*}
which implies 
\begin{equation*}
\tilde{P}_{t}(x_{0}, t_{0})=P_{t}-\eta\delta^{\al}(r^{2}-|x_{0}-x_{i}|^{2}) \quad\text{and}\quad D^{2}\tilde{P}(x_{0}, t_{0})=D^{2}P+2\eta\delta^{\al}(t_{0}-t_{i})Id.
\end{equation*}



This yields
\begin{equation*}
\tilde{P}(x_{0}, t_{0})-u^{\ve}(x_{0}, t_{0})=-\eta\delta^{\al}(t_{0}-t_{i})(r^{2}-|x_{0}-x_{i}|^{2})<0.
\end{equation*}

We consider the solution  $w_{\ve}$ which solves
\begin{equation}
\begin{cases}
w_{\ve, t}-F(D^{2}\tilde{P}(x_{0}, t_{0})+D^{2}w_{\ve}, x/\ve, t/\ve^{2}, \om)=-\overline{F}(D^{2}\tilde{P}(x_{0},t_{0}))\quad\text{in}\quad Q^{+}_{r}(x_{i},t_{i})\\
w_{\ve}=0\quad\text{on}\quad\partial_{p}Q^{+}_{r}(x_{i},t_{i})
\end{cases}
\end{equation}


By Proposition \ref{unifrate}, there exists positive constants $\tilde{C}, \tilde{c}, \hat{C}, \hat{c}$ and $\ve_{0}$ such that for all $\ve\in (0, r\ve_{0})$, there exists a set of bad configurations $A^{r}_{\ve}=A_{\ve}^{\overline{Q}_{r}(x_{i}, t_{i})}\subset \Om$ such that for all $\om\in  \Om\setminus A^{r}_{\ve}$
\begin{equation}
\norm{w_{\ve}(\cdot, \cdot, \om)}\leq Cr^{2}(\ve r^{-1})^{\hat{c}|\ln (\ve r^{-1})|^{-2/3}}.
\end{equation} 
(Note that $P$ may vary between choice of $(x_{0}, t_{0})$ but $|D^{2}P|\leq C\delta^{-\sigma}$ is universal. Moreover, the size of the perturbation is controlled for all choices of $(x_{0}, t_{0})\in K_{r}(x_{i}, t_{i})$. We say that everything is controlled by a factor of $C\delta^{-\sigma}$ universally.)

On $\partial_{p}(Q^{+}_{r}(x_{i}, t_{i})\cap \left\{t\leq t_{0}\right\})$,  
\begin{equation*}
\tilde{P}+w_{\ve}=P\geq u_{\ve}.
\end{equation*}

Moreover, upon studying the solution properties, we have 
\begin{align*}
&\tilde{P}_{t}+w_{\ve, t}-F(D^{2}\tilde{P}+D^{2}w_{\ve}, x/\ve, t/\ve^{2}, \om)\\
 &=\tilde{P}_{t}(x_{0}, t_{0})+\eta\delta^{\al}\left[|x-x_{i}|^{2}-|x_{0}-x_{i}|^{2}\right]+w_{\ve,t}-F(D^{2}\tilde{P}(x_{0}, t_{0})-2\eta\delta^{\al}(t_{0}-t)Id+D^{2}w_{\ve}, x/\ve, t/\ve^{2}, \om)\\
&\geq \tilde{P}_{t}(x_{0}, t_{0})+w_{\ve,t}-F(D^{2}\tilde{P}(x_{0}, t_{0})+D^{2}w_{\ve}, x/\ve, t/\ve^{2}, \om)+\eta\delta^{\al}\left[|x-x_{i}|^{2}-|x_{0}-x_{i}|^{2}\right]-2\La d\eta\delta^{\al}|t_{0}-t|\\
&\geq P_{t}(x_{0}, t_{0})-\eta\delta^{\al}\left[r^{2}-|x_{0}-x_{i}|^{2}-(|x-x_{i}|^{2}-|x_{0}-x_{i}|^{2})\right]-F(D^{2}\tilde{P}(x_{0}, t_{0})+D^{2}w_{\ve}, x/\ve, t/\ve^{2}, \om)\\
&-2\La d\eta\delta^{\al}|t_{0}-t|\\
&\geq P_{t}(x_{0}, t_{0})-\eta\delta^{\al}\left[r^{2}-|x-x_{i}|^{2}\right]-\overline{F}(D^{2}\tilde{P}(x_{0}, t_{0}))-2\La d\eta\delta^{\al}|t_{0}-t|\\
&= P_{t}(x_{0}, t_{0})-\overline{F}(D^{2}P+2\eta\delta^{\al}(t_{0}-t_{i})Id)-\eta\delta^{\al}\left[r^{2}-|x-x_{i}|^{2}\right]-2\La d\eta\delta^{\al}|t_{0}-t|\\
&\geq \delta^{\al}-\eta\delta^{\al}\left[r^{2}-|x-x_{i}|^{2}\right]-2\La d\eta\delta^{\al}(|t_{0}-t_{i}|+|t_{0}-t|)\\
&=\delta^{\al}\left[1-\eta\left(r^{2}-|x-x_{i}|^{2}+2\La d(|t_{0}-t_{i}|+|t_{0}-t|)\right)\right]
\end{align*}
which is positive if 
\begin{equation}\label{etacond}
\eta\left[r^{2}+\frac{4\La d r^{2}}{(81d)^{2}}\right]<1.
\end{equation}

By the comparison principle, 
\begin{equation}
u_{\ve}\leq \tilde{P}+w_{\ve}\quad\text{in}\quad Q^{+}_{r}(x_{i}, t_{i})\cap\left\{t\leq t_{0}\right\}.
\end{equation}
In particular, this means
\begin{align*}
0&< w_{\ve}^{r}(x_{0}, t_{0})+\tilde{P}(x_{0}, t_{0})-u_{\ve}(x_{0}, t_{0})\leq  w_{\ve}^{\delta}(x_{0}, t_{0})-\eta\delta^{\al}(t_{0}-t_{i})(r^{2}-|x_{0}-x_{i}|^{2})\\
\end{align*}
which implies that
\begin{equation*}
\eta\delta^{\al}(t_{0}-t_{i})(r^{2}-|x_{0}-x_{i}|^{2})\leq  \hat{C} r^{2}\left(\ve r^{-1}\right)^{-\hat{c}|\ln(\ve r^{-1})|^{-2/3}}
\end{equation*}
For appropriate choices of $\delta, r, \al$, with $\delta^{\al}<C(\ve r^{-1})^{-\hat{c}|\ln(\ve r^{-1})|^{-2/3}}$, we obtain a contradiction. Therefore, $\om\in A^{r}_{\ve}$ for the appropriate choices of constants.  
\end{proof}
This shows that 
\begin{align*}
&\PP[\text{Claim \ref{claimcube} holds in}~Q^{+}_{r}(x_{i}, t_{i})]\\
&\leq \PP[A^{i}_{\ve}]\\
&\leq \tilde{C}\delta^{-\sigma d^{2}}(1+\delta^{-\sigma})^{2(d+1)}(\ve r^{-1})^{\tilde{c}(\ln |\ve r^{-1}|)^{-2/3}}.
\end{align*}

Therefore, 
\begin{align}
&\sum_{i} \PP[\text{Claim \ref{claimcube} holds in}~Q^{+}_{r}(x_{i}, t_{i})]\notag\\
&\leq \frac{|D_{T}|}{|K_{r}^{i}|}\tilde{C}\delta^{-\sigma d^{2}}(1+\delta^{-\sigma})^{2(d+1)}(\ve r^{-1})^{\tilde{c}|\ln (\ve r^{-1})|^{-2/3}}\notag\\
&\leq \frac{\tilde{C}(D_{T})}{r^{d+2}}\delta^{-\sigma d^{2}}(1+\delta^{-\sigma})^{2(d+1)}(\ve r^{-1})^{\tilde{c}|\ln (\ve r^{-1})|^{-2/3}}\label{sumcubes}
\end{align}
and for suitable choices of $r, \delta, \al$, this is controlled from above by $\tilde{C}\ve^{\tilde{c}|\ln \ve|^{-2/3}}$. 

We are now ready to prove Theorem \ref{thmfinalrate}. 

\begin{proof}[Proof of Theorem \ref{thmfinalrate}]
Combining \pref{comp}, Claim \ref{claimcube}, Lemma \ref{deltaperturb} and \pref{sumcubes}, we obtain the desired result for universal choices of $\tilde{c}, \tilde{C}, \hat{c},$ and $\hat{C}$. 
\end{proof}

\appendix 

\section{The Subadditive Ergodic Theorem}
We prove a generalized version of Akcoglu and Krengel's subadditive ergodic theorem \cite{akerg}. We present our work in a more general setting than what is needed in this paper. We show that the subadditive ergodic theorem holds for any sequence of nested cubes with sides of arbitrary lengths in $\RR^{d}$, where now $d$ is an arbitrary dimension. Let $\mathcal{I}$ denote the collection of subcubes in $\RR^{d}$, and for all $I\in \mathcal{I}$, we denote $|I|$ to be the Lebesgue measure of the cube. Recall the underlying probability space $(\Om, \mathcal{F}, \PP)$, and for all $s\in \RR^{d}$, transformation $\tau_{s}: \Om\rightarrow \Om$ is measurable. We consider nonnegative processes $R(I, \om): \mathcal{I} \times \Om\rightarrow\RR$ which satisfy the following:
\begin{list}{ (\thesscan)}
{
\usecounter{sscan}
\setlength{\topsep}{1.5ex plus 0.2ex minus 0.2ex}
\setlength{\labelwidth}{1.2cm}
\setlength{\leftmargin}{1.5cm}
\setlength{\labelsep}{0.3cm}
\setlength{\rightmargin}{0.5cm}
\setlength{\parsep}{0.5ex plus 0.2ex minus 0.1ex}
\setlength{\itemsep}{0ex plus 0.2ex}
}

\item \label{sethypot1}  $R$ is stationary, i.e. for any $s\in \RR^{d}$, 
\begin{equation*}
R(s+I, \om)=R(I, \tau_{s}\om).
\end{equation*}
\item\label{sethypot2} $R$ is subadditive, i.e. for all $I\in \mathcal{I}$, if $I=I_{1}\cup I_{2}$, with $I_{1}\cap I_{2}=\emptyset$, then 
\begin{equation*}
R(I,\om)\leq R(I_{1},\om)+R(I_{2}, \om).
\end{equation*}
\item \label{bnd} There exists a constant $C$ such that for all $\om\in \Om$, 
\begin{equation*}
0\leq R(I, \om)\leq C|I|.
\end{equation*} 
\end{list}
The purpose of this section is to establish
\begin{theorem}\label{genset}
Let $R(I, \om)$ satisfy \pref{sethypot1}, \pref{sethypot2}, and \pref{bnd}, and consider a sequence of cubes $\left\{I_{\textbf{n}_{j}}\right\}$ with $I_{\textbf{n}_{1}}\subset I_{\textbf{n}_{2}}\ldots$, and $I_{\textbf{n}_{j}}\rightarrow \RR^{d}_{+}=\left\{x\in \RR^{d}: x_{i}\geq 0\right\}$ as $j\rightarrow \infty$. Then $\displaystyle \lim_{j\rightarrow\infty} \frac{R(I_{\textbf{n}_{j}}, \om)}{|I_{\textbf{n}_{j}|}}$ exists, and converges almost surely to a function $R\in L^{1}(\Om)$. Moreover, we have that
\begin{equation*}
\EE[R]=\lim_{j\rightarrow\infty} \frac{1}{|I_{\textbf{n}_{j}}|} \int R(I_{\textbf{n}_{j}}, \om)~d\PP.
\end{equation*}
In particular, if $\tau_{s}$ is ergodic, then $R(\om)=\ga(R)=:\inf_{K\in \mathcal{I}} \frac{1}{|K|} \int R(K, \om)~d\PP$ a.s.
\end{theorem}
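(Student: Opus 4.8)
The plan is to reduce the statement to the classical multiparameter subadditive ergodic theorem of Akcoglu and Krengel \cite{akerg} for the $\ZZ^d$-action generated by the $\tau_{e_i}$ (to which \pref{sethypot1}--\pref{bnd} apply, restricted to integer translations), and then to upgrade from integer-sided cubes anchored at lattice points to an arbitrary nested family exhausting $\RR^d_+$ by a two-sided sandwiching argument. The one thing to keep track of is that subadditivity is directional: if $J\subset I$ are cubes, decomposing $I\setminus J$ into at most $d$ rectangular boxes and applying \pref{sethypot2} and \pref{bnd} gives $R(I,\om)\le R(J,\om)+C\,|I\setminus J|$, with no comparable estimate the other way. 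So in every comparison the smaller cube bounds $R$ from above and the larger cube bounds it from below; this suffices because one of the two cubes in each comparison will always belong to the family for which the limit is already known.

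First I would set up the sandwich. Given the nested exhausting sequence $\{I_{\textbf{n}_j}\}$, let $\underline I_j$ and $\overline I_j$ be obtained from $I_{\textbf{n}_j}$ by rounding each side length down, respectively up, to the nearest integer, keeping a common corner at the origin. These are again nested and exhaust $\RR^d_+$, now with integer side lengths, and since $I_{\textbf{n}_j}\to\RR^d_+$ every side length tends to infinity, so the rounding is relatively negligible: $|I_{\textbf{n}_j}\setminus\underline I_j|/|I_{\textbf{n}_j}|\to0$, $|\overline I_j\setminus I_{\textbf{n}_j}|/|I_{\textbf{n}_j}|\to0$ and $|\underline I_j|/|I_{\textbf{n}_j}|,\,|\overline I_j|/|I_{\textbf{n}_j}|\to1$. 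The directional estimate then gives $R(I_{\textbf{n}_j},\om)\le R(\underline I_j,\om)+C\,|I_{\textbf{n}_j}\setminus\underline I_j|$ and, from $I_{\textbf{n}_j}\subset\overline I_j$, also $R(I_{\textbf{n}_j},\om)\ge R(\overline I_j,\om)-C\,|\overline I_j\setminus I_{\textbf{n}_j}|$. By Akcoglu--Krengel, $R([0,\textbf{n}],\om)/|[0,\textbf{n}]|$ converges a.s.\ and in $L^1$ as $\min_i n_i\to\infty$ to some $\widehat R\in L^1(\Om)$; being a genuine multiparameter limit it holds along both $\underline I_j$ and $\overline I_j$, so dividing the two displayed inequalities by $|I_{\textbf{n}_j}|$ and letting $j\to\infty$ squeezes $R(I_{\textbf{n}_j},\om)/|I_{\textbf{n}_j}|\to\widehat R(\om)$ a.s., and \pref{bnd} upgrades this to $L^1$, yielding $\EE[\widehat R]=\lim_j|I_{\textbf{n}_j}|^{-1}\int R(I_{\textbf{n}_j},\om)\,d\PP$. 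Subadditivity of $\textbf{n}\mapsto\EE[R([0,\textbf{n}],\cdot)]$ together with the multiparameter Fekete lemma identifies this common value with $\inf_K|K|^{-1}\EE[R(K,\cdot)]=\gamma(R)$ (the infimum over all cubes agreeing with the limit along large cubes by a further use of subadditivity).

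It remains to show, under the ergodicity hypothesis \pref{f2}, that $\widehat R$ equals this constant a.s., i.e.\ that $\widehat R$ is a.s.\ constant. Here I would obtain a matching pointwise upper bound directly from ergodicity: tiling $I_{\textbf{n}_j}$ by translates of a fixed small cube $Q$, with an interior boundary layer of relatively vanishing measure (again because all side lengths tend to infinity), then using subadditivity and stationarity and averaging over the translation phase of the tiling, converts the resulting sum into the spatial average $|Q|^{-1}|I_{\textbf{n}_j}|^{-1}\int_{I_{\textbf{n}_j}}R(Q,\tau_v\om)\,dv$; since $R(Q,\cdot)$ is bounded by \pref{bnd}, the multiparameter pointwise ergodic theorem for the ergodic $\RR^d$-action $\{\tau_v\}$ applies along the rectangles $I_{\textbf{n}_j}$ (no aspect-ratio restriction is needed for bounded functions) and gives $\limsup_j R(I_{\textbf{n}_j},\om)/|I_{\textbf{n}_j}|\le|Q|^{-1}\EE[R(Q,\cdot)]$ a.s.; taking the infimum over $Q$ gives $\limsup_j R(I_{\textbf{n}_j},\om)/|I_{\textbf{n}_j}|\le\gamma(R)$ a.s. Combined with the already established convergence $R(I_{\textbf{n}_j},\om)/|I_{\textbf{n}_j}|\to\widehat R(\om)$ and $\EE[\widehat R]=\gamma(R)$, this forces $\widehat R=\gamma(R)$ a.s., completing the proof.

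The main obstacle is the one flagged in the introduction: the cubes $I_{\textbf{n}_j}$ may be arbitrarily eccentric, so they cannot be sandwiched between cubes of bounded aspect ratio (the smallest cube with all sides equal containing an eccentric cube has exponentially larger volume). One is therefore forced to work throughout with the genuinely multiparameter, aspect-ratio-free forms of both Akcoglu--Krengel and the pointwise ergodic theorem, and to arrange the rounding and tiling reductions so that the error terms are negligible relative to the full volume --- which works precisely because $I_{\textbf{n}_j}\to\RR^d_+$ forces every side length to infinity. A secondary subtlety is that ergodicity of the $\RR^d$-action need not pass to the $\ZZ^d$-sublattice, so the limit produced by Akcoglu--Krengel is a priori only a sublattice-invariant function; it is the continuous-parameter ergodic theorem, applied to the bounded ``sampling'' functionals $R(Q,\cdot)$, that supplies the extra upper bound needed to identify this limit with the deterministic constant $\gamma(R)$.
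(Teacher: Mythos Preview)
Your sandwiching reduction from real-sided to integer-sided rectangles is correct and harmless, but it does not actually advance the argument. The integer rectangles $\underline I_j,\overline I_j$ inherit the unbounded eccentricity of the original sequence (for parabolic cubes one has sides $n$ in space and $n^2$ in time), and the classical Akcoglu--Krengel theorem you invoke in step~2 applies only to \emph{regular} sequences---those contained in standard cubes of comparable volume. You seem to recognize this in your final paragraph, where you say one is ``forced to work throughout with the genuinely multiparameter, aspect-ratio-free forms of both Akcoglu--Krengel and the pointwise ergodic theorem,'' but no such form of Akcoglu--Krengel exists in the literature; supplying it is precisely the content of the present appendix. So as written, the proposal is circular: the black box you invoke for integer rectangles is essentially the statement being proved.

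The tiling argument in your step~3 is fine as an upper bound in the ergodic case (the pointwise ergodic theorem over unrestricted rectangles does hold for bounded functions), but it cannot stand alone: it gives $\limsup_j |I_{\textbf n_j}|^{-1}R(I_{\textbf n_j},\om)\le\gamma(R)$ a.s., and reverse Fatou then pins $\limsup=\gamma(R)$ a.s., but nothing here controls the $\liminf$ from below. You rely on step~2 for that, and step~2 is the gap. By contrast, the paper does not reduce to a pre-existing theorem at all: it proves a Vitali-type covering lemma and maximal inequality valid for rectangles of arbitrary aspect ratio, establishes the additive case by induction on the dimension (peeling off one coordinate at a time via the one-dimensional Birkhoff theorem and controlling the resulting $L^1$ limits with the maximal inequality), and only then passes from additive to subadditive by the standard Akcoglu--Krengel decomposition. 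The inductive additive step is where the real work is, and your proposal does not provide a substitute for it.
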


We point out that in contrast to the result of Akcoglu and Krengel, we do not require that there exists a constant $C$ so that $|I_{\textbf{n}_{j}}|\leq C |I_{n}|$, where $I_{n}=[0,n)^{d}$. In particular, Theorem \ref{genset} allows us to apply the subadditive ergodic theorem to parabolic cubes $K_{n}=[0, n^{2})\times [0, n)^{d}$, which are needed in the proof of Theorem \ref{stochhom} and Theorem \ref{thmrate1}. 

We show that the proof of Akcoglu and Krengel can be extended to the setting of Theorem \ref{genset}. We point out that it is enough to prove a multiparameter ergodic theorem for additive processes under the hypotheses above. One can then extend to subadditive processes using the same argument of \cite{akerg}, and thus we omit the details of this step. We first establish a general maximal inequality which will be used at various points in the proof. We then prove a multiparameter ergodic theorem for additive processes using an inductive argument. This will be enough to conclude the proof of Theorem \ref{genset}. 


We prove a discrete version of Theorem \ref{genset}. The proof for continuous processes is done by a standard approximation argument (see \cite{akerg}), using the fact that $R(I, \om)$ is continuous in $I$ by \pref{bnd}.




The first lemma we present is similar to Vitali's covering lemma, for cubes of arbitrary dimension.
\begin{lemma}\label{covering}
Consider a finite subset $A\subset \ZZ^{d}$ such that for all $u\in A$, there exists a corresponding vector $\textbf{n}(u)\in \RR^{d}$. Then there exists $u_{1}, u_{2}, \ldots u_{\ell}\in A$ so that $\left\{ u_{i}+I_{\textbf{n}(u_{i})}\right\}_{i=1}^{\ell}$ are disjoint, and
\begin{equation*}
3^{d} \sum_{i=1}^{\ell} |I_{\textbf{n}(u_{i})}|\geq |A|.
\end{equation*}
\end{lemma}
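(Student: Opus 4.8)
The plan is a Vitali-type greedy selection, using the fact that the vectors $\textbf{n}(u)$ occurring here all lie in the nested family $I_{\textbf{n}_{1}}\subset I_{\textbf{n}_{2}}\subset\cdots$ that defines the theorem, so that ``larger volume'' is equivalent to ``larger box'' and in particular to coordinatewise domination of side lengths. First I would relabel $A=\{a_{1},\dots,a_{N}\}$ so that $|I_{\textbf{n}(a_{1})}|\ge\cdots\ge |I_{\textbf{n}(a_{N})}|$, then scan this list once, maintaining a set $S$ initialized empty: when $a_{k}$ is reached, add it to $S$ provided $a_{k}+I_{\textbf{n}(a_{k})}$ is disjoint from every box already in $S$. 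Let $u_{1},\dots,u_{\ell}$ (in order of addition) denote the final $S$. The boxes $\{u_{i}+I_{\textbf{n}(u_{i})}\}_{i=1}^{\ell}$ are pairwise disjoint by construction, so only the size bound remains.

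Next is the maximality step: every $a\in A$ satisfies $(a+I_{\textbf{n}(a)})\cap(u_{i}+I_{\textbf{n}(u_{i})})\ne\emptyset$ for some $i$ with $|I_{\textbf{n}(u_{i})}|\ge|I_{\textbf{n}(a)}|$. If $a$ was added to $S$, take that $i$; otherwise $a$ was rejected because its box met some $u_{i}$ already in $S$ at that moment, and that $u_{i}$ was added---hence scanned---before $a$, so $|I_{\textbf{n}(u_{i})}|\ge|I_{\textbf{n}(a)}|$. Hence $A\subset\bigcup_{i=1}^{\ell}S_{i}$ where $S_{i}=\{a\in\ZZ^{d}:(a+I_{\textbf{n}(a)})\cap(u_{i}+I_{\textbf{n}(u_{i})})\ne\emptyset,\ |I_{\textbf{n}(a)}|\le|I_{\textbf{n}(u_{i})}|\}$, and it suffices to prove $\card(S_{i})\le 3^{d}|I_{\textbf{n}(u_{i})}|$ for each $i$.

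The heart of the argument is the geometric inclusion that controls $\card(S_{i})$. If $a+I_{\textbf{n}(a)}$ meets $u_{i}+I_{\textbf{n}(u_{i})}$ and has no larger volume, then by nestedness every side length of $a+I_{\textbf{n}(a)}$ is at most the corresponding side length of $u_{i}+I_{\textbf{n}(u_{i})}$; since both boxes are axis-parallel the intersection condition decouples coordinate by coordinate, and one checks directly that $a+I_{\textbf{n}(a)}$ is contained in the threefold enlargement $\widehat{B}_{i}$ of $u_{i}+I_{\textbf{n}(u_{i})}$ obtained by tripling each side about the center. A routine lattice-point count in $\widehat{B}_{i}$, together with $|\widehat{B}_{i}|=3^{d}|I_{\textbf{n}(u_{i})}|$, then gives $\card(S_{i})\le 3^{d}|I_{\textbf{n}(u_{i})}|$, and summing over $i$ yields $|A|\le\sum_{i=1}^{\ell}\card(S_{i})\le 3^{d}\sum_{i=1}^{\ell}|I_{\textbf{n}(u_{i})}|$. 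The one genuinely delicate point is the coordinatewise comparability invoked in the inclusion: for rectangular boxes with unrelated aspect ratios both the threefold-dilate inclusion and the stated bound can fail, so it is essential that the boxes be drawn from a single nested family, which is exactly the regime in which this lemma is applied; the passage from this discrete statement to continuous processes $R(I,\om)$ is then the standard approximation argument, using the continuity of $R(\cdot,\om)$ in $I$ furnished by \pref{bnd}.
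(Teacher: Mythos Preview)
Your proof is correct and follows essentially the same Vitali--type greedy selection as the paper: pick the point with the largest box, discard everything covered by its threefold dilate, and iterate. The only difference is cosmetic---you reject points whose boxes meet an already selected box and then argue that each such point lies in the threefold dilate, whereas the paper removes the points inside the dilate directly---and you are more explicit than the paper about why coordinatewise domination of side lengths (hence the nested-family hypothesis) is needed for the threefold-dilate inclusion to hold.
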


\begin{proof}
Let $u_{1}$ be so that $\textbf{n}(u_{1})=\max_{u\in A}\left\{|I_{\textbf{n}(u)}|\right\}$, and set $K_{\textbf{n}(u_{1})}=u_{1}+I_{\textbf{n}(u_{1})}$. Define 
\begin{equation*}
\tilde{K}_{\textbf{n}(u_{1})}=\bigcup_{ u\in \partial K_{\textbf{n}(u_{1})}} u\pm I_{\textbf{n}(u_{1})}.
\end{equation*}
Then $|\tilde{K}_{\textbf{n}(u_{1})}|=3^{d} |I_{\textbf{n}(u_{1})}|$. Let $A_{1}=A\setminus ( \tilde{K}_{\textbf{n}(u_{1})}\cap A)$. If $A_{1}=\varnothing$, then we are done. Otherwise, we continue this process by choosing $u_{2}$ such that $\textbf{n}(u_{2})=\max_{u\in A_{1}}\left\{|I_{\textbf{n}(u)}|\right\}$. In this way, we select a sequence of vectors $\left\{u_{i}\right\}$ such that $u_{i}+I_{\textbf{n}(u_{i})}$ are disjoint, cover all of $A$, and stay within $\bigcup_{i} \tilde{K}_{\textbf{n}(u_{i})}$. By the construction of $\tilde{K}_{\textbf{n}(u_{i})}$, we have 
\begin{equation*}
3^{d} \sum_{i=1}^{\ell} |I_{\textbf{n}(u_{i})}|\geq |A|
\end{equation*}
as asserted. 
\end{proof}

Using this covering argument, we now prove a general maximal inequality, which holds in every dimension $d$. 
\begin{proposition}\label{max}
Let $H$ be a nonnegative, discrete, superadditive process. Let $\al>0$, and let $E=\left\{\om: \sup_{j\geq 1} \frac{1}{|I_{\textbf{n}_{j}}|} H(I_{\textbf{n}_{j}}, \om)>\al\right\}$. Then 
\begin{equation*}
\PP[E]\leq \frac{3^{d}}{\al}\left(\lim_{j\rightarrow\infty} \frac{1}{|I_{\textbf{n}_{j}}|}\int H(I_{\textbf{n}_{j}}, \om)~d\PP\right).
\end{equation*}
\end{proposition}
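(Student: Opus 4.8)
The plan is to adapt the classical proof of the Hardy--Littlewood maximal inequality to this discrete, multiparameter, superadditive setting, using Lemma \ref{covering} as the covering device in place of Vitali. First I would set up a truncated version of the stopping/selection procedure: fix $\alpha > 0$ and a large integer $N$, and work inside a huge box $I_{\textbf{N}} = [0,N)^d$, restricting attention to the event $E_N = \{\om : \sup_{1 \le j \le N} \frac{1}{|I_{\textbf{n}_j}|} H(I_{\textbf{n}_j},\om) > \alpha\}$ (so that only finitely many scales are in play and all relevant cubes sit inside a bounded region). Since $E_N \uparrow E$ as $N \to \infty$, it suffices to bound $\PP[E_N]$ uniformly in $N$, then pass to the limit by monotone convergence.

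The heart of the argument is a Wiener-type covering step carried out pointwise in $\om$. For $\om \in E_N$, by definition there is a scale $j = j(\om)$ with $H(I_{\textbf{n}_j},\om) > \alpha |I_{\textbf{n}_j}|$; translating via stationarity \pref{sethypot1}, for each lattice translate $u \in \ZZ^d$ with $\tau_u \om \in E_N$ we get a cube $u + I_{\textbf{n}(u)}$, with $\textbf{n}(u) = \textbf{n}_{j(\tau_u\om)}$, on which $H(u + I_{\textbf{n}(u)}, \om) > \alpha |I_{\textbf{n}(u)}|$ (using stationarity to move the superadditive process; here $H(u+I,\om) = H(I,\tau_u\om)$). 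Apply Lemma \ref{covering} to the finite index set $A = \{u \in \ZZ^d : \tau_u\om \in E_N\} \cap I_{\textbf{N}}$ to extract a disjoint subfamily $\{u_i + I_{\textbf{n}(u_i)}\}_{i=1}^\ell$ with $3^d \sum_i |I_{\textbf{n}(u_i)}| \ge |A|$. Now superadditivity \pref{sethypot2} lets me sum the values of $H$ over the disjoint cubes: $\sum_i H(u_i + I_{\textbf{n}(u_i)}, \om) \le H(\text{union}, \om) \le H(\text{large box}, \om)$ (for a box just slightly larger than $I_{\textbf{N}}$ containing all selected cubes), hence
\begin{equation*}
\alpha \,|A| \le \alpha \, 3^d \sum_{i=1}^\ell |I_{\textbf{n}(u_i)}| \le 3^d \sum_{i=1}^\ell H(u_i + I_{\textbf{n}(u_i)}, \om) \le 3^d \, H(\wt{I}_{\textbf{N}}, \om),
\end{equation*}
where $\wt I_{\textbf{N}}$ is the enlarged box. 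Dividing by $\alpha$, $|\{u \in \ZZ^d \cap I_{\textbf{N}} : \tau_u\om \in E_N\}| \le \frac{3^d}{\alpha} H(\wt I_{\textbf{N}},\om)$.

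The final step is to integrate this inequality in $\om$ and use stationarity to convert the left side into a probability. By Fubini and the measure-preserving property of the $\tau_u$, $\int |\{u \in \ZZ^d \cap I_{\textbf{N}} : \tau_u\om \in E_N\}| \, d\PP = \sum_{u \in \ZZ^d \cap I_{\textbf{N}}} \PP[\tau_u^{-1} E_N] = |\ZZ^d \cap I_{\textbf{N}}| \cdot \PP[E_N] \approx |I_{\textbf{N}}| \cdot \PP[E_N]$. On the right, by \pref{bnd} and the superadditive structure, $\frac{1}{|\wt I_{\textbf{N}}|}\int H(\wt I_{\textbf{N}},\om)\,d\PP$ converges, as $N \to \infty$, to the limit $\lim_{j}\frac{1}{|I_{\textbf{n}_j}|}\int H(I_{\textbf{n}_j},\om)\,d\PP$ (both are the spatial average of the additive increments, which is scale-independent for the mean of a stationary superadditive process — this uses that the expectation $\EE[H(I,\cdot)]$ is superadditive and bounded, so $\EE[H(I,\cdot)]/|I|$ has a well-defined supremum/limit independent of the shape of the exhausting sequence). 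Dividing through by $|I_{\textbf{N}}|$, sending $N \to \infty$, and absorbing the $|\wt I_{\textbf{N}}|/|I_{\textbf{N}}| \to 1$ ratio and the $|\ZZ^d \cap I_{\textbf{N}}|/|I_{\textbf{N}}| \to 1$ ratio yields $\PP[E] \le \frac{3^d}{\alpha}\lim_j \frac{1}{|I_{\textbf{n}_j}|}\int H(I_{\textbf{n}_j},\om)\,d\PP$.

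The main obstacle I anticipate is the bookkeeping around the enlargement of the box: the selected cubes $u_i + I_{\textbf{n}(u_i)}$ have $u_i$ inside $I_{\textbf{N}}$ but may protrude beyond it by up to their side length, which is not a priori controlled relative to $N$. The fix is to only count $u$'s for which the whole cube $u + I_{\textbf{n}(u)}$ lies inside $I_{\textbf{N}}$ (a "good core"), show the complement has negligible density as $N \to \infty$ by first further truncating the supremum to scales $j \le N_0 \ll N$ — which is legitimate since $E_{N_0} \uparrow E$ — and then the protrusion is bounded by the fixed scale $N_0$, so its relative contribution vanishes as $N \to \infty$. Careful ordering of the two limits ($N_0 \to \infty$ after $N \to \infty$) and a clean statement that $\EE[H(I,\cdot)]/|I|$ has a shape-independent limit are the two points that need genuine care; everything else is the standard maximal-inequality template transcribed through Lemma \ref{covering} and the measure-preserving property.
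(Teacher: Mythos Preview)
Your approach is essentially the same as the paper's: truncate the supremum to finitely many scales, apply Lemma~\ref{covering} pointwise in $\omega$ to the lattice points $u$ with $\tau_u\omega$ in the truncated event, use superadditivity to dominate the sum over the disjoint selected cubes by $H$ on a big box, integrate, and pass to the limit.

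The one place where the paper is cleaner is exactly the ``protrusion'' issue you flag at the end. Rather than introducing an enlarged box $\wt I_{\textbf{N}}$ and then arguing its ratio to $|I_{\textbf{N}}|$ tends to $1$, the paper uses two separate parameters from the outset: $J$ for the scale truncation (so $E_J=\{\sup_{1\le j\le J}\ldots>\alpha\}$) and $K>J$ for the big box, and defines
\[
A(\omega)=\{u\in\ZZ^d: u+I_{\textbf{n}_J}\subset I_{\textbf{n}_K},\ \tau_u\omega\in E_J\}.
\]
Because the sequence $\{I_{\textbf{n}_j}\}$ is nested, every selected cube $u_i+I_{\textbf{n}(u_i)}$ with $j(u_i)\le J$ automatically lies inside $I_{\textbf{n}_K}$, so superadditivity gives $\sum_i H(u_i+I_{\textbf{n}(u_i)},\omega)\le H(I_{\textbf{n}_K},\omega)$ directly with no enlargement. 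After integrating, the count $|A(\omega)|$ contributes the explicit factor $\prod_p(\textbf{n}_K^p-\textbf{n}_J^p)$, whose ratio to $|I_{\textbf{n}_K}|$ tends to $1$ as $K\to\infty$ with $J$ fixed; then one sends $J\to\infty$. This is precisely the two-parameter fix you describe in your last paragraph, just implemented from the start. Your worry about needing a ``shape-independent'' limit for $\EE[H(I,\cdot)]/|I|$ is unnecessary here: both the big box and the final limit are taken along the \emph{same} nested sequence $\{I_{\textbf{n}_j}\}$, so no comparison of shapes ever arises.
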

\begin{proof}
Fix $J>0$, and let $E_{J}=\left\{\om: \sup_{1\leq j\leq J} \frac{1}{|I_{\textbf{n}_{j}}|} H(I_{\textbf{n}_{j}}, \om)>\al\right\}$. 
Choose $K>J$, fix $\om\in \Om$, and define 
\begin{equation*}
A(\om)=\left\{u\in \ZZ^{d}: u+I_{\textbf{n}_{J}}\subset I_{\textbf{n}_{K}},  \tau_{u}\om\in E_{J}\right\}.
\end{equation*}
We note that $A(\om)$ is finite for all $\om$ since we are in the discrete case. By definition, for each $u\in A(\om)$, there exists $\textbf{n}_{j}=\textbf{n}_{j}(u)$, with $1\leq j\leq J$ so that 
\begin{equation*}
H(u+I_{\textbf{n}_{j}}, \om)=H(I_{\textbf{n}_{j}}, \tau_{u}\om)> \al |I_{\textbf{n}_{j}}|.
\end{equation*}
Lemma \ref{covering} yields a subset $\left\{u_{i}\right\}_{i=1}^{\ell}$ such that $\left\{u_{i}+I_{\textbf{n}_{j}(u_{i})}\right\}_{i=1}^{\ell}$ are disjoint and 
 \begin{equation*}
 3^{d}\sum_{i=1}^{\ell} |I_{\textbf{n}(u_{i})}|\geq |A(\om)|.
 \end{equation*}
 
The nonnegativity and superadditivity of $H$ yield
 \begin{equation*}
 3^{d} H(I_{\textbf{n}_{K}}, \om)\geq 3^{d}\sum_{i=1}^{\ell} H(u_{i}+I_{\textbf{n}(u_{i})}, \om)\geq 3^{d}\al\sum_{i=1}^{\ell} |I_{\textbf{n}(u_{i})}|\geq \al |A(\om)|.
 \end{equation*}
 
 Therefore, 
 \begin{align*}
 3^{d}\int H(I_{\textbf{n}_{K}}, \om)~d\PP&\geq \al \int |A(\om)|~d\PP\\
 &=\al \int \sum_{u+I_{\textbf{n}_{J}}\subset I_{\textbf{n}_{K}}} \mathds{1}_{E_{J}}(\tau_{u}\om)~d\PP\\
 &=\al \prod_{p=1}^{d} (\textbf{n}^{p}_{K}-\textbf{n}^{p}_{J} )\PP(E_{J})
 \end{align*} 
 where $\textbf{n}^{p}_{K}$ is the $p$-th coordinate of $\textbf{n}_{K}$. Hence, 
 \begin{equation*}
  3^{d}\frac{1}{|I_{\textbf{n}_{K}|}} \int H(I_{\textbf{n}_{K}}, \om)~d\PP\geq \al \frac{\prod_{p} (\textbf{n}^{p}_{K}-\textbf{n}^{p}_{J} )}{|I_{\textbf{n}_{K}}|} \PP(E_{J}).
  \end{equation*}
  Sending $K\rightarrow\infty$, then $J\rightarrow\infty$, we have
  \begin{equation*}
  \PP(E)\leq \frac{3^{d}}{\al} \lim_{K\rightarrow\infty}\frac{1}{|I_{\textbf{n}_{K}|}} \int H(I_{\textbf{n}_{K}}, \om)~d\PP.
  \end{equation*} 
\end{proof}

We now prove a multiparameter ergodic theorem for additive processes. In order to do so, we proceed by induction on the dimension of the space. 

\begin{theorem}\label{multiet}
 Let $G(I_{\textbf{n}_{j}}, \om)$ be a nonnegative, discrete, additive process, with the same assumptions as Theorem \ref{genset} on $\left\{I_{\textbf{n}_{j}}\right\}$. Then  $\displaystyle \lim_{j\rightarrow\infty} \frac{G(I_{\textbf{n}_{j}}, \om)}{|I_{\textbf{n}_{j}|}}$ converges a.s. to a function $G\in L^{1}(\Om)$. Moreover,
\begin{equation*}
\EE[G]=\lim_{j\rightarrow\infty} \frac{1}{|I_{\textbf{n}_{j}}|} \int G(I_{\textbf{n}_{j}}, \om)~d\PP.
\end{equation*}
\end{theorem}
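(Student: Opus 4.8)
The plan is to prove Theorem \ref{multiet} by induction on the dimension $d$, following the strategy of Akcoglu and Krengel \cite{akerg} but adapting it to nested cubes whose side lengths need not be comparable to $[0,n)^d$. The key enabling tool is the maximal inequality of Proposition \ref{max}, which already holds for arbitrary nested families in every dimension. For the base case $d=1$, a nonnegative additive process $G$ is simply generated by a single nonnegative $L^1$ random variable $f$ via $G([0,n),\om)=\sum_{k=0}^{n-1} f(\tau_k\om)$; the classical Birkhoff ergodic theorem then gives $n^{-1}G([0,n),\om)\to \bar f$ a.s., and since the family $\{I_{\mbf{n}_j}\}$ is an increasing sequence of intervals exhausting $\RR_+$, the limit along this subsequence agrees with the full limit. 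The $L^1$-convergence of the averages then yields the asserted formula for $\EE[G]$.

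For the inductive step, I would write $\RR^d = \RR^{d-1}\times\RR$ and decompose the cube $I_{\mbf{n}_j}$ accordingly as a product of a $(d-1)$-dimensional cube and an interval. Given the additive process $G$ on $\RR^d$, one first integrates out the last coordinate: for fixed base cube, $G$ restricted to slabs defines a nonnegative additive process in the remaining $d-1$ variables (with respect to the induced transformations $\tau_s$, $s\in\RR^{d-1}$), to which the inductive hypothesis applies, giving an a.s. limit $\tilde G$ which is itself a nonnegative $L^1$ random variable invariant under $\RR^{d-1}$-translations. One then applies the one-dimensional ergodic theorem to $\tilde G$ along the last coordinate. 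The subtle point, exactly as in \cite{akerg}, is that taking these two limits successively (first the $(d-1)$-dimensional average, then the one-dimensional average) is not obviously the same as taking the genuine $d$-dimensional average along the prescribed nested sequence $\{I_{\mbf{n}_j}\}$: one must control the ``diagonal'' rate at which all $d$ side lengths go to infinity simultaneously. This is where the maximal inequality enters: Proposition \ref{max} (applied to $G$ itself and, by a symmetric argument using superadditivity of $-G$ up to constants — here using that $G$ is additive, hence both sub- and super-additive, together with the bound \pref{bnd}) shows that $\sup_j |I_{\mbf{n}_j}|^{-1}G(I_{\mbf{n}_j},\cdot)$ is finite a.s. and controls the tail, so a standard Banach principle / approximation argument upgrades convergence of the iterated averages to convergence of the true $d$-dimensional averages along $\{I_{\mbf{n}_j}\}$.

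The main obstacle I anticipate is precisely this last coordination step: ensuring that the a.s. limit does not depend on the order in which the coordinate directions are processed, and that it coincides with $\lim_j |I_{\mbf{n}_j}|^{-1}G(I_{\mbf{n}_j},\om)$ even though the ratios of side lengths of $I_{\mbf{n}_j}$ are unconstrained. In the classical theorem the hypothesis $|I_{\mbf{n}_j}|\leq C|I_n|$ is exactly what rules out pathological elongated cubes, so removing it requires a genuinely new argument; the point is that for \emph{additive} (as opposed to merely subadditive) processes, the maximal inequality holds in both directions, which forces the Cesàro averages over any exhausting sequence of cubes — however elongated — to converge to the same invariant limit. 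Concretely I would fix $\ve>0$, approximate $G$ by a process whose generator is bounded, handle the bounded case by the iterated-limit argument above, and use Proposition \ref{max} to show the error terms have small $\limsup$ in measure, hence a.s. along a subsequence, and then a monotonicity/sandwiching argument (using \pref{bnd} to pass between the discrete and continuous settings) closes the gap. Once the a.s. limit $G\in L^1(\Om)$ is identified, the formula $\EE[G]=\lim_j |I_{\mbf{n}_j}|^{-1}\int G(I_{\mbf{n}_j},\om)\,d\PP$ follows from $L^1$-convergence, which is itself a consequence of the uniform integrability provided by \pref{bnd}. Finally, the reduction from subadditive to additive processes is carried out verbatim as in \cite{akerg} — one writes the subadditive process as an infimum of additive ones built from its ``increments'' over a fixed partition scale — and hence is omitted, completing the proof of Theorem \ref{genset}.
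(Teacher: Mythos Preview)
Your overall architecture matches the paper's: induction on the dimension, Birkhoff for $d=1$, split $\RR^d=\RR^{d-1}\times\RR$, and use Proposition~\ref{max} to handle the ``diagonal'' coordination of side lengths. Where you remain vague is exactly the step the paper makes concrete, and your sketch of it is slightly off.

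The paper does \emph{not} frame the coordination step as ``iterated limits vs.\ true limit'' plus a Banach principle. Instead it observes that, by additivity, the genuine $d$-dimensional average \emph{equals} $A_j[G'_j](\om)$, where $G'_j(\om)=|I'_j|^{-1}G(I'_j,\om)$ is the $(d-1)$-dimensional average and $A_j$ is the one-dimensional Ces\`aro operator in the last coordinate with $\|A_j\|_{L^1\to L^1}\le 1$. It then shows directly that $\{A_j[G'_j]\}$ is Cauchy in $L^1(\Om)$ by the splitting
\[
A_j[G'_j]-A_\ell[G'_\ell]=A_j[G'_j-G'_\ell]+(A_j-A_\ell)[G'_\ell-G^{d-1}]+(A_j-A_\ell)[G^{d-1}],
\]
using the operator bound on the first two pieces (together with $G'_j\to G^{d-1}$ in $L^1$ from the inductive hypothesis) and one-dimensional Birkhoff on $G^{d-1}\in L^1$ for the third. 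This identifies an $L^1$ limit $A(\om)$, and only then is Proposition~\ref{max} invoked, applied to the nonnegative additive processes $G(I_j,\om)-A(\om)|I_j|+\rho$ and $A(\om)|I_j|-G(I_j,\om)+\eta$, to show $\limsup-\liminf=0$ a.s.

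Two specific corrections to your plan: (i) your ``approximate $G$ by a process with bounded generator'' step is superfluous, since \pref{bnd} already gives $0\le G([0,1)^d,\om)\le C$; (ii) the maximal inequality alone only tells you the sup of the averages is finite a.s., which is not by itself enough to identify the limit---the paper's $L^1$-Cauchy argument is what pins down $A$ first, so that the maximal inequality can be applied to the \emph{centered} process. If you rewrite your inductive step along these lines, your proof becomes the paper's.
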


\begin{proof}
We denote $I^{1}_{\textbf{n}_{j}}=[0, 1]^{d-1}\times [0, n^{1}_{j})$, $I^{2}_{\textbf{n}_{j}}=[0, 1]^{d-2}\times [0, n^{1}_{j})\times [0, n^{2}_{j}) \ldots$ where $\textbf{n}_{j}\in \ZZ^{d}$ for all $j$, and recall that $I_{n}=[0, n)^{d}$. 

In the case when $d=1$, Birkhoff's ergodic theorem ensures that there exists $G^{1}(\cdot)\in L^{1}(\Om)$ such that $\displaystyle\lim_{n\rightarrow\infty}\frac{G(I^{1}_{n}, \om)}{n}\rightarrow G^{1}(\om)$ a.s., and 
\begin{equation*}
\EE[G^{1}]=\lim_{n\rightarrow\infty}\frac{1}{n}\int G(I^{1}_{n}, \om)~d\PP.
\end{equation*}

We suppose now that the result holds true in dimension $d-1$. We modify the notation at this step in order to present the ideas clearly. We let $I_{j}=I^{d}_{\textbf{n}_{j}}$ and $I'_{j}=I^{d-1}_{\textbf{n}_{j}}$. By the inductive hypothesis, there exists $G^{d-1}(\om)$ such that $\displaystyle \lim_{j\rightarrow\infty}\frac{G(I'_{j}, \om)}{|I'_{j}|}=G^{d-1}(\om)$ a.s., and 
\begin{equation*}
\EE[G^{d-1}]=\lim_{j\rightarrow\infty}\frac{1}{|I'_{j}|} \int G(I'_{j}, \om)~d\PP.
\end{equation*}

We define the linear operator $A_{j}: L^{1}(\Om)\rightarrow L^{1}(\Om)$ by
\begin{equation*}
A_{j}[f](\om)=\frac{1}{n^{d}_{j}}\sum_{k=0}^{n^{d}_{j}-1} f(\tau_{(0, 0, \ldots, k)}\om).
\end{equation*}

We note that for every $j$, 
\begin{align*}
\int\left| A_{j}[f](\om)\right|~d\PP&=\int \left|\frac{1}{n^{d}_{j}}\sum_{k=0}^{n^{d}_{j}-1} f(\tau_{(0, 0, \ldots, k)}\om)\right|~d\PP\\
&\leq \frac{1}{n^{d}_{j}}\sum_{k=0}^{n^{d}_{j}-1}  \int |f(\tau_{(0, 0, \ldots, k)}\om)|~d\PP\\
&= \frac{1}{n^{d}_{j}}\sum_{k=0}^{n^{d}_{j}-1} \int |f(\om)|~d\PP
\end{align*}
and hence, 
\begin{equation}\label{opbound}
\sup_{j} \norm{A_{j}}_{1}=\sup_{j} \sup_{\norm{f}_{L^1}\leq 1} \norm{A_{j}[f]}_{L^{1}}\leq 1. 
\end{equation}

By additivity, 
\begin{equation*}
\frac{1}{|I_{j}|}G(I_{j}, \om)= \frac{1}{n^{d}_{j}}\sum_{k=0}^{n^{d}_{j}-1} \frac{G(I'_{j}, \tau_{(0, 0, \ldots, k)}\om)}{|I'_{j}|}=A_{j}\left[\frac{G(I'_{j}, \cdot)}{|I'_{j}|}\right](\om).
\end{equation*}

For simplicity, we define $G'_{j}(\cdot)=\frac{G(I'_{j},\cdot)}{|I'_{j}|}$. Recall by the inductive hypothesis, $G'_{j}(\om)\rightarrow G^{d-1}(\om)$ a.s. We now claim that $A_{j}[G'_{j}](\om)$ converge in $L^{1}(\Om)$. Indeed, 

\begin{align}
\int \left|A_{j}[G'_{j}](\om)-A_{\ell}[G'_{\ell}](\om)\right|~d\PP &\leq \int \left|A_{j}[G'_{j}-G'_{\ell}](\om)\right|~d\PP+\int \left|A_{j}[G'_{\ell}](\om)-A_{\ell}[G'_{\ell}](\om)\right|~d\PP\\
&\leq \norm{A_{j}}_{1}\norm{G'_{j}-G'_{\ell}}_{L^{1}}+\int \left|A_{j}[G'_{\ell}](\om)-A_{\ell}[G'_{\ell}](\om)\right|~d\PP. \label{est}
\end{align}

Using \pref{opbound} and the inductive hypothesis, we claim that for $j, \ell$ sufficiently large, 
\begin{equation}\label{bnd1}
\norm{A_{j}}_{1}\norm{G'_{j}-G'_{\ell}}_{L^{1}}\leq \frac{\ve}{3}.
\end{equation}

To control the second term of \pref{est}, we observe that 
\begin{align*}
\int \left|\left(A_{j}-A_{\ell}\right)[G'_{\ell}](\om)\right|~d\PP &\leq \int \left|\left(A_{j}-A_{\ell}\right)[G^{d-1}](\om)\right|~d\PP+\\
&\int \left|\left(A_{j}-A_{\ell}\right)[G'_{\ell}-G^{d-1}](\om)\right|~d\PP\\
&\leq  \int \left|\left(A_{j}-A_{\ell}\right)[G^{d-1}](\om)\right|~d\PP+2\sup_{j} \norm{A_{j}}_{1}\norm{G'_{\ell}-G^{d-1}}_{L^{1}}.
\end{align*}

We note that $A_{j}$ is the standard 1-dimensional Birkhoff ergodic summation operator, and since $G^{d-1}\in L^{1}(\Om)$, we may apply the Birkhoff Ergodic Theorem to conclude that for $j, \ell$ sufficiently large,  
\begin{equation}\label{bnd2}
 \int \left|\left(A_{j}-A_{\ell}\right)[G^{d-1}](\om)\right|~d\PP\leq \frac{\ve}{3}. 
 \end{equation}

Furthermore, \pref{opbound} and the fact that $G'_{j}(\om)$ converge almost surely to $G^{d-1}$ yields that for $\ell$ sufficiently large, 
\begin{equation}\label{bnd3}
2\sup_{j} \norm{A_{j}}_{1}\norm{G'_{\ell}-G^{d-1}}_{L^{1}}\leq \frac{\ve}{3}.
\end{equation}

 Combining \pref{bnd1}, \pref{bnd2}, and \pref{bnd3}, we have that $\ell, j$ sufficiently large, 
\begin{equation*}
\int \left|A_{j}[G'_{j}](\om)-A_{\ell}[G'_{\ell}](\om)\right|~d\PP\leq \ve.
\end{equation*}
Since $\left\{A_{j}[G'_{j}]\right\}$ are Cauchy in $L^{1}(\Om)$, there exists $A(\om)\in L^{1}(\Om)$ such that  $\frac{1}{|I_{j}|}G(I_{j}, \om)=A_{j}[G'_{j}](\om)\rightarrow A(\om)$ in $L^{1}(\Om)$. We now show that $\frac{1}{|I_{j}|}G(I_{j}, \om)$ converge almost uniformly, which is enough to conclude almost sure convergence. Consider that
 \begin{align*}
 \limsup_{j\rightarrow\infty}\frac{1}{|I_{j}|}G(I_{j}, \om)-\liminf_{j\rightarrow\infty} \frac{1}{|I_{j}|}G(I_{j}, \om) &=\limsup_{j\rightarrow\infty}\frac{1}{|I_{j}|}G(I_{j}, \om)-A(\om)+A(\om)-\liminf_{j\rightarrow\infty} \frac{1}{|I_{j}|}G(I_{j}, \om)\\
 &\leq 2\sup_{j} \left|\frac{1}{|I_{j}|}|G(I_{j}, \om)-A(\om)|I_{j}|\right|.
 \end{align*}
 Moreover, since  $\frac{1}{|I_{j}|}G(I_{j}, \om)\rightarrow A(\om)$ in $L^{1}(\Om)$, there exists $\rho, \eta$ such that for all $j$ sufficiently large, $G(I_{j}, \om)-A(\om)|I_{j}|+\rho\geq0$ and $A(\om)|I_{j}|-G(I_{j}, \om)+\eta\geq0$ almost surely. Therefore, for any $\al>0$, 
\begin{align*}
\PP\left[\sup_{j} \left|\frac{1}{|I_{j}|}|G(I_{j}, \om)-A(\om)|I_{j}|\right|>\frac{\al}{2}\right]&\leq \PP\left[\sup_{j} \frac{1}{|I_{j}|}\left(G(I_{j}, \om)-A(\om)|I_{j}|\right)>\frac{\al}{2}\right]\\
&+\PP\left[\sup_{j} \frac{1}{|I_{j}|}\left(A(\om)|I_{j}|-G(I_{j}, \om)\right)>\frac{\al}{2}\right]\\
&\leq  \PP\left[\sup_{j} \frac{1}{|I_{j}|}\left(G(I_{j}, \om)-A(\om)|I_{j}|+\rho\right)>\frac{\al}{2}\right]\\
&+\PP\left[\sup_{j} \frac{1}{|I_{j}|}\left(A(\om)|I_{j}|-G(I_{j}, \om)+\eta\right)>\frac{\al}{2}\right].
\end{align*}
By Proposition \ref{max}, 
\begin{align*}
\PP\left[\limsup_{j\rightarrow\infty}\frac{1}{|I_{j}|}G(I_{j}, \om)-\liminf_{j\rightarrow\infty} \frac{1}{|I_{j}|}G(I_{j}, \om)>\al\right] &\leq \PP\left[\sup_{j} \frac{1}{|I_{j}|}\left(G(I_{j}, \om)-A(\om)|I_{j}|+\rho\right)>\frac{\al}{2}\right]\\
&+\PP\left[\sup_{j} \frac{1}{|I_{j}|}\left(A(\om)|I_{j}|-G(I_{j}, \om)+\eta\right)>\frac{\al}{2}\right]\\
&\leq 2 \frac{3^{d}}{\al}\lim_{j\rightarrow\infty}\frac{1}{|I_{j}|}\int |A(\om)|I_{j}|-G(I_{j}, \om)|~d\PP\\
&+\frac{3^{d}}{\al}\lim_{j\rightarrow\infty}\frac{\rho+\eta}{|I_{j}|}=0.
\end{align*}

Thus, $\lim_{j\rightarrow\infty} \frac{1}{|I_{j}|}G(I_{j}, \om)$ converge almost surely to a limit $G(\om)$, and by uniqueness, they must converge almost surely to the $L^{1}$-limit, $A(\om)$. We conclude that
\begin{equation*}
\int G(\om)~d\PP=\lim_{j\rightarrow\infty} \frac{1}{|I_{j}|} \int G(I_{j}, \om)~d\PP.
\end{equation*} 
 By induction, we have the conclusion of Theorem \ref{multiet} for all dimensions.
\end{proof}

\section{Proof of Technical Lemmas}\label{techlem}

\begin{proof}[Proof of Lemma \ref{lemmonot}]
We only show the argument for the decay of $\overline{J}_{(k+1)^{3}}$ to $\overline{J}_{k^{3}+3k^{2}}$, since the other cases follow similarly. We also drop the dependence on $\om$ since it plays no role in our analysis. By minimality of the obstacle problem, we have for all $i=1, 2,\ldots 3^{(3k+1)(d+2)}$, 
\begin{equation*}
\left\{\overline{v}^{(k+1)^{3}}=0\right\} \cap G^{i}_{k^{3}+3k^{2}}\subset \left\{\overline{v}^{k^{3}+3k^{2}}=0\right\}.
\end{equation*}
Therefore, we have that 
\begin{align*}
|G_{(k+1)^{3}}|\overline{\pi}_{(k+1)^{3}}(\om) &=\int_{G_{(k+1)^{3}}\cap \left\{\overline{v}^{(k+1)^{3}}=0\right\}\cap G_{(k+1)^{3}}} (\ell+F(0, y, s, \om))^{d+1}_{+} dy ds\\
&= \sum_{i=1}^{3^{(3k+1)(d+2)}} \int_{G^{i}_{k^{3}+3k^{2}}\cap \left\{\overline{v}^{(k+1)^{3}}=0\right\} }(\ell+F(0, y, s, \om))^{d+1}_{+} dy ds\\
&\leq \sum_{i=1}^{3^{(3k+1)(d+2)}} \int_{\left\{\overline{v}^{k^{3}+3k^{2}}=0\right\} }(\ell+F(0, y, s, \om))^{d+1}_{+} dy ds\\
&=|G_{k^{3}+3k^{2}}|  \sum_{i=1}^{3^{(3k+1)(d+2)}} \overline{\pi}^{i}_{k^{3}+3k^{2}}(\om).
\end{align*}
This yields
\begin{equation*}
\overline{\pi}_{(k+1)^{3}}(\om)\leq \frac{|G_{k^{3}+3k^{2}}|}{|G_{(k+1)^{3}}|} \sum_{i=1}^{3^{(3k+1)(d+2)}} \overline{\pi}^{i}_{k^{3}+3k^{2}}(\om)= \overline{A}_{(k+1)^{3}}(\om), 
\end{equation*}
which implies $\overline{J}_{(k+1)^{3}}\leq \EE\left[\left(\overline{A}_{k^{3}+3k^{2}}\right)^{2}\right]$. 

Finally, we compare
\begin{align*}
\EE[(\overline{A}_{(k+1)^{3}})^{2}]&=3^{-2(3k+1)(d+2)}\EE\left[\left(\sum_{i=1}^{3^{(3k+1)(d+2)}} \overline{\pi}^{i}_{k^{3}+3k^{2}}\right)^{2}\right]\\
&=3^{-2(3k+1)(d+2)}\sum_{i,j=1}^{3^{(3k+1)(d+2)}} \EE\left[ \overline{\pi}^{i}_{k^{3}+3k^{2}} \cdot \overline{\pi}^{j}_{k^{3}+3k^{2}}\right]\\
&\leq  3^{-2(3k+1)(d+2)}\sum_{i,j=1}^{3^{(3k+1)(d+2)}} \left(\EE\left[ (\overline{\pi}^{i}_{k^{3}+3k^{2}})^{2}\right]\right)^{1/2}\left( \EE\left[(\overline{\pi}^{j}_{k^{3}+3k^{2}})^{2}\right]\right)^{1/2}\\
&=\EE\left[\left(\overline{\pi}_{k^{3}+3k^{2}}\right)^{2}\right]=\overline{J}_{k^{3}+3k^{2}}
\end{align*}
where we applied the Cauchy-Schwarz inequality and stationarity. Combining the above, we have $\overline{J}_{(k+1)^{3}}\leq \overline{J}_{k^{3}+3k^{2}}$.  
\end{proof}

\begin{proof}[Proof of Lemma \ref{decayv}]
We only show the argument for \pref{decays}, since the argument for \pref{decayb} follows similarly. We consider each of the subcubes $G^{ij}_{k^{3}}=G^{j}_{k^{3}}$, with centers $z_{j}$. (The centers of the cubes are centered in space and time.) We fix one of the subcubes $G^{\hat{j}}_{k^{3}}$ and consider all of the other subcubes which make any contact with $G^{\hat{j}}_{k^{3}}$.  These can be characterized by considering all of the subcubes so that $d(z_{i},z_{\hat{j}})< 2\cdot3^{k^{3}}$. There are at most $3^{d+1}$ such distinct subcubes, including $G^{\hat{j}}_{k^{3}}$.
Note that by \pref{1mixing}, 
\begin{equation*}
\EE[\overline{\pi}^{i}_{k^{3}}\overline{\pi}^{j}_{k^{3}}]\leq \overline{E}_{k^{3}}+3^{-k^{3/2}}\quad\text{if}\quad d(z_{i},z_{j})>3^{{k^{3}}}.
\end{equation*}
Moreover, by the Cauchy-Schwartz inequality and stationarity, 
\begin{equation*}
\EE[\overline{\pi}^{i}_{k^{3}}\overline{\pi}^{j}_{k^{3}}]-(\overline{E}_{k^{3}})^{2}\leq \overline{J}_{k^{3}}-(\overline{E}_{k^{3}})^{2}=\overline{V}_{k^{3}}
\end{equation*}
for all $i,j$. 
Therefore, 
\begin{align*}
V(\overline{A}_{k^{3}+3k^{2}})&=3^{-2(3k^{2})(d+2)}\EE\left[\sum_{i=1}^{3k^{2}(d+2)}(\overline{\pi}^{i}_{k^{3}}-\overline{E}^{i}_{k^{3}})\right]^{2}\\
&=3^{-2(3k^{2})(d+2)}\sum_{i,j=1}^{3k^{2}(d+2)}\EE[(\overline{\pi}^{i}_{k^{3}}-\overline{E}^{i}_{k^{3}})(\overline{\pi}^{j}_{k^{3}}-\overline{E}^{j}_{k^{3}})]\\
&=3^{-2(3k^{2})(d+2)}\sum_{i,j=1}^{3k^{2}(d+2)}\left(\EE[\overline{\pi}^{i}_{k^{3}}\overline{\pi}^{j}_{k^{3}}]-(\overline{E}_{k^{3}})^{2}\right)\\
&=3^{-2(3k^{2})(d+2)}\sum_{\substack{i,j\\d(z_{i}, z_{j})<2\cdot 3^{k^{3}}}}^{3k^{2}(d+2)}\left(\EE[\overline{\pi}^{i}_{k^{3}}\overline{\pi}^{j}_{k^{3}}]-(\overline{E}_{k^{3}})^{2}\right)\\
&+3^{-2(3k^{2})(d+2)}\sum_{\substack{i,j\\d(z_{i}, z_{j})\geq2\cdot 3^{k^{3}}}}^{3k^{2}(d+2)}\left(\EE[\overline{\pi}^{i}_{k^{3}}\overline{\pi}^{j}_{k^{3}}]-(\overline{E}_{k^{3}})^{2}\right)\\
&\leq 3^{-2(3k^{2})(d+2)}[3^{(3k^{2})(d+2)}3^{d+1}\overline{V}_{k^{3}}+(3^{(3k^{2})(d+2)})(3^{(3k^{2})(d+2)}-3^{d+1})3^{-k^{3/2}}]\\
&\leq 3^{-(3k^{2})(d+2)+(d+1)}\overline{V}_{k^{3}}+3^{-k^{3/2}}.
\end{align*}
The argument for \pref{decayb} follows similarly. 
\end{proof}

\begin{proof}[Proof of Lemma \ref{excsets}]
We only show the proofs for $\overline{\pi}_{k^{3}+3k^{2}}$ since those for $\underline{\pi}_{k^{3}+3k^{2}}$ follow similarly. By Chebyshev's inequality, if we define $\overline{\underline{B}}_{k^{3}+3k^{2}}$ by
\begin{equation}\label{chebyB}
\overline{B}_{k^{3}+3k^{2}}=\left\{\om\in\Om: |\overline{\pi}_{k^{3}+3k^{2}}(\om)-\overline{E}_{k^{3}+3k^{2}}|\geq \frac{1}{2}\overline{E}_{k^{3}+3k^{2}}\right\},
\end{equation}
then \pref{chebym} is immediate.


Moreover, by \pref{badcase}, 
\begin{align*}
\overline{V}_{k^{3}}=\EE\left[\left(\overline{\pi}^{ij}_{k^{3}}(\cdot)-\overline{E}_{k^{3}}\right)^{2}\right] & \geq \int [\overline{\pi}^{ij}_{k^{3}}(\cdot)-\overline{E}_{k^{3}}]^{2}\mathbbm{1}_{  \left\{\overline{\pi}^{ij}_{k^{3}}(\cdot)\leq \frac{1}{2}\overline{E}_{k^{3}}\right\} } (\om)~d\PP\\
&\geq \frac{1}{4}(\overline{E}_{k^{3}})^{2}\PP\left[\overline{\pi}^{ij}_{k^{3}}\leq \frac{1}{2}\overline{E}_{k^{3}}\right],
\end{align*}
which implies that  
\begin{equation}
\PP\left[\overline{\pi}^{ij}_{k^{3}}\leq \frac{1}{2}\overline{E}_{k^{3}}\right]\leq \frac{4\overline{V}_{k^{3}}}{(\overline{E}_{k^{3}})^{2}}\leq 4\eta.
\end{equation}

By stationarity, we have that
\begin{equation}
\EE[\overline{a}^{ij}_{k^{3}+3k^{2}}]\leq 4\eta, 
\end{equation}
so by Chebyshev's inequality and \pref{zeta}, 
\begin{equation}\label{angleB}
\PP\left[\langle \overline{B}_{k^{3}}\rangle\right]=\PP\left[\overline{a}^{ij}_{k^{3}+3k^{2}}>\zeta_{d}\right] \leq 4\eta \zeta_{d}^{-1}.
\end{equation}

We note that for $\om\in\Om\setminus \langle\overline{B}_{k^{3}}\rangle=\left\{\om\in\Om: \overline{a}^{ij}_{k^{3}+3k^{2}}<\zeta_{d}\right\}$, 
\begin{equation*}
3^{-3k^{2}(d+2)}\left(3^{3k^{2}(d+2)}-\overline{N}_{k^{3}}\right)<\zeta_{d},
\end{equation*}
which implies
\begin{equation*}
\overline{N}_{k^{3}}\geq (1-\zeta_{d})3^{3k^{2}(d+2)}.
\end{equation*}
Finally, by combining \pref{chebym}, \pref{angleB}, and the analogous statements for $\underline{\pi}_{k^{3}+3k^{2}}$, 
\begin{equation*}
\PP[\textbf{B}_{k}]\leq \PP[\overline{B}_{k^3+3k^{2}}]+\PP[\underline{B}_{k^{3}+3k^{2}}]+\PP[\langle\overline{B}_{k^{3}}\rangle]+\PP[\langle\underline{B}_{k^{3}}\rangle]\leq 8\eta \left[1+\zeta^{-1}_{d}\right].
\end{equation*}
\end{proof}

\begin{proof}[Proof of Lemma \ref{fshomog}]

Recall that $h^{k^{3}+3k^{2}}$ is a supersolution to
\begin{equation*}
h_{s}-\MM^{-}(D^{2}h)\geq f_{k^{3}+3k^{2}}=(\ell+F(0, y, s, \om))_{+}\chi_{\left\{\overline{v}^{k^{3}+3k^{2}}=0\right\}}+(\ell+F(0, y, s, \om))_{-}\chi_{\left\{\underline{v}^{k^{3}+3k^{2}}=0\right\}}
\end{equation*}

Therefore, since $\om\in\Om_{1}, k^{3}+3k^{2}$ satisfy \pref{frac}, by Proposition \ref{FSL}, there exists $c_{fs}$ so that for all $|y|\leq \frac{2}{3} 3^{k^{3}+3k^{2}}$, $\frac{2}{2}3^{-2(k^{3}+3k^{2})}\geq s\geq 0$,
\begin{equation}\label{lbg}
h^{k^{3}+3k^{2}}(y,s, \om)\geq c_{fs}3^{(k^{3}+3k^{2})(2-(d+2)\al)}\norm{f_{k^{3}+3k^{2}}}_{L^{d+1}(G_{k^{3}+3k^{2}}}^{\al}
\end{equation}
with $\al=\rho+\beta \log \left(\frac{2\cdot 3^{(d+2)(k^{3}+3k^{2})}}{\norm{f}_{L^{d+1}(G_{k^{3}+3k^{2}}}^{d+1}}\right)$. 

Using the fact that $(\ell+F(0, y, s, \om))_{-}(\ell+F(0, y, s, \om))_{+}=0$,
\begin{align}
\norm{f_{k^{3}+3k^{2}}}_{L^{d+1}(G_{k^{3}+3k^{2}})}^{d+1} &=\int_{G_{k^{3}+3k^{2}}}[(\ell+F(0, y, s, \om))_{+}\chi_{\left\{\overline{v}^{k}=0\right\}}+(\ell+F(0, y, s, \om))_{-}\chi_{\left\{\underline{v}^{k}=0\right\}}]^{d+1} dy ds\notag\\
&= \int_{G_{k^{3}+3k^{2}}}(\ell+F(0, y, s, \om))^{d+1}_{+}\chi_{\left\{\overline{v}^{k}=0\right\}}+(\ell+F(0, y, s, \om))^{d+1}_{-}\chi_{\left\{\underline{v}^{k}=0\right\}}~ dy ds\notag\\
&= |G_{k^{3}+3k^{2}}|(\overline{\pi}_{k^{3}+3k^{2}}+\underline{\pi}_{k^{3}+3k^{2}})\geq 2|G_{k^{3}+3k^{2}}|\theta^{1/2}.\label{flbnd}
\end{align}

Combining this with \pref{lbg}, we have for all $(y,s)\in \frac{2}{3} G^{k^{3}+3k^{2}}$, 
\begin{align*}
h^{k^{3}+3k^{2}}(y,s, \om)&\geq c_{fs}3^{(k^{3}+3k^{2})(2-(d+2)\al)}|G_{k^{3}+3k^{2}}|^{\al}\theta^{\al/2}\\
&=c_{fs}3^{2(k^{3}+3k^{2})}\theta^{\al/2}.
\end{align*}
This implies that for all $(y,s)\in \frac{2}{3} G^{k^{3}+3k^{2}}$, 
\begin{equation}
\left(\overline{v}^{k^{3}+3k^{2}}(y,s, \om)-0\right)+\left(0-\underline{v}^{k^{3}+3k^{2}}(y,s,\om)\right)\geq C_{FS}3^{2(k^{3}+3k^{2})}\theta^{(\al/2)}.
\end{equation}


If there exists $G^{i}_{k^{3}}\subset \frac{2}{3} G_{k^{3}+3k^{2}}$ such that both $\overline{v}^{k^{3}+3k^{2}}$ and $\underline{v}^{k^{3}+3k^{2}}$ vanish, then by the Holder continuity of $\overline{v}^{k^{3}+3k^{2}}$ and $\underline{v}^{k^{3}+3k^{2}}$, properly rescaled,
\begin{align*}
\max_{G^{i}_{k^{3}}}(\overline{v}^{k^{3}+3k^{2}}(\cdot, , \cdot, \om)-\underline{v}^{k^{3}+3k^{2}}(\cdot, , \cdot, \om))&\leq \max_{G^{i}_{k^{3}}} (\overline{v}^{k^{3}+3k^{2}}(\cdot, \cdot, \om)-0)+\max_{D^{i}_{k^{3}}} (0-\underline{v}^{k^{3}+3k^{2}})\\
&\leq 2C_{h}3^{(k^{3}+3k^{2})[(d/d+1)-\sigma]}3^{\sigma k^{3}}=2C_{h}3^{(k^{3}+3k^{2})(d/d+1)-\sig3k^{2}}.
\end{align*}

Combining the above, we have
\begin{equation*}
2C_{h}3^{(k^{3}+3k^{2})(d/d+1)-\sig3k^{2}}\geq  c_{fs}3^{2(k^{3}+3k^{2})}\theta^{(\al/2)}, 
\end{equation*}
which implies
\begin{equation*}
2\frac{C_{h}}{c_{fs}}\geq 3^{3k^{2}(2+\sig-d/d+1)}3^{k^{3}(2-d/d+1)} \theta^{(\al/2)}
\end{equation*}
which is impossible in light of the hypotheses.
\end{proof}

\section{A Quantitative Regularity Estimate}\label{quantreg}
We review the regularity estimates established in \cite{linreg1} which play a crucial role in the proof of Theorem \ref{thmrate1}. We explain everything in terms of a general $Q_{R}$ instead of $G_{k^{3}+3k^{2}}$. We recall the following general result from \cite{linreg1}:

\begin{theorem}[Lin, \cite{linreg1}]\label{genfs}
Let $0\leq f\leq \norm{f}_{L^{\infty}(Q_{R})}$, and let $u$ be a nonnegative function in $\overline{S}(f, Q_{R})$. There exists $c, C, \rho, \beta> 0$, depending on $\la, \La, d$, such that  for all $|x|\leq \frac{2}{3}R$, $0\geq t\geq -\frac{2}{3} \frac{\norm{f}^{d+1}_{L^{d+1}(Q_{R})}}{2}R^{-d}$, 
\begin{equation}\label{techest}
cR^{2-\left(d+2\right)\al}\norm{f}^{1-(d+1)\al}_{L^{\infty}(Q_{R})}\norm{f}_{L^{d+1}(Q_{R})}^{(d+1)\al}\leq u(x,t)\leq C R^{d/(d+1)} \norm{f}_{L^{d+1}(Q_{R})}
\end{equation}
with $\al=\rho+2\beta R^{d+2}\norm{f}_{L^{d+1}(Q_{R})}^{-(d+1)}$. 
\end{theorem}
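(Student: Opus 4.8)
The statement is the parabolic analogue of the Fabes--Stroock estimate underlying the corresponding elliptic result of Caffarelli and Souganidis, and it is established in full in the companion paper \cite{linreg1}; the plan is to normalize the problem, dispatch the upper bound with the parabolic ABP inequality, and extract the lower bound by iterating the parabolic weak Harnack inequality along a time-ordered chain of cylinders while carefully tracking how the Harnack constant degenerates. First I would normalize: the parabolic scaling $(x,t)\mapsto(Rx,R^{2}t)$ reduces to $R=1$, and dividing the equation by $\norm{f}_{L^{\infty}(Q_{R})}$ reduces to $0\le f\le 1$, so that $\mu:=\norm{f}_{L^{d+1}(Q_{1})}^{d+1}$ becomes the single small parameter. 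Using superadditivity of $\MM^{-}$ one checks that $u-v\in\overline{S}(0,Q_{1})$, where $v$ solves $v_{t}-\MM^{-}(D^{2}v)=f$ in $Q_{1}$ with $v=0$ on $\partial_{p}Q_{1}$, so by the parabolic ABP/minimum principle $u\ge v$ and it suffices to prove the two bounds for $v$.

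The upper bound $v\le C\norm{f}_{L^{d+1}(Q_{1})}$ is then immediate: $v$ is a viscosity subsolution of $v_{t}-\MM^{+}(D^{2}v)\le f$ vanishing on $\partial_{p}Q_{1}$, so the parabolic ABP--Krylov--Tso estimate bounds $\sup_{Q_{1}}v$ by the $L^{d+1}$-norm of $f$ on the contact set. The lower bound is the real content. Since $f\ge 0$, a Chebyshev argument produces a parabolic sub-cube of space-time measure $\sim\mu$, hence of spatial side $\ell\sim\mu^{1/(d+2)}$, on which $f$ has a definite average; an interior parabolic ABP estimate gives $v\gtrsim \ell^{2}$ on a comparable cylinder lying just above it. I would then propagate this lower bound across $\{|x|\le\tfrac23\}$ at later times by a parabolic Harnack chain: cover $Q_{1}$ by $N\sim\ell^{-(d+2)}\sim\mu^{-1}$ sub-cylinders of scale $\ell$ and apply the weak Harnack inequality for nonnegative supersolutions of $w_{t}-\MM^{-}(D^{2}w)\ge 0$ along a time-ordered chain of overlapping cylinders. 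Each link costs a fixed multiplicative factor $\theta\in(0,1)$, so after $N$ links the propagated bound carries the factor $\theta^{N}\sim\theta^{c\mu^{-1}}$; combining $\theta^{N}$ with the base level $\ell^{2}$ and reinstating $R$ and $\norm{f}_{L^{\infty}}$ rewrites the estimate in the scale-invariant form $cR^{2-(d+2)\al}\norm{f}_{L^{\infty}(Q_{R})}^{1-(d+1)\al}\norm{f}_{L^{d+1}(Q_{R})}^{(d+1)\al}$, with the exponent $\al=\rh+2\be R^{d+2}\norm{f}_{L^{d+1}(Q_{R})}^{-(d+1)}$ degenerating precisely because $N$ scales like $R^{d+2}\norm{f}_{L^{\infty}}^{d+1}\norm{f}_{L^{d+1}}^{-(d+1)}$.

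The hard part, and the reason this cannot be a routine $\RR^{d+1}$ transcription of the elliptic argument, is \emph{causality}: the weak Harnack inequality moves positivity only forward in time, so the Harnack chain must be time-ordered and a definite lower bound can only be guaranteed once the source has had time to act. This is exactly why the estimate holds not on all of $\{|x|\le\tfrac23 R\}$ but only for $t$ in the window $-\tfrac23\tfrac{\norm{f}_{L^{d+1}(Q_{R})}^{d+1}}{2}R^{-d}\le t\le 0$, a window that itself shrinks as $\norm{f}_{L^{d+1}}\to 0$. Getting the quantitative interplay between the degenerating exponent $\al$ and this shrinking time window right --- and checking that the resulting estimate remains strong enough for the rate argument --- is the technical heart of \cite{linreg1}, and it is exactly why, in Section \ref{decaymass}, one must first invoke the subadditive ergodic theorem to show that the source spreads throughout the cylinder before Theorem \ref{genfs} can be applied on a fixed fraction of $G_{m}$.
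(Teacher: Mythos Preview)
The paper does not prove Theorem~\ref{genfs}; it is stated in Appendix~\ref{quantreg} as a citation of the companion paper \cite{linreg1}, with only a brief indication that the key step there is a quantitative lower bound for the model problem $w_{t}-\MM^{-}(D^{2}w)=\chi_{Q_{r}(x_{0},t_{0})}$ in $Q_{R}$ with zero lateral data, obtained by an iteration (Proposition~2.5 of \cite{linreg1}) in which the bound decays by a factor $r^{\beta}$ at each step. Your outline --- normalize, compare with the Pucci solution $v$, ABP for the upper bound, and a time-ordered weak-Harnack chain for the lower bound --- is the right skeleton and matches that description: the ``indicator source'' model problem is exactly what remains after your Chebyshev localization, and the iteration you propose is the Harnack-chain propagation. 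Your discussion of causality and of why the admissible time window shrinks with $\norm{f}_{L^{d+1}}$ also agrees with the paper's remarks preceding Lemma~\ref{lemmaka} and Theorem~\ref{thisfs}.

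Two small points. First, the reduction $u\ge v$ transfers only the \emph{lower} bound from $v$ to a general nonnegative $u\in\overline{S}(f,Q_{R})$; the upper bound in \pref{techest} cannot hold for arbitrary supersolutions (add a large constant to $v$) and should be read under an implicit boundary normalization such as $u=0$ on $\partial_{p}Q_{R}$, which is how it is applied to $h^{m}=\overline{v}^{m}-\underline{v}^{m}$ in the paper. Second, your link count $N\sim\mu^{-1}$ is a bit loose: a time-ordered parabolic chain at scale $\ell\sim\mu^{1/(d+2)}$ needs $N\sim\ell^{-2}$ steps to traverse unit time, not $\ell^{-(d+2)}$; the cruder exponent $\al\sim\mu^{-1}$ in the statement reflects the particular iteration of \cite{linreg1} rather than an optimal Harnack chain count, so your heuristic recovers the stated form but for a different reason than you indicate.
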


We point out that the domain where \pref{techest} holds depends on $\norm{f}_{L^{d+1}(Q_{R})}$. This is problematic in adapting the proof of \cite{cs} to this setting, because the domain is shrinking too fast to gain information in a subset of $Q_{R}$ as $R$ increases.

It was shown in \cite{linreg1} that \pref{techest} holds in the domain for all $t\geq t_{0}$ for  
\begin{equation*}
t_{0}=\sup_{t}\left\{(x,t): f> 2^{-1}R^{d+2}\norm{f}^{d+1}_{L^{d+1}(Q_{R})}\right\}.
\end{equation*}
Therefore, to obtain \pref{techest} in a domain such as $\frac{2}{3}Q_{R}$, it is enough to show that 
\begin{equation*}
\left\{(x,t); f> 2^{-1}R^{d+2}\norm{f}^{d+1}_{L^{d+1}(Q_{R})}\right\}\subset\left\{t\leq -\frac{2}{3}R^{2}\right\}.
\end{equation*}
This is where Lemma \ref{lemmaka} plays a role. 

Furthermore, we comment that if we are in the situation of Lemma \ref{lemmaka}, we may improve the estimate of \cite{linreg1}. The key contribution of \cite{linreg1} was to obtain a quantitative lower bound for solutions to 
\begin{equation*}
\begin{cases}
w_{t}-\MM^{-}(D^{2}w)=\chi_{Q_{r}(x_{0}, t_{0})}\quad\text{in}\quad Q_{R},\\
w=0\quad\text{on}\quad \partial_{p}Q_{R},
\end{cases}
\end{equation*}
where $Q_{r}(x_{0}, t_{0})\subset Q_{R}$. If $t_{0}\leq -\ka R^{2}$, then it is possible to rework the argument of  Proposition 2.5 of \cite{linreg1}, to show that the lower bound decreases by $r^{\beta}$ in each step of the iteration, where $\beta=\beta(\la, \La, d)$. Therefore, once may improve the estimate to show that 
\begin{equation} 
w(x,t)\geq C r^{\al}
\end{equation}
where $\al=\rho+\beta \log (R^{d+2}/\norm{f}^{d+1}_{L^{d+1}(Q_{R}})$. We omit the details here. This yields the following improvement to Theorem \ref{genfs}
\begin{theorem}\label{thisfs}
Let $0\leq f\leq \norm{f}_{L^{\infty}(Q_{R})}$, and let $u$ be a nonnegative function in $\overline{S}(f, Q_{R})$. Suppose that $\left\{f\geq 2^{-1}R^{d+2}\norm{f}^{d+1}_{L^{d+1}(Q_{R})}\right\}\subset \left\{t\leq -\frac{2}{3} R^{2}\right\}$. There exists $c_{fs}, C, \rho, \beta> 0$, depending on $\la, \La, d$, such that  for all $|x|\leq \frac{2}{3}R$, $0\geq t\geq -\frac{2}{3} R^{2}$,
\begin{equation}
c_{fs}R^{2-\left(d+2\right)\al}\norm{f}^{1-(d+1)\al}_{L^{\infty}(Q_{R})}\norm{f}_{L^{d+1}(Q_{R})}^{(d+1)\al}\leq u(x,t)\leq C R^{d/(d+1)} \norm{f}_{L^{d+1}(Q_{R})}
\end{equation}
with $\al=\rho+\beta \log (R^{d+2}/\norm{f}_{L^{d+1}(Q_{R})})$. 
\end{theorem}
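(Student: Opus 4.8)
The upper bound $u(x,t) \le C R^{d/(d+1)}\norm{f}_{L^{d+1}(Q_R)}$ is exactly the one in Theorem \ref{genfs} and needs no new argument (it is the parabolic ABP-estimate applied on $Q_R$), so the whole task is the sharpened lower bound on $\{|x| \le \tfrac23 R,\ -\tfrac23 R^2 \le t \le 0\}$ with the improved exponent $\al = \rho + \beta\log(R^{d+2}/\norm{f}_{L^{d+1}(Q_R)})$. The first step I would take is a reduction: since $u$ is a nonnegative supersolution of $\partial_s w - \MM^{-}(D^2 w) = f$ with $u \ge 0$ on $\partial_p Q_R$, the comparison principle gives $u \ge v$, where $v$ solves the Dirichlet problem $\partial_s v - \MM^{-}(D^2 v) = \underline{f}$ in $Q_R$, $v = 0$ on $\partial_p Q_R$, for any $0 \le \underline{f} \le f$. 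I would take $\underline{f} = 2^{-1}R^{d+2}\norm{f}_{L^{d+1}(Q_R)}^{d+1}\,\chi_{E}$ with $E = \{f \ge 2^{-1}R^{d+2}\norm{f}_{L^{d+1}(Q_R)}^{d+1}\}$: by the standing hypothesis $E \subset \{t \le -\tfrac23 R^2\}$, and as in \cite{linreg1} the truncation is arranged so that $\norm{\underline{f}}_{L^{d+1}(Q_R)}$ is comparable to $\norm{f}_{L^{d+1}(Q_R)}$ up to universal factors. Thus it suffices to lower bound $v$, whose source is a fixed height supported on a set $E$ sitting entirely in the bottom slab of $Q_R$.

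The core of the proof is then to rerun the iteration of Proposition 2.5 of \cite{linreg1} for $v$, keeping track of the time location of $E$. Covering, from inside, a definite portion of $E$ by parabolic subcylinders $Q_r(x_j,t_j) \subset Q_R$ with $t_j \le -\tfrac23 R^2$, one propagates the positivity of $v$ forward in time and outward in space by repeated applications of the parabolic Harnack inequality, starting from each $Q_r(x_j,t_j)$. The new ingredient, which is precisely what the hypothesis $E \subset \{t \le -\tfrac23 R^2\}$ buys, is that every source cylinder has a full amount of time $\gtrsim R^2$ of diffusion available before reaching the target region $\{|x| \le \tfrac23 R,\ -\tfrac23 R^2 \le t \le 0\}$; consequently the number of Harnack iterations needed is at most $C(\la,\La,d)\log(R/r)$ rather than a power of $R/r$, and each iteration costs a fixed universal factor that does not degenerate as $r \to 0$. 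Chaining these factors yields $v \ge c\, r^{\beta}$ on the target region for a universal $\beta = \beta(\la,\La,d)$; expressing the height of $\underline{f}$ and the admissible $r$ in terms of $\norm{f}_{L^{d+1}(Q_R)}$ and optimizing over $r$ converts this into $v \ge c R^{2-(d+2)\al}\norm{f}_{L^{\infty}(Q_R)}^{1-(d+1)\al}\norm{f}_{L^{d+1}(Q_R)}^{(d+1)\al}$ with $\al = \rho + \beta\log(R^{d+2}/\norm{f}_{L^{d+1}(Q_R)})$, which transfers to $u$ by the comparison above.

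The main obstacle is the bookkeeping in the Harnack chain. One must verify that the intermediate Harnack cylinders can be nested inside $Q_R$ while respecting parabolic causality (each cylinder strictly later in time than its predecessor), that the loss per step is a genuine fixed factor rather than one tending to $0$ as $r \to 0$, and that the total number of steps is logarithmic in $R/r$ uniformly over the source cells — all of which rely on the separation in time guaranteed by the hypothesis. This is exactly the point at which the naive adaptation of Theorem \ref{genfs} fails, because there the effective time window shrinks with $\norm{f}_{L^{d+1}(Q_R)}$; confining the super-level set to $\{t \le -\tfrac23 R^2\}$ restores a fixed time window and is what makes the improved, merely logarithmic, exponent possible. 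The remaining parts — the reduction via comparison, the layer-cake truncation of $f$, and the optimization in $r$ — are routine and follow \cite{linreg1} verbatim.
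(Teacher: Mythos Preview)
Your proposal is correct and follows essentially the same route as the paper's own treatment. The paper does not give a full proof of Theorem~\ref{thisfs} either: it merely remarks that the key step is to rework Proposition~2.5 of \cite{linreg1} for the model problem $w_t-\MM^{-}(D^2 w)=\chi_{Q_r(x_0,t_0)}$ in $Q_R$ under the extra hypothesis $t_0\le -\ka R^2$, observing that the fixed time separation lets the Harnack iteration lose only a factor $r^{\beta}$ (equivalently, a fixed universal factor over $O(\log(R/r))$ steps) rather than the degenerating loss incurred in the general case, and then omits the details; your reduction by comparison to the truncated source supported in $\{t\le -\tfrac23 R^2\}$, covering by subcylinders, and Harnack chain with logarithmically many steps is exactly this reworking.
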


\section{Regularized Solutions for Parabolic Equations}
\subsection{Regularizing Solutions of Parabolic Equations}
We state some results regarding regularizations of solutions to parabolic equations. We state these results without proof, as the details can be found in \cite{olga1}. We define the aforementioned sup/inf convolutions in space:

\begin{definition}
For $ u\in C^{0,1}(D_{T})$, we define the $x$-sup convolution $ \overline{u}_{\theta}$ and the $x$-inf convolution $\underline{u}_{\theta}$ to be 
\begin{equation*}
\overline{u}_{\theta}(x,t)=\sup_{y}\left\{u(y,t)-\frac{|x-y|^{2}}{2\theta}\right\}\quad\text{and}\quad \underline{u}_{\theta}(x,t)=\inf_{y}\left\{u(y,t)+\frac{|x-y|^{2}}{2\theta}\right\}.
\end{equation*}
\end{definition}
If the supremum/infimum is achieved at some point, we denote that point $x^{*}$. We also define
\begin{equation*}
D_{T}^{\theta}=\left\{(x,t)\in D_{T}: \inf_{(y,s)\in \partial_{p}D_{T}} |x-y|\geq 2\theta |Du|\right\}.
\end{equation*}
We now list some standard properties pertaining to the $x$-sup/inf convolutions. 
\begin{proposition}[Turanova, \cite{olga1}]
Let $u\in C^{0,1}(D_{T})$. 
\begin{enumerate}
\item $|x-x^{*}|\leq 2\theta|Du|$
\item For $(x,t)\in D^{\theta}_{T}$, $\overline{u}_{\theta}(x,t)\leq u(x,t)+2\theta(|Du|)^{2}$ and $\underline{u}_{\theta}(x,t)\geq u(x,t)-2\theta(|Du|)^{2}.$
\item $\overline{\underline{u}}_{\theta}$ is twice differential in $x$, a.e. in $D_{T}^{\theta}$; and $D^{2}_{x}\overline{u}_{\theta}\geq -\theta^{-1}Id$ and $D^{2}_{x}\underline{u}_{\theta}\leq \theta^{-1}Id$ in the sense of distributions.
\item If $u$ is a subsolution, (resp. super solution) to $u_{t}-F(D^{2}u)=c$ in $D_{T}$, then $\overline{u}_{\theta}$ (respectively $\underline{u}_{\theta}$ is a sub (respectively super) solution in $D^{\theta}_{T}$. 
\end{enumerate}
\end{proposition}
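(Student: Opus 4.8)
The four claims are the parabolic, purely spatial versions of the classical sup/inf--convolution facts, and my plan is to prove them by working in each time slice $\{t=\text{const}\}$, using only that $u(\cdot,t)$ is Lipschitz with constant $|Du|$ uniformly in $t$. For (1) and (2): fixing $(x,t)$, I would first note that the supremum defining $\overline{u}_{\theta}(x,t)$ is attained, since $u(\cdot,t)$ Lipschitz forces $y\mapsto u(y,t)-|x-y|^{2}/(2\theta)\to-\infty$; calling a maximizer $x^{*}$ and comparing with the competitor $y=x$ gives $u(x^{*},t)-|x-x^{*}|^{2}/(2\theta)\ge u(x,t)$, hence $|x-x^{*}|^{2}/(2\theta)\le u(x^{*},t)-u(x,t)\le|Du|\,|x-x^{*}|$, which is (1); then $\overline{u}_{\theta}(x,t)=u(x^{*},t)-|x-x^{*}|^{2}/(2\theta)\le u(x^{*},t)\le u(x,t)+|Du|\,|x-x^{*}|\le u(x,t)+2\theta|Du|^{2}$, which is (2). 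The inf--convolution estimates are mirror images, using the competitor $y=x$ in the infimum.

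For (3) I would expand the square to write $\overline{u}_{\theta}(x,t)+|x|^{2}/(2\theta)=\sup_{y}\{u(y,t)-|y|^{2}/(2\theta)+\langle x,y\rangle/\theta\}$, a supremum of affine functions of $x$ for each fixed $t$, hence convex in $x$; this gives $D^{2}_{x}\overline{u}_{\theta}\ge-\theta^{-1}Id$ in the distributional sense, and Alexandrov's theorem gives twice differentiability of $x\mapsto\overline{u}_{\theta}(x,t)+|x|^{2}/(2\theta)$ at a.e.\ $x$, which upgrades to "a.e.\ in $D_{T}^{\theta}$" by Fubini. Symmetrically, $\underline{u}_{\theta}(x,t)-|x|^{2}/(2\theta)$ is an infimum of affine functions of $x$, hence concave, giving $D^{2}_{x}\underline{u}_{\theta}\le\theta^{-1}Id$. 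I would also record here that (3) makes $\overline{u}_{\theta},\underline{u}_{\theta}$ locally Lipschitz in $x$, and that uniform continuity of $u$ on compacts makes them continuous in $t$, so both are genuine continuous functions on $D_{T}^{\theta}$ to which viscosity theory applies.

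The substantive step is (4), and I would prove it by the translation trick. Suppose $\varphi\in C^{2}$ touches $\overline{u}_{\theta}$ from above at $(x_{0},t_{0})\in D_{T}^{\theta}$, and let $x_{0}^{*}$ realize the supremum in $\overline{u}_{\theta}(x_{0},t_{0})$; by (1) and the definition of $D_{T}^{\theta}$, the point $(x_{0}^{*},t_{0})$ lies in $D_{T}$. For $(x,t)$ near $(x_{0},t_{0})$, testing the definition of $\overline{u}_{\theta}(x,t)$ against the competitor $y=x+(x_{0}^{*}-x_{0})$ gives $\overline{u}_{\theta}(x,t)\ge u(x+x_{0}^{*}-x_{0},t)-|x_{0}-x_{0}^{*}|^{2}/(2\theta)$, with equality at $(x_{0},t_{0})$; consequently $(z,t)\mapsto\varphi(z+x_{0}-x_{0}^{*},t)+|x_{0}-x_{0}^{*}|^{2}/(2\theta)$ touches $u$ from above at $(x_{0}^{*},t_{0})$. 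Since a spatial translation plus an additive constant changes neither $\partial_{t}$ nor $D^{2}_{x}$ of a test function, the subsolution property of $u$ at $(x_{0}^{*},t_{0})$ reads $\varphi_{t}(x_{0},t_{0})-F(D^{2}\varphi(x_{0},t_{0}))\le c$, so $\overline{u}_{\theta}$ is a subsolution in $D_{T}^{\theta}$; the supersolution statement for $\underline{u}_{\theta}$ follows by the mirror argument with infima and touching from below.

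The only delicate point is the bookkeeping in (4) that keeps the shifted test point $(x_{0}^{*},t_{0})$ inside $D_{T}$ — this is exactly the role of the restriction to $D_{T}^{\theta}$ together with the bound in (1). I would also note that, because the regularization is purely spatial, no issue of time regularity or of time--dependence of the operator arises in the stated homogeneous case $F=F(D^{2}u)$; for an operator depending on $(y,s)$ one would absorb the $O(|x_{0}-x_{0}^{*}|^{\sigma})$ discrepancy created by the spatial shift into the right--hand side via the modulus $\rho$ of \pref{f4}, which is harmless in the application because one already works there with strict sub/supersolutions. None of these points is hard, so I would expect the whole argument to be short.
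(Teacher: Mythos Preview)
Your argument is correct and is the standard way these facts are established. Note, however, that the paper does not supply its own proof of this proposition: it explicitly states ``We state these results without proof, as the details can be found in \cite{olga1}.'' So there is nothing in the paper to compare against beyond the citation. Your write-up would serve as a self-contained replacement for that citation; the translation trick you use for (4) and the semiconvexity-via-supremum-of-affines for (3) are exactly the classical arguments one finds in the literature on sup/inf convolutions.
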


Moreover, we need the following proposition which says that upon regularizing function a function $u(x,t)$ in $x$, $u$ may be touched by paraboloids in certain sets. This is the main ingredient of Claim \ref{claimcube}. We define the sets
\begin{equation*}
Q^{+, \theta}_{r}(x,t)=B(x, r-\theta\norm{Du}_{\infty})\times (t, t+r^{2}]
\end{equation*}
and
\begin{equation*}
K^{+,\theta}_{r}(x,t)=\left[x-\left(\frac{r}{9\sqrt{d}}-\theta\norm{Du}_{\infty}\right), x+\left(\frac{r}{9\sqrt{d}}-\theta\norm{Du}_{\infty}\right)\right]^{d}\times  \left(t, t+\frac{r^{2}}{81d}\right]
\end{equation*}

\begin{proposition}[Turanova, \cite{olga1}]
Assume $F(0)=0$. Suppose $u\in C^{0,1}(D_{T})$ is a solution to $u_{t}+\overline{F}(D^{2}u)=0$. Assume $\overline{Q}^{\theta}_{r}(\overline{x},\overline{t})\subset D_{T}$, and $dist(Q^{+, \theta}_{r}(\overline{x},\overline{t}), \partial_{p}D_{T})\geq \theta$. 
\begin{enumerate}
\item If $(x_{1}, t_{1}), (x_{2}, t_{2})\in K^{+,\theta}_{r}(x,t)$, then $(x_{1}^{*}, t), (x_{2}^{*}, t)\in K^{+,\theta}_{r}(x,t)$ and 
\begin{equation*}
|x_{1}-x_{2}|\leq C\theta^{-1}(1+\norm{u}_{L^{\infty}(D_{T})})|x_{1}^{*}-x_{2}^{*}|
\end{equation*}
\item There are universal constants $M_{0}, \sig,$ and $C$ such that for all $M\geq M_{0}$, there exists a set 
\begin{equation*}
\overline{\underline{\Theta}}_{M}^{\theta}(\overline{\underline{u}}_{\theta}, Q^{+,\theta}_{r}(\overline{x},\overline{t}))\subset K^{+,\theta}_{r}(\overline{x},\overline{t})
\end{equation*}
 such that for any $(x_{0},t_{0})\in\overline{\underline{\Theta}}_{M}^{\theta}(\overline{\underline{u}}_{\theta}, Q^{+,\theta}_{r}(\overline{x},\overline{t}))$, there exists a polynomial $P=c+\ell\cdot x+m\cdot t+xQx^{T}$ where $c, \ell, m, a\in \RR$, and $Q\in \mathbb{S}^{d}$ such that $P_{t}(x_{0},t_{0})-\overline{F}(D^{2}P(x_{0},t_{0}))=0.$ For all $(y,s)\in \overline{Q}_{r}^{\theta}(\overline{x},\overline{t})\cap \left\{s\leq t_{0}\right\}$, 
\begin{align*}
\overline{\underline{u}}_{\theta}(y,s)-\overline{\underline{u}}_{\theta}(x_{0}, t_{0})&\geq (\leq) P(y-x_{0},s-t_{0})+\frac{C(d)}{r^{2}}M(|y-x_{0}|^{3}+|y-x_{0}|^{2}|t_{0}-s|+|t_{0}-s|^{2})\\
&-\frac{C(1+\norm{u}_{L^{\infty}(D_{T})})}{\theta^{2}}|y-x_{0}|(t_{0}-s).
\end{align*}
Moreover, 
\begin{equation*}
\left|C^{+, \theta}_{r}(\overline{x}, \overline{t})\setminus \overline{\underline{\Theta}}^{\theta}_{M}(\overline{\underline{u}}_{\theta}, Q^{+,\theta}_{r}(\overline{x},\overline{t}))\right|\leq \frac{Cr^{d+1}}{\theta^{2}M^{\sigma}}(\norm{u}_{L^{\infty}(D_{T})}+1)^{1+\sigma}\left(1+\frac{r}{\theta}\right).
\end{equation*}
\end{enumerate}
\end{proposition}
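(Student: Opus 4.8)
The statement is the parabolic counterpart, with the spatial sup/inf-convolution in place of a full mollification, of Caffarelli's $W^{2,\ve}$-type ``good set'' estimate; the plan is to adapt that blueprint to parabolic cylinders while keeping track of causality (paraboloids may only touch a solution ``from the past'') and of the defects created by regularizing in $x$ alone. Throughout, write $v=\overline{u}_{\theta}$, the case of $\underline{u}_{\theta}$ being symmetric with all inequalities reversed.

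First, part (1) and the routine properties of $v$ follow from the previous proposition: $v$ is semiconvex in $x$ with $D^{2}_{x}v\geq -\theta^{-1}\mathrm{Id}$, it is Lipschitz in $(x,t)$ with a bound controlled by $\norm{Du}_{\infty}$ (hence by $1+\norm{u}_{L^{\infty}(D_{T})}$ via interior Lipschitz estimates for the equation), it belongs to $\underline{S}(0,D_{T}^{\theta})$, and at each point of differentiability its maximizer satisfies $x^{*}=x-\theta\nabla_{x}v(x)$. Passing from $Q^{+,\theta}_{r}$ to the smaller cube $K^{+,\theta}_{r}$ is arranged precisely so that the bound $|x-x^{*}|\leq 2\theta\norm{Du}_{\infty}$ keeps maximizers of points of $K^{+,\theta}_{r}$ inside $K^{+,\theta}_{r}$; the comparison $|x_{1}-x_{2}|\leq C\theta^{-1}(1+\norm{u}_{\infty})|x_{1}^{*}-x_{2}^{*}|$ then follows by testing the identity $x_{i}=x_{i}^{*}-\theta\nabla u(x_{i}^{*})$ against increments together with the one-sided second-derivative bounds, i.e. it is the parabolic bookkeeping version of Jensen's lemma.

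The core is part (2). I would define $\overline{\Theta}^{\theta}_{M}$ to be the set of $(x_{0},t_{0})\in K^{+,\theta}_{r}$ at which $v$ is touched from below, on the past cylinder $\overline{Q}^{\theta}_{r}\cap\{s\leq t_{0}\}$, by a paraboloid $P$ of opening $\leq C(d)r^{-2}M$, up to the parabolic-cubic error $\tfrac{C(d)}{r^{2}}M(|y-x_{0}|^{3}+|y-x_{0}|^{2}|t_{0}-s|+|t_{0}-s|^{2})$ and the regularization error $\tfrac{C(1+\norm{u}_{\infty})}{\theta^{2}}|y-x_{0}|(t_{0}-s)$. The measure bound for $K^{+,\theta}_{r}\setminus\overline{\Theta}^{\theta}_{M}$ is the parabolic $W^{2,\ve}$ estimate: apply the parabolic ABP estimate of Wang (stated above) to $v$ minus suitably chosen paraboloids on a Krylov--Safonov/Calder\'on--Zygmund decomposition of $K^{+,\theta}_{r}$ into parabolic subcubes, using $v_{t}-\MM^{-}(D^{2}v)\leq 0$ on $D_{T}^{\theta}$; the power $M^{-\sigma}$ and the geometric decay across scales come from the standard dyadic iteration, and the extra factors $\theta^{-2}(1+\norm{u}_{\infty})^{1+\sigma}(1+r/\theta)$ record exactly the loss from $v$ satisfying the equation only on the $\theta$-shrunken domain and from its time-regularity being that of a function convolved in $x$ only.

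Finally, given $(x_{0},t_{0})\in\overline{\Theta}^{\theta}_{M}$ with touching paraboloid $P_{0}$ of bounded opening, I would correct it to a pointwise solution: since $v$ is a subsolution, touching from below at $(x_{0},t_{0})$ gives $P_{0,t}(x_{0},t_{0})+\overline{F}(D^{2}P_{0}(x_{0},t_{0}))\leq 0$, and replacing $P_{0}$ by $P=P_{0}+c(t-t_{0})$ with $c=-\big(P_{0,t}+\overline{F}(D^{2}P_{0})\big)(x_{0},t_{0})\geq 0$ still lies below $v$ on $\{s\leq t_{0}\}$ (as $c(s-t_{0})\leq 0$ there), yields $P_{t}(x_{0},t_{0})-\overline{F}(D^{2}P(x_{0},t_{0}))=0$, and only improves the lower bound. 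The cubic error is built into the definition of $\overline{\Theta}^{\theta}_{M}$, while the term $\theta^{-2}|y-x_{0}|(t_{0}-s)$ appears when the frozen-time touching at $t_{0}$ is propagated backward along $s\mapsto t_{0}-s$ using the interior time-regularity of $v$, whose defect scales like $\theta^{-2}(1+\norm{u}_{\infty})$ under parabolic scaling. The main obstacle is exactly this last bookkeeping: establishing the parabolic $W^{2,\ve}$ estimate for the spatially-regularized $v$ with the \emph{explicit} dependence on $\theta$, $r$ and $\norm{u}_{\infty}$ demanded by the statement, and correctly separating which errors are cubic (hence harmless in the dyadic iteration and absorbable into the good set) from those that are genuinely lower order but cost the $\theta^{-2}$ factor; causality is the structural reason the elliptic argument does not transcribe verbatim, since it forces the one-sided, sign-definite correction $c(t-t_{0})$.
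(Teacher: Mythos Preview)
The paper does not prove this proposition: it is quoted from Turanova \cite{olga1} and explicitly stated without proof (``We state these results without proof, as the details can be found in \cite{olga1}''). So there is no in-paper argument to compare your sketch against.

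That said, your outline is essentially the approach one finds in \cite{olga1}: the parabolic $W^{2,\varepsilon}$ good-set estimate obtained by combining the parabolic ABP inequality with a Calder\'on--Zygmund decomposition in parabolic cubes, with careful bookkeeping of the defects caused by regularizing in $x$ only. Your identification of the two error terms (the cubic term absorbed into the touching-paraboloid definition, and the $\theta^{-2}|y-x_{0}|(t_{0}-s)$ term coming from the limited time-regularity of the spatial convolution) matches the structure of the result.

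One point to tighten in your final paragraph: you argue that since $v=\overline{u}_{\theta}$ is a subsolution and $P_{0}$ touches $v$ from below, you obtain a sign on $P_{0,t}-\overline{F}(D^{2}P_{0})$. But touching from below is a test for a \emph{supersolution}, not a subsolution, so the viscosity inequality does not apply directly in that direction. The actual mechanism is that at points of $\overline{\Theta}^{\theta}_{M}$ the touching paraboloid from below, together with the semiconvexity $D^{2}_{x}v\geq -\theta^{-1}\mathrm{Id}$ (a uniform paraboloid from above), forces $v$ to be pointwise twice differentiable in $x$ and differentiable in $t$ there; the equation then holds classically at $(x_{0},t_{0})$, and that is what fixes the coefficients of $P$. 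Your time-shift correction $c(t-t_{0})$ is then unnecessary, and indeed would not be sign-definite in the direction you need without the pointwise differentiability step.
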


\subsection{A brief summary of the proof of Claim \ref{claimcube}}
We next explain the proof of the Claim \ref{claimcube}. As the argument is technical, and follows the same reasoning as Lemma 4.2 of \cite{olga1}, we omit the details. We first fix $\om\in\Om$ such that $|\overline{u}^{\delta, \theta}-\underline{u}^{\ve}_{\theta}(\cdot, \cdot, \om)|\geq C \delta^{\al}$. Using the parabolic ABP-estimate for $\overline{u}^{\delta, \theta}-\underline{u}^{\ve}_{\theta}$ and a covering argument, one may show that $\overline{\Theta}^{\theta}_{M}$ and the convex envelope of $u^{+\delta}-u^{\ve}$ must intersect in one of the cubes of the grid. Therefore, there exists a point where $u^{+\delta}$ has a second order expansion from above, and $u^{+\delta}-u^{\ve}$ is convex there. By perturbing the polynomial approximation, we may lower it so that it touches $u^{\ve}$ from above, and stays above $u^{\ve}$ in $Q^{+}_{r}(x_{i}, t_{i})$.

\bibliographystyle{amsplain}
\bibliography{parrate}

\end{document}